\theoremstyle{plain}
\newtheorem{theorem}{Theorem}[section]
\newtheorem{proposition}[theorem]{Proposition}
\newtheorem{lemma}[theorem]{Lemma}
\newtheorem{corollary}[theorem]{Corollary}
\theoremstyle{definition}
\newtheorem{definition}[theorem]{Definition}
\theoremstyle{remark}
\newtheorem{remark}[theorem]{Remark}
\theoremstyle{remark}
\newtheorem*{rem*}{Remark}
\DeclareFontFamily{OMX}{MnSymbolE}{}
\DeclareSymbolFont{MnLargeSymbols}{OMX}{MnSymbolE}{m}{n}
\DeclareFontShape{OMX}{MnSymbolE}{m}{n}{
    <-6>  MnSymbolE5
   <6-7>  MnSymbolE6
   <7-8>  MnSymbolE7
   <8-9>  MnSymbolE8
   <9-10> MnSymbolE9
  <10-12> MnSymbolE10
  <12->   MnSymbolE12
}{}
\DeclareFontShape{OMX}{MnSymbolE}{b}{n}{
    <-6>  MnSymbolE-Bold5
   <6-7>  MnSymbolE-Bold6
   <7-8>  MnSymbolE-Bold7
   <8-9>  MnSymbolE-Bold8
   <9-10> MnSymbolE-Bold9
  <10-12> MnSymbolE-Bold10
  <12->   MnSymbolE-Bold12
}{}
\let\llangle\@undefined
\let\rrangle\@undefined
\DeclareMathDelimiter{\llangle}{\mathopen}{MnLargeSymbols}{'164}{MnLargeSymbols}{'164}
\DeclareMathDelimiter{\rrangle}{\mathclose}{MnLargeSymbols}{'171}{MnLargeSymbols}{'171}
\numberwithin{equation}{section}
\newcommand{\RR}{\mathbb R}
\newcommand{\R}{\mathbb R}
\newcommand{\RHat}{\widehat{\mathbb{R}}}
\newcommand{\ZZ}{\mathbb Z}
\newcommand{\Z}{\mathbb Z}
\newcommand{\NN}{\mathbb N}
\newcommand{\N}{\mathbb N}
\newcommand{\CC}{\mathbb C}
\newcommand{\eps}{\varepsilon}
\newcommand{\localized}{\natural}
\newcommand{\normalizedBAPU}{\flat}
\newcommand{\normalizedFactorisation}{\diamond}
\newcommand{\newA}{\mathrm{A}}
\newcommand{\newB}{\mathrm{B}}
\newcommand{\analysis}{\mathscr{C}}
\newcommand{\synthesis}{\mathscr{D}}
\def\esssup{\mathop{\operatorname{ess~sup}}}
\def\supp{\mathop{\operatorname{supp}}}
\newcommand{\LeftEqNo}{\let\veqno\@@leqno}
\DeclareFontFamily{U}{mathx}{\hyphenchar\font45}
\DeclareFontShape{U}{mathx}{m}{n}{
      <5> <6> <7> <8> <9> <10>
      <10.95> <12> <14.4> <17.28> <20.74> <24.88>
      mathx10
      }{}
\DeclareSymbolFont{mathx}{U}{mathx}{m}{n}
\DeclareMathAccent{\widecheckBlub}{0}{mathx}{"71}
\newcommand{\widecheck}[1]{\widecheckBlub{#1}}
\DeclareMathAccent{\wideparen}{0}{mathx}{"75}
\newcommand{\Indicator}{{\mathds{1}}}
\newcommand{\Fourier}{\mathcal{F}}
\newcommand{\DecompSp}{\mathcal{D}}
\newcommand{\StandardDecompSp}{\DecompSp(\CalQ,L^p,\ell_w^q)}
\newcommand{\DenseSpace}{\Schwartz_{\CalO}}
\newcommand{\CalQ}{\mathcal{Q}}
\newcommand{\CalO}{\mathcal{O}}
\newcommand{\CalD}{\mathcal{D}}
\newcommand{\CalB}{\mathcal{B}}
\newcommand{\GL}{\mathrm{GL}}
\newcommand{\DistributionSpace}{\mathcal{D}'}
\newcommand{\TestFunctionSpace}{C_c^\infty}
\newcommand{\Schwartz}{\mathcal{S}}
\newcommand{\identity}{\mathrm{id}}
\newcommand{\with}{\,:\,}
\newcommand{\mybullet}{\cdot}
\newcommand{\vertiii}[1]{{\left\vert \kern-0.25ex
                            \left\vert \kern-0.25ex
                              \left\vert #1\right\vert\kern-0.25ex
                            \right\vert \kern-0.25ex
                          \right\vert}}
\newcommand{\FirstN}[1]{\underline{#1}}
\newcommand{\Translation}[1]{T_{#1} \, }
\newcommand{\Modulation}[1]{M_{#1} \, }
\newcommand{\StandardGSI}{(\Translation{\gamma}g_j)_{j\in J,\gamma\in C_j\ZZ^d}}
\newcommand{\schur}{\mathrm{Schur}}
\newcommand\numberthis{\addtocounter{equation}{1}\tag{\theequation}}
\newcommand{\op}{\mathrm{op}}
\let\emptyset\varnothing
\begin{document}
\author{Jos\'e Luis Romero}
\address{Faculty of Mathematics,
University of Vienna,
Oskar-Morgenstern-Platz 1,
A-1090 Vienna, Austria\\and
Acoustics Research Institute, Austrian Academy of Sciences,
Wohllebengasse 12-14 A-1040, Vienna, Austria}
\email{jose.luis.romero@univie.ac.at}

\author{Jordy Timo van Velthoven}
\address{Faculty of Mathematics,
University of Vienna,
Oskar-Morgenstern-Platz 1,
A-1090 Vienna, Austria \\ and
Delft University of Technology,
Mekelweg 4, Building 36,
2628 CD Delft, The Netherlands}
\email{jordy-timo.van-velthoven@univie.ac.at, j.t.vanvelthoven@tudelft.nl}

\author{Felix Voigtlaender}
\address{Katholische Universität Eichstätt-Ingolstadt,
Lehrstuhl Reliable Machine Learning, Ostenstraße 26, 85072 Eichstätt}
\email{felix.voigtlaender@ku.de, felix@voigtlaender.xyz}

\thanks{\emph{Acknowledgments.}
J.~L.~R.~gratefully acknowledges support from the Austrian Science Fund (FWF): Y 1199 and P 29462-N35, and from the WWTF grant INSIGHT (MA16-053).
J.~v.~V.~acknowledges support from the Austrian Science Fund (FWF): P 29462-N35.
J.~v.~V.~ is grateful for the hospitality and support
of the Katholische Universit\"at Eichst\"att-Ingolstadt during his visit.}

\title[Invertibility of frame operators on Besov-type decomposition spaces]%
      {Invertibility of frame operators on Besov-type decomposition spaces}

\subjclass[2010]{42B35, 42C15, 42C40}

\keywords{Atomic decompositions, Banach frames,
Besov-type decomposition space, Canonical dual frame,
Walnut-Daubechies representation, Frame operator,
Generalized shift-invariant systems.}

\begin{abstract}
We derive an extension of the Walnut-Daubechies criterion for the invertibility of frame operators.
The criterion concerns general reproducing systems and Besov-type spaces.
As an application, we conclude that $L^2$ frame expansions associated with smooth and fast-decaying
reproducing systems on sufficiently fine lattices extend to Besov-type spaces.
This simplifies and improves recent results on the existence of atomic decompositions,
which only provide a particular dual reproducing system with suitable properties.
In contrast, we conclude that the $L^2$ canonical frame expansions
extend to many other function spaces, and, therefore,
operations such as analyzing using the frame, thresholding the resulting coefficients,
and then synthesizing using the canonical dual
frame are bounded on these spaces.
\end{abstract}

\maketitle

\section{Introduction}
Given a countable collection $(g_j)_{j \in J}$ of functions $g_j: \mathbb{R}^d \to \mathbb{C}$
and a collection $(C_j)_{j \in J}$ of matrices $C_j \in \GL(d, \RR)$,
we consider the structured function system
\begin{align}
  \StandardGSI
  = \big( g_j (\cdot - \gamma) \big)_{j \in J, \gamma \in C_j \ZZ^d},
  \label{eq:GSI_intro}
\end{align}
and aim to represent a function or distribution $f$ as a linear combination
\begin{align}
  f = \sum_{j \in J}
        \sum_{\gamma \in C_j \ZZ^d}
          c_{j, \gamma} \, \Translation{\gamma} g_j.
  \label{eq:series_intro}
\end{align}
In many important examples of this formalism,
the functions $g_j$ are obtained through affine transforms (in the Fourier domain)
of a single function $g$.
For instance, in dimension $d=1$, the well-known \emph{wavelet} \cite{DaubechiesTenLectures}
and \emph{Gabor} systems \cite{GroechenigTimeFrequencyAnalysis} are obtained as
\begin{alignat}{5}
  g_j(x) &:= 2^{j/2} \, g(2^j x), && \qquad j \in \ZZ, && \qquad C_j = 2^j,
  \label{eq_wavelets}
  \\
  g_j(x) &:= e^{2 \pi i j x} \, g(x), && \qquad j \in \beta \ZZ, && \qquad C_j = \alpha.
  \label{eq_gabor}
\end{alignat}
For $d>1$, \emph{anisotropic wavelet systems} provide additional
important examples, see e.g., \cite{MR1982689,MR2082156,MR3189276}.

We are interested in the ability of \eqref{eq:GSI_intro} to reproduce all functions or distributions
$f$ in various function spaces by a suitably convergent series \eqref{eq:series_intro}.
For the Hilbert space $L^2(\RR^d)$ this task is significantly easier:
 it amounts to establishing the \emph{frame inequalities}
\begin{align} \label{eq:frame_normequiv}
  \| f \|_{L^2}^2
  \asymp \sum_{j \in J}
           \sum_{\gamma \in C_j \mathbb{Z}^d}
             \big| \big\langle f \,\mid\, \Translation{\gamma} g_j \big\rangle \big|^2
             \qquad \forall \, f \in L^2 (\RR^d).
\end{align}
Indeed, the norm equivalence \eqref{eq:frame_normequiv} means that the \emph{frame operator}
$S : L^2 (\RR^d) \to L^2 (\RR^d)$,
\begin{align*}
S f := \sum_{j \in J} \sum_{\gamma \in C_j \ZZ^d}
\big\langle f \,\mid\, \Translation{\gamma} g_j \big\rangle
\,
\Translation{\gamma} g_j
\end{align*}
is bounded and invertible on $L^2(\RR^d)$, and consequently \eqref{eq:series_intro} holds with
$c_{j, \gamma} = \big\langle S^{-1} f \,\mid\,  \Translation{\gamma} g_j \big\rangle$.

The validity of the frame inequalities is closely related to the covering properties
of the Fourier transforms of the generating functions $\widehat{g_j}$,
which is encoded in the \emph{Calder\'on condition}:
\begin{align}
   \sum_{j \in J} \frac{1}{|\det C_j|} \big| \widehat{g_j} \big|^2 \asymp 1,
   \qquad \mbox {a.e.}
   \label{eq:GSI_assumption_introduction}
\end{align}
This connection is most apparent in the so-called \emph{painless case}, in which the
supports of the functions $\widehat{g_j}$ are compact. Under this assumption,
the expansion \eqref{eq:series_intro} is a \emph{local Fourier expansion}
\begin{align}
  \widehat{f}(\xi)
  = \sum_{j \in J}
      \sum_{\gamma \in C_j \ZZ^d}
        c_{j, \gamma} \, e^{-2 \pi i \gamma \xi} \, \widehat{g_j} (\xi).
  \label{eq:series_2}
\end{align}
In many important cases, the functions $g_j$ are not bandlimited, but
have a well concentrated frequency profile, such as a Gaussian.
Then \eqref{eq:series_2} is an almost-local Fourier expansion,
that one still expects to be governed by \eqref{eq:GSI_assumption_introduction}---and,
indeed, under mild conditions, \eqref{eq:GSI_assumption_introduction} is necessary
for \eqref{eq:frame_normequiv} to hold \cite{christensen2017explicit, Fuehr2019System}.

The formal analysis of non-painless expansions with a reproducing system \eqref{eq:GSI_intro}
relies on a remarkable representation of the frame operator
in the Fourier domain, namely
\begin{align}
\label{eq_intro_rep}
\widehat{S f}(\xi) = \sum_{\alpha \in \Lambda} t_\alpha(\xi-\alpha) \widehat{f}(\xi-\alpha),
\end{align}
where
\(
  t_{\alpha}(\xi)=
     \sum_{j \in \kappa (\alpha)}
                  \frac{1}{|\det C_j|} \,\,
                  \overline{\widehat{g}_j (\xi)}
                  \, \widehat{g_j} (\xi + \alpha)
\);
here, the translation nodes $\Lambda \subseteq \RR^d$ and indices
${\kappa(\alpha) \subseteq J}$ are determined by the matrices $C_j$
(see \eqref{eq:TAlphaDefinition} below).
For Gabor expansions, the representation \eqref{eq_intro_rep} is known under the name of
Walnut's representation \cite{MR1155734} while for wavelets it is attributed to Daubechies
and Tchamitchian \cite[Chapter 3]{DaubechiesTenLectures}.
The theory of \emph{generalized shift-invariant systems} \cite{hernandez2002unified,ron2005generalized}
establishes the general form of \eqref{eq_intro_rep} and exploits its many consequences.
For example, tight frames---that is, systems for which equality holds
in \eqref{eq:frame_normequiv}---are characterized by a set of algebraic relations
involving the functions $t_{\alpha}$; see \cite{hernandez2002unified}.

\subsection{The Walnut-Daubechies criterion}

The multiplier $t_0$ associated with $\alpha=0$ in \eqref{eq_intro_rep} is precisely
the Calder\'on sum appearing in \eqref{eq:GSI_assumption_introduction}; that is,
\begin{align*}
  t_0(\xi) = \sum_{j \in J} \frac{1}{|\det C_j|} | \widehat{g_j} (\xi) |^2.
\end{align*}
A powerful frame criterion arises by comparing the representation of $S$
given in \eqref{eq_intro_rep} to the diagonal term $\Fourier^{-1} (t_0 \cdot \widehat{f} \, )$,
and by estimating the corresponding discrepancy.
In the model cases of Gabor and wavelets systems, these criteria are again attached to the names
of Walnut and Daubechies, and are particularly useful for studying Gaussian wave-packets,
which have fast-decaying frequency tails, but do not yield tight frames.
A general version of the Walnut-Daubechies criterion also holds
for generalized shift-invariant systems under mild assumptions
\cite{Lemvig2018Criteria, Christensen2008frame}; this criterion is greatly useful
in the construction of anisotropic time-scale decompositions---see e.g. \cite{MR3225503}.

The price to pay for the flexibility of the Walnut-Daubechies criterion is that it does not produce
an explicit dual system implementing the coefficient functionals $f \mapsto c_{j, \gamma}$
in \eqref{eq:series_intro}.
Rather, it only yields an $L^2$ norm estimate which is sufficient
to establish \eqref{eq:frame_normequiv} but does not imply the convergence of \eqref{eq_intro_rep}
in other norms.
In contrast, explicit constructions of \emph{frame pairs}, that is,
frames where the coefficient functionals are given by
\begin{align*}
  c_{j, \gamma} = \big\langle f \,\mid\, \Translation{\gamma} h_j \big\rangle
\end{align*}
for another reproducing system $\{h_j: j \in J\}$, naturally extend
to many other Banach spaces besides $L^2(\RR^d)$.
These spaces are determined by the concentration of the Fourier support of the generators $g_j$,
and are generically called \emph{Besov-type spaces}
\cite[Chapter 2]{TriebelFourierAnalysisAndFunctionSpaces} \cite{tr78}.
The model case is given by \eqref{eq_wavelets}, where the functions $\widehat{g_j}$
form a so-called Littlewood-Paley decomposition.

The goal of this article is to derive a variant of the Walnut-Daubechies criterion
which implies that the frame operator is invertible in such Besov-type spaces.

\subsection{Besov-type decomposition spaces}
For the informal definition of Besov-type spaces, fix a cover $\CalQ = (Q_i)_{i \in I}$
of a full measure open subset in the Fourier domain $\RHat^d$.
We impose a mild admissibility condition by limiting the number of overlaps
between different elements of $\CalQ$---see Section~\ref{sec:decomposition} for the precise condition.
Given a suitable partition of unity $(\varphi_i)_{i \in I}$ subordinate to $\CalQ$,
together with a suitable (so-called $\CalQ$-moderate) weight function $w : I \to (0,\infty)$,
the space $\StandardDecompSp$, for $p, q \in [1,\infty]$, is defined as the space
of distributions $f$ satisfying
\begin{align}
  \| f \|_{\mathcal{D} (\CalQ, L^p, \ell^q_w)}
  := \Big\|
       \big(
         \big\| \Fourier^{-1} (\varphi_i \cdot \widehat{f} \,) \|_{L^p}
       \big)_{i \in I}
     \Big\|_{\ell^q_w}
   = \Big\|
       \big(
         w_i \cdot \big\| \Fourier^{-1} (\varphi_i \cdot \widehat{f} \,) \|_{L^p}
       \big)_{i \in I}
     \Big\|_{\ell^q}
  < \infty,
  \label{eq:decomposition_norm_intro}
\end{align}
where $\Fourier^{-1}$ denotes the inverse Fourier transform.
Provided that an adequate notion of distribution is used in the definitions,
the spaces $\StandardDecompSp$ form Banach spaces and are independent of the
particular (sufficiently regular) partition of unity used to define them.

The construction of Besov-type spaces follows the so-called
\emph{decomposition method} \cite[Chapter 2]{TriebelFourierAnalysisAndFunctionSpaces},
\cite[Section 1.2]{tr78}, yielding an instance of the so-called
\emph{spaces defined by decomposition methods} \cite{sttr79},
or \emph{decomposition spaces} \cite{tr77,DecompositionSpaces1} in more abstract settings.
This is why we also use the term \emph{Besov-type decomposition spaces}.
Uniform Besov-type spaces, associated with
the cover $\CalQ$ consisting of integer translates of a cube,
are known as \emph{modulation spaces} \cite{feichtinger1989atomic},
while a dyadic frequency cover yields the usual
\emph{Besov spaces} \cite{FrazierBesov,PeetreNewThoughtsOnBesovSpaces}
---see also \cite[Section 2.2]{TriebelFourierAnalysisAndFunctionSpaces}.
When the cover is generated by powers of an expansive matrix, one obtains anisotropic Besov spaces
\cite{TriebelFourierAnalysisAndFunctionSpaces, MR1982689,MR2179611,MR2838283}.
We remark that the range of spaces defined by \eqref{eq:decomposition_norm_intro}
does \emph{not} include Triebel-Lizorkin spaces \cite{FrazierTriebel}.

\subsection{Overview of the results}
We state a simplified version of our main results for systems of the form \eqref{eq:GSI_intro}
with generating functions $g_j \in L^1 (\RR^d) \cap L^2 (\RR^d)$
with $\widehat{g} \in C^{\infty} (\RHat^d)$, given by
\begin{equation}
  g_j
  = |\det A_j |^{-1/2} \cdot \Fourier^{-1} (\widehat{g} \circ S_j^{-1})
  = |\det A_j|^{1/2} \cdot e^{2 \pi i \langle b_j, \cdot \rangle} \cdot (g \circ A_j^t) \, ,
  \label{eq:structured_intro}
\end{equation}
for (invertible) affine maps $S_j = A_j (\cdot) + b_j$
and translation matrices $C_j = \delta A_j^{-t}$ with $\delta > 0$.
The parameter $\delta>0$ is a \emph{resolution parameter}
that controls the density of the translation nodes in \eqref{eq:GSI_intro}.

In order to define Besov-type spaces adapted to the frequency concentration
of the system $(g_j)_{j \in J}$, we also consider an \emph{affinely generated cover}
$\CalQ = (Q_j)_{j \in J}$ of the form $Q_j = A_j Q + b_j$.
If $\widehat{g}$ is mostly concentrated inside the basic set $Q$, then
\eqref{eq:structured_intro} implies that $\widehat{g_j}$ is localized around $Q_j$.
Under these assumptions, the Calder\'on condition reads
\begin{align}
  0 < \newA \leq
  \sum_{j \in J} \big|\widehat{g}(S_j^{-1} \xi) \big|^2 \leq \newB < \infty,
  \qquad \mbox{a.e. },
  \label{eq_intro_cal2}
\end{align}
which means that $(\widehat{g_j})_{j \in J}$
is approximately a partition of unity adapted to $\CalQ$.

The following is our main result, proved in Section~\ref{sec_proof_of_thm:theorem_intro}.

\begin{theorem} \label{thm:theorem_intro}
For each affinely generated cover
$\CalQ = (A_j Q + b_j)_{j \in J} = (S_j Q)_{j \in J}$ of an open,
co-null set $\CalO \subset \RHat^d$, and each $\CalQ$-moderate weight $w = (w_j)_{j \in J}$,
there exists a constant $C_{d,\CalQ,w}$ with the following property:
Suppose that $(g_j)_{j \in J}$ is compatible with $\CalQ$ in the sense of \eqref{eq:structured_intro}
and that the Calder\'on condition \eqref{eq_intro_cal2} holds.
Moreover, suppose that
\[
 M_0 := \sup_{i \in J}
    \sum_{j \in J}
      \max \{1, \|A_j^{-1} A_i\|^{d+1} \}
      \bigg(
        \int_Q \max_{|\alpha| \leq d+1}
          |(\partial^{\alpha} \widehat{g}) (S_j^{-1} (S_i \xi))|^{2(d+1)}
        \; d\xi
      \bigg)^{\frac{1}{d+1}}
  < \infty \,
\]
and that
\(
  M_1 := \max \big\{
                \sup_{i \in J} \sum_{j \in J} M_{i,j},\; \sup_{j \in J} \sum_{i \in J} M_{i,j}
              \big\}
       < \infty
\),
where
\[
  M_{i,j}   := L_{i,j} \cdot
               \int_{Q}
                 (1 + |S_j^{-1} (S_i \xi)|)^{2d+2}
                 \max_{|\alpha| \leq d+1}
                   |(\partial^\alpha \widehat{g})(S_j^{-1} (S_i \xi))|
               \, d \xi
  \]
  and
  \(
    L_{i,j} := \max \big\{ \frac{w_i}{w_j}, \frac{w_j}{w_i} \big\} \cdot
               \big(
                 \max \{ 1, \|A_i^{-1} A_j\|^2 \}
                 \, \max \{ 1, \|A_j^{-1} A_i\|^3 \}
               \big)^{d+1}
  \)
  for $i,j \in J$.
  Choose $\delta > 0$ such that
  \[
    C_{d,\CalQ,w} \, M_0^{\frac{d+1}{d+2}} \, M_1^{\frac{2}{d+2}} \, \delta < \newA.
  \]
  Then the frame operator associated to
  $(\Translation{\delta A_j^{-t} k} g_j)_{j \in J, k \in \ZZ^d}$
  is well-defined, bounded, and invertible on $\StandardDecompSp$ for all $p,q \in [1,\infty]$.
  The value of the constant $C_{d,\CalQ,w}$ is given in Theorem~\ref{thm:MainSummary} below.
\end{theorem}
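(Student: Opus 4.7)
The natural plan is to base the argument on the Walnut--Daubechies representation \eqref{eq_intro_rep} of the frame operator, decomposing $S = T_0 + R$ where $T_0$ is the Fourier multiplier with symbol $t_0(\xi) = \delta^{-d}\sum_j |\widehat{g}(S_j^{-1}\xi)|^2$ (the Calder\'on sum) and $R$ collects the off-diagonal pieces $f \mapsto \Fourier^{-1}(t_\alpha(\cdot - \alpha)\widehat{f}(\cdot-\alpha))$ for $\alpha \in \Lambda \setminus \{0\}$. I would then establish in turn: (i) $T_0$ is bounded and invertible on $\StandardDecompSp$; (ii) $\|R\|_{\op}$ is bounded by $\delta$ times a quantity built from $M_0$, $M_1$ and geometric data of $\CalQ$; (iii) for $\delta$ below the theorem's threshold, $S = T_0(I + T_0^{-1} R)$ is invertible on $\StandardDecompSp$ via a Neumann series.

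For (i), the Calder\'on bounds \eqref{eq_intro_cal2} give $\delta^{-d}\newA \le t_0 \le \delta^{-d}\newB$. The derivative control hidden in $M_0$ (the $L^{2(d+1)}$-integrability of the top-order derivatives of $\widehat{g}$ pulled back by $S_j^{-1} S_i$), combined with the affine structure of $\CalQ$, should yield Bernstein-type bounds on $t_0$ and $1/t_0$ sufficient to apply a Fourier multiplier theorem for decomposition spaces (of the kind developed by Feichtinger--Gr\"obner and Borup--Nielsen, and formalized in the abstract framework this paper builds on). This is the ``easier half'', and the resulting multiplier constants are absorbed into the universal $C_{d,\CalQ,w}$.

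The main obstacle lies in (ii). I would expand $\|R f\|_{\StandardDecompSp}$ through the partition of unity $(\varphi_i)$, write each off-diagonal piece $T_\alpha$ as a modulation by $\alpha$ followed by multiplication by $t_\alpha$, and reorganize the sum over $(i,j,\alpha)$ by parameterizing $\alpha$ according to which fiber $C_j^{-t}\ZZ^d$ it lives in. A band-localized piece $\Fourier^{-1}(\varphi_i \widehat{f}\,)$ has Fourier support in $Q_i$, so a shift by $\alpha$ couples $Q_i$ to $Q_j + \alpha$; the ensuing cross-terms form a Schur-type matrix whose entries should reproduce the quantities $M_{i,j}$ from the statement, with the weight ratio in $L_{i,j}$ absorbing the $\CalQ$-moderate weight on the decomposition-space side and the polynomial factor $(1 + |S_j^{-1}(S_i\xi)|)^{2d+2}$ capturing the decay from summing over translates $\alpha$ of size $\gtrsim \delta^{-1}\|A_j\|^{-1}$. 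The Schur test then reduces the bound to $\sup_i \sum_j M_{i,j}$ and $\sup_j \sum_i M_{i,j}$, i.e.\ $M_1$. Extracting the correct power of $\delta$ from this bookkeeping, and in particular arriving at the exponent split $M_0^{(d+1)/(d+2)} M_1^{2/(d+2)}$, is the delicate part: I would balance the Bernstein piece (where $M_0$ enters through the $L^{2(d+1)}$ exponent $\tfrac{1}{d+1}$) against the off-diagonal $\ell^1$ sum governed by $M_1$ via a H\"older interpolation in the $(d+1)$-vs-$1$ exponent pairing.

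Once (i) and (ii) are in place, (iii) is automatic: the assumption $C_{d,\CalQ,w}\, M_0^{(d+1)/(d+2)} M_1^{2/(d+2)}\, \delta < \newA$ translates into $\|T_0^{-1} R\|_{\op} < 1$, so $I + T_0^{-1} R$, and hence $S$, is invertible on $\StandardDecompSp$ for every $p,q \in [1,\infty]$. The value of $C_{d,\CalQ,w}$ records the multiplier constant for $T_0^{-1}$, the Schur constant, and the $\delta$-normalization from step (ii).
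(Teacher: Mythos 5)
Your three-step strategy---decompose $S = T_0 + R$ via the Walnut--Daubechies representation \eqref{eq_intro_rep}, control $T_0^{-1}$ by a decomposition-space Fourier multiplier theorem and $R$ by a Schur test, then close with a Neumann series---is exactly the architecture of the paper's proof (it is Theorem~\ref{thm:MainSummary}, assembled from Propositions~\ref{prop:MainTermForStructuredSystems} and \ref{prop:RemainderSimplifiedForStructuredSystems}, and Lemma~\ref{lem:FrameOperatorInvertibility}). You also correctly attach $M_0$ to the multiplier estimate and $M_1$ to the remainder.

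However, your account of where the exponents $M_0^{(d+1)/(d+2)} M_1^{2/(d+2)}$ come from is not correct, and pursuing it literally would lead you astray. There is no ``H\"older interpolation in the $(d+1)$-vs-$1$ exponent pairing'' inside the remainder estimate, and the two halves of the decomposition are not ``balanced'' against each other. What actually happens is more rigid: the multiplier estimate gives $\|T_0^{-1}\| \lesssim (\newA)^{-(d+2)} M_0^{d+1} \delta^d$, where the power $d+1$ on $M_0$ arises from Fa\`a di Bruno's formula for $\partial^\alpha(1/t_0)$ up to order $d+1$ (Lemma~\ref{lem:DerivativesOfReciprocal}: each derivative is a sum of products of up to $d+1$ factors of $\partial^\beta t_0$), while the Schur argument gives $\|R_0\| \lesssim \delta^2 M_1^2$, where the square on $M_1$ arises from factoring each off-diagonal kernel entry into two halves $K^{(1)}_{i,j,k}\cdot K^{(2)}_{\ell,j,k}$ and bounding each by an $M_1$-type Schur norm (Lemma~\ref{lem:RemainderSimplified}). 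Multiplying the two bounds, the Neumann threshold $\|T_0^{-1}\|\,\|R_0\| < 1$ reads $C\, M_0^{d+1} M_1^2 \,\delta^{d+2} < (\newA)^{d+2}$, and taking the $(d+2)$-th root delivers the exact exponents in the statement. Two further items your outline omits: before decomposing $S$ you must establish that $S$ is even well-defined and bounded on $\StandardDecompSp$, which requires a separate $(w,w,\Phi)$-adaptedness check on the analysis/synthesis Schur matrix of Definition~\ref{def:Adaptedness} (ultimately also controlled by $M_1$-type quantities, via Proposition~\ref{prop:AmalgamMatrixEntriesEstimate}); and for $\max\{p,q\}=\infty$ the whole argument needs the density/Fatou approximation machinery of Proposition~\ref{prop:Density} and Lemma~\ref{lem:DecompositionSpaceFatouProperty}, since $\DenseSpace(\RR^d)$ is no longer dense.
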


The quantities $M_0$ and $M_1$ in Theorem~\ref{thm:theorem_intro} control the interaction
between the generators $g_j$ and the elements of the cover $\CalQ$.
In contrast to the classical $L^2$ Walnut-Daubechies criterion,
the derivatives of $\widehat{g}$ are now involved.
We also prove a more technical version of Theorem~\ref{thm:theorem_intro} in which the generators
need not exactly be affine images (in the Fourier domain) of a single function,
but only approximately so.
This is important, for example, to describe non-homogeneous time-scale systems,
which contain a low-pass and a high-pass window.
We refer the reader to \cite{StructuredBanachFrames} for a detailed discussion of concrete examples
and calculations that can be used also in our framework.

Although the constant $C_{d,\CalQ, w}$ in Theorem~\ref{thm:theorem_intro} is explicit,
it is too large to be used as a guide for concrete numerical implementations.
We also derive a version of the criterion with more favorable constants,
but which only provides expansions on $L^2$-based Besov-type spaces;
see Section \ref{sub:WeightedL2CaseSimplified}.

A result closely related to Theorem~\ref{thm:theorem_intro} was recently obtained
by the third named author in \cite{StructuredBanachFrames}---see the discussion below.
While our techniques are significantly different from those in \cite{StructuredBanachFrames}---and,
indeed, we regard the simplicity of the present methods a main contribution---we remark that
we make use of several auxiliary results obtained in \cite{StructuredBanachFrames}.

Under the conditions of Theorem~\ref{thm:theorem_intro},
the \emph{coefficient} and \emph{reconstruction operators}
\begin{align}
  \analysis : f \mapsto \Big(
                          \big\langle f \,\mid\, \Translation{\gamma} g_j \big\rangle
                        \Big)_{j \in J, \gamma \in C_j \ZZ^d}
  \quad \text{and} \quad
  \synthesis : c = (c_{j,\gamma})_{j \in J, \gamma \in C_j \Z^d}
               \mapsto \sum_{j \in J}
                         \sum_{\gamma \in C_j \ZZ^d}
                           c_{j, \gamma} \, \Translation{\gamma} g_j
  \label{eq_intro_cd}
\end{align}
define bounded operators between the Besov-type space $\StandardDecompSp$
and suitable sequence spaces (see Section~\ref{sec:FrameOperatorBounded}).
As a consequence, the invertibility of the frame operator on the spaces $\StandardDecompSp$
implies that the $L^2$-convergent \emph{canonical frame expansions}
\begin{equation}
  \begin{aligned}
    f  = \sum_{j \in J}
           \sum_{\gamma \in C_j \ZZ^d}
             \big\langle S^{-1} f \,\mid\, \Translation{\gamma} g_j \big\rangle
             \Translation{\gamma} g_j
    = \sum_{j \in J}
        \sum_{\gamma \in C_j \ZZ^d}
          \big\langle f \,\mid\, \Translation{\gamma} g_j \big\rangle
          S^{-1} \Translation{\gamma} g_j
  \end{aligned}
  \label{eq_intro_ops}
\end{equation}
extend to series convergent in Besov-type norms (or weak-$*$-convergent for $p=\infty$ or $q=\infty$).
In more technical terms, the \emph{canonical Hilbert-space dual frame}
$\{ S^{-1} \Translation{\gamma}  g_j: j \in J, \gamma \in C_j \ZZ^d\}$
provides a \emph{Banach frame} and an \emph{atomic decomposition}
for the Besov-type spaces $\StandardDecompSp$.
This is a novel feature of Theorem~\ref{thm:theorem_intro}:
other results on the existence of series expansions, based on so-called \emph{oscillation estimates},
show that the coefficient and reconstruction maps \eqref{eq_intro_cd} are respectively
left and right invertible on the Besov-type spaces, but do not yield consequences
for the Hilbert space pseudo-inverses $\analysis^\dagger= S^{-1} \synthesis$
and $\synthesis^\dagger=\analysis S^{-1}$
\cite{FeichtingerCoorbit1,GroechenigDescribingFunctions, StructuredBanachFrames}.
In contrast, Theorem~\ref{thm:theorem_intro} concerns $\analysis^\dagger, \synthesis^\dagger$---%
see Corollary~\ref{cor:BanachFramesAtomicDecompositions}---and implies that operations
\emph{on the canonical frame expansions} \eqref{eq_intro_ops} that decrease
the magnitude of the coefficients, such as thresholding, are uniformly bounded in Besov-type norms.
More precisely, if for each $j \in J$ and $\gamma \in C_j \Z^d$,
we are given a function $\Phi_{j,\gamma} : \CC \to \CC$ satisfying
$|\Phi_{j,\gamma} (x)| \leq C \, |x|$, then the maps
\[
  f \mapsto \sum_{j \in J}
              \sum_{\gamma \in C_j \Z^d}
                \Phi_{j,\gamma} \big( \big\langle S^{-1} f \,\mid\, \Translation{\gamma} g_j \big\rangle \big) \,
                \Translation{\gamma} g_j
                \]
and
\[
  f \mapsto \sum_{j \in J}
              \sum_{\gamma \in C_j \Z^d}
                \Phi_{j, \gamma} \big( \big\langle f \,\mid\, \Translation{\gamma} g_j \big\rangle \big) \,
                S^{-1} \Translation{\gamma} g_j
\]
are bounded (possibly non-linear) operators on all of the spaces $\StandardDecompSp$.
In particular, \emph{frame multipliers} with bounded symbols---see e.g.~\cite{MR2273547}---define
bounded operators on Besov-type spaces.

\subsection{Related work}
\mbox{}
\medskip

\noindent {\em The theory of localized frames}.
The uniform frequency cover $\{(-1,1)^d + k: k \in \ZZ^d\}$---which gives rise to Gabor systems
\eqref{eq_gabor}---is special in that every reproducing system \eqref{eq:GSI_intro}
satisfying the frame inequalities \eqref{eq:frame_normequiv},
and mild smoothness and decay conditions, provides also expansions for other Banach spaces
(the precise range of spaces being determined by the particular smoothness and decay of the generators).
Indeed, the theory of localized frames \cite{MR2224392,MR2235170,MR2054304} implies that
the frame operator is invertible on modulation spaces.
Similar results hold for $L^p$ spaces \cite{MR2450070,MR3247038}.
Thus, in these cases, the classical Walnut-Daubechies criterion has consequences
for Banach spaces besides $L^2$---without having to adjust the density $\delta$---and
Theorem~\ref{thm:theorem_intro} does not add anything interesting.

The key tool of the theory of localized frames is the spectral invariance of certain matrix algebras.
Such tools are not applicable to general admissible covers as considered in this article.
Indeed, it is known that the frame operator associated with certain smooth and fast-decaying wavelets
with several vanishing moments fails to be invertible on $L^p$-spaces
\cite[Chapter 4]{MeyerWaveletsAndOperators}.
In connection to this point, we mention that the Mexican hat wavelet satisfies Daubechies criterion,
but the validity of the corresponding $L^p$ expansions was established only recently
with significant ad-hoc work \cite{MR3034769}.

\medskip

\noindent{\em Almost painless generators and homogeneous covers}.
There is a well-developed literature related to the so-called painless expansions
on decomposition spaces.
The first construction of Banach frames for general decomposition spaces was given
by Borup and Nielsen \cite{BorupNielsenDecomposition} using bandlimited generators.
This construction was then complemented with a delicate perturbation argument to produce compactly
supported frames \cite{nielsen2012compactly}---see also \cite{MR1862566, MR2989978}.
The constructions in \cite{nielsen2012compactly}
for Besov-type spaces are restricted to so-called \emph{homogeneous covers}, which are generated
by applying integer powers of a matrix to a given set.
This restriction rules out some important examples such as inhomogeneous dyadic covers
and many popular wavepacket systems.

\medskip

\noindent{\em Invertibility of the frame operator versus existence of left and right inverses}.
The first construction of time-scale decompositions proceeded by discretizing
Calder\'on's reproducing formula through Riemann-like sums \cite{MR1107300}.
A similar approach works for the voice transform associated with any integrable
unitary representation and is the basis of the so-called coorbit theory \cite{FeichtingerCoorbit1}.
To some extent, those techniques extend to any integral transform,
provided that one can control its modulus of continuity \cite{MR1025485}.
Such an approach was used by the third named author to construct compactly supported
Banach frames and atomic decompositions in Besov-type spaces \cite{StructuredBanachFrames}.
The main result of \cite{StructuredBanachFrames} is qualitatively similar
to Theorem~\ref{thm:theorem_intro}, but only concludes the existence of left and right inverses
for the coefficient and synthesis maps, acting on respective Banach spaces.
In contrast, we show that the Hilbert space frame operator is simultaneously invertible
on all the relevant Banach spaces.
The advantage of the present approach is that we are able to show that
the Hilbert spaces series---which are defined by minimizing the $\ell^2$ norm
of the coefficients in \eqref{eq:series_intro}---extend to series convergent in Besov-type spaces, and thus many operations on the canonical frame expansion are
also shown to be bounded in Besov-type spaces.
On the other hand, there are situations in which there exists a left inverse for the coefficient operator (or a right inverse for the reconstruction operator), but the frame operator is not invertible.
For example, a wavelet system generated by a smooth mother wavelet without vanishing moments can generate an atomic decomposition for the Besov spaces $B^s_{p,q}(\R^d)$ of strictly positive smoothness $s > 0$ without yielding a frame \cite[Proposition 8.4]{StructuredBanachFrames}. Such examples are not covered by our results.

\medskip

\noindent{\em Quasi-Banach spaces}.
We do not treat the quasi-Banach range $p,q \in (0,\infty]$,
which is treated in \cite{StructuredBanachFrames}.
We expect the tools developed in \cite{StructuredBanachFrames} for treating the quasi-Banach range
to be also applicable to the present setting, and to yield an extension
of our main results to the quasi-Banach range.

\subsection{Technical overview and organization}
Our approach is as follows:
we consider the Walnut-Daubechies representation \eqref{eq_intro_rep} of the frame operator
and bound the discrepancy between $Sf$ and the diagonal term
$\mathcal{F}^{-1} \big(t_0 \cdot \widehat{f} \, \big)$ in a Besov-type norm.
To this end, we estimate each Fourier multiplier $t_\alpha$ with a Sobolev embedding,
and control the inverse Fourier multiplier ${1}/{t_0}$ by directly bounding the terms
in Fa\`a di Bruno's formula.

The main estimates are derived in decreasing level of generality.
We first consider very general covers $\CalQ = (Q_i)_{i \in I}$ and an abstract notion of molecule,
which models the interaction between the generators $g_j$ of the system $\StandardGSI$
and the elements $Q_i$ of the cover $\CalQ$.
Here, the associated index sets $I$ and $J$ do not need to coincide.
We then provide simplified estimates for affinely generated covers.
The limiting cases $p,q = \infty$ involve delicate approximation arguments
that may be of independent interest.

The paper is organized as follows:
Section~\ref{sec:Notation} introduces notation and preliminaries.
Besov-type spaces are introduced in Section~\ref{sec:decomposition}.
Section~\ref{sec:FrameOperatorBounded} treats the boundedness of the
coefficient, synthesis and frame operators on suitable spaces.
Section~\ref{sec:FrameOperatorInvertible} is concerned with the invertibility
of the frame operator and provides estimates for the abstract Walnut-Daubechies criterion.
These estimates are further simplified in Sections~\ref{sec:SimplifiedEstimates}
and \ref{sec:Structured_Compatible} for affinely generated covers
and suitably adapted generating functions.
Several technical results are deferred to the appendices.

\section{Notation and preliminaries}
\label{sec:Notation}

\subsection{General notation}

We let $\NN := \{1,2,3,\dots\}$, and $\NN_0 := \NN \cup \{0\}$.
For $n \in \NN_0$, we write ${\underline{n} := \{1, ..., n\}}$;
in particular, $\underline{0} = \emptyset$.
For a multi-index $\beta \in \NN_0^d$, its length is
${|\beta| = \sum_{i = 1}^d |\beta_i|}$.

The conjugate exponent $p'$ of $p \in (1,\infty)$ is defined
as $p' := \frac{p}{p-1}$.
We let $1' := \infty$ and $\infty' := 1$.

Given two functions $f, g : X \to [0,\infty)$, we write $f \lesssim g$ provided that
there exists a constant $C > 0$ such that $f(x) \leq C g(x)$ for all $x \in X$.
We write $f \asymp g$ for $f \lesssim g$ and $g \lesssim f$.

The dot product of $x, y \in \RR^d$ is written
$x \cdot y := \sum_{i = 1}^d x_i \, y_i$.
The Euclidean norm of a vector $x \in \RR^d$ is denoted by $|x| := \sqrt{x \cdot x}$.
The open Euclidean ball, with radius $r > 0$ and center $x \in \RR^d$,
is denoted by $B_r (x)$, and the corresponding closed ball is denoted by $\overline{B_r}(x)$.
More generally, the closure of a set $M \subseteq \RR^d$ is denoted by $\overline{M}$.

The cardinality of a set $X$ will be denoted by $|X| \in \NN_0 \cup \{\infty\}$.
The Lebesgue measure of a Borel measurable set $E \subset \RR^d$ will be denoted
by $\lambda (E)$.
Given a subset $M \subset X$, we define its \emph{indicator function}
$\Indicator_M : X \to \{0,1\}$ by requiring $\Indicator_M (x) = 1$ if $x \in M$
and $\Indicator_M (x) = 0$ otherwise.

\smallskip{}

For a matrix $M \in \mathbb{C}^{I \times J}$, its \emph{Schur norm} is defined as
\[
  \| M \|_{\schur}
  := \max \bigg\{
            \sup_{i \in I}
              \sum_{j \in J}
                |M_{i,j}|
            , \;
            \sup_{j \in J}
              \sum_{i \in I}
                |M_{i,j}|
          \bigg\} \in [0,\infty] \, .
\]
A matrix $M \in \mathbb{C}^{I \times J}$ satisfying $\| M \|_{\schur} < \infty$
is said to be of \emph{Schur-type}.
A Schur-type matrix $M \in \CC^{I \times J}$ induces a bounded linear operator
\(
  \mathbf{M} :
  \ell^p (J) \to \ell^p (I), \;
  (c_j)_{j \in J} \mapsto \big(\sum_{j \in J} M_{i,j} c_j \big)_{i \in I}
\),
with $\|\mathbf{M}\|_{\ell^p \to \ell^p} \leq \| M \|_{\schur}$ for all $p \in [1,\infty]$;
this is called \emph{Schur's test}.
For a proof of a (weighted) version of Schur's test, cf.  \cite[Lemma~4]{Grochenig2000nonlinear}.

\subsection{Fourier analysis}
\label{sub:FourierAnalysis}

The translate of $f : \RR^d \to \CC$ by  $y \in \RR^d$ is denoted by
$\Translation{y} f (x) = f(x - y)$.
We denote by $\RHat^d$ the Fourier domain of $\RR^d$.
Modulation of $f : \RR^d \to \CC$ by $\xi \in \RHat^d$ is denoted by
$M_{\xi} f (x) := e^{2 \pi i \xi \cdot x} f(x) $.
The Fourier transform
$\Fourier : L^1 (\RR^d) \to C_0 (\RHat^d), \; f \mapsto \widehat{f}$
is normalized as
\[
  \widehat{f} (\xi)
  = \int_{\RR^d} f(x) \, e^{-2 \pi i x \cdot \xi} \; dx
\]
for $\xi \in \RHat^d$.
Similarly normalized, we define
$\Fourier : L^1 (\RHat^d) \to C_0(\RR^d)$.
The inverse Fourier transform $\Fourier^{-1}f := \widehat{f}(- \cdot) \in C_0 (\RR^d)$
of $f \in L^1 (\RHat^d)$ will occasionally also be denoted by $\widecheck{f}$.
Similar notation will be used for the (unitary) Fourier-Plancherel transform
$\mathcal{F} : L^2 (\RR^d) \to L^2 (\RHat^d)$.

The test space of compactly supported, smooth functions on an open set
$\CalO \subset \RR^d$ will be denoted by $C_c^{\infty} (\CalO)$.
The topology on $C_c^{\infty} (\CalO)$ is taken to be the usual topology
defined through the inductive limit of Fr\'echet spaces;
see \cite[Section~6.2]{RudinFunctionalAnalysis} for the details.
The sesquilinear dual pairing between $\CalD(\CalO) := C_c^\infty (\CalO)$
and its dual $\CalD'(\CalO)$ is given by
\(
  \langle f \mid g \rangle_{\CalD', \CalD}
  := f (\overline{g})
\)
for $f \in \CalD'(\CalO)$ and $g \in C_c^{\infty} (\CalO)$.

The Schwartz space is denoted by
$\Schwartz (\RR^d)$ and its topological dual will be denoted by $\Schwartz' (\RR^d)$.
The canonical extension of the Fourier transform to $\Schwartz'(\RR^d)$
is denoted by $\Fourier : \Schwartz' (\RR^d) \to \Schwartz' (\RHat^d)$, that is,
$\langle \Fourier f ,\, g \rangle_{\Schwartz', \Schwartz}
 = \langle f ,\, \Fourier g \rangle_{\Schwartz',\Schwartz}$ for
$f \in \Schwartz'(\RR^d)$ and $g \in \Schwartz(\RHat^d)$.
We denote \emph{bilinear} dual pairings by $\langle \cdot,\cdot \rangle$,
while $\langle \cdot \mid \cdot \rangle$ denotes a \emph{sesquilinear} dual pairing,
which is anti-linear in the second component.

\smallskip{}

Lastly, for $p \in [1,\infty]$ we define
$\Fourier L^p (\RR^d) := \{ \widehat{f} \colon f \in L^p (\RR^d) \} \subset \Schwartz' (\RHat^d)$,
equipped with the norm ${\|f\|_{\Fourier L^p} := \|\Fourier^{-1} f\|_{L^p}}$.
Here, note that $\|f \cdot g\|_{\Fourier L^p} \leq \|f\|_{\Fourier L^1} \cdot \|g\|_{\Fourier L^p}$,
where the exact nature of the product $f \cdot g$ is explained in more detail
in Definition~\ref{def:SpecialMultiplication}.
Furthermore, for  any invertible affine-linear map $S : \RHat^d \to \RHat^d$, one has
$\|f \circ S\|_{\Fourier L^1} = \|f\|_{\Fourier L^1}$.

\subsection{Amalgam spaces}\label{sub:AmalgamSpaces}
Let $U \subset \R^d$ be a bounded Borel set with non-empty interior.
The \emph{Amalgam space} $W_U (L^{\infty}, L^1)$ is the space of
all $f \in L^{\infty} (\RR^d)$ satisfying
\[
  \| f \|_{W_U (L^{\infty}, L^1)} :=  \int_{\RR^d} \| f \|_{L^{\infty} (U + x)} \; dx < \infty.
\]
The (closed) subspace of $W_U (L^{\infty} \!,\! L^1)$ consisting of \emph{continuous}
functions is denoted $W_U(C_0, L^1)$.

The space $W (L^\infty, L^1) := W_U (L^\infty, L^1)$ is independent of the choice of $U$,
with equivalent norms for different choices.
In particular, if $A \in \GL(\RR^d)$, then
\begin{equation}
  \|f\|_{W_{AU}(L^\infty, L^1)}
  = |\det A| \cdot \| f \circ A \|_{W_U (L^\infty, L^1)} \, ,
  \label{eq:WienerAmalgamLinearTransformation}
\end{equation}
an identity that will be used repeatedly.
It is readily seen that the space $W_U (L^{\infty}, L^1)$ is an $L^1$-convolution module;
that is, if $f \in L^1 (\RR^d)$ and $g \in W_U (L^{\infty}, L^1)$,
then the product $f \ast g \in W_U (L^{\infty}, L^1)$, with
$\| f \ast g \|_{W_U (L^{\infty}, L^1)} \leq \|f\|_{L^1} \|g\|_{W_U(L^{\infty}, L^1)}$,
simply because of
$\|f \ast g\|_{L^\infty(U + x)} \leq \big( |f| \ast [y \mapsto \|g\|_{L^\infty(U + y)}] \big) (x)$.

Lastly, there is an equivalent \emph{discrete} norm on $W(L^{\infty}, L^1)$, namely
\[
  \| f \|_{W(L^{\infty}, \ell^1)}
  := \sum_{n \in \ZZ^d}
       \| \mathds{1}_{n + [0,1]^d} \cdot f \|_{L^{\infty}}.
\]
The global component in this norm is denoted by $\ell^1$ rather than $L^1$
in order to distinguish it from $\| \cdot \|_{W_U (L^{\infty}, L^1)}$.
The norm $\| \cdot \|_{W(C_0, \ell^1)}$ is simply the restriction of
$\| \cdot \|_{W(L^{\infty}, \ell^1)}$ to $W_U(C_0, L^1)$.

The reader is referred to \cite{ho75, fost85} for background on amalgam spaces
and to \cite{FeichtingerWienerSpaces} for a far-reaching generalization
that includes the combination of smoothness and decay conditions.

\section{Besov-type spaces}
\label{sec:decomposition}

This section introduces decomposition spaces, and related notions such as covers,
weights and bounded admissible partitions of unity (BAPUs).

\subsection{Covers and BAPUs}
\label{sub:Coverings}

\begin{definition}\label{def:AdmissibleCovering}
Let $\CalO \neq \emptyset$ be an open subset of $\RHat^d$.
A family $\CalQ = (Q_i)_{i \in I}$ of subsets $Q_i \subset \CalO$ is
called an \emph{admissible cover} of $\CalO$ if
\begin{enumerate}
  \item[(i)] $\CalQ$ is a cover of $\CalO$, that is,
             $\CalO = \bigcup_{i \in I} Q_i$;

  \item[(ii)] $Q_i \neq \emptyset$ for all $i \in I$;

  \item[(iii)] $N_{\CalQ} := \sup_{i \in I} |i^*| < \infty$,
               where $i^* := \{ \ell \in I \; : \; Q_{\ell} \cap Q_i \neq \emptyset \}$
               for $i \in I$.
\end{enumerate}
A sequence $w = (w_i)_{i \in I}$ in $(0,\infty)$ is called a
\emph{$\CalQ$-moderate weight} if
\[
  C_{w, \CalQ}
  := \sup_{i \in I} \, \sup_{\ell \in i^*} \, \frac{w_i}{w_{\ell}}
  < \infty.
\]
\end{definition}
For a \emph{weight} $w = (w_i)_{i \in I}$ in $(0,\infty)$ and an exponent
$q \in [1, \infty]$, we define
\[
  \ell_w^q (I)
  := \left\{
       c = (c_i)_{i \in I} \in \CC^I
       \; : \;
       \| c \|_{\ell^q_w} := \| (w_i \cdot c_i)_{i \in I} \|_{\ell^q} < \infty
     \right\}.
\]
The significance of a $\CalQ$-moderate weight is that the associated
\emph{$\CalQ$-clustering map} is well-defined and bounded.
The precise statement is as follows; see~\cite[Lemma~4.13]{DecompositionEmbedding}.

\begin{lemma}\label{lem:clustering_map}
Let $q \in [1,\infty]$. Suppose that $\CalQ = (Q_i)_{i \in I}$ is an admissible
cover of an open subset $\CalO \subset \RHat^d$
and that the weight $w = (w_i)_{i \in I}$ is
$\CalQ$-moderate. Then the \emph{$\CalQ$-clustering map}
\[
  \Gamma_\CalQ : \ell_w^q (I) \to \ell_w^q (I), \quad
                 (c_i)_{i \in I} \mapsto (c_i^{\ast})_{i \in I},
\]
where
$
  c_i^\ast := \sum_{\ell \in i^\ast} c_\ell \, ,
$
is well-defined and bounded, with
$\|\Gamma_{\CalQ}\|_{\ell^q_w \to \ell^q_w} \leq C_{w,\CalQ} \cdot N_{\CalQ}$.
\end{lemma}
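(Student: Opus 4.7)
The plan is to realize $\Gamma_\CalQ$ as the action of a nonnegative matrix on weighted sequence space and then invoke Schur's test. Concretely, I view $\Gamma_\CalQ$ as multiplication by the matrix $M \in \{0,1\}^{I \times I}$ with entries $M_{i,\ell} = \Indicator_{i^*}(\ell)$. The identification $\ell^q_w(I) \cong \ell^q(I)$ via $(c_i) \mapsto (w_i c_i)$ conjugates this to the matrix $\widetilde{M}_{i,\ell} = \tfrac{w_i}{w_\ell} \, \Indicator_{i^*}(\ell)$ acting on the unweighted space $\ell^q(I)$. Thus it suffices to show that $\|\widetilde{M}\|_{\schur} \leq C_{w,\CalQ} \cdot N_{\CalQ}$, since then the standard Schur test yields $\|\Gamma_\CalQ\|_{\ell^q_w \to \ell^q_w} = \|\widetilde{\mathbf{M}}\|_{\ell^q \to \ell^q} \leq C_{w,\CalQ} \cdot N_\CalQ$ for every $q \in [1,\infty]$.

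Next I would estimate the row sums. For fixed $i \in I$, one has
\[
  \sum_{\ell \in I} \widetilde{M}_{i,\ell}
  = \sum_{\ell \in i^*} \frac{w_i}{w_\ell}
  \leq C_{w,\CalQ} \cdot |i^*|
  \leq C_{w,\CalQ} \cdot N_{\CalQ},
\]
directly from the definitions of $C_{w,\CalQ}$ and $N_{\CalQ}$. For the column sums, I would use the symmetry of the relation $\ell \in i^* \Longleftrightarrow Q_\ell \cap Q_i \neq \emptyset \Longleftrightarrow i \in \ell^*$, so that for fixed $\ell \in I$,
\[
  \sum_{i \in I} \widetilde{M}_{i,\ell}
  = \sum_{i \,:\, \ell \in i^*} \frac{w_i}{w_\ell}
  = \sum_{i \in \ell^*} \frac{w_i}{w_\ell}
  \leq C_{w,\CalQ} \cdot |\ell^*|
  \leq C_{w,\CalQ} \cdot N_{\CalQ},
\]
again by moderation and admissibility. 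Taking the maximum of the two gives the claimed Schur bound.

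No step here is genuinely delicate: the only point that requires a moment's attention is checking that the moderation hypothesis applies in the column-sum estimate, which is the reason for invoking the symmetry $\ell \in i^* \Leftrightarrow i \in \ell^*$. Once the bound on $\widetilde{M}$ is in place, well-definedness of $\Gamma_\CalQ$ on $\ell^q_w(I)$ (including the endpoint $q = \infty$) follows automatically from Schur's test, and the operator norm estimate $\|\Gamma_\CalQ\|_{\ell^q_w \to \ell^q_w} \leq C_{w,\CalQ} \cdot N_\CalQ$ is obtained uniformly in $q$.
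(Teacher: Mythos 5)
Your proof is correct. The conjugation by the isometry $(c_i) \mapsto (w_i c_i)$ correctly transfers the problem to bounding $\widetilde{M}_{i,\ell} = \tfrac{w_i}{w_\ell}\Indicator_{i^*}(\ell)$ on $\ell^q(I)$, both row and column sums are bounded by $C_{w,\CalQ} N_\CalQ$ (the column-sum estimate indeed hinges on the symmetry $i \in \ell^* \Leftrightarrow \ell \in i^*$, since then $\ell \in i^*$ and the definition of $C_{w,\CalQ}$ gives $w_i/w_\ell \leq C_{w,\CalQ}$), and Schur's test, which the paper explicitly records in its weighted form, yields the claim for all $q \in [1,\infty]$ at once. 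The paper defers the proof to the cited reference rather than giving one, so there is nothing to compare line by line; but the Schur-test argument you give is the standard route and matches the tool set the paper itself sets up and uses elsewhere.
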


The next definition clarifies our assumptions regarding the partitions of unity
that are suitable for defining the decomposition space norm.

\begin{definition}\label{def:BAPU}
  Let $\CalQ = (Q_i)_{i \in I}$ be an admissible cover of
  an open subset $\emptyset \neq \mathcal{O} \subset \RHat^d$.
  A family ${\Phi = (\varphi_i )_{i \in I}}$ is called a
  \emph{bounded admissible partition of unity} (BAPU), subordinate to $\CalQ$, if
  \begin{enumerate}[label=(\roman*)]
    \item  \label{enu:BAPU_Smoothness}
           $\varphi_i \in C_c^{\infty} (\CalO) \subset \Schwartz (\RHat^d)$ for all $i \in I$;

    \item \label{enu:BAPU_Partition}
          $\sum_{i \in I} \varphi_i (\xi) = 1$ for all
          $\xi \in \CalO$;

    \item \label{enu:BAPU_Support}
          $\varphi_i (\xi) = 0$ for all $\xi \in \CalO \setminus Q_i$
          and all $i \in I$;

    \item \label{enu:BAPU_Boundedness}
          $C_{\Phi}
           := \sup_{i \in I} \| \Fourier^{-1} \varphi_i \|_{L^1} < \infty$.
  \end{enumerate}
  The cover $\CalQ$ is called a
  \emph{decomposition cover}  if there exists a BAPU subordinate to $\CalQ$.
\end{definition}

Given a decomposition cover $\CalQ = (Q_i)_{i \in I}$ of
an open set $\emptyset \neq \CalO \subset \RHat^d$, it will be assumed throughout this article
that a BAPU $\Phi = (\varphi_{i})_{i \in I}$ for $\CalQ = (Q_i)_{i \in I}$ is fixed.

\begin{definition}\label{def:SemiStructuredCovering}
  Let $\mathcal{O} \neq \emptyset$ be an open subset of $\RHat^d$.
  A family $\CalQ = (Q_i)_{i \in I}$ of subsets $Q_i \subset \CalO$ is
  called an \emph{affinely generated cover} of $\CalO$ if, for each
  $i \in I$, there are $A_i \in \mathrm{GL}(d, \RR)$ and
  $b_i \in \RHat^d$ and an open subset $Q'_i \subset \RHat^d$ with
  $Q_i = A_i \, (Q'_i) + b_i$ satisfying the following:
  \begin{enumerate}
    \item[(i)] $\CalQ$ is an admissible cover of $\CalO$;

    \item[(ii)] the sets $(Q'_i)_{i \in I}$ are uniformly bounded, that is,
                \[
                  R_{\CalQ} := \sup_{i \in I} \sup_{\xi \in Q'_i} |\xi| < \infty \, ;
                \]

    \item[(iii)] for indices $i, \ell \in I$ with
                 $Q_i \cap Q_\ell \neq \emptyset$, the transformations
                 $A_i (\cdot) + b_i$ and $A_{\ell} (\cdot) + b_{\ell}$
                 are uniformly compatible, that is,
                 \[
                   C_{\CalQ} :=
                   \sup_{i \in I}
                     \sup_{\ell \in i^*}
                       \| A_i^{-1} A_{\ell} \|
                   < \infty  ;
                 \]
  \end{enumerate}
  and moreover,
  for each $i \in I$, there is an open set $Q''_i \subset \RHat^d$ such that
  \begin{enumerate}
    \item[(iv)] the closure $\overline{Q''_i} \subset Q'_i$ for all $i \in I$;
          \vspace{0.1cm}

    \item[(v)] the family $(A_i ( Q''_i) + b_i)_{i \in I}$ covers $\CalO$; and
          \vspace{0.1cm}

    \item[(vi)] the sets $\{Q'_i \with i \in I \}$ and $\{Q''_i \with i \in I\}$
          are finite.
  \end{enumerate}
\end{definition}

\begin{remark}
An affinely generated cover is also called an \emph{(almost) structured cover}
in the literature, see for instance \cite{DecompositionEmbedding} and \cite{BorupNielsenDecomposition}
for similar notions.
\end{remark}

In the sequel, the map $S_i : \RHat^d \to \RHat^d\vphantom{\sum_j}$
will always denote an affine linear mapping $\xi \mapsto A_i \, \xi + b_i$
for some $A_i \in \mathrm{GL}(d,\RR)$ and $b_i \in \RHat^d$.

\begin{definition}\label{def:RegularBAPUNormalizedVersion}
  Let $\CalQ = \big(S_i (Q_{ i}')\big)_{i \in I}$ be an affinely generated cover of $\CalO$,
  and let $\Phi = (\varphi_i)_{i \in I}$ be a smooth partition of unity subordinate to $\CalQ$.
  For $i \in I$, define the \emph{normalization} of $\varphi_i$ by
  $\varphi^{\normalizedBAPU}_i := \varphi_i \circ S_i$.
  The family $\Phi = (\varphi_i)_{i \in I}$ is called
  a \emph{regular partition of unity}, subordinate to $\CalQ$, if
  \begin{equation}
    C_{\CalQ, \Phi, \alpha}
    := \sup_{i \in I}
         \| \partial^{\alpha} \varphi_i^{\normalizedBAPU} \|_{L^\infty}
    < \infty
    \label{eq:RegularPartitionOfUnity}
  \end{equation}
  for all multi-indices $\alpha \in \NN_0^d$.
\end{definition}

The following result shows that every affinely generated cover is a decomposition cover.

\begin{proposition}\label{prop:RegularBAPUs}
  (\cite[Corollary~2.7 and Theorem~2.8]{DecompositionIntoSobolev})

  Let $\CalQ = \big(S_i (Q_{ i}')\big)_{i \in I}$ be an affinely generated cover of $\CalO$.
  Then the following hold:

  \begin{enumerate}
    \item Every regular partition of unity $\Phi$ subordinate to $\CalQ$
          is also a BAPU subordinate to $\CalQ$.

    \item There exists a regular partition of unity $\Phi = (\varphi_i)_{i \in I}$
          subordinate to $\CalQ$.
  \end{enumerate}
\end{proposition}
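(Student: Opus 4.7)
The plan is to prove the two parts of the proposition separately. For part (1), the key reduction is the affine invariance
\[
  \|\Fourier^{-1}\varphi_i\|_{L^1} = \|\Fourier^{-1}\varphi_i^{\normalizedBAPU}\|_{L^1},
\]
which follows by pulling the inverse Fourier transform through the change of variables $\xi = S_i \eta = A_i \eta + b_i$: this produces a Jacobian $|\det A_i|$ that is cancelled by rescaling in the $L^1$-integral, together with a modulation factor $e^{2\pi i x \cdot b_i}$ of modulus one. Since $\supp \varphi_i^{\normalizedBAPU} \subset Q_i' \subset \overline{B_{R_\CalQ}}(0)$, I would then bound $\|\Fourier^{-1}\varphi_i^{\normalizedBAPU}\|_{L^1}$ uniformly in $i$ by splitting the integral against the weight $(1+|x|)^{-(d+1)}$ and combining this with the standard integration-by-parts estimate
\[
  (1 + |x|)^{d+1} \cdot |\Fourier^{-1}\varphi_i^{\normalizedBAPU}(x)|
  \lesssim \max_{|\alpha| \leq d+1} \| \partial^\alpha \varphi_i^{\normalizedBAPU} \|_{L^\infty} \cdot \lambda(Q_i'),
\]
whose right-hand side is uniformly bounded in $i$ by the regularity hypothesis \eqref{eq:RegularPartitionOfUnity} and $R_\CalQ < \infty$. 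The remaining BAPU axioms \ref{enu:BAPU_Smoothness}--\ref{enu:BAPU_Support} hold by assumption.

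For part (2), condition (vi) guarantees that only finitely many distinct pairs $(Q'_i, Q''_i)$ appear, and condition (iv) gives $\overline{Q''_i} \subset Q'_i$ for each $i$. For each of these finitely many classes I would fix, once and for all, a bump $\psi \in C_c^\infty(Q')$ with $0 \leq \psi \leq 1$ and $\psi \equiv 1$ on an open neighborhood of $\overline{Q''}$. Setting $\gamma_i := \psi_{c(i)} \circ S_i^{-1}$, where $c(i)$ labels the class of $i$, each $\gamma_i$ is smooth, supported in $S_i(Q'_i) = Q_i$, and identically $1$ on $S_i(Q''_i)$; admissibility of $\CalQ$ makes $\Sigma := \sum_\ell \gamma_\ell$ locally finite, and condition (v) gives $\Sigma \geq 1$ on $\CalO$. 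Therefore $\varphi_i := \gamma_i / \Sigma$ is a smooth partition of unity subordinate to $\CalQ$.

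The main technical obstacle is the uniform derivative bound $\sup_i \|\partial^\alpha \varphi_i^{\normalizedBAPU}\|_{L^\infty} < \infty$ required by \eqref{eq:RegularPartitionOfUnity}. Writing
\[
  \varphi_i^{\normalizedBAPU}(\xi)
  = \frac{\psi_{c(i)}(\xi)}{\sum_{\ell \in i^*} \psi_{c(\ell)} \big( S_\ell^{-1} (S_i \xi) \big)},
\]
one sees that the denominator is a sum of at most $N_\CalQ$ terms, each a composition of one of the finitely many $\psi$'s with an affine map whose linear part $A_\ell^{-1} A_i$ has norm bounded by $C_\CalQ$; moreover, because $\gamma_i$ itself contributes $1$ on $\supp \psi_{c(i)}$, the denominator is bounded below by $1$ there. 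Applying the Leibniz rule and the Fa\`a di Bruno formula to differentiate this quotient then produces only finitely many terms, each controlled by $N_\CalQ$, $C_\CalQ$, the fixed derivative bounds on the finitely many $\psi$'s, and the uniform lower bound on the denominator, yielding the desired uniform bound.
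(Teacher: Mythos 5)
The paper does not prove this proposition; it is cited from \cite{DecompositionIntoSobolev}, so there is no internal proof to compare against. Your approach is the standard one for affinely generated covers and is, with one correction, essentially right. For part (1), the reduction to the normalized pieces $\varphi_i^{\normalizedBAPU}$ via affine invariance of the $\Fourier L^1$-norm, followed by the Bernstein/Sobolev-type estimate using the uniform bound $\supp \varphi_i^{\normalizedBAPU} \subset Q_i' \subset \overline{B_{R_\CalQ}}(0)$ and integration against $(1+|x|)^{-(d+1)}$, is exactly the right argument; it is in fact the same mechanism used in Lemma~\ref{lem:FourierDecayViaSmoothness}.

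For part (2), the construction $\varphi_i = \gamma_i/\Sigma$ with $\gamma_i = \psi_{c(i)}\circ S_i^{-1}$ is correct, as is the strategy of estimating derivatives of the normalized quotient via Leibniz and Fa\`a di Bruno, controlled by $N_\CalQ$, $C_\CalQ$, the finitely many fixed bumps $\psi$, and a lower bound on the denominator. The one genuine slip is your justification of the denominator lower bound: you assert that ``$\gamma_i$ itself contributes $1$ on $\supp\psi_{c(i)}$,'' but the contribution of $\gamma_i$ to $\Sigma\circ S_i$ is exactly $\psi_{c(i)}$, which is merely $\leq 1$ on its support and vanishes toward the boundary of $\supp\psi_{c(i)}$ (it is $\equiv 1$ only on a neighborhood of $\overline{Q_i''}$). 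That single term therefore does not yield the bound. The correct justification---which you have already set up in the preceding sentence---is that condition (v) of Definition~\ref{def:SemiStructuredCovering} gives some $j$ with $\gamma_j(\eta) = 1$ for every $\eta\in\CalO$, so $\Sigma \geq 1$ on all of $\CalO$; since $S_i(\supp\psi_{c(i)}) \subset Q_i \subset \CalO$, the denominator $\Sigma\circ S_i$ is $\geq 1$ wherever the numerator is nonzero, and only the terms indexed by $\ell\in i^*$ can contribute there (because $\gamma_\ell(S_i\xi)\neq 0$ forces $Q_\ell\cap Q_i\neq\emptyset$), keeping the sum to at most $N_\CalQ$ terms with $\|A_\ell^{-1}A_i\|\leq C_\CalQ$. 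With this repair, the derivative estimate required by \eqref{eq:RegularPartitionOfUnity} goes through as you describe.
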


\subsection{Besov-type spaces}
\label{sub:DecompositionSpaces}

We introduce Besov-type spaces following the approach
in \cite{TriebelFourierAnalysisAndFunctionSpaces}, which relies on
the \emph{space of Fourier distributions}.
Since we only treat the Besov-type scale of spaces, we allow for rather general covers.
More restrictions would be necessary to include the Triebel-Lizorkin scale,
because the corresponding theory relies on inequalities for maximal functions;
see \cite[Section 3.6]{sttr79}, \cite[Section 2.4.3]{TriebelFourierAnalysisAndFunctionSpaces},
and also \cite{MR3189276}.

\begin{definition}\label{def:Reservoir}
  Let $\mathcal{O} \neq \emptyset$ be open in $\widehat{\RR}^d$. The space
  \(
    Z (\mathcal{O})
    := \Fourier (C_c^{\infty}  (\mathcal{O}))
  \)
  is called the \emph{Fourier test function space} on $\mathcal{O}$.
  The space $Z (\mathcal{O})$ is endowed with the unique topology making the
  Fourier transform $\Fourier : C_c^{\infty} (\mathcal{O}) \to Z(\CalO)$
  into a homeomorphism.

  The topological dual space $(Z (\mathcal{O}))'$ of
  $Z (\mathcal{O})$ is denoted by $Z' (\mathcal{O})$ and is called the
  space of \emph{Fourier distributions}.
  The (bilinear) dual pairing between $Z'(\mathcal{O})$ and $Z(\CalO)$
  will be denoted by
  \(
    \langle \phi, f \rangle_{Z', Z}
    := \langle \phi, f \rangle_{Z'}
    := \langle \phi, f \rangle
    := \phi(f)
  \)
  for $\phi \in Z'(\CalO)$ and $f \in Z(\CalO)$.

  The \emph{Fourier transform} $\phi \in \DistributionSpace(\CalO)$ of a Fourier distribution
  $\phi \in Z'(\mathcal{O})$ is defined by duality; i.e.,
  \[
    \Fourier : Z'(\mathcal{O}) \to \DistributionSpace(\mathcal{O}),
    \quad
               \phi \mapsto \Fourier \phi
                            := \widehat{\phi}
                            := \phi \circ \Fourier,
  \]
  which entails
  \(
    \langle \Fourier \phi, f \rangle_{\DistributionSpace, \CalD}
    = \langle \phi, \Fourier f \rangle_{Z', Z}
  \)
  for $ \phi \in Z'(\CalO$) and $f \in C_c^{\infty} (\CalO)$.
\end{definition}

Using the Fourier distributions as a reservoir, a decomposition space is defined as
follows:

\begin{definition}\label{def:DecompositionSpace}
  Let $p, q \in [1, \infty]$. Let $\CalQ = (Q_i)_{i \in I}$ be a
  decomposition cover of an open set $\emptyset \neq \CalO \subset \widehat{\RR}^d$ with
  associated BAPU $(\varphi_i)_{i \in I}$.
  Let $w = (w_i)_{i \in I}$ be $\CalQ$-moderate.
  For $f \in Z' (\CalO)$, set
  \begin{align} \label{eq:decomp_norm}
    \| f \|_{\DecompSp (\CalQ, L^p, \ell^q_w) }
    := \Big\|
          (
           \| \Fourier^{-1} (\varphi_i \cdot \widehat{f} \, ) \|_{L^p}
          )_{i \in I}
       \Big\|_{\ell^q_w}
    \in [0,\infty] \, ,
  \end{align}
  and define the associated \emph{decomposition space}
  $\DecompSp(\CalQ,L^p,\ell^q_w)$ as
  \[
    \DecompSp (\CalQ, L^p, \ell_w^q)
    := \bigg\{
          f \in Z' (\mathcal{O})
          \with
          \|f \|_{\DecompSp (\CalQ, L^p, \ell^q_w) } < \infty
       \bigg\}.
  \]
\end{definition}

\begin{remark}
  The norm \eqref{eq:decomp_norm} is well-defined: If $f \in Z'(\mathcal{O})$,
  then $\widehat{f} \in \DistributionSpace(\mathcal{O})$, whence
  $\varphi_i \cdot \widehat{f}$ is a (tempered) distribution with compact
  support.
  By the Paley-Wiener theorem \cite[Theorem~7.23]{RudinFunctionalAnalysis},
  it follows therefore that $\Fourier^{-1}(\varphi_i \cdot \widehat{f} \,)$
  is given by a smooth function.
  In addition, $\DecompSp (\CalQ, L^p, \ell^q_w)$ is a Banach space
  and independent of the choice of the BAPU $(\varphi_i)_{i \in I}$,
  with equivalent norms for different choices;
  see \cite[Corollary~3.18 and Theorem~3.21]{DecompositionEmbedding}.
\end{remark}

\begin{remark}
\label{rem_ds}
Our presentation follows \cite{DecompositionEmbedding,StructuredBanachFrames}
and relies on the original approach of \cite{TriebelFourierAnalysisAndFunctionSpaces,tr78},
specially in the use of Fourier distributions, which is essential for the more technical aspects
of our results.
More abstract versions of Besov-type spaces replace the Fourier transform
by an adequate symmetric operator \cite{tr77} or use a more general Banach space of functions
on a locally compact space in lieu of the Fourier image of $L^p$ \cite{DecompositionSpaces1}.
This latter (far reaching) generalization is particularly useful
to model signal processing applications, such as sampling.
\end{remark}

In the sequel, we will often prove our results on the subspace
 \(
   \Schwartz_{\CalO} (\RR^d)
   := \Fourier^{-1} (\TestFunctionSpace (\CalO))
   \subset \Schwartz (\RR^d)
 \)
 of the space $\DecompSp(\CalQ, L^p, \ell_w^q)$, and then extend
 to all of $\mathcal{D} (\CalQ, L^p, \ell^q_w)$ by a suitable density argument.
 These density arguments rely on the following concept.

 \begin{definition}\label{def:DominatedFunction}
   Let $I$ be an index set, and let $w = (w_i)_{i \in I}$ be a weight.
   For a sequence $F = (F_i)_{i \in I}$ of functions $F_i \in L^p (\RR^d)$, we write
   \(
     \|F\|_{\ell_w^q(I; L^p)}
     := \big\| (\|F_i\|_{L^p})_{i \in I} \big\|_{\ell_w^q}
     \in [0,\infty]
   \),
   and set
   \[
     \ell_w^q (I; L^p)
     := \big\{
          F \in [L^p(\RR^d)]^I
          \colon
          \|F\|_{\ell_w^q(I;L^p)} < \infty
        \big\} \, .
   \]
   Let $\CalQ = (Q_i)_{i \in I}$ be a decomposition cover of an open set
   $\CalO \subset \RHat^d$ with BAPU $\Phi = (\varphi_i)_{i \in I}$,
   and let $F = (F_i)_{i \in I}$ be a family of functions $F_i : \RR^d \to [0,\infty)$.
   A Fourier distribution $f \in Z'(\CalO)$ is said to be \emph{$(F,\Phi)$-dominated} if,
   for all $i \in I$,
   \begin{equation}
     |\Fourier^{-1} (\varphi_i \cdot \widehat{f} \,)| \leq F_i.
     \label{eq:DominatedDefinition}
   \end{equation}
 \end{definition}

We next state our density result; its proof is postponed to Appendix~\ref{sub:DensityResultProof}.

\begin{proposition}\label{prop:Density}
  Let $\CalQ = (Q_i)_{i \in I}$ be a decomposition cover of an open set
  $\emptyset \neq \CalO \subset \RHat^d$ with BAPU $\Phi = (\varphi_i)_{i \in I}$
   and let $w = (w_i)_{i \in I}$ be a $\CalQ$-moderate weight.
  Then
  \begin{enumerate}
    \item[(i)] The inclusion
               $\DenseSpace(\RR^d) \subset \DecompSp(\CalQ, L^p, \ell_w^q)$
               holds for all $p,q \in [1,\infty]$.

    \item[(ii)] If $p,q \in [1,\infty)$, then $\DenseSpace (\RR^d)$ is norm
                dense in $\DecompSp(\CalQ,L^p,\ell_w^q)$.

    \item[(iii)] If $p,q \in [1,\infty]$ and $f \in \DecompSp(\CalQ, L^p, \ell_w^q)$,
                 then there exist $F \in \ell_w^q (I; L^p)$ satisfying
                 \[
                   \|F\|_{\ell_w^q(I; L^p)}
                   \leq C_\Phi \,
                        \|\Gamma_{\CalQ}\|_{\ell_w^q \to \ell_w^q}^2
                        \cdot \|f\|_{\DecompSp(\CalQ, L^p, \ell_w^q)},
                 \]
                 and a sequence $(g_n)_{n \in \NN}$ of $(F, \Phi)$-dominated
                 functions $g_n \in \DenseSpace (\RR^d)$ such that
                 $g_n \to f$, with convergence in $Z' (\CalO)$.
  \end{enumerate}
\end{proposition}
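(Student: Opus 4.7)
The plan is to prove (i) directly, then (iii), and finally to deduce (ii) from (iii) by two dominated-convergence arguments. \emph{Part (i)} is immediate: if $f \in \DenseSpace(\RR^d)$, then $\widehat{f} \in C_c^\infty(\CalO)$ has compact support $K \subset \CalO$, which is covered by finitely many $Q_{i_1}, \dots, Q_{i_n}$; admissibility then forces any $i \in I$ with $Q_i \cap K \neq \emptyset$ to lie in $\bigcup_k i_k^\ast$, a finite set. Only finitely many $\Fourier^{-1}(\varphi_i \widehat{f})$ are nonzero, each belongs to $\Schwartz(\RR^d) \subset L^p(\RR^d)$, and the decomposition-space norm is a finite sum of finite terms.

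\emph{Part (iii).} Exhaust $\CalO$ by compacta $K_n \subset K_{n+1}^\circ$ with $\bigcup_n K_n = \CalO$, pick cutoffs $\psi_n \in C_c^\infty(\CalO)$ with $\psi_n \equiv 1$ on $K_n$ and $\supp \psi_n \subset K_{n+1}$, and nonnegative mollifiers $\rho_n \in C_c^\infty(\RHat^d)$ with $\int \rho_n = 1$ and $\supp \rho_n \subset B_{\eps_n}(0)$, where $\eps_n > 0$ is small enough that $\supp \psi_n + B_{\eps_n}(0) \subset \CalO$. Setting $\widehat{g_n} := \rho_n \ast (\psi_n \widehat{f}) \in C_c^\infty(\CalO)$ produces $g_n \in \DenseSpace(\RR^d)$. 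Localizing via $\varphi_j = \varphi_j \tilde\varphi_j$ (with $\tilde\varphi_j := \sum_{\ell \in j^\ast} \varphi_\ell$ equal to $1$ on $\supp \varphi_j$) and shrinking $\eps_n$ further so that $\varphi_j \cdot \rho_n \ast (\psi_n \varphi_i \widehat{f}) \equiv 0$ unless $i$ lies in the double cluster $j^{\ast\ast} := \bigcup_{\ell \in j^\ast} \ell^\ast$, I combine $\|\Fourier^{-1}\rho_n\|_{L^\infty} \leq \|\rho_n\|_{L^1} = 1$ with the convolution theorem and the triangle inequality to obtain the $n$-uniform pointwise estimate
\[
  |\Fourier^{-1}(\varphi_j \widehat{g_n})|
  \;\leq\; |\Fourier^{-1}(\varphi_j)|
  \ast \sum_{i \in j^{\ast\ast}} |\Fourier^{-1}(\varphi_i \widehat{f})|
  \;=:\; F_j .
\]
Young's inequality and two applications of Lemma~\ref{lem:clustering_map} then deliver $\|F\|_{\ell_w^q(I;L^p)} \leq C_\Phi \|\Gamma_\CalQ\|_{\ell_w^q \to \ell_w^q}^2 \|f\|_{\DecompSp(\CalQ, L^p, \ell_w^q)}$. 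Convergence $g_n \to f$ in $Z'(\CalO)$ follows by testing against $\Fourier \chi$, $\chi \in C_c^\infty(\CalO)$, via the adjoint identity $\langle \rho_n \ast (\psi_n \widehat{f}), \chi \rangle = \langle \widehat{f}, \psi_n \cdot (\widetilde{\rho_n} \ast \chi) \rangle$ and the convergence $\psi_n (\widetilde{\rho_n} \ast \chi) \to \chi$ in $C_c^\infty(\CalO)$.

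\emph{Part (ii).} With $p, q < \infty$ and $(g_n)$ as in (iii), the convergence $\widehat{g_n} \to \widehat{f}$ in $\DistributionSpace(\CalO)$, combined with the uniformly compact support $\supp(\varphi_i \widehat{g_n}) \subset \supp \varphi_i$, lifts to convergence in the space of compactly supported distributions on $\RHat^d$, and hence to pointwise convergence $\Fourier^{-1}(\varphi_i \widehat{g_n})(x) \to \Fourier^{-1}(\varphi_i \widehat{f})(x)$. Dominated convergence in $L^p$ with majorant $F_i + |\Fourier^{-1}(\varphi_i \widehat{f})| \in L^p$ (using $p < \infty$) yields $\|\Fourier^{-1}(\varphi_i(\widehat{g_n} - \widehat{f}))\|_{L^p} \to 0$ for each $i$, and a second dominated convergence in $\ell_w^q(I)$ with majorant $\|F_i\|_{L^p} + \|\Fourier^{-1}(\varphi_i \widehat{f})\|_{L^p}$ (using $q < \infty$) gives $\|g_n - f\|_{\DecompSp(\CalQ, L^p, \ell_w^q)} \to 0$.

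The main obstacle lies in the uniform-in-$n$ domination in (iii): the mollifier width $\eps_n$ must be coordinated with the geometry of $\CalQ$ so that, uniformly in $n$, only indices $i \in j^{\ast\ast}$ contribute to $\varphi_j \widehat{g_n}$. The appearance of the double cluster $j^{\ast\ast}$, and thus of the squared clustering constant in the norm bound, reflects two distinct sources of support spreading that must be absorbed into the dominating sequence, namely the localization $\varphi_j = \varphi_j \tilde\varphi_j$ and the convolution with $\rho_n$.
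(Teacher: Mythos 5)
Part (i) matches the paper. Your global organization differs: you derive (ii) from (iii) by dominated convergence, while the paper proves (ii) directly by a finite-truncation argument (shrink the $\ell_w^q$-tail using $q<\infty$, then approximate the finitely many remaining pieces in $L^p$ using $p<\infty$) and proves (iii) separately. Deducing (ii) from (iii) is attractive, but it makes (ii) stand or fall with (iii), and that is where the gap lies.

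In (iii), the claimed $n$-uniform domination $|\Fourier^{-1}(\varphi_j\widehat{g_n})|\leq F_j$ does not hold. After localizing to $i\in j^{\ast\ast}$ (which can indeed be arranged: for each $n$ only finitely many pairs $(i,j)$ are active, and $\mathrm{dist}(\supp\varphi_j,\supp\varphi_i)>0$ whenever $i\notin j^{\ast\ast}$), each remaining term equals $\Fourier^{-1}\varphi_j\ast\big(\Fourier^{-1}\rho_n\cdot\Fourier^{-1}(\psi_n\varphi_i\widehat{f}\,)\big)$. The bound $|\Fourier^{-1}\rho_n|\leq 1$ leaves $|\Fourier^{-1}\varphi_j|\ast|\Fourier^{-1}(\psi_n\varphi_i\widehat{f}\,)|$, and your argument tacitly replaces $\Fourier^{-1}(\psi_n\varphi_i\widehat{f}\,)$ by $\Fourier^{-1}(\varphi_i\widehat{f}\,)$. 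This is unjustified: multiplying by the frequency cutoff $\psi_n$ is convolving with $\Fourier^{-1}\psi_n$ in space, which redistributes mass (a hard frequency cutoff of a Gaussian turns rapid decay into polynomial decay), so there is no pointwise bound $|\Fourier^{-1}(\psi_n\varphi_i\widehat{f}\,)|\leq|\Fourier^{-1}(\varphi_i\widehat{f}\,)|$. One has $\psi_n\varphi_i=\varphi_i$ only once $\supp\varphi_i\subset\{\psi_n\equiv 1\}$, but for every $n$ there are ``boundary'' indices $i$ for which this fails, and such offending $(i,n)$-pairs never stop occurring as $n\to\infty$; these boundary terms escape your dominating family $(F_j)_{j}$. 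The fix is to remove the smooth cutoff $\psi_n$: either replace $\psi_n\widehat{f}$ by a finite partial sum $\sum_{i\in L_n}\varphi_i\widehat{f}$ of the BAPU decomposition (so that $\psi_n\varphi_i$ collapses to $\varphi_i$ or $0$), or, as the paper does, approximate each $f_i=\Fourier^{-1}(\varphi_i\widehat{f}\,)$ separately by Schwartz functions $f_i^{(n)}$ with $|f_i^{(n)}|\leq|f_i|$ pointwise and $\supp\widehat{f_i^{(n)}}$ kept inside a fixed open $U_i\subset Q_i^\ast$ (invoking \cite[Lemma~3.2]{DecompositionEmbedding}), then set $g_N:=\sum_{n\leq N}f_{i_n}^{(N)}$; the domination $|\Fourier^{-1}(\varphi_\ell\widehat{g_N})|\leq\sum_{i\in\ell^{\ast\ast}}|\widecheck{\varphi_\ell}|\ast|f_i|$ is then immediate and gives the stated constant.
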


\begin{remark}
The inclusion $\DenseSpace (\RR^d) \subset \StandardDecompSp \subset Z'(\CalO)$
in Proposition~\ref{prop:Density}(i) should be understood
in the following sense:
Clearly $\DenseSpace (\RR^d) \subset \Schwartz(\RR^d) \hookrightarrow \Schwartz'(\RR^d)$,
where as usual a function $f \in \Schwartz(\RR^d)$ is identified
with the distribution $\phi \mapsto \int f \cdot \phi \, dx$.
But since $Z(\CalO) \hookrightarrow \Schwartz(\RR^d)$,
each $f \in \Schwartz' (\RR^d)$
restricts to an element of $Z'(\CalO)$; in particular, each
$f \in \DenseSpace$ can be seen as an element of $Z'(\CalO)$ by virtue of
$\langle f, \phi \rangle_{Z',Z} = \int f \cdot \phi \, dx$.
Under this identification, the Fourier transform
$\Fourier f \in \CalD'(\CalO)$ is just the usual
$\widehat{f} \in \Schwartz(\RHat^d)$, interpreted as a distribution on $\CalO$.
\end{remark}

As a companion to the above density result, the following \emph{Fatou property}
of the decomposition spaces $\DecompSp(\CalQ, L^p, \ell_w^q)$ will be used.
For the proof, see \cite[Lemma~36]{FuehrVoigtlaenderCoorbitSpacesAsDecompositionSpaces}.

\begin{lemma}
\label{lem:DecompositionSpaceFatouProperty}
  Let $\CalQ = (Q_i)_{i \in I}$ be a decomposition cover of an open set
  $\emptyset \neq \CalO \subset \RHat^d$.
  Let ${w = (w_i)_{i \in I}}$ be a $\CalQ$-moderate weight, and let ${p,q \in [1,\infty]}$.
  Suppose that $(f_n)_{n \in \NN}$ is a sequence in $\DecompSp(\CalQ,L^p,\ell_w^q)$ such that
  ${\liminf_{n\to\infty} \|f_n\|_{\DecompSp(\CalQ,L^p,\ell_w^q)} < \infty}$ and
  $f_n \to f \in Z'(\CalO)$, with convergence in $Z'(\CalO)$.
  Then $f \in \DecompSp(\CalQ,L^p,\ell_w^q)$, with associated norm estimate
  $\|f\|_{\DecompSp(\CalQ,L^p,\ell_w^q)}
   \leq \liminf_{n \to \infty} \|f_n\|_{\DecompSp(\CalQ,L^p,\ell_w^q)}$.
\end{lemma}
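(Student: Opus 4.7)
The plan is to reduce the inequality to a pointwise statement and then apply classical Fatou-type arguments, first in $L^p$ for each fixed $i \in I$, and then in the weighted sequence space $\ell_w^q(I)$. Although the result is attributed to another reference, the proof is short and proceeds as follows.

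First I would show that for each fixed $i \in I$ and $x \in \RR^d$, the pointwise limit
\[
  \Fourier^{-1}(\varphi_i \cdot \widehat{f_n})(x)
  \xrightarrow{n \to \infty}
  \Fourier^{-1}(\varphi_i \cdot \widehat{f}\,)(x)
\]
holds. Since $\varphi_i \in C_c^\infty(\CalO)$ and the test function $h_{i,x}(\xi) := \varphi_i(\xi) \, e^{2\pi i x \cdot \xi}$ again lies in $C_c^\infty(\CalO)$, the Paley-Wiener theorem and the definition of $\Fourier$ on $Z'(\CalO)$ (Definition~\ref{def:Reservoir}) yield the identity
\[
  \Fourier^{-1}(\varphi_i \cdot \widehat{f_n})(x)
  = \langle \widehat{f_n}, h_{i,x}\rangle_{\CalD',\CalD}
  = \langle f_n, \Fourier h_{i,x}\rangle_{Z',Z},
\]
and the analogous identity holds with $f_n$ replaced by $f$. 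Since $\Fourier h_{i,x} \in Z(\CalO)$ and $f_n \to f$ in $Z'(\CalO)$ by assumption, the pointwise convergence follows.

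Next I would apply Fatou in $L^p(\RR^d)$. For $p \in [1,\infty)$ this is immediate. For $p = \infty$, the functions $\Fourier^{-1}(\varphi_i \cdot \widehat{f_n})$ are smooth (again by Paley-Wiener), so their essential suprema equal their ordinary suprema, and the pointwise inequality $|\Fourier^{-1}(\varphi_i \cdot \widehat{f}\,)(x)| \leq \liminf_n \|\Fourier^{-1}(\varphi_i \cdot \widehat{f_n})\|_{L^\infty}$ transfers to the $L^\infty$ norm. Setting $a_i^{(n)} := \|\Fourier^{-1}(\varphi_i \cdot \widehat{f_n})\|_{L^p}$ and $a_i := \|\Fourier^{-1}(\varphi_i \cdot \widehat{f}\,)\|_{L^p}$, I obtain $a_i \leq \liminf_n a_i^{(n)}$ for every $i \in I$.

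Finally I would apply Fatou in $\ell_w^q(I)$ with respect to the counting measure, with the weight absorbed into the summand. For $q < \infty$ this is the standard Fatou lemma applied to the sequences $(w_i \, a_i^{(n)})_{i \in I}$, and for $q = \infty$ it is the elementary observation that $w_i \, a_i \leq \liminf_n w_i \, a_i^{(n)} \leq \liminf_n \sup_{j \in I} w_j \, a_j^{(n)}$ for every $i$, followed by taking the supremum over $i$. This yields $\|f\|_{\DecompSp(\CalQ,L^p,\ell_w^q)} \leq \liminf_n \|f_n\|_{\DecompSp(\CalQ,L^p,\ell_w^q)}$, and in particular $f \in \DecompSp(\CalQ,L^p,\ell_w^q)$ since the right-hand side is finite. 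The only delicate point is the bookkeeping in the $p = \infty$ (and $q=\infty$) case, which is handled via the Paley-Wiener smoothness noted above.
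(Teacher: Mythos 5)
Your proof is correct. The paper does not prove this lemma itself; it cites Lemma~36 of F\"uhr--Voigtlaender, \emph{Wavelet coorbit spaces viewed as decomposition spaces} (J.~Funct.\ Anal.\ 269, 2015), so no direct comparison to a proof in the paper is possible. Your argument is the natural and, as far as I can tell, the same route as that reference takes: the weak-$*$ convergence in $Z'(\CalO)$, combined with $\varphi_i e_x \in C_c^\infty(\CalO)$ and the Paley--Wiener identification $\Fourier^{-1}(\varphi_i \widehat{f_n})(x) = \langle \widehat{f_n}, \varphi_i e_x\rangle_{\CalD',\CalD} = \langle f_n, \Fourier(\varphi_i e_x)\rangle_{Z',Z}$, gives pointwise convergence; Fatou in $L^p$ (using continuity to upgrade ess-sup to sup when $p=\infty$) gives $a_i \le \liminf_n a_i^{(n)}$; and Fatou in $\ell^q_w(I)$ (resp.\ the elementary $\liminf$-vs-$\sup$ bound when $q=\infty$) closes the argument. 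The only place I would insist on one more word of care is the $\ell^q$ step with $q<\infty$: you need to observe that $t \mapsto t^q$ commutes with $\liminf$ (continuity and monotonicity) so that $(w_i a_i)^q \le \liminf_n (w_i a_i^{(n)})^q$ before invoking Fatou for counting measure, but this is routine.
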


\subsection{The extended pairing}
\label{sec:pairing}

We will use the following extension of the $L^2$-inner product.

\begin{definition}\label{def:SpecialDualPairing}
  Let $\CalQ = (Q_i)_{i \in I}$ be a decomposition cover of an open set
  $\emptyset \neq \CalO \subset \widehat{\RR}^d$.
  Let $\Phi = (\varphi_i)_{i \in I}\vphantom{\sum_j}$ be a
  BAPU subordinate to $\CalQ$.
  For $f \in Z'(\CalO)$ and $g \in L^1 (\RR^d)$ with
  $\widehat{g} \in C^\infty (\widehat{\RR}^d)$, define the
  \emph{extended inner product} between $f$ and $g$ as
  \begin{equation}
     \langle f \mid g \rangle_{\Phi}
    := \sum_{i \in I}
         \langle
           \widehat{f}
           \mid
           \varphi_i \cdot \widehat{g} \,
         \rangle_{\CalD', \CalD} \,\, ,
    \label{eq:SpecialDualPairing}
  \end{equation}
  provided that the series on the right-hand side converges absolutely.
\end{definition}

\begin{remark} \label{rem:SpecialDualPairing}

\makeatletter
\hyper@anchor{\@currentHref}
\makeatother
\begin{enumerate}[leftmargin=0.7cm]
  \item[(i)] For $f \in L^2 (\RR^d)$ satisfying $\widehat{f} \equiv 0$ almost everywhere on
             $\RHat^d \setminus \CalO$ and for ${g \in L^1 (\RR^d) \cap L^2 (\RR^d)}$
             with $\widehat{g} \in C^\infty (\widehat{\RR}^d)$,
             the extended inner product defined above coincides with the
             standard inner product on $L^2$.
             Indeed, since $|\varphi_i (\xi)| \leq \|\varphi_i\|_{\Fourier L^1} \leq C_\Phi$
             and thus $\sum_{i \in I} |\varphi_i (\xi)| \leq N_\CalQ \, C_\Phi$,
             we can apply the dominated convergence theorem to see that
             \begin{align*}
               \quad \quad
               \langle f \mid g \rangle_{\Phi}
               &= \sum_{i \in I}
                   \langle
                     \widehat{f} \mid \varphi_i \cdot \widehat{g} \,
                   \rangle_{\CalD', \CalD}
               = \sum_{i \in I}
                    \int_{\widehat{\RR}^d}
                      \widehat{f}(\xi) \,\,
                      \overline{\varphi_i (\xi)} \,\,
                      \overline{\widehat{g} (\xi)}
                    \, d \xi \\
               &= \int_{\widehat{\RR}^d}
                   \widehat{f} (\xi) \,\,
                   \overline{\widehat{g} (\xi)} \,\,
                   \overline{\sum_{i \in I} \varphi_i (\xi)}
                 \, d \xi
               = \int_{\CalO}
                    \widehat{f} (\xi) \, \overline{\widehat{g} (\xi)}
                  \, d \xi \\
                &= \langle \widehat{f} \mid \widehat{g} \, \rangle_{L^2}
                = \langle f \mid g \rangle_{L^2} \, .
             \end{align*}

  \item[(ii)] In general, it is not clear whether the extended inner product defined above is
              independent of the chosen BAPU.
              However, as we will show in Lemma~\ref{lem:AdaptednessIndependent},
              the extended pairing is independent of this choice under suitable hypotheses.
\end{enumerate}
\end{remark}

\section{Boundedness of the frame operator} \label{sec:FrameOperatorBounded}

In this section, we present conditions under which the frame operator associated
with a generalized shift-invariant system is well-defined and bounded
on Besov-type decomposition spaces.
These conditions involve the interplay between smoothness and decay of the generators
and the underlying frequency cover.
See also \cite[Section 2]{MR2765595} and \cite{StructuredBanachFrames} for related estimates.

\subsection{Generalized shift-invariant systems}
\begin{definition}\label{def:GSI_System}
  Let $J$ be a countable index set.
  For $j \in J$, let $C_j \in \mathrm{GL}(d, \RR)$
  and $g_j \in L^2(\RR^d)$.
  A \emph{generalized shift-invariant (GSI) system},
  associated with $(g_j)_{j \in J}$ and $(C_j)_{j \in J}$,
  is defined as
  \[
    \big( \Translation{\gamma} g_j \big)_{j \in J, \gamma \in C_j \ZZ^d}
    = \big( g_j(\mybullet - \gamma) \big)_{j \in J, \gamma \in C_j \ZZ^d}.
  \]
\end{definition}

Throughout the paper, we assume the following standing hypotheses on the system.

\medskip{}

\noindent
\textbf{Standing hypotheses}.
The generators $(g_j)_{j \in J}$ of $\StandardGSI$ will be assumed to satisfy
${g_j \in L^1 (\RR^d) \cap L^2 (\RR^d)}$ and $\widehat{g_j} \in C^{\infty} (\RHat^d)$.
Moreover, we will use the function $t_0 := \sum_{j \in J} |\det C_j|^{-1} |\widehat{g_j} |^2$
for which we assume that there exist constants $A, B >0$ such that
\begin{align}
  A \leq \sum_{j \in J} \frac{1}{|\det C_j|} | \widehat{g_j} (\xi) |^2 \leq B
  \qquad \text{for a.e. } \xi \in \RHat^d.
 \label{eq:GSI_assumption}
\end{align}

\begin{remark}
The assumption \eqref{eq:GSI_assumption} is automatically satisfied
for any generalized shift-invariant \emph{frame} $\StandardGSI$ for $L^2 (\RR^d)$,
with frame bounds $A, B > 0$, if it satisfies the so-called
\emph{$\alpha$-local integrability condition} \eqref{eq:LIC} introduced below.
For a proof, see \cite[Theorem~3.13 and Remark~5]{Fuehr2019System}
and \cite[Proposition~4.1]{hernandez2002unified}.
\end{remark}
\medskip{}

Given the GSI system $\StandardGSI$, the associated
\emph{frame operator} is formally defined as
\[
  S : \mathcal{D}(\CalQ, L^p, \ell^q_w) \to \mathcal{D} (\CalQ, L^p, \ell^q_w),
  \quad f \mapsto \sum_{j \in J}
                    \sum_{k \in \mathbb{Z}^d}
                      \langle f \mid \Translation{C_j k} g_j \rangle_{\Phi} \,
                      \Translation{C_j k } g_j \, .
\]
For analyzing the boundedness and well-definedness of the frame operator,
the following terminology will be convenient.

\begin{definition}\label{def:Adaptedness}
Let $\CalQ = (Q_i)_{i \in I}$ be a decomposition cover of an open set $\CalO \subset \RHat^d$
with BAPU $(\varphi_i)_{i \in I}$. Let $w = (w_i)_{i \in I}$
and $v = (v_j)_{j \in J}$ be weights.
The system $\StandardGSI$ is said to be $(w, v, \Phi)$-\emph{adapted}
if the matrix $M \in \CC^{I \times J}$ defined by
\begin{equation}
  M_{i,j}
  := \max \left\{ \frac{w_i}{v_j}, \frac{v_j}{w_i}\right\}
     \cdot | \det C_j|^{\frac{1}{2}}
     \cdot \|
             (\widecheck{\varphi_i} \ast g_j) \circ C_j
           \|_{W(L^{\infty}, \ell^1)} \,
  \label{eq:AnalysisOperatorSchurMatrix}
\end{equation}
is of Schur-type.
\end{definition}

\begin{lemma} \label{lem:AdaptednessIndependent}
Let $\CalQ = (Q_i)_{i \in I}$ be a decomposition cover with BAPU $\Phi$.
Let $w = (w_i)_{i \in I}$ be a $\CalQ$-moderate weight and let the weight $v = (v_j)_{j \in J}$
be arbitrary.
\begin{enumerate}
  \item[(i)] If $\StandardGSI$ is $(w, v, \Phi)$-adapted, then $\StandardGSI$ is $(w, v, \Psi)$-adapted
             for any BAPU $\Psi$ subordinate to $\CalQ$.

  \item[(ii)] If $\StandardGSI$ is $(w,v,\Phi)$-adapted,
              then the extended inner product $\langle f \mid \Translation{C_j k} g_j \rangle_{\Phi}$
              is well-defined and independent of the choice of the BAPU $\Phi$,
              for any $p,q \in [1,\infty]$, any ${f \in \DecompSp(\CalQ, L^p,\ell_w^q)}$,
              and all $j \in J$ and $k \in \ZZ^d$.
\end{enumerate}
\end{lemma}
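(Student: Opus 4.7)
For part (i), the plan is to exploit that both $\Phi$ and $\Psi$ are partitions of unity on $\CalO$ and that $\CalQ$ is locally finite. On $\CalO$ one has $\psi_i = \psi_i \sum_{\ell \in i^*} \varphi_\ell$ (a finite sum), so, after applying the inverse Fourier transform and convolving with $g_j$,
\[
  \widecheck{\psi_i} \ast g_j = \sum_{\ell \in i^*} \widecheck{\psi_i} \ast (\widecheck{\varphi_\ell} \ast g_j).
\]
Using the identity $(\widecheck{\psi_i} \ast h) \circ C_j = |\det C_j| \cdot (\widecheck{\psi_i} \circ C_j) \ast (h \circ C_j)$, the $L^1$-module property of $W(L^\infty,\ell^1)$, and $\|\widecheck{\psi_i} \circ C_j\|_{L^1} = |\det C_j|^{-1} \|\widecheck{\psi_i}\|_{L^1} \leq |\det C_j|^{-1} C_\Psi$, one deduces
\[
  \|(\widecheck{\psi_i} \ast g_j) \circ C_j\|_{W(L^\infty,\ell^1)}
  \leq C_\Psi \sum_{\ell \in i^*} \|(\widecheck{\varphi_\ell} \ast g_j) \circ C_j\|_{W(L^\infty,\ell^1)}.
\]
Combining this with the $\CalQ$-moderateness bound $\max(w_i/v_j,\, v_j/w_i) \leq C_{w,\CalQ} \max(w_\ell/v_j,\, v_j/w_\ell)$ valid for $\ell \in i^*$ gives $M^{\Psi}_{i,j} \leq C_\Psi\, C_{w,\CalQ} \sum_{\ell \in i^*} M^{\Phi}_{\ell,j}$. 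Summing row- and column-wise and using $|i^*|, |\{i : \ell \in i^*\}| \leq N_\CalQ$ then yields $\|M^{\Psi}\|_{\schur} \leq C_\Psi\, C_{w,\CalQ}\, N_\CalQ\, \|M^{\Phi}\|_{\schur}$.

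For part (ii), I plan to establish well-definedness first and then BAPU-independence. Setting $\varphi_i^* := \sum_{\ell \in i^*}\varphi_\ell$, one has $\varphi_i^* \equiv 1$ on $\supp\varphi_i$, so (assuming $\varphi_i$ is real) moving $\varphi_i$ across the sesquilinear pairing and applying Parseval for the compactly supported distribution $\varphi_i \widehat{f}$ against the test function $\varphi_i^* M_{-C_j k}\widehat{g_j} \in C_c^\infty(\CalO)$ gives
\[
  \langle \widehat{f} \mid \varphi_i M_{-C_j k}\widehat{g_j} \rangle_{\CalD',\CalD}
  = \int \Fourier^{-1}(\varphi_i \widehat{f})(x) \cdot \overline{\Translation{C_j k}(\widecheck{\varphi_i^*} \ast g_j)(x)}\, dx.
\]
H\"older's inequality bounds this by $\|\Fourier^{-1}(\varphi_i \widehat{f})\|_{L^p} \cdot \|\widecheck{\varphi_i^*} \ast g_j\|_{L^{p'}}$, and the latter factor is controlled by $|\det C_j|^{1/p'} \sum_{\ell \in i^*}\|(\widecheck{\varphi_\ell}\ast g_j)\circ C_j\|_{W(L^\infty,\ell^1)}$, using $\|h\|_{L^{p'}} \leq |\det C_j|^{1/p'} \|h\circ C_j\|_{W(L^\infty,\ell^1)}$ and $\widecheck{\varphi_i^*}\ast g_j = \sum_{\ell\in i^*} \widecheck{\varphi_\ell}\ast g_j$. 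Summing in $i$ via weighted H\"older with exponents $(q,q')$ and the $\CalQ$-moderate weights $(w,1/w)$, and invoking Lemma~\ref{lem:clustering_map} together with the crude bound $\|(M^{\Phi}_{\ell,j})_\ell\|_{\ell^{q'}} \leq \|M^{\Phi}\|_{\schur}$ (valid since each entry is dominated by the finite row-sum $\sum_\ell M^{\Phi}_{\ell,j} \leq \|M^{\Phi}\|_{\schur}$), produces absolute convergence of the defining series.

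For BAPU-independence, given two BAPUs $\Phi,\Psi$ subordinate to $\CalQ$, both are adapted by part (i). On $\supp\varphi_i$ the finite sum $\sum_{\ell \in i^*}\psi_\ell$ equals $1$, which yields $\langle \widehat{f} \mid \varphi_i \widehat{g}\rangle = \sum_{\ell \in i^*}\langle \widehat{f} \mid \varphi_i\psi_\ell\widehat{g}\rangle$ (a finite sum, hence freely interchangeable with the pairing). Summing in $i$ produces a double sum over $\{(i,\ell) : \ell \in i^*\}$ whose absolute convergence follows from an analogous H\"older--Schur estimate applied with $d_\ell := \|\widecheck{\psi_\ell}\ast g_j\|_{L^{p'}}$ controlled by $\|M^{\Psi}\|_{\schur} < \infty$. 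Fubini permits swapping the order of summation, and using that $\sum_{i \in \ell^*}\varphi_i \equiv 1$ on $\supp\psi_\ell$, the inner sum collapses to $\langle \widehat{f} \mid \psi_\ell\widehat{g}\rangle$, so the total equals $\sum_\ell \langle \widehat{f} \mid \psi_\ell\widehat{g}\rangle$. The main obstacle is the careful application of the Parseval identity for a compactly supported distribution paired against a $C_c^\infty$-test function in order to realize the pairing as an absolutely convergent $L^p$--$L^{p'}$ integral, together with the bookkeeping of the scaling factor $|\det C_j|^{1/p'-1/2}$ that links the $L^{p'}$-norm of $\widecheck{\varphi_\ell}\ast g_j$ to the normalized amalgam norm $\|(\widecheck{\varphi_\ell}\ast g_j)\circ C_j\|_{W(L^\infty,\ell^1)}$ featuring in the Schur matrix $M^{\Phi}$.
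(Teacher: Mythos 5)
Your proposal follows essentially the same route as the paper for both parts: part (i) via $\psi_i = \varphi_i^\ast\psi_i$, the convolution scaling identity, the $L^1$-module property of $W(L^\infty,L^1)$, and $\CalQ$-moderateness; part (ii) via realizing the extended pairing as an $L^p$--$L^{p'}$ integral, using the Schur-type bound on $M$ for absolute convergence, and Fubini for BAPU-independence. One small point worth fixing: the parenthetical ``(assuming $\varphi_i$ is real)'' is not needed and is not guaranteed by Definition~\ref{def:BAPU}; the paper sidesteps it by writing $\langle\widehat f\mid\varphi_i\psi_\ell\widehat g\rangle_{\CalD',\CalD}=\langle\varphi_i^\ast\widehat f\mid\varphi_i\psi_\ell\widehat g\rangle$, which only uses $\varphi_i^\ast\equiv 1$ (in particular real-valued) on $\supp\varphi_i$, and then pairs $\Fourier^{-1}(\varphi_i^\ast\widehat f)\in L^p$ against $\Fourier^{-1}(\varphi_i\psi_\ell\widehat g)\in L^{p'}$; your version, keeping $\varphi_i$ on the $\widehat f$ side, forces you to conjugate $\varphi_i$ and hence to assume it real. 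Also, your justification of $\|(M^{\Phi}_{\ell,j})_\ell\|_{\ell^{q'}}\le\|M^{\Phi}\|_{\schur}$ should invoke the embedding $\ell^1\hookrightarrow\ell^{q'}$ (so that the $\ell^{q'}$-norm is bounded by the column sum), not pointwise domination of each entry by the sum.
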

\begin{proof}
We assume throughout that
$\Phi = (\varphi_i)_{i \in I}$ and $\Psi = (\psi_i)_{i \in I}$ are two BAPUs subordinate to $\CalQ$.

We first show that if $\StandardGSI$ is $(w,v,\Phi)$-adapted,
then $\StandardGSI$ is also $(w,v,\Psi)$-adapted.
For this, note that
$(f \ast g) (C x) = |\det C| \cdot \big( (f \circ C) \ast (g \circ C) \big) (x)$
for any $f \in L^1(\RR^d)$, $g \in L^1 (\RR^d) \cap L^\infty (\RR^d)$, and $C \in \GL(d,\RR)$.
Using this, together with $\psi_i = \varphi_i^{\ast} \, \psi_i$, yields
\begin{align*}
    \big\|
      (\widecheck{\psi_i} \ast g_j) \circ C_j
    \big\|_{W(L^\infty, L^1)}
     &
     \leq \sum_{\ell \in i^\ast}
             \Big\|
               \big[
                 (\Fourier^{-1} \psi_i)
                 \ast (\widecheck{\varphi_\ell} \ast g_j)
               \big] \circ C_j
             \Big\|_{W(L^\infty, L^1)} \\
    & = \sum_{\ell \in i^\ast}
          |\det C_j|
          \cdot \Big\|
                  \big[ \widecheck{\psi_i} \circ C_j \big]
                  \ast \big[ (\widecheck{\varphi_\ell} \ast g_j) \circ C_j \big]
                \Big\|_{W(L^\infty, L^1)} \\
    & \leq \sum_{\ell \in i^\ast}
             |\det C_j|
             \cdot \| \widecheck{\psi_i} \circ C_j \|_{L^1}
             \cdot \|
                     (\widecheck{\varphi_\ell} \ast g_j) \circ C_j
                   \|_{W(L^\infty, L^1)} \\
    & \leq C \cdot C_\Psi
           \cdot \sum_{\ell \in i^\ast}
                   \big\|
       (\widecheck{\varphi_\ell} \ast g_j) \circ C_j
     \big\|_{W(L^\infty, \ell^1)} \, ,
  \numberthis \label{eq:AdaptednessIndependenceMainEstimate}
\end{align*}
where $C \geq 1$ is given by the norm equivalence
$\|\cdot\|_{W(L^\infty, \ell^1)} \asymp \|\cdot\|_{W(L^\infty, L^1)}$.

The matrix entries $M_{i,j}$ in~\eqref{eq:AnalysisOperatorSchurMatrix} satisfy
\[
  M_{i,j}
  = \max \{ \frac{w_i}{v_j}, \frac{v_j}{w_i} \}
    \cdot |\det C_j|^{1/2}
    \cdot \big\|
            (\widecheck{\varphi_i} \ast g_j) \circ C_j
          \big\|_{W(L^\infty, \ell^1)}.
\]
Likewise, let us define
\[
  N_{i,j}
  := \max \{ \frac{w_i}{v_j}, \frac{v_j}{w_i} \}
     \cdot |\det C_j|^{1/2}
     \cdot \big\|
             (\widecheck{\psi_i} \ast g_j) \circ C_j
           \big\|_{W(L^\infty, \ell^1)}.
\]
Using the moderateness of the weight $w$
and the equivalence $\ell \in i^\ast \Longleftrightarrow i \in \ell^\ast$,
we obtain that
\begin{align*}
  \sum_{i \in I}
    N_{i,j}
  & \leq C^2 C_\Psi
          \sum_{i \in I}
                 \sum_{\ell \in i^\ast}
                   \max \Big\{ \frac{w_i}{v_j} , \frac{v_j}{w_i} \Big\}
                    |\det C_j|^{1/2}
                   \big\|
       (\widecheck{\varphi_\ell} \ast g_j) \circ C_j
     \big\|_{W(L^\infty, \ell^1)} \\
  & \leq C^2 C_\Psi C_{w,\CalQ}
         \cdot \sum_{\ell \in I}
                 \sum_{i \in \ell^\ast}
                   \max \Big\{ \frac{w_\ell}{v_j} , \frac{v_j}{w_\ell} \Big\}
                    |\det C_j|^{1/2}
                   \big\|
       (\widecheck{\varphi_\ell} \ast g_j) \circ C_j
     \big\|_{W(L^\infty, \ell^1)} \\
  & \leq C^2 C_\Psi C_{w, \CalQ} N_{\CalQ}
          \sum_{\ell \in I}
                 M_{\ell,j}
    \leq C^2 C_\Psi C_{w, \CalQ} N_{\CalQ}  \|M\|_{\mathrm{Schur}}
    < \infty
\end{align*}
for all $j \in J$.
Similarly,
\begin{align*}
  \sum_{j \in J}
    N_{i,j}
  & \leq C^2 C_\Psi
         \cdot \sum_{j \in J}
                 \sum_{\ell \in i^\ast}
                    \max \Big\{ \frac{w_i}{v_j} , \frac{v_j}{w_i} \Big\}
                     |\det C_j|^{1/2}
                    \big\|
       (\widecheck{\varphi_\ell} \ast g_j) \circ C_j
     \big\|_{W(L^\infty, \ell^1)} \\
  & \leq C^2 C_\Psi C_{w,\CalQ}
         \cdot \sum_{\ell \in i^\ast}
                 \sum_{j \in J}
                    M_{\ell, j}
    \leq C^2 C_\Psi C_{w,\CalQ} N_{\CalQ} \|M\|_{\mathrm{Schur}}
    < \infty
\end{align*}
for all $i \in I$.
In combination, these two estimates show that  $N = (N_{i,j})_{i \in I, j \in J}$
is of Schur-type.

Finally, let $p,q \in [1,\infty]$ and $f \in \DecompSp(\CalQ, L^p, \ell_w^q)$,
as well as $j \in J$ and $k \in \ZZ^d$ be arbitrary; we show that the extended product
$\langle f \mid \Translation{C_j k} g_j \rangle_{\Phi}$ is well-defined and that
\(
  \langle f \mid \Translation{C_j k} g_j \rangle_{\Phi}
  = \langle f \mid \Translation{C_j k} g_j \rangle_{\Psi}
\).
To show this, set
\(
  B_{j,i}
  := |\det C_j|^{1/2}
     \cdot \|
             (\widecheck{\varphi_i} \ast g_j) \circ C_j
           \|_{W(C_0, \ell^1)}
\).
Since $\StandardGSI$ is $(w,v,\Phi)$-adapted, Schur's test shows that
\(
  \mathbf{B} :
  \ell_w^q (I) \to \ell_v^q (J),
  (c_i)_{i \in I} \mapsto \Big( \sum_{i \in I} B_{j,i} \, c_i \Big)_{j \in J}
\)
is well-defined and bounded.
Define $d_i := \| \Fourier^{-1} (\varphi_i \cdot \widehat{f} \,)\|_{L^p}$
and $c_i := \| \Fourier^{-1} (\varphi_i^\ast \cdot \widehat{f} \,)\|_{L^p} \vphantom{\sum_j}$,
and note that
$0 \leq c_i \leq \sum_{\ell \in i^\ast} d_\ell = (\Gamma_{\CalQ} \, d)_i$,
whence $c = (c_i)_{i \in I} \in \ell_w^q(I)$, since $d = (d_i)_{i \in I} \in \ell_w^q(I)$
as $f \in \DecompSp(\CalQ, L^p, \ell_w^q)$.

As the final setup, let $p' \in [1,\infty]$ denote the conjugate exponent to $p$,
and set $g := \Translation{C_j k} g_j$.
Since $\|f\|_{L^{p'}} \leq \|f\|_{W(C_0,\ell^1)}$ for all $f \in W(C_0, \ell^1)$
and since ${\widecheck{\varphi_i} \ast g = \Translation{C_j k} (\widecheck{\varphi_i} \ast g_j)}$,
it follows that
\begin{align*}
  \|
    \Fourier^{-1} (\varphi_i \, \psi_\ell \, \widehat{g} \,)
  \|_{L^{p'}}
  & \leq C_\Psi \cdot \|\widecheck{\varphi_i} \ast g\|_{L^{p'}}
    =    C_\Psi
         \cdot |\det C_j|^{1/p'}
         \cdot \| ( \widecheck{\varphi_i} \ast g_j ) \circ C_j \|_{L^{p'}} \\
  & \leq C_\Psi \cdot |\det C_j|^{1/p'}
         \cdot \| ( \widecheck{\varphi_i} \ast g_j ) \circ C_j \|_{W(C_0, \ell^1)}
    =    C_\Psi \cdot |\det C_j|^{\frac{1}{2} - \frac{1}{p}} \cdot B_{j,i} \, .
\end{align*}
Using that $\varphi_i = \varphi_i^\ast \varphi_i$,
and $\widehat{g} \in C^\infty (\RHat^d)$, we next see
\begin{align*}
  \big|
    \langle
      \widehat{f}
      \,\mid\,
      \varphi_i \, \psi_\ell \, \widehat{g} \,
    \rangle_{\CalD', \CalD}
  \big|
  & = \big|
        \langle
          \varphi_i^\ast \, \widehat{f}
          \,\mid\,
          \varphi_i \, \psi_\ell \, \widehat{g} \,
        \rangle_{\Schwartz', \Schwartz}
      \big|
  = \big|
      \langle
        \Fourier^{-1} (\varphi_i^\ast \, \widehat{f} \, )
        \,\mid\,
        \Fourier^{-1} (\varphi_i \, \psi_\ell \, \widehat{g})
      \rangle_{L^p, L^{p'}}
    \big| \\
  & \leq \| \Fourier^{-1} (\varphi_i^\ast \, \widehat{f} \,) \|_{L^p}
         \cdot \| \Fourier^{-1} (\varphi_i \, \psi_\ell \, \widehat{g} ) \|_{L^{p'}}
    \leq C_\Psi \cdot c_i \cdot |\det C_j|^{\frac{1}{2} - \frac{1}{p}} \cdot B_{j, i} \, ,
\end{align*}
where the right-hand side is independent of $\ell$.
Given this estimate, it follows immediately that
\[
  \sum_{i \in I}
    \sum_{\ell \in i^\ast}
      |
       \langle
         \widehat{f}
         \,\mid\,
         \varphi_i \, \psi_\ell \, \widehat{g} \,
       \rangle_{\CalD',\CalD}
      |
  \leq C_\Psi N_{\CalQ} \cdot |\det C_j|^{\frac{1}{2} - \frac{1}{p}} \cdot (\mathbf{B} \, c)_j
  < \infty \, .
\]
Therefore, we can interchange the sums in the following calculation:
\begin{align*}
  \langle f \,\mid\, g \rangle_{\Phi}
  & = \sum_{i \in I}
        \langle \, \widehat{f} \,\mid\, \varphi_i \, \widehat{g} \, \rangle_{\CalD',\CalD}
    = \sum_{i \in I} \,
        \sum_{\ell \in i^\ast}
          \langle\,
            \widehat{f}
            \,\mid\,
            \varphi_i \, \psi_\ell \, \widehat{g}\,
          \rangle_{\CalD', \CalD} \\
  & = \sum_{\ell \in I} \,
        \sum_{i \in \ell^\ast}
          \langle\,
            \widehat{f}
            \,\mid\,
            \varphi_i \, \psi_\ell \, \widehat{g}\,
          \rangle_{\CalD', \CalD}
    = \sum_{\ell \in I}
        \langle\,
          \widehat{f}
          \,\mid\,
          \psi_\ell \, \widehat{g}\,
        \rangle_{\CalD', \CalD}
    = \langle f \,\mid\, g \rangle_{\Psi} .
\end{align*}
This calculation implies in particular that both $\langle f \,\mid\, g \rangle_{\Phi}$
and $\langle f \,\mid\, g \rangle_{\Psi}$ are well-defined.
\end{proof}

\subsection{Sequence spaces and operators}
The frame operator can be factored into the \emph{coefficient} and the \emph{reconstruction} operator.
In this subsection, we investigate the boundedness of these operators on suitable sequence spaces.

\begin{definition}\label{def:CoefficientSpace}
  Let $\StandardGSI$ be a generalized shift-invariant system and let
  $p, q \in [1,\infty]$.
  For a  weight $v = (v_j)_{j \in J}$ and a sequence
  $c = (c_k^{(j)})_{j \in J, k \in \ZZ^d} \in \CC^{J \times \ZZ^d}$, define
  \[
    \| c \|_{Y_v^{p,q}}
    := \left\|
         \left( v_j \cdot
           | \det C_j |^{\frac{1}{p}-\frac{1}{2}}
           \cdot \| (c_k^{(j)})_{k \in \ZZ^d} \|_{\ell^p}
         \right)_{j \in J}
       \right\|_{\ell^q} \in [0,\infty].
  \]
  Finally, define the associated \emph{coefficient space} $Y^{p,q}_v$ as
  \[
    Y_v^{p,q}
    := \left\{
          c \in \CC^{J \times \ZZ^d}
          \with
          \| c \|_{Y_v^{p,q}} < \infty
       \right\} .
  \]
\end{definition}

 Let $\StandardDecompSp$ be a decomposition space.
 Given a GSI system $\StandardGSI$ and an associated coefficient space $Y_v^{p,q}$,
 the \emph{reconstruction} or \emph{synthesis} operator is formally defined as the mapping
 \begin{equation}
  \synthesis : Y_{v}^{p,q} \to \DecompSp(\CalQ,L^p,\ell_w^q),
               \quad
               (c_{k}^{(j)})_{j \in J,k \in \mathbb{Z}^d}
               \mapsto  \sum_{j \in J} \,
                          \sum_{k \in \mathbb{Z}^d} \,
                            c_{k}^{(j)} \,
                            \Translation{C_j k} \, g_j \, ,
  \label{eq:SynthesisOperator}
 \end{equation}
 while the \emph{coefficient} or \emph{analysis} operator is formally defined by
 \begin{align*}
   \analysis : \DecompSp(\CalQ, L^p, \ell_w^q) \to Y_v^{p,q},
               \quad
               f \mapsto \Big(
                           \langle
                             f \mid \Translation{C_j k} g_j
                           \rangle_{\Phi}
                         \Big)_{j \in J, k \in \ZZ^d} \, \, ,
 \end{align*}
 where $\langle \cdot, \cdot \rangle_{\Phi}$ denotes the extended pairing
 defined in Section~\ref{sec:pairing}.

\subsection{Boundedness of analysis and synthesis operators}
\label{sec:CoefficientReconstructionOperatorBounded}

For proving the boundedness of the operators $\synthesis$ and $\analysis$,
we will invoke the following lemma.

\begin{lemma}\label{lem:GeneralLatticeSynthesis}
  Let $g \in W(C_0, \ell^1)(\RR^d)$ and $M \in \GL(\RR^d)$.
  Then the map
  \[
    D_{M,g}  :
    c = (c_k)_{k \in \ZZ^d}
    \mapsto \sum_{k \in \ZZ^d}
              c_k \, \Translation{M k} g
  \]
  is bounded from $\ell^{\infty} (\ZZ^d)$ into $L^{\infty} (\RR^d)$,
  with the series converging pointwise absolutely.
  Furthermore, for any $p \in [1,\infty]$, the mapping
  $D_{M,g} : \ell^p (\ZZ^d) \to L^p (\RR^d)$
  is well-defined and bounded, with
  \(
    \| D_{M,g} \|_{\ell^p \to L^p}
    \leq | \det M \, |^{1/p} \cdot \| g\circ M  \|_{W(L^{\infty}, \ell^1)}.
  \)
\end{lemma}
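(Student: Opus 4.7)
The plan is to reduce the claim to the case $M = I$ via a linear change of variables, then handle $p = \infty$ with a direct pointwise estimate and $p \in [1,\infty)$ with a H\"older argument keyed to the discrete form of the Wiener amalgam norm.

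First I would set $h := g \circ M$, which lies in $W(L^\infty, \ell^1)$ by hypothesis, and observe that $g(x - Mk) = (g \circ M)(M^{-1}x - k) = h(M^{-1}x - k)$. Hence $(D_{M,g}\, c)(x) = (D_{I,h}\, c)(M^{-1} x)$, and the standard change-of-variables formula gives
\[
  \| D_{M,g}\, c \|_{L^p(\RR^d)}
  = |\det M|^{1/p} \cdot \| D_{I,h}\, c \|_{L^p(\RR^d)}
\]
for every $p \in [1,\infty]$ (with the usual convention $|\det M|^{1/\infty} = 1$). It therefore suffices to prove that $\| D_{I,h}\, c \|_{L^p} \le \|h\|_{W(L^\infty, \ell^1)} \cdot \|c\|_{\ell^p}$.

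The key ingredient is the uniform pointwise estimate
\[
  \sum_{k \in \ZZ^d} |h(y - k)| \le \|h\|_{W(L^\infty, \ell^1)}
  \qquad \text{for every } y \in \RR^d.
\]
To see this, I would write $y = n_0 + \rho$ with $n_0 \in \ZZ^d$ and $\rho \in [0,1)^d$, so that each term satisfies $|h(y-k)| \le \|\mathds{1}_{(n_0 - k) + [0,1)^d} \cdot h\|_{L^\infty}$; summing over $k \in \ZZ^d$ and reindexing by $m := n_0 - k$ recovers exactly the discrete amalgam norm. This bound immediately yields pointwise absolute convergence of the synthesis series and the $p = \infty$ case of the claim.

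For $p \in [1,\infty)$, let $p'$ be the conjugate exponent. Applying H\"older's inequality to the decomposition $|c_k|\cdot|h(y-k)| = (|c_k|^p\, |h(y-k)|)^{1/p} \cdot |h(y-k)|^{1/p'}$ gives, for almost every $y \in \RR^d$,
\[
  \Big| \sum_{k \in \ZZ^d} c_k\, h(y-k) \Big|^p
  \le \Big( \sum_{k} |c_k|^p\, |h(y-k)| \Big) \Big( \sum_{k} |h(y-k)| \Big)^{p-1}
  \le \|h\|_{W(L^\infty, \ell^1)}^{p-1} \sum_{k} |c_k|^p\, |h(y-k)|.
\]
Integrating in $y$ and invoking Tonelli together with the trivial inequality $\|h\|_{L^1} \le \|h\|_{W(L^\infty, \ell^1)}$ then yields
\[
  \| D_{I,h}\, c \|_{L^p}^p
  \le \|h\|_{W(L^\infty, \ell^1)}^{p-1} \cdot \|h\|_{L^1} \cdot \|c\|_{\ell^p}^p
  \le \|h\|_{W(L^\infty, \ell^1)}^{p} \cdot \|c\|_{\ell^p}^p,
\]
which is the desired bound. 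The only mildly subtle point is the uniform pointwise estimate on $\sum_k |h(y-k)|$, which however is precisely what the discrete $W(L^\infty, \ell^1)$ norm is built to control; the remaining steps are routine bookkeeping via H\"older, Tonelli, and the change of variables.
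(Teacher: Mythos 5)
Your proof is correct. You take the same change-of-variables reduction to $M = \identity_{\R^d}$ that the paper uses (the identity $D_{M,g}\,c(x) = (D_{\identity, g\circ M}\,c)(M^{-1}x)$ is precisely what the paper writes), but where the paper then simply cites \cite[Lemma~2.9]{GroechenigNonuniformSampling} for the $M=\identity$ case, you supply a self-contained argument. Your approach buys self-containment at essentially no cost: the uniform bound $\sup_y \sum_k |h(y-k)| \leq \|h\|_{W(L^\infty,\ell^1)}$ via reindexing is exactly the mechanism behind the cited lemma, the $p=\infty$ case follows immediately, and the H\"older/Tonelli interpolation for $p\in[1,\infty)$ (splitting $|c_k||h(y-k)|$ into $1/p$ and $1/p'$ powers and using $\|h\|_{L^1} \leq \|h\|_{W(L^\infty,\ell^1)}$) is the standard Schur-type argument. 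One small point worth being explicit about: the inequality $|h(y-k)| \leq \|\mathds{1}_{(n_0-k)+[0,1]^d}\cdot h\|_{L^\infty}$ uses that $h = g\circ M$ is \emph{continuous} (recall $g\in W(C_0,\ell^1)$), since for general $L^\infty$ functions an essential sup does not dominate a pointwise value. Also, for $p\in(1,\infty)$ it holds for every $y$, not merely almost every $y$, precisely by continuity; for $p=1$ the exponent $p-1$ degenerates to $0$ and the H\"older step is vacuous, but the conclusion still reads correctly.
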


\begin{proof}
  For the case $M = \identity_{\R^d}$, this follows from \cite[Lemma~2.9]{GroechenigNonuniformSampling}%
  ---see also \cite{Bui2008Affine}.
  For the general case, simply note that
  $D_{M,g} \, c (x) = \big( D_{\identity_{\R^d}, g \circ M} (c) \big) (M^{-1} x)$.
\end{proof}

The following technical lemma allows us to use density arguments
for the full range $p,q \in [1,\infty]$.

\begin{lemma}\label{lem:DominatedFunctionsHaveDominatedCoefficients}
  Let $p,q \in [1,\infty]$. Suppose the system $\StandardGSI$ is $(w,v,\Phi)$-adapted with
  matrix $M$ as in \eqref{eq:AnalysisOperatorSchurMatrix}.
  Then, for any $F \in \ell_w^q (I; L^p)$, there is a sequence
  $\theta = (\theta_{j,k})_{j \in J, k \in \ZZ^d} \in Y_v^{p,q}$ such that
  \[
    \|\theta\|_{Y_v^{p,q}}
    \leq \|M\|_{\schur}
         \cdot \|\Gamma_{\CalQ}\|_{\ell_w^q \to \ell_w^q}
         \cdot \|F\|_{\ell_w^q (I; L^p)}
  \]
  and
  \(
    |
     \langle
       f
       \,\mid\,
       \Translation{C_j k} g_j
     \rangle_{\Phi}
    |
    \leq \theta_{j,k}
  \)
   for all $j \in J, k \in \ZZ^d$
   and every $(F,\Phi)\text{-dominated } f \in Z'(\CalO)$.

  Moreover, if $(f_n)_{n \in \NN}$ is a sequence of $(F,\Phi)$-dominated
  Fourier distributions $f_n \in Z'(\CalO)$ satisfying $f_n \to f_0 \in Z'(\CalO)$
  with convergence in $Z'(\CalO)$, then
  \(
    \langle f_n \,\mid\, \Translation{C_j k} g_j \rangle_{\Phi}
    \to \langle f_0 \,\mid\, \Translation{C_j k} g_j \rangle_{\Phi}
  \)
  for all $j \in J, k \in \ZZ^d$.
\end{lemma}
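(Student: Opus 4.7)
\textbf{The plan is to decompose the extended pairing term-by-term via the BAPU, bound each summand by a convolution of the dominating function $F_i$ with a reflected version of $\widecheck{\varphi_i} \ast g_j$ sampled at $C_j \ZZ^d$, and then combine Minkowski's inequality, a sampling-after-convolution estimate (dual to Lemma~\ref{lem:GeneralLatticeSynthesis}), and Schur's test to bound $\|\theta\|_{Y_v^{p,q}}$.}

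For fixed $i \in I$, $j \in J$, $k \in \ZZ^d$, I would first invoke the identity $\overline{\varphi_i^\ast} \cdot \varphi_i = \varphi_i$ with $\varphi_i^\ast := \sum_{\ell \in i^\ast} \varphi_\ell$ (this holds since admissibility forces $\varphi_\ell$ to vanish on $\{\varphi_i \neq 0\}$ for $\ell \notin i^\ast$, so $\varphi_i^\ast \equiv 1$ there). Transferring $\varphi_i^\ast$ to the distributional factor of the sesquilinear pairing yields
\[
  \langle \widehat{f} \mid \varphi_i \widehat{\Translation{C_j k} g_j} \rangle_{\CalD', \CalD}
  = \langle \varphi_i^\ast \widehat{f} \mid \varphi_i \widehat{\Translation{C_j k} g_j} \rangle.
\]
The distribution $\varphi_i^\ast \widehat{f}$ is compactly supported, so by Paley-Wiener and the $(F, \Phi)$-dominance, $\Fourier^{-1}(\varphi_i^\ast \widehat{f}) = \sum_{\ell \in i^\ast} \Fourier^{-1}(\varphi_\ell \widehat{f})$ is a smooth function pointwise bounded by $F_i^\ast := \sum_{\ell \in i^\ast} F_\ell \in L^p$. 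Since $\Fourier^{-1}(\varphi_i \widehat{\Translation{C_j k} g_j}) = \Translation{C_j k}(\widecheck{\varphi_i} \ast g_j)$ lies in $W(L^\infty, \ell^1) \hookrightarrow L^{p'}$, a Plancherel-type duality reduces the pairing to an absolutely convergent integral and produces
\[
  |\langle \widehat{f} \mid \varphi_i \widehat{\Translation{C_j k} g_j} \rangle|
  \leq (F_i^\ast \ast H_{j,i})(C_j k), \qquad H_{j,i}(x) := |(\widecheck{\varphi_i} \ast g_j)(-x)|.
\]
The natural majorant is then $\theta_{j,k} := \sum_{i \in I} (F_i^\ast \ast H_{j,i})(C_j k)$.

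To bound $\|\theta\|_{Y_v^{p,q}}$, I would apply Minkowski in $\ell^p_k$ and invoke the sampling-after-convolution inequality (the dual of Lemma~\ref{lem:GeneralLatticeSynthesis}, combined with the reflection-invariance of the amalgam norm) to get
\[
  \|((F_i^\ast \ast H_{j,i})(C_j \cdot))_k\|_{\ell^p}
  \lesssim |\det C_j|^{1/p'} \, \|F_i^\ast\|_{L^p} \, \|(\widecheck{\varphi_i} \ast g_j) \circ C_j\|_{W(L^\infty, \ell^1)}.
\]
Multiplying by $v_j |\det C_j|^{1/p - 1/2}$ collapses the determinant powers into $|\det C_j|^{1/2}$, so the $g_j$-factor matches $M_{i,j}$ from~\eqref{eq:AnalysisOperatorSchurMatrix} after writing $v_j = (v_j/w_i) w_i$ and bounding $v_j/w_i \leq \max\{v_j/w_i, w_i/v_j\}$. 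Schur's test on $M$ then yields $\|\theta\|_{Y_v^{p,q}} \leq \|M\|_\schur \cdot \|(w_i \|F_i^\ast\|_{L^p})_i\|_{\ell^q}$; finally, $\|F_i^\ast\|_{L^p} \leq (\Gamma_{\CalQ} (\|F_\ell\|_{L^p})_\ell)_i$ together with Lemma~\ref{lem:clustering_map} produces the claimed factor $\|\Gamma_{\CalQ}\|_{\ell^q_w \to \ell^q_w} \cdot \|F\|_{\ell^q_w(I; L^p)}$.

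For the convergence claim I would fix $j, k$ and apply dominated convergence to the series over $i$. Each summand $\langle \widehat{f_n} \mid \varphi_i \widehat{\Translation{C_j k} g_j} \rangle$ converges to its counterpart for $f_0$ because $\overline{\varphi_i \widehat{\Translation{C_j k} g_j}} \in C_c^\infty(\CalO)$ and $f_n \to f_0$ in $Z'(\CalO)$; the uniform bound $|\langle \widehat{f_n} \mid \varphi_i \widehat{\Translation{C_j k} g_j} \rangle| \leq (F_i^\ast \ast H_{j,i})(C_j k)$, valid for every $n$ by $(F, \Phi)$-dominatedness, provides the summable majorant $\theta_{j,k} < \infty$, legitimizing the interchange. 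The main technical obstacle I anticipate is the justification of the Plancherel-type duality in the first step: although Paley-Wiener makes $\Fourier^{-1}(\varphi_i^\ast \widehat{f})$ a smooth tempered distribution, one must identify it with an honest bounded function dominated by $F_i^\ast$ in order to reduce the distributional pairing against $\Translation{C_j k}(\widecheck{\varphi_i} \ast g_j)$ to an absolutely convergent integral, which can be handled by approximating the latter by Schwartz functions in $L^{p'}$. A secondary bookkeeping issue---the conjugation inherent in the sesquilinear pairing---is resolved cleanly by the identity $\overline{\varphi_i^\ast} \cdot \varphi_i = \varphi_i$.
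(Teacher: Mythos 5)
Your proposal follows essentially the same route as the paper's proof: the identity $\overline{\varphi_i^\ast}\varphi_i = \varphi_i$, the dominating integrals $(F_\ell \ast |(\widecheck{\varphi_i}\ast g_j)(-\cdot)|)(C_j k)$, the same choice of $\theta_{j,k}$, and Schur's test combined with the clustering map. The one place where you diverge is in bounding $\|((F_i^\ast \ast H_{j,i})(C_j k))_k\|_{\ell^p}$: you invoke a ``sampling-after-convolution'' estimate as the formal dual of Lemma~\ref{lem:GeneralLatticeSynthesis}, whereas the paper derives the same bound directly by a H\"older inequality with respect to the measure $d\mu_{i,j,k}(x) = (\Translation{C_j k}|\widecheck{\varphi_i}\ast g_j|)(x)\,dx$, followed by a single application of Lemma~\ref{lem:GeneralLatticeSynthesis} to the constant sequence $(1)_{k}$. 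The paper's route has the advantage of being self-contained and producing the sharp constant~$1$ that the lemma's statement requires, and of sidestepping the duality subtleties at the endpoints $p \in \{1,\infty\}$ (where the adjoint of $D_{M,g}$ acting between the natural $\ell^p/L^p$ scales is not literally a Banach-space adjoint). If you want to keep your duality formulation, you should either prove the sampling-after-convolution lemma directly (which essentially reduces to the paper's measure-theoretic H\"older argument), or at least check that it yields the constant~$1$, since your $\lesssim$ would otherwise not reproduce the stated bound $\|\theta\|_{Y_v^{p,q}} \leq \|M\|_{\schur}\|\Gamma_\CalQ\|\,\|F\|$. Your treatment of the convergence claim is correct and slightly more direct than the paper's (which dominates by $\ell^p$-norms of the $\zeta$'s rather than the pointwise $\theta_{j,k}$), and your discussion of the Plancherel-type identification is sound.
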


\begin{proof}
  Let $f \in Z'(\CalO)$ be $(F,\Phi)$-dominated.
  Using $\overline{\varphi_i^\ast} \varphi_i = \varphi_i$
  and the estimate \eqref{eq:DominatedDefinition}, we see that
  \begin{align*}
    \big|
      \big\langle
        \widehat{f}
        \,\,\big|\,\,
        \varphi_i \cdot \Fourier [\Translation{C_j k} g_j]
      \big\rangle_{\CalD',\CalD}
    \big|
    & = \big|
          \big\langle
            \varphi_i^\ast \, \widehat{f}
            \,\,\big|\,\,
            \Fourier \big[ \Translation{C_j k} (\widecheck{\varphi_i} \ast g_j) \big]
          \big\rangle_{\Schwartz', \Schwartz}
        \big| \\
  &   \leq \sum_{\ell \in i^\ast}
             \Big|
               \big\langle
                 \Fourier^{-1} (\varphi_\ell \, \widehat{f} \,)
                 \,\,\big|\,\,
                 \Translation{C_j k} (\widecheck{\varphi_i} \ast g_j)
               \big\rangle_{\Schwartz', \Schwartz}
             \Big| \\
    & \leq \sum_{\ell \in i^\ast}
             \int_{\RR^d}
               F_\ell (x) \cdot \big( \Translation{C_j k} |\widecheck{\varphi_i} \ast g_j| \big) (x)
             \, d x
      =:   \sum_{\ell \in i^\ast}
             \zeta_{i,j,k,\ell} \, ,
  \numberthis  \label{eq:DominatedSequenceMainIdentity}
  \end{align*}
  and thus
  \begin{equation}
    \big| \langle f \mid \Translation{C_j k} g_j \rangle_{\Phi} \big|
    = \Big|
        \sum_{i \in I}
          \big\langle
            \widehat{f}
            \,\,\big|\,\,
            \varphi_i \cdot \Fourier[\Translation{C_j k} g_j]
          \big\rangle_{\CalD', \CalD}
      \Big|
    \leq \sum_{i \in I}
           \sum_{\ell \in i^\ast}
             \zeta_{i,j,k,\ell}
    =:   \theta_{j,k} \,
    \label{eq:DominatingSequenceThetaDefinition}
  \end{equation}
  with $\zeta_{i,j,k,\ell}$ and $\theta_{j,k}$ being independent of $f$.

  Next, define a measure $\mu_{i,j,k}$ on $\RR^d$ by
  $d \mu_{i,j,k} (x) := \big( \Translation{C_j k} |\widecheck{\varphi_i} \ast g_j| \big) (x) \, d x$.
  Then
  \begin{align*}
      \zeta_{i,j,k,\ell}
        = \int_{\RR^d}
            F_\ell (x) \cdot 1
          \, d \mu_{i,j,k}(x)
      & \leq \| F_\ell \|_{L^p (\mu_{i,j,k})} \cdot \| 1 \|_{L^{p'} (\mu_{i,j,k})} \\
      & =    \| F_\ell \|_{L^p (\mu_{i,j,k})}
             \cdot \| \Translation{C_j k} (\widecheck{\varphi_i} \ast g_j) \|_{L^1}^{1/p'} \\
     & \leq |\det C_j|^{1/p'}
             \cdot \| F_\ell \|_{L^p (\mu_{i,j,k})}
             \cdot \| ( \widecheck{\varphi_i} \ast g_j) \circ C_j \|_{W(L^\infty, \ell^1)}^{1/p'} \, .
    \numberthis \label{eq:DominatedSequenceThetaMainEstimate}
  \end{align*}
  There are now two cases.
  If $p = \infty$, then the estimate \eqref{eq:DominatedSequenceThetaMainEstimate} and
  $\| \cdot \|_{L^{\infty} (\mu_{i,j,k})} \leq \| \cdot \|_{L^{\infty}}$
  yield that
  \[
    \|( \zeta_{i,j,k,\ell} )_{k \in \ZZ^d}\|_{\ell^\infty}
    \leq |\det C_j|^{1/p'}
         \cdot \|F_\ell\|_{L^p(\RR^d)}
         \cdot \| (\widecheck{\varphi_i} \ast g_j) \circ C_j \|_{W(L^\infty,\ell^1)}.
  \]
  If $p < \infty$, then \eqref{eq:DominatedSequenceThetaMainEstimate}
  and Lemma~\ref{lem:GeneralLatticeSynthesis} together show that
  \begin{align*}
    \sum_{k \in \ZZ^d}
      \zeta_{i,j,k,\ell}^p
    & \leq |\det C_j|^{p/p'}
           \cdot \| (\widecheck{\varphi_i} \ast g_j) \circ C_j \|_{W(L^\infty, \ell^1)}^{p/p'}
           \cdot \sum_{k \in \ZZ^d}
                   \int_{\RR^d}
                     (F_\ell(x))^p
                     \cdot \big( \Translation{C_j k} |\widecheck{\varphi_i} \ast g_j| \big) (x)
                   \, d x \\
    & =    |\det C_j|^{p/p'}
           \cdot \| ( \widecheck{\varphi_i} \ast g_j ) \circ C_j \|_{W(L^\infty, \ell^1)}^{p/p'}
           \cdot \int_{\RR^d}
                   (F_\ell(x))^p
                   \cdot \big[ D_{C_j, |\widecheck{\varphi_i} \ast g_j|} (1)_{k \in \ZZ^d} \big] (x)
                 \, d x \\
    & \leq |\det C_j|^{p/p'}
           \cdot \|
                   ( \widecheck{\varphi_i} \ast g_j ) \circ C_j
                 \|_{W(L^\infty, \ell^1)}^{1 + (p/p')}
           \cdot \|F_\ell\|_{L^p (\RR^d)}^p \, .
  \end{align*}
  Hence,
  \(
    \| (\zeta_{i,j,k,\ell})_{k \in \ZZ^d} \|_{\ell^p}
    \leq |\det C_j|^{1/p'}
         \cdot \| (\widecheck{\varphi_i} \ast g_j) \circ C_j\|_{W(L^\infty, \ell^1)}
         \cdot \|F_\ell\|_{L^p} \,
  \)
  for any $p \in [1,\infty]$.

  \smallskip{}

  Define $c \in \ell_w^q(I)$ by $c_\ell := \|F_\ell\|_{L^p}$.
  Then, for all $j \in J$,
  \begin{align*}
      &v_j \, |\det C_j|^{\frac{1}{p}-\frac{1}{2}} \, \| (\theta_{j,k})_{k \in \ZZ^d} \|_{\ell^p} \\
      & \quad \quad \quad \leq \sum_{i \in I}
               \sum_{\ell \in i^\ast}
                 v_j \, |\det C_j|^{\frac{1}{p}-\frac{1}{2}}
                 \| (\zeta_{i,j,k,\ell})_{k \in \ZZ^d} \|_{\ell^p} \\
      & \quad \quad \quad \leq \sum_{i \in I}
               \Big[
                 v_j \, |\det C_j|^{\frac{1}{p} - \frac{1}{2}}\, |\det C_j|^{1 - \frac{1}{p}} \,
                 \| (\widecheck{\varphi_i} \ast g_j) \circ C_j\|_{W(L^\infty, \ell^1)}
                 \sum_{\ell \in i^\ast}
                   c_\ell
               \Big] \\
      &\quad \quad \quad  \leq \sum_{i \in I}
               M_{i,j} \cdot w_i \cdot (\Gamma_{\CalQ} \, c)_i, \numberthis
    \label{eq:DominatingSequenceExistenceAlmostDone}
  \end{align*}
  where $M_{i,j}$ is defined as in Equation~\eqref{eq:AnalysisOperatorSchurMatrix}.
  Next, since $\StandardGSI$ is $(w,v,\Phi)$-adapted, Schur's test
  shows that
  \(
    \mathbf{M} :
    \ell^q (I) \to \ell^q(J),
    (d_i)_{i \in I} \mapsto \big( \sum_{i \in I} M_{i,j} d_i \big)_{j \in J}
  \)
  is well-defined and bounded, with norm
  $\|\mathbf{M}\|_{\ell^q \to \ell^q} \leq \|M\|_{\mathrm{Schur}}$.
  Consequently, we obtain
  \begin{align*}
    \| (\theta_{j,k})_{j \in J, k \in \ZZ^d} \|_{Y_v^{p,q}}
          \leq \big\| \mathbf{M} \big(w \cdot \Gamma_{\CalQ} (c) \big) \big\|_{\ell^q (J)}
            \leq \|M\|_{\mathrm{Schur}}
           \cdot \| \Gamma_{\CalQ} \|_{\ell_w^q \to \ell_w^q}
           \cdot \|c\|_{\ell_w^q} \, .
  \end{align*}
  But $\|c\|_{\ell_w^q} = \|F\|_{\ell_w^q(I; L^p)}$,
  and thus the first part of the proof is complete.

  For the proof of the second part, first note
  \[
    \langle
      \widehat{f_n}
      \mid
      \varphi_i \cdot \Fourier [ \Translation{C_j k} g_j ]
    \rangle_{\CalD', \CalD}
    \xrightarrow[n \to \infty]{}
    \langle
      \widehat{f_0}
      \mid
      \varphi_i \cdot \Fourier [ \Translation{C_j k} g_j ]
    \rangle_{\CalD', \CalD}
  \]
  since $\varphi_i \cdot \Fourier [\Translation{C_j k} g_j] \in C_c^\infty (\CalO)$
  and since $f_n \to f_0$ in $Z'(\CalO)$ which implies $\widehat{f_n} \to \widehat{f_0}$
  in $\DistributionSpace(\CalO)$.
  Next, since the $f_n$ are $(F,\Phi)$-dominated,
  Equation~\eqref{eq:DominatedSequenceMainIdentity} shows that
  \[
    |
     \langle
       \widehat{f_n}
       \mid
       \varphi_i \cdot \Fourier [ \Translation{C_j k} g_j ]
     \rangle_{\CalD', \CalD}
    |
    \leq \sum_{\ell \in i^\ast}
           \zeta_{i,j,k,\ell}
   \leq u_j^{-1}
        \sum_{\ell \in i^\ast}
          u_j \, \| (\zeta_{i,j,k,\ell})_{k \in \ZZ^d}\|_{\ell^p}
   =: \gamma_{i,j},
  \]
  while Equation~\eqref{eq:DominatingSequenceExistenceAlmostDone}
  shows that $\sum_{i \in I} \gamma_{i,j} < \infty$.
  Thus,
  \[
    \langle f_n \mid \Translation{C_j k} g_j \rangle_{\Phi}
    \xrightarrow[n \to \infty]{}
        \langle f_0 \mid \Translation{C_j k} g_j \rangle_{\Phi}
  \]
  by definition of $\langle \cdot \mid \cdot \rangle_{\Phi}$ and by
  the dominated convergence theorem.
\end{proof}

We now prove the boundedness of the coefficient and reconstruction operators.

\begin{proposition} \label{prop:AnalysisSynthesisOperatorGeneral}
  Let $\mathcal{D}(\CalQ, L^p, \ell_w^q)$ be a decomposition space and
  let $Y_v^{p,q}$ be the sequence space associated to the GSI system $\StandardGSI$
  as per Definition~\ref{def:CoefficientSpace}.
  Suppose that $\StandardGSI$ is $(w,v,\Phi)$-adapted (where $\Phi$ is a BAPU for $\CalQ$)
  with matrix $M$ as in \eqref{eq:AnalysisOperatorSchurMatrix}.
  Then
  \begin{enumerate}
    \item[(i)] For all $p, q \in [1,\infty]$, the \emph{reconstruction map}
    \[
      \synthesis : Y_v^{p,q} \to \DecompSp(\CalQ,L^p,\ell_w^q), \quad
                   (c_k^{(j)})_{j \in J, k \in \ZZ^d}
                   \mapsto \sum_{j \in J}
                             \sum_{k \in \ZZ^d}
                               c_k^{(j)} \cdot \Translation{C_j k} g_j
    \]
    is well-defined and bounded with
    $\|\synthesis\|_{Y_v^{p,q} \to \DecompSp(\CalQ,L^p,\ell_w^q)} \leq \|M\|_{\schur}$.
    Furthermore, the defining double series converges unconditionally in $Z'(\CalO)$.

    \item[(ii)]  For all $p,q \in [1,\infty]$, the \emph{coefficient operator}
      \[
        \analysis : \DecompSp(\CalQ, L^p, \ell_w^q) \to Y_v^{p,q},
                    \quad
                    f \mapsto \Big(
                                \langle
                                  f \mid \Translation{C_j k} g_j
                                \rangle_{\Phi}
                              \Big)_{j \in J, k \in \ZZ^d}
      \]
      is well-defined and bounded with
      \(
        \|\analysis\|_{\DecompSp(\CalQ, L^p, \ell_w^q) \to Y_v^{p,q}}
        \leq \|M\|_{\mathrm{Schur}} \cdot \|\Gamma_{\CalQ}\|_{\ell_w^q \to \ell_w^q}
      \).

  \item[(iii)]
      If $\Psi$ is another BAPU for $\CalQ$,
      and if $f \in \DecompSp(\CalQ, L^p, \ell_w^q)$,
      then $\langle f \mid \Translation{C_j k} g_j \rangle_{\Psi}$ is
      well-defined and satisfies
      \(
        \langle f \mid \Translation{C_j k} g_j \rangle_{\Psi}
         = \langle f \mid \Translation{C_j k} g_j \rangle_{\Phi}
      \)
      for all $j \in J$ and $k \in \ZZ^d$.
  \end{enumerate}
\end{proposition}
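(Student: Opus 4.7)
My plan for part (i) is to estimate the decomposition norm of $\synthesis c$ band by band, exploiting the identity $\Fourier^{-1}(\varphi_i \cdot \widehat{\Translation{\gamma} g_j}) = \Translation{\gamma}(\widecheck{\varphi_i} \ast g_j)$. For fixed $i \in I$ and $j \in J$, the convolution $\widecheck{\varphi_i} \ast g_j$ is continuous (because $\widecheck{\varphi_i} \in \Schwartz$ and $g_j \in L^1$) and lies in $W(L^\infty, \ell^1)$ by the $(w,v,\Phi)$-adaptedness hypothesis, hence in $W(C_0, \ell^1)$. Applying Lemma~\ref{lem:GeneralLatticeSynthesis} with $M = C_j$ to the inner series yields
\[
  \Big\| \sum_{k \in \ZZ^d} c_k^{(j)} \, \Translation{C_j k}(\widecheck{\varphi_i} \ast g_j) \Big\|_{L^p}
  \leq |\det C_j|^{1/p} \, \|(\widecheck{\varphi_i} \ast g_j) \circ C_j\|_{W(L^\infty, \ell^1)} \, \|(c_k^{(j)})_k\|_{\ell^p}.
\]
Summing over $j$, multiplying by $w_i$, and regrouping via $|\det C_j|^{1/p} = |\det C_j|^{1/2} \cdot |\det C_j|^{1/p - 1/2}$ and $w_i / v_j \leq \max\{w_i/v_j,\, v_j/w_i\}$, I recognize the resulting bound as the $i$-th coordinate of $M\,a$, where $a_j := v_j \, |\det C_j|^{1/p - 1/2} \, \|(c_k^{(j)})_k\|_{\ell^p}$ satisfies $\|a\|_{\ell^q} = \|c\|_{Y_v^{p,q}}$. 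Schur's test then produces the estimate $\|\synthesis c\|_{\DecompSp(\CalQ, L^p, \ell_w^q)} \leq \|M\|_\schur \, \|c\|_{Y_v^{p,q}}$.

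For part (ii), I would invoke Lemma~\ref{lem:DominatedFunctionsHaveDominatedCoefficients} with the canonical dominating family: given $f \in \DecompSp(\CalQ, L^p, \ell_w^q)$, setting $F_i := |\Fourier^{-1}(\varphi_i \widehat{f}\,)| \in L^p(\RR^d)$ makes $f$ tautologically $(F,\Phi)$-dominated and gives $\|F\|_{\ell_w^q(I; L^p)} = \|f\|_{\DecompSp(\CalQ, L^p, \ell_w^q)}$. That lemma simultaneously ensures that each extended pairing $\langle f \mid \Translation{C_j k} g_j\rangle_\Phi$ is well-defined and produces a coordinate-wise dominating sequence $\theta \in Y_v^{p,q}$ with $\|\theta\|_{Y_v^{p,q}} \leq \|M\|_\schur \, \|\Gamma_\CalQ\|_{\ell_w^q \to \ell_w^q} \, \|f\|_{\DecompSp(\CalQ, L^p, \ell_w^q)}$, which yields (ii). Part (iii) is then immediate from Lemma~\ref{lem:AdaptednessIndependent}: its first assertion implies that $\StandardGSI$ is also $(w,v,\Psi)$-adapted, so (ii) applied with $\Psi$ in place of $\Phi$ shows that $\langle f \mid \Translation{C_j k} g_j\rangle_\Psi$ is well-defined, and its second assertion identifies the two pairings.

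The main technical obstacle lies in part (i), namely giving rigorous meaning to $\synthesis c$ and its unconditional convergence in $Z'(\CalO)$ when $p = \infty$ or $q = \infty$, since finitely supported sequences are not norm-dense in $Y_v^{p,q}$ in those cases. For $p, q < \infty$ the argument is clean: finitely supported sequences are dense, $\synthesis c$ is a finite sum in $\Schwartz_\CalO \subset Z'(\CalO)$ for such $c$, and the uniform Schur-type bound extends $\synthesis$ by continuity, with unconditional convergence inherited from the absolute summability of the $L^p$-majorants per frequency band. In the limiting cases I would instead consider an increasing sequence of finite truncations $c_N \to c$ coordinate-wise, check weak-$\ast$ convergence of the partial syntheses in $Z'(\CalO)$ by pairing against $\widehat{\psi} \in Z(\CalO)$ (where the pairing reduces, via the Plancherel identity, to an absolutely summable series $\sum_{k} c_k^{(j)} \cdot \Fourier(\widehat{g_j}\cdot\psi)(C_j k)$ in each band $j$), and then recover the norm estimate on the limit via the Fatou property from Lemma~\ref{lem:DecompositionSpaceFatouProperty}.
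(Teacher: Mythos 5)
Your proposal follows essentially the same route as the paper: part (i) reduces $\Fourier^{-1}(\varphi_i\,\widehat{\synthesis c})$ to a sum $\sum_j D_{C_j,\widecheck{\varphi_i}\ast g_j}\,c^{(j)}$, applies Lemma~\ref{lem:GeneralLatticeSynthesis} band by band, and recognizes the result as a Schur matrix action; part (ii) sets $F_i := |\Fourier^{-1}(\varphi_i\widehat{f})|$ and invokes Lemma~\ref{lem:DominatedFunctionsHaveDominatedCoefficients}; part (iii) is Lemma~\ref{lem:AdaptednessIndependent}. The only organizational difference is that the paper establishes unconditional convergence in $Z'(\CalO)$ uniformly for all $p,q\in[1,\infty]$ by the pairing-against-test-functions estimate~\eqref{eq:SynthesisOperatorUnconditionalConvergenceMainEstimate} (which is precisely the absolute-summability computation you sketch for the limiting cases) and then extracts the norm bound, whereas you split into $p,q<\infty$ (density plus continuity) and $\max\{p,q\}=\infty$ (truncation, weak-$\ast$ convergence, Fatou); both organizations are sound.

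One slip to correct: for finitely supported $c$, the finite sum $\synthesis c$ is \emph{not} an element of $\Schwartz_\CalO = \Fourier^{-1}(C_c^\infty(\CalO))$. The translates $T_{C_j k} g_j$ have Fourier transforms $e^{-2\pi i C_j k\cdot\xi}\widehat{g_j}(\xi)$, and the standing hypotheses only impose $\widehat{g_j}\in C^\infty(\RHat^d)$, not compact support, so $\varphi_i\,\widehat{\synthesis c}$ need not vanish for all but finitely many $i$. This does not break the argument — all you actually need is that $\synthesis c$ is a well-defined element of $Z'(\CalO)$ with finite decomposition norm, which holds because it is a finite sum of $L^1\cap L^2$ functions and the band-wise bound plus Schur's test control the norm — but the assertion as written is false and could mislead a reader into thinking the localization is trivial.
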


\begin{proof}
To prove (i),  let $c = (c_k^{(j)})_{j \in J, k \in \ZZ^d} \in Y_v^{p,q}$ be arbitrary, and
 set $c^{(j)} := (c_k^{(j)})_{k \in \ZZ^d}$ for $j \in J$.
 Then $c^{(j)} \in \ell^p(\ZZ^d)$.
 Moreover, if  $d = (d_j)_{j \in J}$ is defined as
 $d_j := |\det C_j|^{\frac{1}{p} - \frac{1}{2}} \cdot \| c^{(j)} \|_{\ell^p}$,
 then $d \in \ell_v^q (J)$ and $\| d \|_{\ell_v^q} = \| c \|_{Y_v^{p,q}}$.
 Finally, let $|c^{(j)}| = (|c_k^{(j)}|)_{k \in \ZZ^d}$ for $j \in J$.

 We first prove the unconditional convergence of the double series defining $\synthesis c$.
 Since the Fourier transform $\Fourier : Z'(\CalO) \to \DistributionSpace(\CalO)$
 is a linear homeomorphism, it suffices to show that the double series
 $\sum_{j \in J} \sum_{k \in \ZZ^d} c_k^{(j)} \Fourier[ \Translation{C_j k} g_j ]$
 converges unconditionally in $\DistributionSpace(\CalO)$.
 To prove this, let $K \subset \CalO$ be compact.
 Since $\sum_{i \in I} \varphi_i \equiv 1$ on $\CalO$, the family
 $\big( \varphi_i^{-1} (\CC \setminus \{0\}) \big)_{i \in I}$ forms an open cover
 of $\CalO \supset K$.
 By compactness of $K$, there is a finite set $I_K \subset I$ for which
 \(
   K
   \subset \bigcup_{i \in I_K} \varphi_i^{-1} (\CC \setminus \{0\})
   \subset \bigcup_{i \in I_K} Q_i
 \).
 Note that $I_K^\ast := \bigcup_{\ell \in I_K} \ell^\ast \subset I$ is finite.
 Furthermore, for $j \in I \setminus I_K^\ast$, note that
 $Q_j \cap K \subset \bigcup_{i \in I_K} Q_j \cap Q_i = \emptyset$, whence
 $\varphi_j \equiv 0$ on $K$.
 Thus, any $g \in C_c^\infty (\CalO) \subset \Schwartz(\RHat^d)$
 with $\supp g \subset K$ can be written as
 $g = \sum_{i \in I} \varphi_i \, g = \sum_{i \in I_K^\ast} \varphi_i \, g$.
 A direct calculation using Lemma~\ref{lem:GeneralLatticeSynthesis} therefore shows
 \begin{align*}
    & \sum_{j \in J}
       \sum_{k \in \ZZ^d}
         |c_k^{(j)}|
         \cdot \big|
                 \langle \Fourier[\Translation{C_j k} g_j] , g \rangle_{\CalD', \CalD}
               \big|\\
     &\quad \quad \quad \leq \sum_{i \in I_K^\ast}
              \sum_{j \in J}
                \sum_{k \in \ZZ^d}
                  |c_k^{(j)}|
                  \cdot \big|
                          \langle
                            \varphi_i \, \Fourier[\Translation{C_j k} g_j] , g
                          \rangle_{\Schwartz', \Schwartz}
                        \big| \\
          & \quad \quad \quad \leq \sum_{i \in I_K^\ast}
              \sum_{j \in J}
                \int_{\RR^d}
                  |\widehat{g}(x)|
                  \sum_{k \in \ZZ^d}
                    |c_k^{(j)}| \, \big( \Translation{C_j k} |\widecheck{\varphi_i} \ast g_j| \big) (x)
                \, d x \\
     & \quad \quad \quad \leq \sum_{i \in I_K^\ast}
              \sum_{j \in J}
                \| \widehat{g} \|_{L^{p'}}
                \cdot \big\| D_{C_j, |\widecheck{\varphi_i} \ast g_j|} |c^{(j)}| \big\|_{L^p} \\
     & \quad \quad \quad \leq \|\widehat{g}\|_{L^{p'}}
            \sum_{i \in I_K^\ast}
              \sum_{j \in J}
                |\det C_j|^{\frac{1}{p}} \,
                \big\|
                  (\widecheck{\varphi_i} \!\ast\! g_j) \!\circ\! C_j
                \big\|_{W(L^\infty,\ell^1)}
                \, \|\, |c^{(j)}| \,\|_{\ell^p} \\
     & \quad \quad \quad \leq \| \widehat{g} \|_{L^{p'}}
            \cdot \sum_{i \in I_K^\ast}
                    \Big[
                      w_i^{-1}
                      \sum_{j \in J}
                        v_j \, d_j \, M_{i,j}
                    \Big] \\
     & \quad \quad \quad \leq \| \widehat{g} \|_{L^{p'}}
            \cdot \| d \|_{\ell_v^q}
            \cdot \| M \|_{\schur}
            \cdot \sum_{i \in I_K^\ast}
                    w_i^{-1}
       <    \infty \, . \numberthis
   \label{eq:SynthesisOperatorUnconditionalConvergenceMainEstimate}
 \end{align*}
 Since $g \mapsto \|\widehat{g}\|_{L^{p'}}$ is a continuous norm on $C_c^\infty (\CalO)$
 and since $g \in C_c^\infty (\CalO)$ with $\supp g \subset K$ was arbitrary,
 the desired unconditional convergence follows.

 Next, we show that $\synthesis : Y_v^{p,q} \to \StandardDecompSp$ is well-defined and bounded.
 For $i \in I$ and $j \in J$, define
 \(
   B_{i,j} := |\det C_j|^{\frac{1}{2}}
              \cdot \| (\widecheck{\varphi_i} \ast g_j) \circ C_j \|_{W (L^{\infty}, \ell^1)}
 \).
 The assumption that $\StandardGSI$ is $(w,v,\Phi)$-adapted yields by Schur's test that the map
 \(
   \mathbf{B} :
   \ell^q_v (J) \to \ell^q_w (I),
   \;
   (d_j)_{j \in J} \mapsto \big(
                             \sum_{j \in J}
                               B_{i,j} \cdot d_j
                           \big)_{i \in I} \,
 \)
 is bounded with $\|\mathbf{B}\|_{\op} \leq \| M \|_{\schur}$.
 The series defining $\synthesis c$ being unconditionally convergent yields
 \[
   \Fourier^{-1} (\varphi_i \cdot \widehat{\synthesis \, c})
   = \sum_{j \in J}
       \sum_{k \in \ZZ^d}
         c_k^{(j)} \, \Fourier^{-1} (\varphi_i \, \Fourier [\Translation{C_j k} g_j])
   = \sum_{j \in J}
       D_{C_j, \widecheck{\varphi_i} \ast g_j} \, c^{(j)} \, .
 \]
 Therefore, an application of Lemma~\ref{lem:GeneralLatticeSynthesis} shows
 \begin{align*}
   \big\| \Fourier^{-1} (\varphi_i \cdot \widehat{\synthesis \, c}) \big\|_{L^p}
  & \leq \sum_{j \in J}
          |\det C_j|^{\frac{1}{p}}
          \cdot \| (\widecheck{\varphi_i} \ast g_j) \circ C_j \|_{W(L^\infty, \ell^1)}
          \cdot \| c^{(j)} \|_{\ell^p} \\
   &=    \sum_{j \in J}
           B_{i,j} \, d_j
   =    (\mathbf{B} \, d)_i
   <    \infty \, ,
 \end{align*}
 whence
 \(
   \|  \synthesis \, c \|_{\StandardDecompSp}
   \leq \| \mathbf{B} \, d \|_{\ell_w^q}
   \leq \| M \|_{\schur} \cdot \| d \|_{\ell_v^q}
   =    \| M \|_{\schur} \cdot \| c \|_{Y_v^{p,q}}
 \).

 \smallskip{}

 To prove (ii), let $f \in \DecompSp(\CalQ,L^p,\ell_w^q)$ be arbitrary.
 Define $F_i := |\Fourier^{-1} (\varphi_i \widehat{f} \, )|$ for $i \in I$.
 Then $F = (F_i)_{i \in I} \in \ell_w^q (I; L^p)$
 and  $\|F\|_{\ell_w^q(I; L^p)} = \|f\|_{\DecompSp(\CalQ, L^p, \ell_w^q)}$.
 Clearly, $f$ is $(F,\Phi)$-dominated.
 Therefore, Lemma~\ref{lem:DominatedFunctionsHaveDominatedCoefficients} yields
 $\theta = (\theta_{j,k})_{j \in J, k \in \ZZ^d} \in Y_v^{p,q}$ satisfying the estimate
 $| \langle f \mid \Translation{C_j k} g_j \rangle_{\Phi}| \leq \theta_{j,k}$
 for all $j \in J$ and $k \in \ZZ^d$, and furthermore
 \(
   \| \theta \|_{Y_v^{p,q}}
   \leq \|M\|_{\mathrm{Schur}}
        \cdot \|\Gamma_{\CalQ}\|_{\ell_w^q \to \ell_w^q}
        \cdot \|F\|_{\ell_w^q (I; L^p)}
 \).
 Hence, $\analysis : \DecompSp(\CalQ, L^p, \ell_w^q) \to Y_v^{p,q}$
 is well-defined and bounded, with the claimed estimate for the operator norm.

 \smallskip{}

 Assertion (iii) is a direct consequence of Lemma~\ref{lem:AdaptednessIndependent}.
\end{proof}

Proposition~\ref{prop:AnalysisSynthesisOperatorGeneral} shows in particular that the reconstruction
operator $\synthesis : Y_v^{p,q} \!\to \DecompSp(\CalQ, L^p, \ell_w^q)$ is continuous.
However, in case $\max \{ p, q \} = \infty$, the convergence in $Y_v^{p,q}$ is a quite
restrictive condition.
To accommodate for this, we will often employ the following lemma.

\begin{lemma}\label{lem:ReconstructionOperatorSpecialContinuity}
  Under the assumptions of Proposition~\ref{prop:AnalysisSynthesisOperatorGeneral},
  the following holds:

  \smallskip{}

  For each $n \in \NN$, let
  $c^{(n)} = (c^{(n)}_{j,k})_{j \in J, k \in \ZZ^d} \in Y_v^{p,q}$
  be such that $c^{(n)}_{j,k} \xrightarrow[n \to \infty]{} c_{j,k} \in \CC$ for all $j \in J$
  and $k \in \ZZ^d$. Suppose there exists a sequence
  $\theta = (\theta_{j,k})_{j \in J, k \in \ZZ^d} \in Y_v^{p,q}$ satisfying
  $|c^{(n)}_{j,k}| \leq \theta_{j,k}$ for all $j \in J$, $k \in \ZZ^d$, and $n \in \NN$.
  Then the reconstruction operator $\synthesis$ satisfies
  $\synthesis \, c^{(n)} \xrightarrow[n \to \infty]{Z'(\CalO)} \synthesis \, c$.
\end{lemma}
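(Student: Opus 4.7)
The plan is to reduce the weak-$\ast$ convergence statement to an absolutely convergent double sum and invoke the dominated convergence theorem for counting measure on $J \times \ZZ^d$.

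First, I would observe that since $|c^{(n)}_{j,k}| \leq \theta_{j,k}$ for all $n \in \NN$, the pointwise limit inherits the bound $|c_{j,k}| \leq \theta_{j,k}$; thus $c \in Y_v^{p,q}$ and $\synthesis c$ is a well-defined element of $Z'(\CalO)$ by Proposition~\ref{prop:AnalysisSynthesisOperatorGeneral}(i). To establish $\synthesis c^{(n)} \to \synthesis c$ in $Z'(\CalO)$, I would fix a test element $\psi \in Z(\CalO)$, write $\psi = \widehat{\phi}$ for some $\phi \in C_c^{\infty}(\CalO)$, and show that $\langle \widehat{\synthesis c^{(n)}}, \phi \rangle_{\CalD',\CalD} \to \langle \widehat{\synthesis c}, \phi \rangle_{\CalD',\CalD}$.

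Next, the unconditional convergence in $Z'(\CalO)$ of the defining series of $\synthesis c^{(n)}$ and $\synthesis c$, already established in Proposition~\ref{prop:AnalysisSynthesisOperatorGeneral}(i), allows the expansion
\[
  \langle \widehat{\synthesis c^{(n)}}, \phi \rangle_{\CalD',\CalD}
  = \sum_{j \in J} \sum_{k \in \ZZ^d}
      c^{(n)}_{j,k} \,
      \big\langle \Fourier[\Translation{C_j k} g_j], \phi \big\rangle_{\CalD',\CalD},
\]
and analogously for $\synthesis c$. Specialized to the sequence $\theta \in Y_v^{p,q}$, the estimate \eqref{eq:SynthesisOperatorUnconditionalConvergenceMainEstimate} from the proof of that proposition yields the $n$-independent bound
\[
  \sum_{j \in J} \sum_{k \in \ZZ^d}
    \theta_{j,k} \,
    \big|\big\langle \Fourier[\Translation{C_j k} g_j], \phi \big\rangle_{\CalD',\CalD}\big|
  \leq \|\widehat{\phi}\|_{L^{p'}} \cdot \|M\|_{\schur} \cdot \|\theta\|_{Y_v^{p,q}} \cdot \!\!\sum_{i \in I_K^\ast}\!\! w_i^{-1}
  < \infty,
\]
where $I_K^\ast$ is the finite index set attached to the compact support of $\phi$ in that proof.

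Finally, since $c^{(n)}_{j,k} \to c_{j,k}$ pointwise and $|c^{(n)}_{j,k} \langle \Fourier[\Translation{C_j k} g_j], \phi \rangle_{\CalD',\CalD}| \leq \theta_{j,k} \, |\langle \Fourier[\Translation{C_j k} g_j], \phi \rangle_{\CalD',\CalD}|$ defines a summable majorant, the dominated convergence theorem (for counting measure on $J \times \ZZ^d$) gives $\langle \widehat{\synthesis c^{(n)}}, \phi \rangle_{\CalD',\CalD} \to \langle \widehat{\synthesis c}, \phi \rangle_{\CalD',\CalD}$, which is the desired convergence in $Z'(\CalO)$. No genuine obstacle arises: the hard analytic work resides in the absolute-summability estimate produced for Proposition~\ref{prop:AnalysisSynthesisOperatorGeneral}(i), and this lemma is essentially a dominated-convergence corollary thereof, required only because $Y_v^{p,q}$-norm convergence is too restrictive when $\max\{p,q\} = \infty$.
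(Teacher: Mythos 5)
Your proof is correct and follows essentially the same route as the paper's: both reduce the claim to a dominated convergence argument on $J \times \ZZ^d$, with the summable majorant supplied by the absolute-summability estimate from the proof of Proposition~\ref{prop:AnalysisSynthesisOperatorGeneral}(i). The only difference is cosmetic: you cite the estimate \eqref{eq:SynthesisOperatorUnconditionalConvergenceMainEstimate} directly (applied to $\theta$), whereas the paper re-derives an equivalent bound by splitting $\Fourier^{-1}f = \sum_{i \in I_f} \varphi_i \, \Fourier^{-1} f$ and invoking Lemma~\ref{lem:GeneralLatticeSynthesis} again, so your version is slightly more economical but mathematically the same.
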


\begin{proof}
  Let $f \in Z(\CalO)$.
  Then $K := \supp \Fourier^{-1} f \subset \CalO$ is compact.
  Since $(\varphi_i^{-1} (\CC \setminus \{0\}))_{i \in I}$ is an open cover of $ K$,
  there is a finite set $I_0 \subset I$ satisfying
  \(
    K
    \subset \bigcup_{i \in I_0}
              \varphi_i^{-1} (\CC \setminus \{0\})
    \subset \bigcup_{i \in I_0}
              Q_i
  \).
  This easily implies $Q_i \cap K = \emptyset$ for $i \in I \setminus I_f$,
  where $I_f := I_0^\ast := \bigcup_{\ell \in I_0} \ell^\ast \subset I$ is finite.
  Thus, $\varphi_i \cdot \Fourier^{-1}f \equiv 0$ for $i \in I \setminus I_f$,
  and hence $\Fourier^{-1} f = \sum_{i \in I_f} \varphi_i \, \Fourier^{-1} f$.
  Therefore,
  \begin{align*}
    \langle \Translation{C_j k} g_j, f \rangle_{\Schwartz', \Schwartz}
&    = \langle
        \Fourier [ \Translation{C_j k} g_j ],
        \Fourier^{-1} f
      \rangle_{\Schwartz', \Schwartz}
    = \sum_{i \in I_f}
        \langle
          \varphi_i \, \Fourier [\Translation{C_j k} g_j],
          \Fourier^{-1} f
        \rangle_{\Schwartz', \Schwartz} \\
&    = \sum_{i \in I_f}
        \langle
          \Translation{C_j k} (\widecheck{\varphi_i} \ast g_j),
          f
        \rangle_{\Schwartz', \Schwartz} \, .
  \end{align*}
  For $\nu = (\nu_{j,k})_{j \in J, k \in \ZZ^d} \in Y_v^{p,q}$,
  it follows therefore by the convergence in $Z'(\CalO)$ of the series defining $\synthesis \nu$ that
  \begin{equation}
    \langle \synthesis \nu, f \rangle_{Z', Z}
      = \sum_{j \in J}
          \sum_{k \in \ZZ^d}
            \nu_{j,k} \, \langle \Translation{C_j k} g_j, f \rangle_{\Schwartz', \Schwartz}
      = \sum_{i \in I_f}
          \sum_{j \in J}
            \sum_{k \in \ZZ^d}
              \nu_{j,k} \,
              \langle
                \Translation{C_j k} (\widecheck{\varphi_i} \ast g_j),
                f
              \rangle_{\Schwartz', \Schwartz} \, .
    \label{eq:ReconstructionOperatorSpecialContinuityMainIdentity}
  \end{equation}
  Next, Lemma~\ref{lem:GeneralLatticeSynthesis} shows that
  \begin{align*}
           \sum_{k \in \ZZ^d}
             \theta_{j,k} \,
             |
              \langle
                \Translation{C_j k} (\widecheck{\varphi_i} \ast g_j),
                f
              \rangle_{\Schwartz', \Schwartz}
             |
    & \leq \int_{\RR^d}
             |f(x)|
             \cdot \sum_{k \in \ZZ^d}
                     \Big[
                       \theta_{j,k}
                       \cdot \big( \Translation{C_j k} |\widecheck{\varphi_i} \ast g_j| \big) (x)
                     \Big]
           \, d x \\
    & \leq \|f\|_{L^{p'}}
           \cdot \big\|
                   D_{C_j, |\widecheck{\varphi_i} \ast g_j|}
                   \big( (\theta_{j,k})_{k \in \ZZ^d} \big)
                 \big\|_{L^p} \\[0.15cm]
    & \leq \|f\|_{L^{p'}}
           \cdot |\det C_j|^{1/p}
           \cdot \| (\widecheck{\varphi_i} \ast g_j) \circ C_j \|_{W(L^\infty, \ell^1)}
           \cdot \gamma_j \, ,
  \end{align*}
  where we defined $\gamma_j := \| (\theta_{j,k})_{k \in \ZZ^d} \|_{\ell^p}$ in the last step.

  For brevity, let $u_j := v_j \cdot |\det C_j|^{\frac{1}{p} - \frac{1}{2}}$.
  Note that since $\theta \in Y_v^{p,q}$, we have
  $\gamma = (\gamma_j)_{j \in J} \in \ell_u^q \hookrightarrow \ell_u^\infty$,
  which yields a constant $C_1 > 0$ such that $u_j \, \gamma_j \leq C_1$ for all $j \in J$.
  Using this, we see
  \begin{align*}
    & \quad
      \sum_{i \in I_f}
        \sum_{j \in J}
          \sum_{k \in \ZZ^d}
            \theta_{j,k} \,
            |
             \langle
               \Translation{C_j k} (\widecheck{\varphi_i} \ast g_j),
               f
             \rangle_{\Schwartz', \Schwartz}
            | \\
    & \leq \|f\|_{L^{p'}}
           \sum_{i \in I_f}
             \Big[
               w_i^{-1}
               \sum_{j \in J}
                 \frac{w_i}{v_j}
                 \cdot |\det C_j|^{\frac{1}{2}}
                 \cdot \| (\widecheck{\varphi_i} \ast g_j) \circ C_j \|_{W(L^\infty, \ell^1)}
                 \cdot u_j \, \gamma_j
             \Big] \\
    & \leq C_1 \cdot \|f\|_{L^{p'}}
           \sum_{i \in I_f}
               \sum_{j \in J}
                 \frac{M_{i,j}}{w_i}
      \leq C_1
           \cdot \|f\|_{L^{p'}}
           \cdot \Big( \sum_{i \in I_f} w_i^{-1} \Big)
           \cdot \|M\|_{\mathrm{Schur}}
      < \infty \, .
  \end{align*}
  Finally, since $|c^{(n)}_{j,k}| \leq \theta_{j,k}$ for all $j \in J$, $k \in \ZZ^d$,
  and $n \in \NN$, and since $c^{(n)}_{j,k} \xrightarrow[n \to \infty]{} c_{j,k}$,
  applying the dominated convergence theorem in
  Equation~\eqref{eq:ReconstructionOperatorSpecialContinuityMainIdentity} shows that
  \[
    \langle \synthesis c^{(n)} , f \rangle_{Z',Z}
     \xrightarrow[n \to \infty]{}
      \langle  \synthesis c, f \rangle_{Z',Z} ,
  \]
  as desired.
\end{proof}

\begin{corollary}\label{cor:FrameOperatorSpecialContinuity}
  Under the assumptions of Proposition~\ref{prop:AnalysisSynthesisOperatorGeneral},
  the following holds:
  The frame operator
  $S := \synthesis \circ \analysis : \DecompSp(\CalQ,L^p,\ell_w^q) \to \DecompSp(\CalQ,L^p,\ell_w^q)$
  is well-defined and bounded.

  Furthermore, if $(f_n)_{n \in \NN} \subset \DecompSp(\CalQ,L^p,\ell_w^q)$ is a sequence
  satisfying $f_n \to f \in Z'(\CalO)$, with convergence in $Z'(\CalO)$, and
  for which there exists $F \in \ell_w^q (I; L^p)$ such that all $f_n$ are $(F, \Phi)$-dominated,
  then $f \in \DecompSp(\CalQ,L^p,\ell_w^q)$ and $S f_n \to S f$ with convergence in $Z'(\CalO)$.
\end{corollary}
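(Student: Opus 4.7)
The plan is to reduce the corollary to an application of the preceding lemmas, which were engineered precisely for this purpose. Boundedness of $S$ is immediate: Proposition~\ref{prop:AnalysisSynthesisOperatorGeneral} yields bounded maps $\analysis : \DecompSp(\CalQ, L^p, \ell_w^q) \to Y_v^{p,q}$ and $\synthesis : Y_v^{p,q} \to \DecompSp(\CalQ, L^p, \ell_w^q)$, so their composition $S = \synthesis \circ \analysis$ is bounded, with norm at most $\|M\|_{\schur}^2 \cdot \|\Gamma_{\CalQ}\|_{\ell_w^q \to \ell_w^q}$.

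For the second assertion, I first verify that the limit $f$ lies in $\DecompSp(\CalQ, L^p, \ell_w^q)$. The $(F,\Phi)$-domination of each $f_n$ yields the pointwise bound $|\Fourier^{-1}(\varphi_i \widehat{f_n})| \leq F_i$, hence $\|\Fourier^{-1}(\varphi_i \widehat{f_n})\|_{L^p} \leq \|F_i\|_{L^p}$, and therefore $\|f_n\|_{\DecompSp(\CalQ, L^p, \ell_w^q)} \leq \|F\|_{\ell_w^q(I; L^p)}$ uniformly in $n$. The Fatou property (Lemma~\ref{lem:DecompositionSpaceFatouProperty}) then delivers $f \in \DecompSp(\CalQ, L^p, \ell_w^q)$ with the same bound, so that $Sf = \synthesis(\analysis f)$ is well-defined via Proposition~\ref{prop:AnalysisSynthesisOperatorGeneral}.

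Next, I invoke Lemma~\ref{lem:DominatedFunctionsHaveDominatedCoefficients}. Its first conclusion produces a single sequence $\theta = (\theta_{j,k})_{j\in J, k\in\ZZ^d} \in Y_v^{p,q}$ depending only on $F$ and $\Phi$, such that $|(\analysis f_n)_{j,k}| = |\langle f_n \mid \Translation{C_j k} g_j \rangle_\Phi| \leq \theta_{j,k}$ for every $n \in \NN$. Its second conclusion provides the pointwise convergence $\langle f_n \mid \Translation{C_j k} g_j \rangle_\Phi \to \langle f \mid \Translation{C_j k} g_j \rangle_\Phi$ for each fixed $(j,k)$; passing to the limit in the pointwise estimates also shows $|(\analysis f)_{j,k}| \leq \theta_{j,k}$. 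Thus the coefficient sequences $c^{(n)} := \analysis f_n$ converge pointwise to $c := \analysis f$ and are uniformly dominated in $Y_v^{p,q}$ by $\theta$. Lemma~\ref{lem:ReconstructionOperatorSpecialContinuity} then yields $\synthesis c^{(n)} \to \synthesis c$ in $Z'(\CalO)$, i.e., $S f_n \to S f$ in $Z'(\CalO)$.

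The proof is essentially a book-keeping exercise; there is no substantive obstacle. The only delicate point worth flagging is that convergence in the coefficient space $Y_v^{p,q}$ is too strong to expect when $p$ or $q$ equals $\infty$, which is exactly why Lemma~\ref{lem:ReconstructionOperatorSpecialContinuity} was stated in terms of pointwise convergence with a common $Y_v^{p,q}$-dominator rather than norm convergence. The $\theta$ produced by Lemma~\ref{lem:DominatedFunctionsHaveDominatedCoefficients} depends only on $F$, so it bounds the coefficients of all $f_n$ simultaneously, and the pointwise convergence at the coefficient level upgrades to weak$^*$ convergence of $Sf_n$ via dominated convergence inside the reconstruction synthesis.
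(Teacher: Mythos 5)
Your proof is correct and follows essentially the same route as the paper: boundedness of $S$ from Proposition~\ref{prop:AnalysisSynthesisOperatorGeneral}, membership $f \in \DecompSp(\CalQ,L^p,\ell_w^q)$ via the Fatou property of Lemma~\ref{lem:DecompositionSpaceFatouProperty}, the common dominator $\theta$ and pointwise coefficient convergence from Lemma~\ref{lem:DominatedFunctionsHaveDominatedCoefficients}, and finally convergence of the reconstructions via Lemma~\ref{lem:ReconstructionOperatorSpecialContinuity}. Your closing remark correctly identifies the role that the $\theta$-dominator plays as a substitute for norm convergence in $Y_v^{p,q}$, which matches the paper's design.
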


\begin{proof}
  $S$ is well-defined, bounded by Proposition~\ref{prop:AnalysisSynthesisOperatorGeneral}.
  Since
  $ \|f_n\|_{\DecompSp(\CalQ,L^p,\ell_w^q)} \leq \|F\|_{\ell_w^q (I; L^p)}  $
  for all $n \in \NN$, Lemma~\ref{lem:DecompositionSpaceFatouProperty} yields
  $f \in \DecompSp(\CalQ,L^p,\ell_w^q)$, where $c := \analysis \, f \in Y_v^{p,q}$.
  Next, Lemma~\ref{lem:DominatedFunctionsHaveDominatedCoefficients} shows that
  there is a sequence $\theta = (\theta_{j,k})_{j \in J, k \in \ZZ^d} \in Y_v^{p,q}$
  such that if we set $c^{(n)} := \analysis f_n$, then $| c^{(n)}_{j,k} | \leq \theta_{j,k}$
  for all $(n,j,k) \in \NN \times J \times \ZZ^d$.
  The same lemma also shows that $c^{(n)}_{j,k} \to c_{j,k}$ for all $j \in J$ and $k \in \ZZ^d$.
  Therefore, Lemma~\ref{lem:ReconstructionOperatorSpecialContinuity} shows that
  $S f_n = \synthesis \, c^{(n)} \to \synthesis \, c = S f$ with convergence in $Z'(\CalO)$.
\end{proof}

\section{Invertibility of the frame operator}
\label{sec:FrameOperatorInvertible}

\subsection{Representation of the frame operator}
The frame properties of generalized shift-invariant systems are usually
studied under a compatibility condition that controls the interaction
between the generating functions and the translation lattices of the system.
Specifically, we will use the so-called local integrability conditions
\cite{hernandez2002unified,JakobsenReproducing2014,velthoven2019on}.

\begin{definition}
For an open set $\CalO \subset \RHat^d$ of full measure, let
\[
  \mathcal{B}_{\CalO} (\RR^d)
  := \bigg\{
       f \in L^2 (\RR^d)
       \; : \;
       \widehat{f} \in L^{\infty} (\RHat^d)
       \text{ and }
       \supp \widehat{f} \subset \CalO \text{ compact}
     \bigg\}.
\]
A generalized shift-invariant system $\StandardGSI$ is said to satisfy the
\emph{$\alpha$-local integrability condition ($\alpha$-LIC)}, relative to $\mathcal{O}^c$,
if, for all $f \in \mathcal{B}_{\CalO} (\RR^d)$,
\begin{align}\label{eq:LIC}
  \sum_{j \in J}
    \frac{1}{|\det C_j|}
    \sum_{\alpha \in C_j^{-t} \mathbb{Z}^d } \,\,\,
      \int_{\RHat^d} \,
       |\widehat{f} (\xi) \widehat{f} (\xi + \alpha)
       \widehat{g_j} (\xi) \widehat{g}_j (\xi + \alpha)|
      \, d \xi
  < \infty .
\end{align}
\end{definition}

Given $\StandardGSI$, we set
$\Lambda := \bigcup_{j \in J} C_j^{-t} \ZZ^d$
and $\kappa (\alpha) := \{ j \in J \; : \; \alpha \in C_j^{-t} \ZZ^d\}$
for $\alpha \in \Lambda$.
For $\alpha \in \Lambda$, we define the functions
\begin{equation}
 t_{\alpha} : \RHat^d \to \CC,
 \quad \xi \mapsto
   \sum_{j \in \kappa (\alpha)}
                \frac{1}{|\det C_j|} \,\,
                \overline{\widehat{g}_j (\xi)}
                \, \widehat{g_j} (\xi + \alpha).
  \label{eq:TAlphaDefinition}
\end{equation}
Note that $t_{\alpha} \in L^{\infty} (\RHat^d)$ for all $\alpha \in \Lambda$
by \eqref{eq:GSI_assumption}.
Furthermore, $t_\alpha (\xi - \alpha) = \overline{t_{-\alpha} (\xi)}$.

Under the $\alpha$-local integrability condition, the following (weak-sense) representation
of the frame operator can be obtained;
this follows by polarization from the proofs of
\cite[Proposition~2.4]{hernandez2002unified} and \cite[Theorem 3.4]{JakobsenReproducing2014}.

\begin{proposition}\label{prop:GSI_identity}
Suppose $\StandardGSI$ satisfies the $\alpha$-local integrability condition
\eqref{eq:LIC}, relative to $\CalO^c$.
Then, for all $f_1, f_2 \in \mathcal{B}_{\CalO} (\RR^d)$,
\begin{align*}
  \sum_{(j,k) \in J \times \ZZ^d} \!\!\!\!\!
      \langle f_1 \mid T_{C_j k} g_j \rangle
      \langle T_{C_j k} g_j \mid f_2 \rangle
 & \!=\! \sum_{\alpha \in \Lambda}
          \int_{\RHat^d}
            \widehat{f}_1 (\xi) \,
            \overline{\widehat{f}_2 (\xi + \alpha)} \,
            t_{\alpha} (\xi)
          \; d\xi \\
&  \!=\! \sum_{\alpha \in \Lambda}
          \langle
            \Fourier^{-1} \big[ \Translation{\alpha} (t_\alpha \, \widehat{f_1}) \big]
            \mid
            f_2
          \rangle_{L^2} \, ,
\numberthis  \label{eq:WalnutRepresentation}
\end{align*}
where the series converges absolutely; in fact,
\begin{equation}
  \sum_{\alpha \in \Lambda}
    \int_{\RHat^d} \,\,
       |\widehat{f_1} (\xi) \,
       \widehat{f_2}(\xi+\alpha)|
        \sum_{j \in \kappa (\alpha)}
          \frac{1}{|\det C_j|}
          | \widehat{g_j} (\xi)
          \widehat{g_j} (\xi+\alpha)|
    \, d \xi < \infty \, .
  \label{eq:WalnutRepresentationAbsoluteConvergence}
\end{equation}
\end{proposition}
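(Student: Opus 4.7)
The plan is to reduce the claimed bilinear identity to its quadratic specialization, which is already available from the proofs of \cite[Proposition~2.4]{hernandez2002unified} and \cite[Theorem~3.4]{JakobsenReproducing2014}: for every $f \in \mathcal{B}_{\CalO}(\RR^d)$, one has
\[
\sum_{(j,k) \in J \times \ZZ^d} |\langle f \mid \Translation{C_j k} g_j \rangle|^2 = \sum_{\alpha \in \Lambda} \int_{\RHat^d} \widehat{f}(\xi) \, \overline{\widehat{f}(\xi + \alpha)} \, t_\alpha(\xi) \, d\xi,
\]
with both sides absolutely convergent. Since $\mathcal{B}_{\CalO}(\RR^d)$ is a complex vector space---the four vectors $f_1 + i^n f_2$, $n \in \{0,1,2,3\}$, all lying in $\mathcal{B}_{\CalO}(\RR^d)$---the bilinear identity will then be recovered by polarization.

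First I would establish the absolute convergence claim \eqref{eq:WalnutRepresentationAbsoluteConvergence}. To this end, set $\phi := |\widehat{f_1}| + |\widehat{f_2}|$, which is bounded, non-negative, and supported in a compact subset of $\CalO$, so that $\phi \in L^2(\RHat^d) \cap L^\infty(\RHat^d)$; letting $f_3 := \Fourier^{-1} \phi$, one has $f_3 \in \mathcal{B}_{\CalO}(\RR^d)$ together with the pointwise bound $|\widehat{f_1}(\xi) \, \widehat{f_2}(\xi+\alpha)| \leq \widehat{f_3}(\xi) \, \widehat{f_3}(\xi+\alpha)$. Applying the $\alpha$-LIC \eqref{eq:LIC} to $f_3$, and using Tonelli's theorem to interchange the sums over $j$ and $\alpha$---via the equivalence $j \in \kappa(\alpha) \Leftrightarrow \alpha \in C_j^{-t} \ZZ^d$---yields \eqref{eq:WalnutRepresentationAbsoluteConvergence}, and in particular absolute convergence of the $\alpha$-series on the right-hand side of the proposition. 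Absolute convergence of the left-hand side double series follows from the Cauchy--Schwarz inequality together with the finiteness of $\sum_{j,k} |\langle f_i \mid \Translation{C_j k} g_j \rangle|^2$ for $i = 1,2$, which in turn is the quadratic identity applied to each $f_i$.

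With both sides sesquilinear in $(f_1, f_2)$ and absolutely convergent, the polarization formula $B(f_1, f_2) = \tfrac{1}{4} \sum_{n=0}^{3} i^n \, B(f_1 + i^n f_2, f_1 + i^n f_2)$ applied to the cited quadratic identity delivers the first equality of the proposition. The second equality is then routine: the change of variables $\eta = \xi + \alpha$ transforms the integrand into $[\Translation{\alpha}(t_\alpha \widehat{f_1})](\eta) \, \overline{\widehat{f_2}(\eta)}$, and Plancherel's theorem---applied to the $L^2$ function $\Translation{\alpha}(t_\alpha \widehat{f_1})$, which is bounded with compact support---gives the inner-product form. The only bookkeeping worthy of note is the correct use of Tonelli in the convergence step, but no genuine analytic subtlety arises beyond what the $\alpha$-LIC already supplies.
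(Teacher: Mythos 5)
Your proof is correct and follows exactly the route the paper indicates: the paper's own treatment of Proposition~\ref{prop:GSI_identity} consists solely of the sentence preceding the statement, which attributes the result to polarization of the quadratic identity proved in \cite[Proposition~2.4]{hernandez2002unified} and \cite[Theorem~3.4]{JakobsenReproducing2014}. You have supplied the bookkeeping that the paper leaves implicit (the dominating function $f_3 = \Fourier^{-1}(|\widehat{f_1}| + |\widehat{f_2}|) \in \mathcal{B}_\CalO(\RR^d)$, the Tonelli rearrangement via $j \in \kappa(\alpha) \Leftrightarrow \alpha \in C_j^{-t}\ZZ^d$, the Cauchy--Schwarz bound for the left side, and the change of variables plus Plancherel for the last equality), and all of it is sound.
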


Proposition~\ref{prop:GSI_identity} yields an analogous representation of
the frame operator on $\StandardDecompSp$, at least on the subspace $\DenseSpace(\RR^d)$.

\begin{corollary}\label{cor:FrameOperatorOnDenseSpace}
  Under the assumptions of Proposition~\ref{prop:GSI_identity}, the series
  $\sum_{\alpha \in \Lambda_0} \Fourier^{-1} [\Translation{\alpha} (t_\alpha \, \widehat{f} \, )]$
  converges unconditionally in $Z'(\CalO)$ for any subset $\Lambda_0 \subset \Lambda$,
  and any $f \in \DenseSpace(\RR^d)$.

  Furthermore, if $\CalQ$ is a decomposition cover of $\CalO$, with subordinate BAPU $\Phi$,
  if $w$ is $\CalQ$-moderate, and if $v = (v_j)_{j \in J}$ is a weight such that
  $\StandardGSI$ is $(w,v,\Phi)$-adapted, then the frame operator
  $S : \StandardDecompSp \to \StandardDecompSp$ fulfills for each $f \in \DenseSpace(\R^d)$
  the identity
  \begin{equation}
    S f
    = \sum_{j \in J}
        \sum_{k \in \ZZ^d}
          \langle f \mid \Translation{C_j k} g_j \rangle_{\Phi} \,
          \Translation{C_j k} g_j
    = \sum_{j \in J}
        \sum_{k \in \ZZ^d}
          \langle f \mid \Translation{C_j k} g_j \rangle_{L^2} \,
          \Translation{C_j k} g_j
    = \sum_{\alpha \in \Lambda}
        \Fourier^{-1} \big[ \Translation{\alpha} (t_\alpha \, \widehat{f}) \big] .
    \label{eq:FrameRepresentationOnDenseSpace}
  \end{equation}
\end{corollary}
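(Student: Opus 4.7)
The plan is to test both claims against Fourier test functions $g = \Fourier h \in Z(\CalO)$ (with $h \in C_c^\infty(\CalO)$) and to invoke Proposition~\ref{prop:GSI_identity} with $f_1 = f$ and $f_2 := \overline{g} = \Fourier^{-1}\overline{h}$. Since $\widehat{f} \in C_c^\infty(\CalO)$ and $\widehat{f_2} = \overline{h} \in C_c^\infty(\CalO)$, both $f$ and $f_2$ lie in $\mathcal{B}_\CalO(\R^d)$, so the $\alpha$-LIC version of Proposition~\ref{prop:GSI_identity} is applicable. The role of the conjugate $f_2 = \overline{g}$ is to translate the bilinear pairing $\langle \cdot,\cdot\rangle_{Z',Z}$ into the sesquilinear $L^2$-pairing that appears in~\eqref{eq:WalnutRepresentation}.

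For the unconditional-convergence statement, I would first note that each $\Fourier^{-1}[\Translation{\alpha}(t_\alpha \widehat{f}\,)]$ lies in $L^2(\R^d) \hookrightarrow Z'(\CalO)$, because $t_\alpha \in L^\infty(\RHat^d)$ and $\widehat{f}$ has compact support. A single translation substitution together with Parseval's identity (in bilinear form) gives, for every $g \in Z(\CalO)$,
\[
  \langle \Fourier^{-1}[\Translation{\alpha}(t_\alpha \widehat{f}\,)], g \rangle_{Z',Z}
  = \int_{\RHat^d} \widehat{f}(\xi) \, h(\xi+\alpha) \, t_\alpha(\xi) \, d\xi
  = \int_{\RHat^d} \widehat{f_1}(\xi) \, \overline{\widehat{f_2}(\xi+\alpha)} \, t_\alpha(\xi) \, d\xi.
\]
The absolute-convergence bound~\eqref{eq:WalnutRepresentationAbsoluteConvergence} then shows that for \emph{every} subset $\Lambda_0 \subset \Lambda$ and every $g \in Z(\CalO)$, the numerical series $\sum_{\alpha \in \Lambda_0} \langle \Fourier^{-1}[\Translation{\alpha}(t_\alpha \widehat{f}\,)], g\rangle_{Z',Z}$ converges absolutely, which yields unconditional convergence of $\sum_{\alpha \in \Lambda_0} \Fourier^{-1}[\Translation{\alpha}(t_\alpha \widehat{f}\,)]$ in $Z'(\CalO)$.

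For the representation~\eqref{eq:FrameRepresentationOnDenseSpace}, the first equality is nothing but the definition $Sf = \synthesis \analysis f$, combined with the unconditional $Z'(\CalO)$-convergence of the synthesis series provided by Proposition~\ref{prop:AnalysisSynthesisOperatorGeneral}(i). The second equality is immediate from Remark~\ref{rem:SpecialDualPairing}(i): under the present regularity ($f \in \DenseSpace(\R^d)$ has compact Fourier support in $\CalO$ and each $\Translation{C_j k} g_j$ lies in $L^1 \cap L^2$ with smooth Fourier transform), the extended inner product collapses to the $L^2$ inner product. The third equality I would verify by pairing once more against $g \in Z(\CalO)$: using $\langle \Translation{C_j k} g_j, g\rangle_{Z',Z} = \langle \Translation{C_j k} g_j \mid f_2\rangle_{L^2}$, the left-hand side becomes $\sum_{j,k} \langle f \mid \Translation{C_j k} g_j\rangle_{L^2} \langle \Translation{C_j k} g_j \mid f_2\rangle_{L^2}$, which by~\eqref{eq:WalnutRepresentation} equals $\sum_\alpha \int \widehat{f}(\xi)\,\overline{\widehat{f_2}(\xi+\alpha)}\,t_\alpha(\xi)\,d\xi$, matching the displayed formula above.

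The main obstacle is purely bookkeeping: keeping straight the conjugation conventions that translate between the bilinear pairing defining convergence in $Z'(\CalO)$ and the sesquilinear pairings $\langle \cdot \mid \cdot\rangle_{L^2}$ and $\langle \cdot \mid \cdot\rangle_\Phi$ that appear, respectively, in the Walnut representation and in the definition of the frame operator. The systematic substitution $f_2 = \overline{g}$ in Proposition~\ref{prop:GSI_identity} handles this once and for all.
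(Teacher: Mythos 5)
Your overall plan follows the same route as the paper's proof: pair against test functions in $Z(\CalO)$, set $f_2 = \overline{g}$ to translate the bilinear $Z'$-$Z$ pairing into the sesquilinear $L^2$-pairing of Proposition~\ref{prop:GSI_identity}, handle the first two equalities via Proposition~\ref{prop:AnalysisSynthesisOperatorGeneral}(i) and Remark~\ref{rem:SpecialDualPairing}(i), and verify the third by applying \eqref{eq:WalnutRepresentation}. The bookkeeping with conjugations is correct.

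There is, however, one genuine omission in your treatment of the unconditional convergence claim. Showing that for each fixed $g \in Z(\CalO)$ the numerical series $\sum_{\alpha\in\Lambda_0}\langle\Fourier^{-1}[\Translation{\alpha}(t_\alpha\widehat f\,)],g\rangle_{Z',Z}$ converges absolutely establishes that the partial sums converge \emph{pointwise} as functionals on $Z(\CalO)$, but it does not by itself identify the limit as an element of $Z'(\CalO)$, i.e.\ as a \emph{continuous} functional. The paper handles this by a slightly different choice: instead of $f_2=\overline g$ (which ties the estimate to one particular $g$), it takes $f_2 = \Fourier^{-1}\Indicator_K$ for a fixed compact $K\subset\CalO$, which yields the uniform estimate $\sum_{\alpha\in\Lambda_0}|\langle\Translation{\alpha}(t_\alpha\widehat f\,),\psi\rangle_{\CalD',\CalD}|\le C_K\,\|\psi\|_{L^\infty}$ for all $\psi\in C_c^\infty(\CalO)$ with $\supp\psi\subset K$. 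This simultaneously gives unconditional convergence and—via \cite[Theorem~6.6]{RudinFunctionalAnalysis}—continuity of the limit. Your argument can be repaired either by switching to the $\Indicator_K$ trick, or by invoking the barrelledness of $C_c^\infty(\CalO)$ (Banach--Steinhaus on an LF-space: a pointwise limit of continuous linear functionals is continuous), but as written you jump from ``converges absolutely for each $g$'' to ``converges unconditionally in $Z'(\CalO)$'' without addressing this step.
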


\begin{proof}
  Since $t_\alpha \in L^\infty (\RHat^d)$ and $\widehat{f} \in \Schwartz(\RHat^d)$, we have
  \(
    \Translation{\alpha} (t_\alpha \, \widehat{f})
    \in L^1 (\RHat^d)
    \hookrightarrow \Schwartz'(\RHat^d)
    \hookrightarrow \DistributionSpace (\CalO)
  \),
  and hence $\Fourier^{-1} [\Translation{\alpha} (t_\alpha \, \widehat{f} \, )] \in Z'(\CalO)$.
  The Fourier transform $\Fourier : Z'(\CalO) \to \CalD'(\CalO)$ is a linear homeomorphism;
  hence, it suffices to prove that the series
  $\sum_{\alpha \in \Lambda_0} \Translation{\alpha} (t_\alpha \, \widehat{f} \, )$
  converges unconditionally in $\CalD'(\CalO)$.
  To see this, let $K \subset \CalO$ be compact.
  Define $f_1 := f \in \DenseSpace(\RR^d) \subset \CalB_{\CalO} (\RR^d)$,
  and set $f_2 := \Fourier^{-1} \Indicator_K \in \CalB_{\CalO} (\RR^d)$.
  By Equation~\eqref{eq:WalnutRepresentationAbsoluteConvergence},
  the constant
  \(
    C_K := \sum_{\alpha \in \Lambda}
             \int_{\RHat^d}
               |\widehat{f} (\xi)| \,
               \Indicator_K (\xi + \alpha) \,
               |t_\alpha (\xi)|
             \, d \xi
  \)
  is finite.
  Now, let $\psi \in C_c^\infty (\CalO)$ be arbitrary with $\supp \psi \subset K$.
  Then
  \begin{align*}
      \sum_{\alpha \in \Lambda_0}
        \big|
          \langle
            \Translation{\alpha} (t_\alpha \, \widehat{f} \, ) ,
            \psi
          \rangle_{\CalD', \CalD}
        \big|
      & \leq \sum_{\alpha \in \Lambda}
               \|\psi\|_{L^\infty} \!\!
               \int_{\RHat^d}
                 |t_\alpha (\eta - \alpha) \, \widehat{f}(\eta - \alpha)|
                 \cdot \Indicator_K (\eta)
               \, d \eta \\
        &      = C_K \, \|\psi\|_{L^\infty} < \infty.
    \numberthis
    \label{eq:WalnutRepresentationUnconditionalConvergenceMainEstimate}
  \end{align*}
  Since $\|\cdot\|_{L^\infty}$ is continuous with respect to the topology on $C_c^\infty (\CalO)$,
  and since $\psi \in C_c^\infty (\CalO)\vphantom{\sum_j}$ with $\supp \psi \subset K$ was arbitrary,
  the estimate \eqref{eq:WalnutRepresentationUnconditionalConvergenceMainEstimate} simultaneously
  yields that
  $\sum_{\alpha \in \Lambda_0} \Translation{\alpha} (t_\alpha \, \widehat{f} \, ) \in \CalD'(\CalO)$,
  cf.~\cite[Theorem 6.6]{RudinFunctionalAnalysis},
  as well as the unconditional convergence of the series in $\CalD'(\CalO)$.

  For the remaining part, note if
  $f \in \DenseSpace(\RR^d)$, then
  \(
    \langle f \mid \Translation{C_j k} g_j \rangle_{\Phi}
    = \langle f \mid \Translation{C_j k} g_j \rangle_{L^2}
  \)
  by Remark~\ref{rem:SpecialDualPairing}.
  This proves everything but the last equality in Equation~\eqref{eq:FrameRepresentationOnDenseSpace}.
  To prove this, let $g \in Z(\CalO)$.
  Then $\widehat{\overline{g}} = \overline{\Fourier^{-1} g} \in C_c^\infty (\CalO)$,
  and hence $\overline{g} \in \CalB_{\CalO} (\RR^d)$.
  This, together with Equation~\eqref{eq:WalnutRepresentation}, shows
  \[
    \langle Sf , g \rangle_{Z', Z}
    = \langle S f \mid \overline{g} \rangle_{L^2}
    = \sum_{\alpha \in \Lambda}
        \big\langle
          \Fourier^{-1} \big[ \Translation{\alpha} (t_\alpha \, \widehat{f} \, ) \big]
          \mid
          \overline{g}
        \big\rangle_{L^2}
    = \Big\langle
        \sum_{\alpha \in \Lambda}
          \Fourier^{-1} \big[ \Translation{\alpha} (t_\alpha \, \widehat{f} \, ) \big] , g
      \Big\rangle_{Z', Z} ,
  \]
  and hence \eqref{eq:FrameRepresentationOnDenseSpace} follows.
\end{proof}

\subsection{Towards invertibility}
According to Corollary~\ref{cor:FrameOperatorOnDenseSpace},
on the set $\DenseSpace(\RR^d)$, the frame operator can be represented as
\begin{equation}
  S f = T_0 f + R f \, ,
  \label{eq:FrameOperatorSplit}
\end{equation}
with
\begin{align} \label{eq:MainTermDefinition}
   T_0 f & = \Fourier^{-1} ( \, t_0 \cdot \widehat{f} \, )
\end{align}
and
\begin{align}
   R   f & = \Fourier^{-1}
             \Big(
               \sum_{\alpha \in \Lambda \setminus \{0\}}
                 \Translation{\alpha} (\, t_\alpha \cdot \widehat{f} \, )
             \Big) \, ,
  \label{eq:RemainderTermDefinition}
\end{align}
for $f \in \DenseSpace(\RR^d)$.
In the following, we estimate the norms of $T_0^{-1}$ and $R$ as operators
on the decomposition space $\DecompSp (\CalQ, L^p, \ell_w^q)$.
This will be used, together with the following elementary result,
to provide conditions ensuring that the frame operator is invertible.

\begin{lemma}\label{lem:FrameOperatorInvertibility}
  Let $X$ be a Banach space, and let $S : X \to X$ be a linear operator that
  can be written as $S = T_0 + R$, where $T_0 , R$ are bounded
  linear operators on $X$. Finally, assume that $T_0$ is boundedly invertible and that
  \[
    \| T_0^{-1} \|_{X \to X} \cdot \| R \|_{X \to X} < 1 \, .
  \]
  Then $S : X \to X$ is also boundedly invertible.
\end{lemma}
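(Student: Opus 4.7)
The plan is to reduce this to the classical Neumann series. First, I would factor the operator as
\[
  S = T_0 + R = T_0 \, (\identity_X + T_0^{-1} R),
\]
which is legitimate because $T_0$ is assumed to be boundedly invertible on $X$. The submultiplicativity of the operator norm combined with the hypothesis yields
\[
  \| T_0^{-1} R \|_{X \to X}
  \leq \| T_0^{-1} \|_{X \to X} \cdot \| R \|_{X \to X}
  < 1,
\]
which is precisely the condition needed to invoke the Neumann series.

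Next, I would apply the standard fact that for any bounded operator $U : X \to X$ with $\|U\|_{X \to X} < 1$, the operator $\identity_X - U$ is boundedly invertible with inverse given by the absolutely convergent series $\sum_{n=0}^{\infty} U^n$, whose norm is bounded by $(1 - \|U\|)^{-1}$. Setting $U := -T_0^{-1} R$, this shows that $\identity_X + T_0^{-1} R$ is boundedly invertible on $X$, with a quantitative norm bound in terms of $\|T_0^{-1}\|$ and $\|R\|$.

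Finally, since $S$ is the composition of two boundedly invertible operators, it is itself boundedly invertible, with
\[
  S^{-1} = (\identity_X + T_0^{-1} R)^{-1} \, T_0^{-1}
         = \bigg( \sum_{n=0}^{\infty} (-T_0^{-1} R)^n \bigg) T_0^{-1},
\]
the series converging in the operator norm. There is no genuine obstacle here: the argument is entirely formal once the factorization is written down, and the only point to be careful about is that we work on the same Banach space $X$ throughout, so that compositions such as $T_0^{-1} R$ and its powers make sense as bounded operators on $X$.
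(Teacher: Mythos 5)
Your proof is correct and takes essentially the same route as the paper's: factor $S = T_0(\identity_X + T_0^{-1}R)$, observe $\|T_0^{-1}R\| < 1$ by submultiplicativity, and invoke the Neumann series. The only difference is that you spell out the series explicitly and record the norm bound, which the paper leaves implicit.
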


\begin{proof}
  We have $S = T_0 + R = T_0 \big( \identity_X - (- T_0^{-1} R) \big)$.
  But $\|- T_0^{-1} R\|_{X \to X} \leq \|T_0^{-1}\|_{X \to X} \cdot \| R \|_{X \to X} < 1$,
  so that $\identity_X - (- T_0^{-1} R)$ is boundedly invertible by a Neumann series argument.
  This implies that $S$ is boundedly invertible as a composition of boundedly invertible operators.
\end{proof}

\subsection{Estimates for Fourier multipliers}
\label{sub:MainTermEstimates}

The operator $T_0$ is a Fourier multiplier, and we aim to estimate its inverse.
As a first step, we prove a general result concerning the boundedness of Fourier multipliers
on Besov-type spaces; see Proposition~\ref{prop:FourierMultiplierBound} below.
More qualitative versions of that proposition
can be found in \cite[Section 2.4.3]{TriebelFourierAnalysisAndFunctionSpaces},
\cite[Section 2.3]{tr78} and
\cite[Theorem~2.11]{DecompositionSpaces1}.
Corresponding results for Triebel-Lizorkin spaces hold under more stringent assumptions
on the decomposition cover; see
\cite[Sections 2.4.2 and 2.5.4]{TriebelFourierAnalysisAndFunctionSpaces} and \cite{sttr79}.

In contrast to \cite[Section 2.4.3]{TriebelFourierAnalysisAndFunctionSpaces},
we consider Fourier symbols with limited regularity. This entails certain
technical difficulties because of our choice of the reservoir
$Z'(\CalO)$, where $Z(\CalO) = \Fourier(C_c^\infty (\CalO))$.
More precisely, if $f \in \DecompSp(\CalQ,L^p,\ell_w^q) \subset Z'(\CalO)$,
then $\widehat{f} \in \CalD'(\CalO)$ is a distribution, and can be multiplied by a
function $h \in C^\infty (\CalO)$.
We need, however, to make sense of the product with more general functions $h$,
by fully exploiting the fact that $f \in \DecompSp(\CalQ,L^p,\ell_w^q)$.
To this end, we introduce the following notion:

\begin{definition}\label{def:SpecialMultiplication}
  Let $p \in [1,\infty]$.
  For $f \in \Fourier L^1(\R^d)$ and $g \in \Fourier L^p (\R^d)$, we define
  the \emph{generalized product} of $f$ and $g$ as
  \[
    f \odot g := \Fourier[(\Fourier^{-1} f) \ast (\Fourier^{-1} g)]
              \in \Fourier L^p (\R^d) \subset \Schwartz'(\R^d).
  \]
\end{definition}

\begin{remark}\label{rem:SpecialMultiplication}
  The definition makes sense because of Young's inequality:
  $(\Fourier^{-1} f) \ast (\Fourier^{-1} g) \in L^p(\R^d)$.
  Furthermore, our definition indeed generalizes the usual product:
  if $f \in \Schwartz (\R^d)$ and ${g \in \Schwartz'(\R^d)}$, then
  $f \cdot g = \Fourier[(\Fourier^{-1} f) \ast (\Fourier^{-1} g)]$%
  ---see, for instance \cite[Theorem 7.19]{RudinFunctionalAnalysis}.
\end{remark}

We can now derive an estimate for Fourier multipliers on decomposition spaces.
The proof is deferred to Appendix~\ref{sec:FourierMultiplierProof}.

\begin{proposition}\label{prop:FourierMultiplierBound}
  Let $\CalQ = (Q_i)_{i \in I}$ be a decomposition cover of an open
  set $\emptyset \neq \CalO \subset \RHat^d$, and let $(\varphi_i)_{i \in I}$ be a BAPU
  subordinate to $\CalQ$.
  A continuous function $h \in C(\CalO)$ is called \emph{tame} if
  \begin{equation}
    C_h := \sup_{i \in I} \|\Fourier^{-1} (\varphi_i \cdot h)\|_{L^1} < \infty .
    \label{eq:TameCondition}
  \end{equation}

  If $h$ is tame and if $f \in \DecompSp(\CalQ,L^p,\ell_w^q)$ for certain $p,q \in [1,\infty]$
  and a $\CalQ$-moderate weight $w$, then the series
  \begin{equation}
    \Phi_h \, f
    := \sum_{i \in I}
         \Fourier^{-1} [(\varphi_i^\ast h) \odot (\varphi_i \widehat{f} \, )]
    \label{eq:SpecialFourierMultiplierDefinition}
  \end{equation}
  converges unconditionally in $Z'(\CalO)$.
  Furthermore, the operator $\Phi_h$ satisfies the following properties:
  \begin{enumerate}[leftmargin=0.9cm]
    \item[(i)] $\Phi_h \!:\! \DecompSp(\CalQ,L^p,\ell_w^q) \!\to\! \DecompSp(\CalQ,L^p,\ell_w^q)$
          is bounded, with
          \(
            \|\Phi_h\|_{\DecompSp(\CalQ,L^p,\ell_w^q) \to \DecompSp(\CalQ,L^p,\ell_w^q)}
            \leq N_\CalQ^2 C_{\Phi} C_h
          \)
          for arbitrary $p,q \in [1,\infty]$ and any $\CalQ$-moderate weight $w$.
          \vspace{0.1cm}

    \item[(ii)] If $(f_n)_{n \in \N} \subset Z'(\CalO)$ is $(F,\Phi)$-dominated for some
          $F \in \ell_w^q(I; L^p)$ and if $f_n \to f$ with convergence in
          $Z'(\CalO)$, then also $\Phi_h f_n \to \Phi_h f$
          with convergence in $Z'(\CalO)$.
          In addition, there is $G \in \ell_w^q(I; L^p)$ such that $\Phi_h f_n$
          is $(G,\Phi)$-dominated for all $n \in \N$ and such that
          $\|G\|_{\ell_w^q(I;L^p)} \leq N_\CalQ^2 C_{\Phi} C_h \cdot \|F_\ell\|_{\ell_w^q(I;L^p)}$.

    \item[(iii)] If $f \in \DecompSp(\CalQ,L^p,\ell_w^q)$ and $\widehat{f} \in C_c (\CalO)$,
          then $\Phi_h f = \Fourier^{-1} (h \cdot \widehat{f} \,)$.
          \vspace{0.1cm}

    \item[(iv)] If $g,h \in C(\CalO)$ are tame, then so is $g \cdot h$,
          and we have $\Phi_h \Phi_g = \Phi_{g h}$.
  \end{enumerate}
\end{proposition}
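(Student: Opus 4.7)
The plan is to prove the proposition by (a) identifying each summand $(\varphi_i^\ast h) \odot (\varphi_i \widehat f)$ with a compactly supported pointwise product, (b) using the resulting frequency localization to derive a local $L^p$-bound on $\Fourier^{-1}(\varphi_\ell \widehat{\Phi_h f})$ for each $\ell \in I$, and (c) assembling the $\ell^q_w$-norm via the clustering lemma, handling the limits $p,q = \infty$ with the Fatou/domination machinery of Section~\ref{sub:DecompositionSpaces}.

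For step (a), $\varphi_i^\ast h$ is continuous and compactly supported with $\Fourier^{-1}(\varphi_i^\ast h) \in L^1$, while $\varphi_i \widehat f$ is a compactly supported distribution with $\Fourier^{-1}(\varphi_i \widehat f) \in L^p$; standard convolution theory then collapses the generalized product to the genuine pointwise product $(\varphi_i^\ast h)(\varphi_i \widehat f)$, which is frequency-supported inside $\supp \varphi_i \subset Q_i$. Unconditional convergence of the defining series in $Z'(\CalO)$ follows by testing against an arbitrary $\psi = \Fourier^{-1}\eta$ with $\eta \in C_c^\infty(\CalO)$ of compact support $K$: only indices $i$ with $Q_i \cap K \neq \emptyset$ contribute, and the Young-type bound from step (b) yields absolute summability of the pairing.

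For step (b), multiplying by $\varphi_\ell$ annihilates all terms with $i \notin \ell^\ast$ by disjoint supports. For $i \in \ell^\ast$, we rewrite $\varphi_\ell \cdot [(\varphi_i^\ast h)(\varphi_i \widehat f)] = (\varphi_\ell \varphi_i^\ast h) \odot (\varphi_i \widehat f)$ and apply Young's inequality together with
\[
  \|\Fourier^{-1}(\varphi_\ell \varphi_i^\ast h)\|_{L^1} \leq \|\Fourier^{-1}\varphi_\ell\|_{L^1} \cdot \|\Fourier^{-1}(\varphi_i^\ast h)\|_{L^1} \leq C_\Phi \cdot N_\CalQ C_h
\]
to obtain
\[
  \|\Fourier^{-1}(\varphi_\ell \widehat{\Phi_h f})\|_{L^p}
  \leq N_\CalQ C_\Phi C_h \sum_{i \in \ell^\ast} \|\Fourier^{-1}(\varphi_i \widehat f)\|_{L^p}.
\]
Taking the $\ell^q_w$-norm and invoking Lemma~\ref{lem:clustering_map} yields (i). The same pointwise inequality, with $F_i$ in place of $\|\Fourier^{-1}(\varphi_i \widehat f)\|_{L^p}$, supplies the dominating sequence $G$ for (ii); the convergence $\Phi_h f_n \to \Phi_h f$ in $Z'(\CalO)$ then follows by testing against a fixed $\psi \in Z(\CalO)$ and applying dominated convergence to the pairing. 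Claim (iii) is immediate: for $\widehat f \in C_c(\CalO)$ the sum is essentially finite, and $\sum_i (\varphi_i^\ast h)(\varphi_i \widehat f) = h \widehat f \cdot \sum_i \varphi_i = h \widehat f$ on $\CalO$, because $\varphi_i^\ast \varphi_i = \varphi_i$ there. For (iv), the identity $\varphi_i(gh) = (\varphi_i^\ast g)(\varphi_i h)$ on $\CalO$ combined with Young gives tameness of $gh$; the operator identity $\Phi_h \Phi_g = \Phi_{gh}$ is then verified first on $\DenseSpace(\R^d)$---where (iii) forces both sides to reduce to $\Fourier^{-1}((gh)\widehat f)$---and propagated to all of $\StandardDecompSp$ by combining the approximation of Proposition~\ref{prop:Density}(iii) with the dominated-convergence property (ii) of each multiplier.

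The main technical obstacle is step (a): justifying that the formal product $\odot$ coincides with the pointwise product, which is precisely what permits the support-based pruning to $i \in \ell^\ast$ in step (b). Without such pruning one could only bound the sum in $\ell^1$, incompatible with general $\ell^q_w$-norms. A secondary subtlety is that $h$ is only assumed continuous, so all pointwise manipulations on $\CalO$ must be phrased distributionally; and the limits $p, q = \infty$ preclude norm density, making essential use of the Fatou property (Lemma~\ref{lem:DecompositionSpaceFatouProperty}) and (ii) for propagating operator identities to all of $\StandardDecompSp$.
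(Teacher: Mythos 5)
Your overall architecture follows the paper quite closely, but step (a) contains a genuine gap. You assert that ``standard convolution theory collapses the generalized product to the genuine pointwise product $(\varphi_i^\ast h)(\varphi_i \widehat{f}\,)$,'' and you explicitly flag this as the load-bearing step that permits the support-based pruning. However, this identification is not justified and is in fact false in general. For $p > 2$, the assumption $\Fourier^{-1}(\varphi_i \widehat{f}\,) \in L^p$ does not imply that $\varphi_i \widehat{f}$ is a \emph{function}; for $p = \infty$ the simplest counterexample is $\widehat{f}$ containing a Dirac mass, in which case $\varphi_i \widehat{f}$ is a (non-zero) Dirac measure while $\Fourier^{-1}(\varphi_i\widehat{f}\,)$ is a bounded continuous function. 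Since $\varphi_i^\ast h$ is only continuous (it inherits only the regularity of $h$, not of $\varphi_i$), multiplying a merely continuous function by such a distribution is not defined, and there is no ``genuine pointwise product'' to collapse to. The range $p \in [1,2]$ is fine (this is exactly Lemma~\ref{lem:SpecialMultiplicationProperties}(iii) of the paper), but the hard part of the proposition is precisely $p > 2$.

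The fix is that you do not actually need the pointwise-product identification anywhere. All your step (b) requires is the \emph{support localization} $\supp\bigl[(\varphi_i^\ast h)\odot(\varphi_i \widehat{f}\,)\bigr] \subset \supp\varphi_i$, and this is obtainable directly from the algebraic $\odot$-calculus: using associativity of $\odot$ and the fact that $\psi \odot u = \psi u$ for Schwartz $\psi$, one shows that cutting off by a smooth $\psi$ with $\supp\psi$ disjoint from $\supp\varphi_i$ annihilates the $\odot$-product (this is Lemma~\ref{lem:SpecialMultiplicationProperties}(iv) in the paper). Once the support property is available, the manipulation in step (b) should also be phrased in the $\odot$-calculus, e.g.\ $\varphi_\ell\cdot\bigl[(\varphi_i^\ast h)\odot(\varphi_i \widehat{f}\,)\bigr] = \bigl[\varphi_\ell\,\varphi_i^\ast h\bigr]\odot(\varphi_i \widehat{f}\,)$ via associativity and $\varphi_\ell \in \Schwartz$, not via a pointwise product that is undefined.

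A secondary, smaller point: your factorization $(\varphi_\ell\varphi_i^\ast h)\odot(\varphi_i\widehat{f}\,)$ keeps the $f$-dependence inside the inner sum over $i \in \ell^\ast$, so your $\ell^q_w$-bound passes through the clustering map $\Gamma_\CalQ$ and yields at best $N_\CalQ C_\Phi C_h\,\|\Gamma_\CalQ\|_{\ell^q_w\to\ell^q_w}$ rather than the stated $N_\CalQ^2 C_\Phi C_h$. The paper instead factors $\Fourier^{-1}(\varphi_i\varphi_\ell\widehat{f}\,) = \Fourier^{-1}\varphi_i \ast \Fourier^{-1}(\varphi_\ell\widehat{f}\,)$, so the $L^p$-factor $\|\Fourier^{-1}(\varphi_\ell\widehat{f}\,)\|_{L^p}$ is fixed at the outer index $\ell$ and the inner sum only contributes the combinatorial $N_\CalQ$, giving the cleaner constant without any appeal to the clustering lemma.
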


\begin{rem*}
  One can show that if $C_h$ is finite for one BAPU $(\varphi_i)_{i \in I}$,
  then the same holds for any other BAPU.
  Still, the precise value of the constant $C_h$ depends on the choice of the BAPU.
\end{rem*}

\subsection{Estimates for the remainder term \texorpdfstring{$R$}{R}}
\label{sub:RemainderEstimate}
The following proposition provides a general condition under which
$R$ defines a bounded operator on $\DecompSp(\CalQ,L^p,\ell_w^q)$.
Simplified versions of these are derived in Section~\ref{sec:SimplifiedEstimates}.

\begin{proposition}\label{prop:RemainderAbstractTAlphaEstimate}
  Let $\CalQ = (Q_i)_{i \in I}$ be a decomposition cover
  of an open set $\CalO \subset \RHat^d$ of full measure,
  with associated BAPU $\Phi = (\varphi_i)_{i \in I}$.
  Let $w = (w_i)_{i \in I}$ be $\CalQ$-moderate.
  Suppose the system $\StandardGSI$ satisfies the
  $\alpha$-local integrability condition \eqref{eq:LIC}, with respect to $\CalO^c$.
  Moreover, suppose that, for all $i, \ell \in I$,
  \begin{equation}\label{eq:matrix_entries_N}
    N_{i,\ell}
    := \frac{w_i}{w_{\ell}}
        \sum_{\alpha \in \Lambda \setminus \{0\}}
          \bigg\|
            \Fourier^{-1}
            \bigg(
              \varphi_i (\cdot + \alpha)
              \cdot t_{\alpha}
              \cdot \varphi_{\ell}
            \bigg)
          \bigg\|_{L^1} < \infty
  \end{equation}
  and that the matrix $N = (N_{i,\ell})_{i,\ell \in I} \in \CC^{I \times I}$
  is of Schur-type.
  Then, for all $p, q \in [1,\infty]$, the ``remainder operator $R$'' defined in
  \eqref{eq:RemainderTermDefinition} satisfies
  \[
    \| R f \|_{\DecompSp (\CalQ, L^p, \ell^q_w)}
    \leq \| N \|_{\schur} \,
         \| \Gamma_{\CalQ} \|_{\ell^q_w (I) \to \ell^q_w (I)} \,
         \| f \|_{\StandardDecompSp}
    \qquad \forall \, f \in \DenseSpace(\RR^d) \, .
  \]
\end{proposition}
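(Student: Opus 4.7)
The plan is to estimate, for each $i \in I$, the local piece $\Fourier^{-1}(\varphi_i \cdot \widehat{Rf})$ in $L^p$ and then pass to the $\ell_w^q$-norm. For $f \in \DenseSpace(\RR^d)$, the Fourier side $\widehat{f} \in C_c^\infty(\CalO)$ is compactly supported, so Corollary~\ref{cor:FrameOperatorOnDenseSpace} gives $\widehat{Rf} = \sum_{\alpha \in \Lambda \setminus \{0\}} \Translation{\alpha}(t_\alpha \widehat{f}\,)$ unconditionally in $\CalD'(\CalO)$, and only finitely many $\alpha$ contribute after multiplication by $\varphi_i$ while only finitely many $\ell$ contribute after multiplication by $\widehat{f}$. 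All the rearrangements below are therefore justified by finite linear algebra.

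Using the partition of unity to write $\widehat{f} = \sum_{\ell \in I} \varphi_\ell \widehat{f}$ on $\CalO$, and shifting variables $\eta := \xi - \alpha$, I would first establish the identity
\[
  \varphi_i \cdot \Translation{\alpha}(t_\alpha \cdot \varphi_\ell \widehat{f}\,)
  \;=\; \Translation{\alpha}(m_{\alpha,i,\ell} \cdot \widehat{f}\,)
  \qquad \text{with} \qquad
  m_{\alpha,i,\ell}(\eta) \;:=\; \varphi_i(\eta + \alpha)\, t_\alpha(\eta)\, \varphi_\ell(\eta),
\]
which is precisely the symbol whose inverse-Fourier $L^1$-norm is summed in the definition of $N_{i,\ell}$. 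Since $\Fourier^{-1}$ intertwines translation with modulation, $\|\Fourier^{-1}[\Translation{\alpha}(\cdot)]\|_{L^p} = \|\Fourier^{-1}(\cdot)\|_{L^p}$, and the outer translation disappears. Next, I would exploit the absorption identity $\varphi_\ell^\ast \varphi_\ell = \varphi_\ell$ (which holds because every $\varphi_k$ nonzero on $\supp \varphi_\ell$ necessarily satisfies $k \in \ell^\ast$) to rewrite $m_{\alpha,i,\ell} \widehat{f} = m_{\alpha,i,\ell} \cdot (\varphi_\ell^\ast \widehat{f}\,)$, and apply Young's convolution inequality:
\[
  \|\Fourier^{-1}(\varphi_i \cdot \Translation{\alpha}(t_\alpha \widehat{f}\,))\|_{L^p}
  \;\leq\; \sum_{\ell \in I}
            \|\Fourier^{-1}(m_{\alpha,i,\ell})\|_{L^1}
            \cdot \|\Fourier^{-1}(\varphi_\ell^\ast \widehat{f}\,)\|_{L^p}.
\]
Summing over $\alpha \in \Lambda \setminus \{0\}$ produces exactly the factor $\tfrac{w_\ell}{w_i}\, N_{i,\ell}$ in front of the $\ell$-th term.

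To finish, set $F_i := \|\Fourier^{-1}(\varphi_i \widehat{f}\,)\|_{L^p}$; then the bound $\|\Fourier^{-1}(\varphi_\ell^\ast \widehat{f}\,)\|_{L^p} \leq (\Gamma_\CalQ F)_\ell$ together with the preceding estimate yields
\[
  w_i \cdot \|\Fourier^{-1}(\varphi_i \widehat{Rf}\,)\|_{L^p}
  \;\leq\; \sum_{\ell \in I}
             N_{i,\ell} \cdot w_\ell \, (\Gamma_\CalQ F)_\ell,
\]
so Schur's test applied to $N$ combined with the boundedness of $\Gamma_\CalQ$ from Lemma~\ref{lem:clustering_map} delivers the claimed estimate after taking $\ell^q$-norms in $i$. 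The only real obstacle I anticipate is bookkeeping: carefully justifying that the triangle inequality on the Fourier side commutes with the $\alpha$-summation when $f \in \DenseSpace$, which reduces to the compact support of $\widehat{f}$ and the finiteness of the relevant index sets; beyond that, the argument is a clean separation of variables followed by Schur's test.
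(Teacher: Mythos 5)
Your proposal is correct and follows essentially the same route as the paper's proof: decompose $\widehat{f}$ via the partition of unity, absorb $\varphi_\ell = \varphi_\ell\varphi_\ell^\ast$, pull the translation out of the $\Fourier L^p$ norm, apply the $\Fourier L^1$-module estimate (Young's inequality), and then conclude with the clustering map and Schur's test. One small imprecision: it is not true in general that ``only finitely many $\alpha$ contribute after multiplication by $\varphi_i$'' --- the set $\Lambda = \bigcup_j C_j^{-t}\ZZ^d$ can have infinitely many points meeting $\supp\varphi_i - \supp\widehat{f}$ --- so the interchange of the triangle inequality with the $\alpha$-summation is justified not by finiteness but by the unconditional convergence from Corollary~\ref{cor:FrameOperatorOnDenseSpace} together with the absolute convergence of $\sum_\alpha\|\Fourier^{-1}(m_{\alpha,i,\ell})\|_{L^1}$ guaranteed by the hypothesis $N_{i,\ell}<\infty$; the finiteness claim is only valid for the $\ell$-sum.
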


\begin{proof}
 The assumptions yield, by Schur's test, that the operator
 \[
   \mathbf{N} : \ell^q_w (I) \to \ell_w^q (I),
   \quad (c_\ell)_{\ell \in I} \mapsto \bigg(
                                         \sum_{\ell \in I}
                                           \bigg[
                                             \sum_{\alpha \in \Lambda \setminus \{0\}}
                                               \|
                                                 \varphi_i (\cdot + \alpha)
                                                 \cdot t_{\alpha}
                                                 \cdot \varphi_{\ell}
                                               \|_{\Fourier L^1}
                                           \bigg] \cdot c_{\ell}
                                       \bigg)_{i \in I},
 \]
 is bounded, with $\| \mathbf{N} \|_{\ell_w^q (I) \to \ell_w^q(I)} \leq \| N \|_{\schur}$.

 Let $f \in \DenseSpace (\RR^d)$ be arbitrary.
 For any $\ell \in I$, define $c_\ell := \|\varphi_\ell^\ast \cdot \widehat{f}\|_{\Fourier L^p}$
 and $\theta_\ell := \|\varphi_\ell \cdot \widehat{f} \|_{\Fourier L^p}$,
 where $\varphi_\ell^\ast := \sum_{i \in \ell^\ast} \varphi_i$.
 Let $c = (c_i)_{i \in I}$ and $\theta = (\theta_i)_{i \in I}$.
 Then $0 \leq c_\ell \leq \sum_{i \in \ell^\ast} \theta_i = (\Gamma_\CalQ \, \theta )_\ell$,
 and hence
 \(
   \|c\|_{\ell_w^q}
   \leq \|\Gamma_\CalQ\|_{\ell_w^q \to \ell_w^q} \cdot \|\theta\|_{\ell_w^q}
   =    \|\Gamma_\CalQ\|_{\ell_w^q \to \ell_w^q} \cdot \|f\|_{\DecompSp(\CalQ,L^p,\ell_w^q)}
   <    \infty
 \).

 Since $f \in \DenseSpace (\RR^d)$, we have $\widehat{f} \in C_c^\infty (\CalO)$, and hence
 \(
   \widehat{f}
   = \sum_{\ell \in I}
       \varphi_\ell \cdot \widehat{f}
   = \sum_{\ell \in I}
       \varphi_\ell \, \varphi_\ell^\ast \, \widehat{f}
 \),
 where only finitely many terms of the series do not vanish.
 Therefore, by the unconditional convergence of the series defining $R f$
 (see Corollary~\ref{cor:FrameOperatorOnDenseSpace}), we see
 \[
   \varphi_i \cdot \widehat{Rf}
   = \varphi_i
     \cdot \sum_{\alpha \in \Lambda \setminus \{0\}}
             \Translation{\alpha} (t_\alpha \cdot \widehat{f} \,)
   = \sum_{\ell \in I}
       \sum_{\alpha \in \Lambda \setminus \{0\}}
         \varphi_i
         \cdot \Translation{\alpha}
               (
                t_\alpha
                \cdot \varphi_\ell
                \cdot \varphi_\ell^\ast
                \cdot \widehat{f}\,
               ) \, .
 \]
 Hence, for all $i \in I$,
 \begin{align*}
   \|
     \varphi_i \cdot \widehat{R f}
   \|_{\Fourier L^p}
   & \leq \sum_{\ell \in I}
            \sum_{\alpha \in \Lambda \setminus \{0\}}
              \|
                \varphi_i
                \cdot \Translation{\alpha}
                      (
                       t_\alpha
                       \cdot \varphi_\ell
                       \cdot \varphi_\ell^\ast
                       \cdot \widehat{f}\,
                      )
              \|_{\Fourier L^p}   \\
   &\leq \sum_{\ell \in I}
            \sum_{\alpha \in \Lambda \setminus \{0\}}
              \|
                (\Translation{-\alpha} \varphi_i)
                \cdot t_\alpha
                \cdot \varphi_\ell
              \|_{\Fourier L^1}
              \,
              \|
                \varphi_\ell^\ast \cdot \widehat{f}\,
              \|_{\Fourier L^p}
     =    (\mathbf{N} \, c)_i \, ,
 \end{align*}
 and thus
 \begin{align*}
   \| R f \|_{\DecompSp(\CalQ,L^p,\ell_w^q)}
   = \big\|
       \big(
         \| \varphi_i \cdot \widehat{R f} \|_{\Fourier L^p}
       \big)_{i \in I}
     \big\|_{\ell_w^q}
   \leq \| \mathbf{N} \, c \|_{\ell_w^q}
   \leq \| N \|_{\schur}
         \|\Gamma_\CalQ\|_{\ell_w^q \to \ell_w^q}
         \|f\|_{\StandardDecompSp},
 \end{align*}
 as claimed.
\end{proof}

\begin{corollary} \label{cor:RemainderTermR0}
  Assume that the hypotheses of Proposition~\ref{prop:RemainderAbstractTAlphaEstimate} are satisfied.
  Furthermore, assume that the function $t_0$ defined in \eqref{eq:TAlphaDefinition}
  is continuous on $\CalO$ and tame (see Proposition~\ref{prop:FourierMultiplierBound}),
  so that the operator $\Phi_{t_0} : \StandardDecompSp \to \StandardDecompSp$
  is well-defined and bounded.
  Finally, assume that $\StandardGSI$ is $(w,v,\Phi)$-adapted for some weight $v = (v_j)_{j \in J}$.

  Define $T_0 := \Phi_{t_0}$.
  Then the frame operator
  $S : \DecompSp(\CalQ,L^p,\ell_w^q) \to \DecompSp(\CalQ,L^p,\ell_w^q)$
  is well-defined and bounded and satisfies
  $S = T_0 + R_0$ with a bounded linear operator
  $R_0 : \DecompSp(\CalQ,L^p,\ell_w^q) \to \DecompSp(\CalQ,L^p,\ell_w^q)$ satisfying
  \[
    \|R_0\|_{\DecompSp(\CalQ,L^p,\ell_w^q) \to \DecompSp(\CalQ,L^p,\ell_w^q)}
    \leq C_{p,q} \| N \|_{\schur} \,
         \| \Gamma_{\CalQ} \|_{\ell^q_w (I) \to \ell^q_w (I)},
  \]
  where $N \in \CC^{I \times I}$ is as in \eqref{eq:matrix_entries_N},
  and $C_{p,q} := 1$ if $\max \{p,q\} < \infty$
  and $C_{p,q} := C_\Phi \, \|\Gamma_{\CalQ}\|_{\ell_w^q \to \ell_w^q}^2$ otherwise.
\end{corollary}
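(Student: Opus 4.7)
The plan is to obtain $S = T_0 + R_0$ by setting $R_0 := S - T_0$ and exploit the identity $R_0 f = R f$ on the dense subspace $\DenseSpace(\R^d)$, where $R$ is the operator of Proposition~\ref{prop:RemainderAbstractTAlphaEstimate}. Since $\StandardGSI$ is $(w,v,\Phi)$-adapted, Corollary~\ref{cor:FrameOperatorSpecialContinuity} yields that $S$ is bounded on $\StandardDecompSp$, while Proposition~\ref{prop:FourierMultiplierBound}(i) gives that $T_0 = \Phi_{t_0}$ is bounded on $\StandardDecompSp$. For any $f \in \DenseSpace(\R^d)$, we have $\widehat{f} \in C_c^\infty(\CalO)$, so Proposition~\ref{prop:FourierMultiplierBound}(iii) gives $T_0 f = \Fourier^{-1}(t_0 \cdot \widehat{f} \,)$, which is precisely the main term in the splitting of Corollary~\ref{cor:FrameOperatorOnDenseSpace}. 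Hence $R_0 f = (S - T_0) f = R f$ for $f \in \DenseSpace(\R^d)$, and Proposition~\ref{prop:RemainderAbstractTAlphaEstimate} yields the estimate $\|R_0 f\|_{\StandardDecompSp} \leq \|N\|_{\schur} \|\Gamma_\CalQ\|_{\ell^q_w \to \ell^q_w} \|f\|_{\StandardDecompSp}$ on this subspace.

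For $p,q \in [1,\infty)$, the space $\DenseSpace(\R^d)$ is norm-dense in $\StandardDecompSp$ by Proposition~\ref{prop:Density}(ii), so the estimate extends by continuity to all of $\StandardDecompSp$, yielding $C_{p,q} = 1$.

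The main obstacle is the case $\max\{p,q\} = \infty$, where $\DenseSpace(\R^d)$ need not be norm-dense. Here the plan is to use the weak-type approximation in Proposition~\ref{prop:Density}(iii): for $f \in \StandardDecompSp$, one obtains an envelope $F \in \ell_w^q(I; L^p)$ with $\|F\|_{\ell_w^q(I;L^p)} \leq C_\Phi \, \|\Gamma_\CalQ\|^2_{\ell_w^q \to \ell_w^q} \, \|f\|_{\StandardDecompSp}$ and a sequence $(g_n)_n \subset \DenseSpace(\R^d)$ of $(F,\Phi)$-dominated functions with $g_n \to f$ in $Z'(\CalO)$. Applying the $Z'(\CalO)$-continuity statements in Corollary~\ref{cor:FrameOperatorSpecialContinuity} for $S$ and Proposition~\ref{prop:FourierMultiplierBound}(ii) for $T_0$, I get $R_0 g_n = S g_n - T_0 g_n \to S f - T_0 f = R_0 f$ in $Z'(\CalO)$. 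Since each $g_n$ is $(F,\Phi)$-dominated, one has $\|g_n\|_{\StandardDecompSp} \leq \|F\|_{\ell_w^q(I;L^p)}$, so the dense-subspace estimate gives a uniform bound $\|R_0 g_n\|_{\StandardDecompSp} \leq \|N\|_{\schur} \|\Gamma_\CalQ\|_{\ell^q_w \to \ell^q_w} \|F\|_{\ell_w^q(I;L^p)}$. Finally, the Fatou property (Lemma~\ref{lem:DecompositionSpaceFatouProperty}) applied to $(R_0 g_n)_n$ transfers this uniform bound to $R_0 f$, introducing the additional factor $C_\Phi \, \|\Gamma_\CalQ\|^2_{\ell_w^q \to \ell_w^q}$ coming from the estimate on $\|F\|_{\ell_w^q(I;L^p)}$; this is exactly the stated $C_{p,q}$. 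The delicate point is the interplay of three different notions of convergence — unconditional convergence of the defining series of $\Phi_{t_0}$ and of the remainder, weak convergence in $Z'(\CalO)$, and norm control via Fatou — which is why the domination property $(F,\Phi)$ in Proposition~\ref{prop:Density}(iii) is essential.
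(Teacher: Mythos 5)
Your proposal is correct and follows essentially the same route as the paper's own proof: define $R_0 := S - T_0$, verify $R_0 f = R f$ on $\DenseSpace(\R^d)$ via Proposition~\ref{prop:FourierMultiplierBound}(iii) and Corollary~\ref{cor:FrameOperatorOnDenseSpace}, extend by norm-density when $\max\{p,q\}<\infty$, and handle the endpoint case via the $(F,\Phi)$-dominated approximation from Proposition~\ref{prop:Density}(iii), the $Z'(\CalO)$-continuity of $S$ and $T_0$, and the Fatou property, with the factor $C_{p,q}$ arising precisely from the bound on $\|F\|_{\ell_w^q(I;L^p)}$. No gaps; this matches the paper's argument step for step.
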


\begin{proof}
 Corollary~\ref{cor:FrameOperatorSpecialContinuity} shows that the frame operator
 $S : \StandardDecompSp \to \StandardDecompSp$ is well-defined and bounded,
 and hence so is $R_0 := S - T_0$.
 Note for $f \in \DenseSpace(\RR^d)$ that $T_0 f = \Fourier^{-1}(t_0 \cdot \widehat{f})$
 by Proposition~\ref{prop:FourierMultiplierBound}(iii).
 Therefore, Corollary~\ref{cor:FrameOperatorOnDenseSpace} shows
 for $f \in \DenseSpace(\RR^d)$ that $R_0 f = R f$
 with $R f$ as in Equation~\eqref{eq:RemainderTermDefinition}.
 Thus, if $\max \{p,q\} < \infty$, the density of $\DenseSpace(\RR^d)$
 in $\DecompSp(\CalQ,L^p,\ell_w^q)$ (Proposition~\ref{prop:Density}),
 combined with Proposition~\ref{prop:RemainderAbstractTAlphaEstimate}, shows the claim.

 Now, suppose that $\max \{p,q\} = \infty$, and let $f \in \StandardDecompSp$ be arbitrary.
 Then, Proposition~\ref{prop:Density} yields a sequence
 $(g_n)_{n \in \NN} \subset \DenseSpace(\RR^d)$ and some $F \in \ell_w^q (I; L^p)$
 such that $g_n \to f$ with convergence in $Z'(\CalO)$,
 and such that each $g_n$ is $(F,\Phi)$-dominated, where
 $\|F\|_{\ell_w^q(I; L^p)} \leq C_{p,q} \cdot \|f\|_{\StandardDecompSp}$
 with $C_{p,q}$ as in the statement of the current corollary.
 By Proposition~\ref{prop:FourierMultiplierBound}(ii), we get
 ${T_0 g_n  \to T_0 f}$ with convergence in $Z'(\CalO)$.
 In addition, Corollary~\ref{cor:FrameOperatorSpecialContinuity} shows that $S \, g_n \to S \, f$
 in $Z'(\CalO)$.
 Therefore, $R \, g_n = R_0 \, g_n = (S - T_0) g_n \to (S - T_0) f = R_0  f$,
 while Proposition~\ref{prop:RemainderAbstractTAlphaEstimate} shows
 \begin{align*}
   \|R \, g_n\|_{\StandardDecompSp}
 &  \leq \|N\|_{\schur} \|\Gamma_{\CalQ}\|_{\ell_w^q \to \ell_w^q} \, \|g_n\|_{\StandardDecompSp}\\
&   \leq C_{p,q} \|N\|_{\mathrm{Schur}}
         \|\Gamma_{\CalQ}\|_{\ell_w^q \to \ell_w^q}
           \|f\|_{\StandardDecompSp} \, .
 \end{align*}
  Lemma~\ref{lem:DecompositionSpaceFatouProperty} yields
 \(
   \|R_0 f\|_{\StandardDecompSp}
   \leq C_{p,q} \|N\|_{\mathrm{Schur}}
         \|\Gamma_{\CalQ}\|_{\ell_w^q \to \ell_w^q}
           \|f\|_{\StandardDecompSp}
 \).
\end{proof}

In many cases, instead of verifying that the matrix $N$ defined in
Equation~\eqref{eq:matrix_entries_N} is of Schur-type, it is easier to consider the
matrix $\widetilde{N}$ defined next.

\begin{corollary}\label{cor:RemainderAbstractEstimate}
  Let $\CalQ = (Q_i)_{i \in I}$ be a decomposition cover of an open set $\CalO \subset \RHat^d$
  of full measure with BAPU $\Phi = (\varphi_i)_{i \in I}$, and let $w = (w_i)_{i \in I}$
  be $\CalQ$-moderate.
  Let $\StandardGSI$ be a generalized shift-invariant system.
  Suppose that the matrix $\widetilde{N} = (\widetilde{N}_{i,\ell})_{i,\ell \in I}$ given by
  \begin{align}\label{eq:matrix_entries_NTilde}
    \widetilde{N}_{i,\ell}
    :=  \max \bigg\{ 1, \frac{w_i}{w_{\ell}} \bigg\}
       \sum_{j \in J}
         \frac{1}{|\det C_j|}
         \sum_{\alpha \in C_j^{-t} \ZZ^d \setminus \{0\}}
           \bigg\|
             \Fourier^{-1}
             \bigg(
               \varphi_i (\cdot - \alpha)
               \cdot \overline{\widehat{g_j}}
               \cdot \widehat{g_j}(\cdot - \alpha)
               \cdot \varphi_{\ell}
             \bigg)
           \bigg\|_{L^1}
  \end{align}
  is of Schur-type.
  Then $\StandardGSI$ satisfies the $\alpha$-local integrability condition relative to $\CalO^c$,
  and $\|N\|_{\schur} \leq \|\widetilde{N} \|_{\schur}$,
  where $N$ is as defined in Equation~\eqref{eq:matrix_entries_N}.
\end{corollary}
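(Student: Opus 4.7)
The proof splits naturally into two parts: first the pointwise (hence Schur-norm) domination $N_{i,\ell} \le \widetilde{N}_{i,\ell}$, and second the verification of the $\alpha$-local integrability condition. The finiteness of the entries $\widetilde{N}_{i,\ell}$ (which is part of ``of Schur-type'' after a moment's thought, since the rows and columns being summable forces each entry to be finite) will be used in both halves.

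For the Schur bound, the plan is to unfold the definition of $t_\alpha$ inside the expression for $N_{i,\ell}$ and then apply the triangle inequality for $\|\Fourier^{-1}(\cdot)\|_{L^1}$. This yields
\[
  N_{i,\ell}
  \le \frac{w_i}{w_\ell}
      \sum_{\alpha \in \Lambda \setminus \{0\}}
        \sum_{j \in \kappa(\alpha)}
          \frac{1}{|\det C_j|}
          \big\|
            \Fourier^{-1}\bigl(
              \varphi_i(\cdot + \alpha) \cdot \overline{\widehat{g_j}}
              \cdot \widehat{g_j}(\cdot + \alpha) \cdot \varphi_\ell
            \bigr)
          \big\|_{L^1}.
\]
Next I would swap the order of summation using the identity
$\sum_{\alpha \in \Lambda} \sum_{j \in \kappa(\alpha)} = \sum_{j \in J} \sum_{\alpha \in C_j^{-t}\ZZ^d}$,
which is legitimate by Tonelli. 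The remaining mismatch with $\widetilde{N}_{i,\ell}$ is only the sign of $\alpha$: the expression uses $\varphi_i(\cdot + \alpha)$ and $\widehat{g_j}(\cdot + \alpha)$, whereas $\widetilde{N}$ uses the analogous quantities with $-\alpha$. Since each lattice $C_j^{-t}\ZZ^d$ is symmetric under negation, the change of variable $\alpha \mapsto -\alpha$ inside the sum leaves everything unchanged. Combined with the trivial bound $w_i/w_\ell \le \max\{1, w_i/w_\ell\}$, this gives $N_{i,\ell} \le \widetilde{N}_{i,\ell}$, and the Schur estimate follows immediately by monotonicity of $\|\cdot\|_{\schur}$.

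For the $\alpha$-LIC, the $\alpha = 0$ contribution is controlled by
\(
  \int |\widehat f(\xi)|^2 t_0(\xi) \, d\xi \le \|t_0\|_{L^\infty} \|f\|_{L^2}^2,
\)
which is finite by the standing hypothesis \eqref{eq:GSI_assumption}. For $\alpha \ne 0$, I would exploit that any $f \in \CalB_\CalO(\RR^d)$ has $\widehat{f}$ compactly supported in $\CalO$, so by admissibility there is a finite set $I_f \subset I$ with $\widehat f = \sum_{\ell \in I_f} \varphi_\ell \widehat f$, and likewise $\widehat f(\cdot + \alpha) = \sum_{i \in I_f} \varphi_i(\cdot + \alpha)\widehat f(\cdot + \alpha)$. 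Inserting this partition of unity, pulling out $\|\widehat f\|_{L^\infty}^2$, and then using that the integrand $\varphi_\ell \overline{\widehat{g_j}} \varphi_i(\cdot + \alpha) \widehat{g_j}(\cdot + \alpha)$ is supported in the compact set $\supp \varphi_\ell$ allows me to majorize the integral by $\lambda(\supp \varphi_\ell)$ times the $L^\infty$-norm of that function, which in turn is bounded by its $\Fourier L^1$-norm via Fourier inversion. Summing over $j$ and $\alpha$, applying the same $\alpha \mapsto -\alpha$ symmetry as above, and absorbing the resulting sums into $\widetilde{N}_{i,\ell}$ (whose weight factor $\max\{1, w_i/w_\ell\}$ is $\ge 1$ and hence harmless) yields
\[
  \sum_{j \in J} \frac{1}{|\det C_j|} \!\!\!\!
  \sum_{\alpha \in C_j^{-t}\ZZ^d \setminus \{0\}} \!\!\!
    \int |\widehat f(\xi)\widehat f(\xi+\alpha) \widehat{g_j}(\xi)\widehat{g_j}(\xi+\alpha)| \, d\xi
  \le \|\widehat f\|_{L^\infty}^2 \!\!\!
      \sum_{i,\ell \in I_f} \!\!
        \lambda(\supp \varphi_\ell) \, \widetilde N_{i,\ell},
\]
which is finite because $I_f$ is finite and each $\widetilde{N}_{i,\ell}$ is finite.

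The only mildly delicate point is conceptual rather than computational: the expressions for $N$ and $\widetilde{N}$ look structurally different (the translations sit on different factors), and one might initially worry that the bound requires $\widetilde{N}$ to dominate a \emph{symmetrized} version of $N$. The remedy is the lattice symmetry $\alpha \mapsto -\alpha$ combined with the fact that $\kappa(-\alpha) = \kappa(\alpha)$; once this is in place, both assertions reduce to routine triangle-inequality and compact-support arguments.
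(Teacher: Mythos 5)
Your proof is correct, and it splits into the same two tasks as the paper (entrywise comparison $N_{i,\ell}\leq\widetilde N_{i,\ell}$; verification of the $\alpha$-LIC), but handles the second task by a genuinely different route. For the Schur comparison you do the same thing the paper does — unfold $t_\alpha$, apply the triangle inequality, swap the $(\alpha,j)$ double sum into $\sum_{j}\sum_{\alpha\in C_j^{-t}\ZZ^d}$, and observe that $w_i/w_\ell\leq\max\{1,w_i/w_\ell\}$ — and you make explicit the $\alpha\mapsto-\alpha$ lattice symmetry and the identity $\kappa(-\alpha)=\kappa(\alpha)$ that the paper uses silently, which is a genuine improvement in transparency.

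For the $\alpha$-LIC the two routes diverge. The paper first proves a pointwise Daubechies--Bessel estimate
\(
\esssup_{\xi\in\CalO}\sum_{j}|\det C_j|^{-1}\sum_{\alpha\in C_j^{-t}\ZZ^d}|\widehat{g_j}(\xi)\widehat{g_j}(\xi+\alpha)|\leq N_\CalQ\|\widetilde N\|_\schur+B,
\)
obtained by localising $\xi$ inside some $Q_{i_0}$ (so that $\sum_{\ell\in i_0^\ast}\varphi_\ell(\xi)=1$) and bounding $1\leq\sum_i|\varphi_i(\xi+\alpha)|$, after which the $\alpha$-LIC follows from the one-line bound $\int|\widehat f(\xi)\widehat f(\xi+\alpha)\widehat{g_j}(\xi)\widehat{g_j}(\xi+\alpha)|d\xi\leq C\,\|\widehat f\|_{L^\infty}\|\widehat f\|_{L^1}$. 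You skip the pointwise bound entirely: you insert a finite partition of unity $\widehat f=\sum_{\ell\in I_f}\varphi_\ell\widehat f$ and $\widehat f(\cdot+\alpha)=\sum_{i\in I_f}\varphi_i(\cdot+\alpha)\widehat f(\cdot+\alpha)$, pull out $\|\widehat f\|_{L^\infty}^2$, majorise the remaining integral by $\lambda(\supp\varphi_\ell)$ times the $\Fourier L^1$-norm of the compactly supported product, and absorb the result into $\sum_{i,\ell\in I_f}\lambda(\supp\varphi_\ell)\widetilde N_{i,\ell}$. Both are valid; the paper's approach yields the pointwise estimate as a useful intermediate fact, while yours is slightly more direct and, as you note, uses only finiteness of the individual entries $\widetilde N_{i,\ell}$ rather than the full Schur norm at that step. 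Your handling of the $\alpha=0$ diagonal term via Plancherel and the standing upper bound $t_0\leq B$ is also fine and matches the paper in spirit.
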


\begin{proof}
 By assumption, $\| \widetilde{N} \|_{\schur} < \infty$.
 We first show that
 \begin{align} \label{eq:daubechies_bessel_criterion}
   C
   := \esssup_{\xi \in \CalO}
        \sum_{j \in J}
          \frac{1}{|\det C_j|}
          \sum_{\alpha \in C_j^{-t} \ZZ^d}
            |\widehat{g_j} (\xi) \widehat{g_j} (\xi + \alpha) |
   < \infty.
 \end{align}
 To show this, first note that since $\CalO \subset \RHat^d$ is of full measure, so is
 \[
   \CalO_0
   := \big\{
        \xi \in \RHat^d
        \quad\colon\quad
            \xi + \alpha \in \CalO, \quad \forall \, j \in J \, , \;
          \forall \, \alpha \in C_j^{-t} \ZZ^d
      \big\},
 \]
 since $\CalO_0^c = \bigcup_{j \in J} \bigcup_{\alpha \in C_j^{-t} \ZZ^d} (\CalO^c - \alpha)$
 is a countable union of null-sets.
 If $\xi \in \CalO_0$ and $j \in J$, $\alpha \in C_j^{-t} \ZZ^d$ are arbitrary,
 then $\xi + \alpha \in \CalO$ and hence $\sum_{i \in I} \varphi_i(\xi+\alpha) = 1$,
 whence $1 \leq \sum_{i \in I} |\varphi_i(\xi+\alpha)|$.
 Now, let $\xi \in \CalO_0 \subset \CalO$ be arbitrary and choose
 $i_0 \in I$ such that $\xi \in Q_{i_0}$.
 Then $\sum_{\ell \in i_0^*} \varphi_{\ell} (\xi) = 1$.
 Thus, using the estimate $\|f\|_{\sup} \leq \|\Fourier^{-1} f\|_{L^1}$, we see that
 \begin{align*}
  & \sum_{j \in J}
     \frac{1}{|\det C_j|}
     \sum_{\alpha \in C_j^{-t} \ZZ^d \setminus \{0\}} \!\!\!\!
       |\widehat{g_j} (\xi) \widehat{g_j} (\xi + \alpha) | \\
   &\quad \quad \quad \leq \sum_{i \in I, \ell \in i_0^\ast}
            \sum_{j \in J}
              \frac{1}{|\det C_j|}
              \sum_{\alpha \in C_j^{-t} \ZZ^d \setminus \{0\}} \!\!\!\!
                |
                 \widehat{g_j} (\xi)
                 \varphi_i (\xi + \alpha)
                 \widehat{g_j} (\xi + \alpha)
                 \varphi_{\ell} (\xi)
                |\\
   &\quad \quad \quad \leq \sum_{\ell \in i_0^\ast, i \in I}
           \widetilde{N}_{i,\ell}
    \leq N_{\CalQ} \cdot \| \widetilde{N} \|_{\schur}
    <    \infty.
 \end{align*}
 In combination with our standing assumption \eqref{eq:GSI_assumption},
 this proves \eqref{eq:daubechies_bessel_criterion}.

 \smallskip{}

 Now, the monotone convergence theorem and \eqref{eq:daubechies_bessel_criterion} show
 for arbitrary $f \in \mathcal{B}_{\CalO} (\RR^d)$ that
 \[
   \sum_{j \in J}
     \frac{1}{|\det C_j|}
     \sum_{\alpha \in C_j^{-t} \ZZ^d}
       \int_{\RHat^d}
         |
          \widehat{f}(\xi) \,
          \widehat{f}(\xi+\alpha) \,
          \widehat{g_j}(\xi) \,
          \widehat{g_j} (\xi + \alpha)
         |
       \, d \xi
     \leq C \, \|\widehat{f}\|_{L^\infty}
          \cdot \int_{\RHat^d}
                  |\widehat{f}(\xi)|
                \, d \xi
     < \infty,
 \]
 since $\widehat{f} \in L^\infty(\RHat^d)$ and $\supp \widehat{f} \subset \CalO$ is compact.
 This shows that $\StandardGSI$ satisfies the $\alpha$-LIC.

 Finally, recall that
 \(
   t_\alpha (\xi)
    = \sum_{j \in \kappa(\alpha)}
        |\det C_j|^{-1} \, \overline{\widehat{g_j}(\xi)} \, \widehat{g_j}(\xi + \alpha),
 \)
 where $\kappa(\alpha) = \{ j \in J \colon \alpha \in C_j^{-t} \ZZ^d \}$.
 Therefore,  the matrix entries $N_{i,\ell}$ defined in
 \eqref{eq:matrix_entries_N} satisfy
 \[
   N_{i,\ell}
   \leq  \max \bigg\{ 1, \frac{w_i}{w_{\ell}} \bigg\}
         \sum_{\alpha \in \Lambda \setminus \{0\}}
                \sum_{j \in \kappa(\alpha)}
                  |\det C_j|^{-1} \,
                  \big\|
                    \Fourier^{-1}
                    \big(
                      \varphi_i (\cdot + \alpha)
                      \cdot \overline{\widehat{g_j}}
                      \cdot \widehat{g_j} (\cdot + \alpha)
                      \cdot \varphi_\ell
                    \big)
                  \big\|_{L^1}
   = \widetilde{N}_{i,\ell}.
 \]
 Thus $\|N\|_{\mathrm{Schur}} \leq \|\widetilde{N} \|_{\mathrm{Schur}}$, as claimed.
\end{proof}

\subsection{Invertibility in the case \texorpdfstring{$(p,q) = (2,2)$}{(p,q) = (2,2)}}
\label{sub:WeightedL2CaseSimplified}

In this subsection, we focus on the special case $(p,q) = (2,2)$,
where the following identification holds; see \cite[Lemma~6.10]{DecompositionEmbedding}.

\begin{lemma}\label{lem:D-L^2-equivalence}
Let $\CalQ = (Q_i)_{i \in I}$ be a decomposition cover of an open set
$\emptyset \neq \CalO \subset \RHat^d$,
and let ${w = (w_i)_{i \in I}}$ be a $\CalQ$-moderate weight.
Then there is a measurable weight $v : \CalO \to (0,\infty)$ with
$v (\xi) \asymp w_i$ for all $\xi \in Q_i$ and $i \in I$.
Furthermore,
\(
  \DecompSp(\CalQ, L^2, \ell_w^2) = \Fourier^{-1}( L^2_v (\CalO))
\)
with equivalent norms, where the norm
\(
 \|f\|_{\Fourier^{-1} (L_v^2 (\CalO))} := \|\widehat{f}\|_{L_v^2 (\CalO)}
\)
is used on
\(
  \Fourier^{-1} (L_v^2 (\CalO))
  = \big\{ f \in Z'(\CalO) \,:\, \widehat{f} \in L_v^2 (\CalO) \big\}
\).
\end{lemma}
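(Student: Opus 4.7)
The plan reduces both inclusions to a single pointwise estimate on $\CalO$, namely
\[
  \sum_{i \in I} w_i^2 \, |\varphi_i(\xi)|^2 \asymp v(\xi)^2,
\]
which converts the decomposition-space norm into a weighted $L^2$ norm via Plancherel and Tonelli.

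\textbf{Construction of $v$ and pointwise equivalence.} I would well-order the (countable) index set $I$ and define $v(\xi) := w_{i(\xi)}$, where $i(\xi) := \min\{i \in I : \xi \in Q_i\}$; its level sets $Q_{i_0} \setminus \bigcup_{i < i_0} Q_i$ are Borel, so $v$ is measurable, and $\CalQ$-moderateness of $w$ forces $v(\xi) \asymp w_i$ whenever $\xi \in Q_i$ (because $Q_i \cap Q_{i(\xi)} \ni \xi$ yields $i \in i(\xi)^\ast$ and $i(\xi) \in i^\ast$). For the pointwise estimate, fix $\xi \in \CalO$ and put $I(\xi) := \{i : \varphi_i(\xi) \neq 0\}$; since $\varphi_i$ vanishes off $Q_i$, we have $I(\xi) \subseteq i_0^\ast$ for any $i_0$ with $\xi \in Q_{i_0}$, so $|I(\xi)| \leq N_\CalQ$. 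The upper bound then follows from $|\varphi_i(\xi)| \leq \|\widecheck{\varphi_i}\|_{L^1} \leq C_\Phi$ combined with $w_i \asymp v(\xi)$ for $i \in I(\xi)$; the lower bound comes from Cauchy--Schwarz applied to the \emph{finite} identity $1 = \sum_{i \in I(\xi)} \varphi_i(\xi)$, yielding $\sum_i |\varphi_i(\xi)|^2 \geq N_\CalQ^{-1}$.

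\textbf{Forward inclusion.} Given $f \in Z'(\CalO)$ with $\widehat{f} \in L^2_v(\CalO)$, Plancherel yields $\|\Fourier^{-1}(\varphi_i \, \widehat{f} \,)\|_{L^2}^2 = \|\varphi_i \, \widehat{f}\|_{L^2}^2$, and summing with weights $w_i^2$ and applying Tonelli gives
\[
  \|f\|_{\DecompSp(\CalQ, L^2, \ell_w^2)}^2
  = \int_\CalO |\widehat{f}(\xi)|^2 \Big( \sum_{i \in I} w_i^2 \, |\varphi_i(\xi)|^2 \Big) \, d\xi
  \asymp \|\widehat{f}\|_{L^2_v(\CalO)}^2,
\]
which settles the inclusion $\Fourier^{-1}(L^2_v (\CalO)) \hookrightarrow \DecompSp(\CalQ, L^2, \ell_w^2)$ with comparable norms.

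\textbf{Reverse inclusion and main obstacle.} The converse requires upgrading the distribution $\widehat{f} \in \CalD'(\CalO)$ to a bona fide measurable function. For each $\xi_0 \in \CalO$, the BAPU identity $\sum_i \varphi_i(\xi_0) = 1$ forces some $i_0$ with $\varphi_{i_0}(\xi_0) \neq 0$, and by continuity $|\varphi_{i_0}| \geq c > 0$ on a neighborhood $U \ni \xi_0$; on $U$, the $L^2$-function $h_{i_0} := \varphi_{i_0} \, \widehat{f}$ (in $L^2$ by Plancherel) divided by the smooth non-vanishing $\varphi_{i_0}$ represents $\widehat{f}$, so $\widehat{f} \in L^2_\loc (\CalO)$. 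These local representatives are compatible across overlaps by the distributional identity $\varphi_j \, h_i = \varphi_i \, h_j$, and hence assemble into a measurable function $H$ on $\CalO$ satisfying $H \, \varphi_i = h_i$ a.e.; rerunning the Tonelli computation with $H$ in place of $\widehat{f}$ yields $\|H\|_{L^2_v}^2 \asymp \|f\|_{\DecompSp}^2$. The principal technical hurdle is precisely this identification of $\widehat{f}$ with a measurable function: it relies on the local invertibility of the BAPU (pointwise non-vanishing plus continuity) and on the consistency of the local pieces across overlapping patches.
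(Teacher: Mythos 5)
Your proof is correct, and the structure — construct $v$ from a measurable selection $i(\xi)$, establish the pointwise equivalence $\sum_i w_i^2\,|\varphi_i(\xi)|^2 \asymp v(\xi)^2$ via $|I(\xi)| \leq N_{\CalQ}$ and Cauchy--Schwarz, then convert the decomposition norm to $\|\widehat{f}\|_{L^2_v}$ by Plancherel and Tonelli — is the standard route for this identification and matches the cited source (\cite[Lemma~6.10]{DecompositionEmbedding}) in spirit; the paper itself gives no proof but defers to that reference.

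You correctly isolate the genuine technical content of the reverse inclusion, which is upgrading the distribution $\widehat{f}\in\CalD'(\CalO)$ to a measurable representative $H$: on each open set $\{\varphi_i\neq 0\}$ one has $\widehat f = h_i/\varphi_i$ where $h_i := \varphi_i\widehat f\in L^2$ by Plancherel (since $\Fourier^{-1}(\varphi_i\widehat f)\in L^2$ is part of the hypothesis $f\in\DecompSp(\CalQ,L^2,\ell_w^2)$), and the distributional identity $\varphi_j h_i = \varphi_i h_j$ forces consistency of these local quotients on overlaps. Two small points worth making explicit if you write this up: (a) the identity $\widehat f = H$ in $\CalD'(\CalO)$ is most cleanly seen by testing against $\psi\in C_c^\infty(\{\varphi_i\neq 0\})$ and writing $\langle\widehat f,\psi\rangle = \langle h_i,\psi/\varphi_i\rangle = \int H\psi$, then invoking the fact that distributions agreeing on the open cover $(\{\varphi_i\neq 0\})_{i\in I}$ agree globally; and (b) once $\widehat f = H$ as distributions, the a.e.\ identity $H\varphi_i = h_i$ follows for free (both are $L^2_{\loc}$ functions representing the same distribution), so one can rerun the Tonelli computation for $H$, concluding $H\in L^2_v(\CalO)$ from the finiteness of $\|f\|_{\DecompSp}$. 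These are the steps you indicate; I am just flagging that the consistency-on-overlaps argument needs to be tied back to the global distributional equality before replacing $\widehat f$ by $H$ in the integral.
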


We will also make use of the following two lemmata.

\begin{lemma}\label{lem:estimateT0-L2}
Let $\emptyset \neq \CalO \subset \RHat^d$ be an open set,
let $v : \CalO \to (0,\infty)$ be a weight function,
and let $t_0$ be as in Equation~\eqref{eq:TAlphaDefinition}.
Then the Fourier multipliers
\(
  T_0 : \Fourier^{-1} (L^2_v (\CalO)) \to \Fourier^{-1} (L_v^2 (\CalO)),
        f \mapsto \Fourier^{-1} (t_0 \, \widehat{f} \,)
\)
and
\[
  T_0^{-1} : \Fourier^{-1} (L^2_v (\CalO)) \to \Fourier^{-1} (L^2_v (\CalO)),
  \quad f \mapsto \Fourier^{-1} (t_0^{-1} \cdot \widehat{f} \, )
\]
are well-defined and bounded, with $\|T_0^{-1} \|_{\op} \leq A^{-1}$
and $\|T_0\|_{\op} \leq B$, where $A,B > 0$ are as in \eqref{eq:GSI_assumption}.
\end{lemma}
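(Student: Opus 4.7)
The plan is to reduce everything to a pointwise multiplier statement on $L_v^2(\CalO)$ via Lemma~\ref{lem:D-L^2-equivalence}. By that lemma, the space $\DecompSp(\CalQ,L^2,\ell_w^2)$ is isomorphic, through the Fourier transform, to $L_v^2(\CalO)$ for some weight $v$ equivalent to $w$, and the norm on $\Fourier^{-1}(L_v^2(\CalO))$ is simply $\|f\| = \|\widehat{f}\|_{L_v^2(\CalO)}$. So the operators $T_0$ and $T_0^{-1}$ become multiplication operators $m \mapsto t_0 \cdot m$ and $m \mapsto t_0^{-1} \cdot m$ acting on the space $L_v^2(\CalO)$ of measurable functions, and it suffices to check that multiplication by $t_0$ and by $t_0^{-1}$ is bounded on $L_v^2(\CalO)$ with the claimed norm bounds.

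First I would observe that the hypothesis \eqref{eq:GSI_assumption} gives $A \le t_0(\xi) \le B$ for almost every $\xi \in \RHat^d$, so $t_0$ is essentially bounded on $\CalO$ with $\|t_0\|_{L^\infty(\CalO)} \le B$, and likewise $t_0^{-1}$ is defined a.e.\ and satisfies $\|t_0^{-1}\|_{L^\infty(\CalO)} \le A^{-1}$. Since $v$ is a weight and these are $L^\infty$ bounds, the pointwise estimates
\[
  \int_{\CalO} |t_0(\xi) \, \widehat{f}(\xi)|^2 \, v(\xi) \, d\xi
  \le B^2 \int_{\CalO} |\widehat{f}(\xi)|^2 \, v(\xi) \, d\xi,
  \qquad
  \int_{\CalO} |t_0(\xi)^{-1} \, \widehat{f}(\xi)|^2 \, v(\xi) \, d\xi
  \le A^{-2} \int_{\CalO} |\widehat{f}(\xi)|^2 \, v(\xi) \, d\xi
\]
are immediate, from which the operator norm bounds $\|T_0\|_{\op} \le B$ and $\|T_0^{-1}\|_{\op} \le A^{-1}$ follow after transporting through~$\Fourier^{-1}$.

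To justify calling the second operator $T_0^{-1}$, I would note that $t_0 \cdot (t_0^{-1} \, \widehat{f}\,) = \widehat{f}$ and $t_0^{-1} \cdot (t_0 \, \widehat{f}\,) = \widehat{f}$ hold almost everywhere on $\CalO$, so the two multiplication maps are mutual inverses on $L_v^2(\CalO)$; conjugating by $\Fourier^{-1}$ gives the corresponding composition identities on $\Fourier^{-1}(L_v^2(\CalO))$. The only minor point that needs checking is that the pointwise product $t_0 \cdot \widehat{f}$ makes sense in $L_v^2(\CalO)$: this holds because $\widehat{f}$ is a genuine measurable function on $\CalO$ (by $\widehat{f} \in L_v^2(\CalO)$), and $t_0$ is a measurable function on $\CalO$ bounded above and below, so there is no subtlety with the distributional product.

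There is essentially no main obstacle here; the lemma is a direct consequence of Lemma~\ref{lem:D-L^2-equivalence} together with the two-sided bound on $t_0$ from the standing hypothesis. The only care needed is to quote the equivalence of norms from Lemma~\ref{lem:D-L^2-equivalence} and to ensure that the weight $v$ returned by that lemma is the one used to define the space, which is automatic since constants from the equivalence $v(\xi)\asymp w_i$ are absorbed into the definition of the norm on $\Fourier^{-1}(L_v^2(\CalO))$.
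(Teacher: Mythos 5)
Your proof is correct and takes essentially the paper's approach: reduce to boundedness of multiplication by $t_0$ and $t_0^{-1}$ on $L_v^2(\CalO)$, using the essential bounds $A \le t_0 \le B$ from \eqref{eq:GSI_assumption}. One small adjustment: the appeal to Lemma~\ref{lem:D-L^2-equivalence} is not needed here and is slightly off target, since the present lemma is stated for an arbitrary weight $v : \CalO \to (0,\infty)$ with no cover $\CalQ$ or discrete weight $w$ in sight; the norm on $\Fourier^{-1}(L_v^2(\CalO))$ is $\|f\| = \|\widehat f\|_{L_v^2(\CalO)}$ by definition, so the reduction to a multiplication estimate on $L_v^2(\CalO)$ is immediate from that definition alone.
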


\begin{proof}
  If $f \in \Fourier^{-1}(L_v^2 (\CalO))$, then
  \[
    \|T_0^{-1} f\|_{\Fourier^{-1} (L_v^2 (\CalO))}
     = \| t_0^{-1} \cdot \widehat{f} \, \|_{L_v^2 (\CalO)}
     \leq \|t_0^{-1}\|_{L^\infty (\CalO)}
        \cdot \|f\|_{\Fourier^{-1} (L_v^2 (\CalO))}
    .
  \]
  The argument for $T_0$ is similar.
\end{proof}

\begin{lemma}\label{lem:estimateR-L2}
  Let $\CalO \subset \RHat^d$ be an open set of full measure and let
  $v : \RHat^d \to (0,\infty)$ be $v_0$-moderate for some symmetric weight
  $v_0 : \RHat^d \to (0,\infty)$; that is,
  $v(\xi+\eta) \leq C_v \cdot v (\xi) \cdot v_0(\eta)$
  for all $\xi,\eta \in \RHat^d$ and some $C_v > 0$.
  Then the operator $R$ defined in Equation~\eqref{eq:RemainderTermDefinition} satisfies
  \begin{equation}
    \| R \|_{\Fourier^{-1}(L^2_v (\CalO)) \to \Fourier^{-1} (L_v^2 (\CalO))}
    \leq C_v \cdot \esssup_{\xi \in \CalO}
                     \sum_{\alpha \in \Lambda \setminus \{0\}}
                       |t_{\alpha} (\xi) | \cdot v_0 (\alpha) .
    \label{eq:RemainderEstimateL2}
  \end{equation}
\end{lemma}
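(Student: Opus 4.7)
The plan is to first establish the estimate on the dense subspace $\DenseSpace(\R^d)$, where the operator $R$ is defined pointwise by the convergent series in \eqref{eq:RemainderTermDefinition}, and then extend to all of $\Fourier^{-1}(L_v^2(\CalO))$ using the density of $\DenseSpace(\R^d)$ in $\DecompSp(\CalQ,L^2,\ell_w^2)$ provided by Proposition~\ref{prop:Density}(ii). Concretely, let $f \in \DenseSpace(\R^d)$, so that $\widehat{f} \in C_c^\infty(\CalO)$, and observe that Corollary~\ref{cor:FrameOperatorOnDenseSpace} together with Definition~\ref{def:Reservoir} yields the pointwise formula
\[
  \widehat{R f}(\xi)
  = \sum_{\alpha \in \Lambda \setminus \{0\}}
        t_\alpha(\xi - \alpha)\, \widehat{f}(\xi - \alpha),
\]
where only finitely many terms are nonzero for each $\xi$.

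Next, I would introduce the auxiliary function $\widetilde{f}(\xi) := v(\xi)\, \widehat{f}(\xi)$, which satisfies $\|\widetilde{f}\|_{L^2} = \|f\|_{\Fourier^{-1}(L_v^2(\CalO))}$. Multiplying the identity above by $v(\xi)$ and invoking $v$-moderateness in the form $v(\xi)/v(\xi-\alpha) \leq C_v \, v_0(\alpha)$, one obtains
\[
  v(\xi)\, |\widehat{R f}(\xi)|
  \leq C_v \sum_{\alpha \in \Lambda \setminus \{0\}}
             v_0(\alpha)\, |t_\alpha(\xi - \alpha)|\, |\widetilde{f}(\xi - \alpha)|.
\]
Now I apply Cauchy--Schwarz in the summation variable $\alpha$, splitting each summand as $\bigl(v_0(\alpha)\, |t_\alpha(\xi-\alpha)|\bigr)^{1/2} \cdot \bigl(v_0(\alpha)\, |t_\alpha(\xi-\alpha)|\bigr)^{1/2}\, |\widetilde{f}(\xi-\alpha)|$. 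The crucial observation---which will be the technical heart of the argument---is that the identity $t_\alpha(\xi - \alpha) = \overline{t_{-\alpha}(\xi)}$ noted just after \eqref{eq:TAlphaDefinition}, combined with the symmetry of $v_0$ and the fact that $\Lambda$ is closed under negation, yields
\[
  \sum_{\alpha \in \Lambda \setminus \{0\}}
    v_0(\alpha)\, |t_\alpha(\xi - \alpha)|
  = \sum_{\alpha \in \Lambda \setminus \{0\}}
      v_0(\alpha)\, |t_\alpha(\xi)|
  \leq C,
\]
where $C$ is the essential supremum appearing on the right-hand side of \eqref{eq:RemainderEstimateL2}.

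Using this bound on the first Cauchy--Schwarz factor, squaring, and integrating over $\CalO$, Tonelli's theorem gives
\[
  \|R f\|_{\Fourier^{-1}(L_v^2(\CalO))}^2
  \leq C_v^2 \cdot C \cdot
       \int_{\CalO}
         \sum_{\alpha \in \Lambda \setminus \{0\}}
           v_0(\alpha)\, |t_\alpha(\xi - \alpha)|\, |\widetilde{f}(\xi - \alpha)|^2
       \, d\xi,
\]
and after the translation change of variables $\eta = \xi - \alpha$, another application of the same uniform bound on $\sum_\alpha v_0(\alpha)\, |t_\alpha(\eta)|$ yields $\|R f\|_{\Fourier^{-1}(L_v^2(\CalO))} \leq C_v \cdot C \cdot \|f\|_{\Fourier^{-1}(L_v^2(\CalO))}$. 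Finally, I would extend the estimate to all of $\Fourier^{-1}(L_v^2(\CalO))$ by density: using Proposition~\ref{prop:Density}(ii) to approximate $f$ by $f_n \in \DenseSpace(\R^d)$ in the decomposition space norm, and then identifying $R f$ with the operator $R_0 = S - T_0$ from Corollary~\ref{cor:RemainderTermR0} (which is bounded, and which coincides with $R$ on $\DenseSpace$), to pass to the limit. The only mildly delicate step is the symmetrization that turns $|t_\alpha(\xi-\alpha)|$ into $|t_\alpha(\xi)|$ inside the $L^\infty$-bound; once this is in hand, the rest is a clean Schur-type computation.
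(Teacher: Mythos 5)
Your core estimate is essentially the same as the paper's: both arguments reduce to the uniform Schur-type bound on $\sum_{\alpha\neq 0} v_0(\alpha)|t_\alpha(\xi)|$, both use the identity $t_\alpha(\xi-\alpha)=\overline{t_{-\alpha}(\xi)}$, the symmetry of $v_0$, and the symmetry of $\Lambda$ to reindex; and your Cauchy--Schwarz split is the multiplicative form of the paper's AM--GM split $|ab|\le\tfrac12(|a|^2+|b|^2)$. The real difference is structural: the paper proceeds by duality, estimating $\int |g|\,v\,\sum_\alpha|t_\alpha(\cdot-\alpha)\widehat f(\cdot-\alpha)|$ for a unit-norm test function $g$, and this works directly for \emph{arbitrary} $f\in\Fourier^{-1}(L_v^2(\CalO))$; as a byproduct it establishes that the series in \eqref{eq:RemainderTermDefinition} converges absolutely a.e., so no density step is needed. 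You instead restrict to $\DenseSpace(\R^d)$ first and extend by density, which is fine in outcome but more roundabout, and your Cauchy--Schwarz argument would in fact work unchanged on general $f$ (with Tonelli handling the interchange of sum and integral). A few side remarks in your plan are inaccurate and should be dropped: the parenthetical claim that ``only finitely many terms are nonzero for each $\xi$'' is false in general, since $\Lambda=\bigcup_j C_j^{-t}\Z^d$ can be dense even when $\widehat f$ has compact support; the pointwise formula for $\widehat{Rf}$ is just Equation~\eqref{eq:RemainderTermDefinition} unrolled and does not require Corollary~\ref{cor:FrameOperatorOnDenseSpace}, which assumes the $\alpha$-LIC (not a hypothesis of this lemma); and identifying $R$ with $R_0=S-T_0$ from Corollary~\ref{cor:RemainderTermR0} is not available here either, since that corollary requires $(w,v,\Phi)$-adaptedness and tameness of $t_0$. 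The extension by density is already justified once the bound on $\DenseSpace$ is proved, so none of these citations is needed.
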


\begin{proof}
Since $\CalO$ is of full measure, we have
$\Fourier^{-1} (L_v^2 (\CalO)) = \Fourier^{-1} (L_v^2 (\RHat^d))$,
up to canonical identifications.
Let $g \in L^2 (\RHat^d)$ and $f \in \Fourier^{-1}(L_v^2 (\CalO))$ be such that
$\|g\|_{L^2} \leq 1$ and $\|f\|_{\Fourier^{-1}(L_v^2 (\CalO))} \leq 1$.
Using the estimates $v(\xi) \leq C_v \cdot v(\xi - \alpha) \cdot v_0 (\alpha)$
and  $|a b| \leq \frac{1}{2} \big( |a|^2 + |b|^2 \big)$
and the identity $t_\alpha (\xi - \alpha) = \overline{t_{-\alpha} (\xi)}$,
it follows that
\begin{align*}
  & \int_{\RHat^d}
      |g(\xi)|
      \cdot v(\xi)
      \cdot \sum_{\alpha \in \Lambda \setminus \{0\}}
              \big|
                t_\alpha (\xi - \alpha) \,
                \widehat{f} (\xi - \alpha)
              \big|
    \, d\xi \\
  & \leq C_v \cdot \sum_{\alpha \in \Lambda \setminus \{0\}}
                     v_0 (\alpha)
                     \int_{\RHat^d}
                       \Big( |t_{-\alpha} (\xi)|^{1/2} \cdot |g(\xi)| \Big)
                       \cdot \Big(
                               |t_\alpha (\xi - \alpha)|^{1/2}
                               \cdot | (v \widehat{f} \, ) (\xi - \alpha)|
                             \Big)
                     \, d \xi \\
  & \leq \frac{C_v}{2}
         \cdot \sum_{\alpha \in \Lambda \setminus \{0\}}
                 v_0 (\alpha)
                 \int_{\RHat^d}
                   |t_{-\alpha} (\xi)| \cdot |g(\xi)|^2
                   + |t_\alpha (\xi - \alpha)|
                     \cdot |(v \widehat{f} \, ) (\xi - \alpha)|^2
                 \, d \xi \\
  & =
      \frac{C_v}{2} \cdot \bigg(
                            \int_{\RHat^d}
                              \Big(
                                \sum_{\beta \in \Lambda \setminus \{0\}}
                                  v_0 (-\beta) \, |t_{\beta} (\xi)|
                              \Big)
                              \cdot |g(\xi)|^2
                            \, d \xi \\
                            & \quad \quad \quad \quad \quad \quad \quad \quad \quad
                            + \int_{\RHat^d}
                                \Big(
                                  \sum_{\alpha \in \Lambda \setminus \{0\}}
                                    v_0 (\alpha) \, |t_\alpha (\eta)|
                                \Big)
                                \cdot |(v \widehat{f} \, ) (\eta)|^2
                              \, d \eta
                          \bigg) \\
  & \leq C_v \cdot \esssup_{\xi \in \CalO}
                   \sum_{\alpha \in \Lambda \setminus \{0\}}
                     v_0 (\alpha) \, |t_\alpha (\xi)| \, .
\end{align*}
Since this holds for all $g \in L^2(\RHat^d)$ with $\|g\|_{L^2} \leq 1$, the series
\[
  \sum_{\alpha \in \Lambda \setminus \{0\}}
    t_\alpha (\xi - \alpha) \, \widehat{f} (\xi - \alpha)
  = \sum_{\alpha \in \Lambda \setminus \{0\}}
      \big[ \Translation{\alpha} (t_\alpha \cdot \widehat{f} \, ) \big](\xi)
  = \big[ \widehat{R f} \big] (\xi)
\]
is almost everywhere absolutely convergent, and
\[
  \|R f\|_{\Fourier^{-1} (L_v^2 (\CalO))}
  \leq \Big\|
         v
         \cdot \sum_{\alpha \in \Lambda \setminus \{0\}}
                 \big|
                   \Translation{\alpha} (t_\alpha \, \widehat{f} \, )
                 \big|
       \Big\|_{L^2}
  \leq C_v \cdot \esssup_{\xi \in \CalO}
                   \sum_{\alpha \in \Lambda \setminus \{0\}}
                     v_0 (\alpha) \, |t_\alpha (\xi)| \, ,
\]
for all $f \in \Fourier^{-1} (L_v^2 (\CalO))$ with
$\|f\|_{\Fourier^{-1}(L_v^2 (\CalO))} \leq 1$.
This proves the claim.
\end{proof}

Using the previous lemmata, the following result follows easily.
See \cite[Theorem 3.3]{Lemvig2018Criteria} for a similar result in $L^2$.

\begin{proposition} \label{prop:sufficient_l2}
Let $\CalQ = (Q_i)_{i \in I}$ be a decomposition cover of an open set
$\CalO \subset \RHat^d$ of full measure, and let $w = (w_i)_{i \in I}$ be $\CalQ$-moderate.
Suppose $\StandardGSI$ satisfies the $\alpha$-local integrability condition \eqref{eq:LIC}
relative to $\CalO^c$.
Finally, assume that
\begin{align} \label{eq:S-invertible-L2}
 C_v \cdot
 \esssup_{\xi \in \CalO}
   \sum_{\alpha \in \Lambda \setminus \{0\}}
     |t_{\alpha} (\xi) | \cdot v_0 (\alpha)
 < A \, ,
\end{align}
where $A > 0$ is as in \eqref{eq:GSI_assumption},
where $v : \RHat^d \to (0,\infty)$ is a measurable weight that satisfies
$v(\xi) \asymp w_i$ for all $\xi \in Q_i$ and $i \in I$, and where
$v_0 : \RHat^d \to (0,\infty)$ is assumed to be a symmetric weight
satisfying $v(\xi + \eta) \leq C_v \cdot v(\xi) \cdot v_0 (\eta)$
for all $\xi, \eta \in \RHat^d$.

Then the frame operator $S : \DenseSpace(\RR^d) \to L^2(\R^d)$ associated to $\StandardGSI$
uniquely extends to a bounded linear operator
$S_0 : \CalD(\CalQ,L^2,\ell_w^2) \to \CalD(\CalQ,L^2,\ell_w^2)$.
This extended operator is boundedly invertible.
\end{proposition}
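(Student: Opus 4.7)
The plan is to transfer the problem to the weighted Lebesgue setting via the identification of Lemma~\ref{lem:D-L^2-equivalence}, apply a Neumann-series argument, and then verify that the resulting bounded invertible operator extends the original frame operator.

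First, I invoke Lemma~\ref{lem:D-L^2-equivalence} to choose a measurable weight $v : \CalO \to (0,\infty)$ with $v(\xi) \asymp w_i$ on $Q_i$, giving $\DecompSp(\CalQ, L^2, \ell_w^2) = \Fourier^{-1}(L_v^2(\CalO))$ with equivalent norms. Note that since $v$ is constructed from the $\CalQ$-moderate sequence $w$ and $\CalQ$ is admissible, one can take $v$ to be $v_0$-moderate for some symmetric weight $v_0$ matching the one appearing in hypothesis~\eqref{eq:S-invertible-L2}; this is a routine verification using the geometric control afforded by $\CalQ$-moderateness of $w$. On the space $\Fourier^{-1}(L_v^2(\CalO))$ I then define the candidate extended frame operator $S_0 := T_0 + R$, where $T_0 f := \Fourier^{-1}(t_0 \cdot \widehat{f}\,)$ and $R$ is the multiplier-type operator of Lemma~\ref{lem:estimateR-L2}.

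Second, Lemma~\ref{lem:estimateT0-L2} yields that $T_0$ is boundedly invertible on $\Fourier^{-1}(L_v^2(\CalO))$ with $\|T_0^{-1}\|_{\op} \leq A^{-1}$, while Lemma~\ref{lem:estimateR-L2} gives
\[
  \|R\|_{\Fourier^{-1}(L_v^2 (\CalO)) \to \Fourier^{-1}(L_v^2(\CalO))}
  \leq C_v \cdot \esssup_{\xi \in \CalO}
         \sum_{\alpha \in \Lambda \setminus \{0\}}
           |t_\alpha (\xi)| \, v_0 (\alpha) .
\]
The standing hypothesis~\eqref{eq:S-invertible-L2} therefore yields $\|T_0^{-1}\|_{\op} \cdot \|R\|_{\op} < 1$, so Lemma~\ref{lem:FrameOperatorInvertibility} shows that $S_0 = T_0 + R$ is boundedly invertible on $\Fourier^{-1}(L_v^2(\CalO)) = \DecompSp(\CalQ, L^2, \ell_w^2)$.

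Third, I need to show that $S_0$ extends the $L^2$ frame operator on the dense subspace $\DenseSpace(\RR^d)$. For $f \in \DenseSpace(\RR^d) \subset \CalB_\CalO(\RR^d)$ and arbitrary $g \in \CalB_\CalO(\RR^d)$, Proposition~\ref{prop:GSI_identity} applies and gives
\[
  \langle S f \mid g \rangle_{L^2}
  = \sum_{\alpha \in \Lambda}
      \langle
        \Fourier^{-1} [\Translation{\alpha}(t_\alpha \, \widehat{f}\,)]
        \mid g
      \rangle_{L^2}
  = \langle T_0 f \mid g \rangle_{L^2} + \langle R f \mid g \rangle_{L^2},
\]
where the $\alpha = 0$ term is precisely $T_0 f$, and the remaining series matches the $L^2$-function $R f$ thanks to the $L^2$-boundedness of $R$ supplied by Lemma~\ref{lem:estimateR-L2} together with the fact that $\widehat{f}$ has compact support in $\CalO$. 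Since $\CalB_\CalO(\RR^d)$ is dense in $L^2(\RR^d)$ (as $\CalO$ has full measure), this forces $S f = T_0 f + R f = S_0 f$ as elements of $L^2(\RR^d)$, and hence also in $\DecompSp(\CalQ, L^2, \ell_w^2)$ after identification.

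Finally, Proposition~\ref{prop:Density}(ii) asserts that $\DenseSpace(\RR^d)$ is norm dense in $\DecompSp(\CalQ, L^2, \ell_w^2)$ for $p = q = 2 < \infty$. Since $S_0$ is bounded on $\DecompSp(\CalQ, L^2, \ell_w^2)$ and agrees with the frame operator on this dense subspace, it is the unique bounded extension; the bounded invertibility established in the second step completes the proof. The main delicate point is the third step: reconciling the a~priori $L^2$-definition of the frame operator with the Fourier-multiplier definition of $S_0$ on $\Fourier^{-1}(L_v^2(\CalO))$; this is precisely where the weak-sense Walnut–Daubechies representation of Proposition~\ref{prop:GSI_identity}, together with the $L^2$-mapping property of $R$ derived in Lemma~\ref{lem:estimateR-L2}, does the crucial work.
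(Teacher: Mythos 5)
Your proof is correct and follows essentially the same route as the paper's: identify $\DecompSp(\CalQ,L^2,\ell_w^2)$ with $\Fourier^{-1}(L_v^2(\CalO))$ via Lemma~\ref{lem:D-L^2-equivalence}, decompose $S_0 = T_0 + R$, apply the norm bounds from Lemmas~\ref{lem:estimateT0-L2} and~\ref{lem:estimateR-L2} together with the Neumann-series criterion of Lemma~\ref{lem:FrameOperatorInvertibility}, and invoke Proposition~\ref{prop:GSI_identity} plus the density result of Proposition~\ref{prop:Density} for the extension claim. One small inaccuracy: the remark in your first step about \emph{constructing} a symmetric weight $v_0$ for which $v$ is $v_0$-moderate is superfluous, since $v$, $v_0$, and $C_v$ are already supplied as part of the Proposition's hypothesis rather than being objects you need to build; this does not affect the validity of the rest of the argument.
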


\begin{proof}
  Lemmas~\ref{lem:estimateT0-L2} and \ref{lem:estimateR-L2} show, respectively, that the operators
  $T_0$ and $R$ defined in these lemmas yield bounded operators on $\Fourier^{-1}(L_v^2(\CalO))$,
  so that $S_0 := T_0 + R : \Fourier^{-1} (L_v^2(\CalO)) \to \Fourier^{-1} (L_v^2(\CalO))$
  is well-defined and bounded.
  As seen in Proposition~\ref{prop:GSI_identity}, we have
  $S_0 f = S f$ for all ${f \in \DenseSpace(\RR^d) \subset \CalB_{\CalO}(\RR^d)}$.
  Furthermore, $\DenseSpace(\RR^d) \subset \CalD(\CalQ,L^2,\ell_w^2) = \Fourier^{-1}(L_v^2(\CalO))$
  is dense (see Proposition~\ref{prop:Density} and Lemma~\ref{lem:D-L^2-equivalence});
  therefore, $S_0$ is the unique bounded extension of $S$.

  Finally, conditions \eqref{eq:GSI_assumption} and \eqref{eq:S-invertible-L2} together with
  Lemma~\ref{lem:estimateT0-L2} and Lemma~\ref{lem:estimateR-L2} yield that
  \[
    \|
      T_0^{-1}
    \|_{\Fourier^{-1} (L^2_v (\CalO)) \to \Fourier^{-1} (L^2_v (\CalO))}
    \cdot \|
            R
          \|_{\Fourier^{-1} (L^2_v (\CalO)) \to \Fourier^{-1} (L^2_v (\CalO))}
    < 1.
  \]
  Hence, $S_0 = T_0 + R$ is boundedly invertible on $\Fourier^{-1}(L_v^2(\CalO))$
  by Lemma~\ref{lem:FrameOperatorInvertibility}.
  Using the norm equivalence
  \(
    \| \cdot \|_{\Fourier^{-1} (L^2_v (\CalO))}
    \asymp \| \cdot \|_{\mathcal{D} (\CalQ, L^2, \ell^2_w)}
  \)
  provided by Lemma~\ref{lem:D-L^2-equivalence}, it follows therefore that also
  $S_0 : \mathcal{D}(\CalQ, L^2, \ell^2_w) \to \mathcal{D}(\CalQ, L^2, \ell^2_w)$
  is boundedly invertible.
\end{proof}

\begin{remark}
  The formulation of Proposition~\ref{prop:sufficient_l2} is rather technical, because,
  under those assumptions, the formula defining the frame operator might not make sense
  for $f \in \CalD(\CalQ,L^2,\ell_w^2)$.
  Indeed, the hypothesis are satisfied for every tight frame,
  even if $g_j \notin \CalD(\CalQ,L^2,\ell_w^2)$.
  If, in addition, $\StandardGSI$ is assumed to be $(w,v,\Phi)$-adapted for some weight $v$,
  then Proposition~\ref{prop:AnalysisSynthesisOperatorGeneral} applies and
  we can conclude unambiguously that $S : \CalD(\CalQ,L^2,\ell_w^2) \to \CalD(\CalQ,L^2,\ell_w^2)$
  is well-defined, bounded and boundedly invertible on $\CalD(\CalQ,L^2,\ell_w^2)$.
\end{remark}

\begin{remark}
  If $\StandardGSI$ is a tight frame for $L^2 (\RR^d)$ with lower frame bound $A > 0$,
  which furthermore satisfies the $\alpha$-local integrability condition, then
  the multipliers $t_{\alpha} \in L^{\infty} (\RHat^d)$ satisfy
  $t_{\alpha} (\xi) = A \, \delta_{\alpha, 0}$ for a.e.~$\xi \in \RHat^d$
  and all $\alpha \in \Lambda$, cf.~\cite[Theorem 3.4]{JakobsenReproducing2014}.
  The condition \eqref{eq:S-invertible-L2} is then obviously satisfied.
  The placement of the absolute value sign outside of the series defining
  the multipliers $t_{\alpha}$ allows for cancellations,
  which can be very important \cite{Lemvig2018Criteria}.
\end{remark}

\section{Concrete estimates for affinely generated covers}
\label{sec:SimplifiedEstimates}

In this section, we simplify the results of Section~\ref{sec:FrameOperatorInvertible} for the case
that the decomposition cover $\CalQ$ is affinely generated.
The results obtained here will be further simplified
in Section~\ref{sec:Structured_Compatible}.

In the sequel, we will repeatedly use \emph{$\CalQ$-localized} versions of
the generating functions $g_j$ of the system $\StandardGSI$.
Precisely, given a family $(g_j)_{j \in J}$ of generating functions
${g_j \in L^1 (\RR^d) \cap L^2 (\RR^d)}$ and a family $(S_i)_{i \in I}$
of invertible affine-linear maps $S_i = A_i (\cdot) + b_i$, we let
\begin{equation}
  g_{i,j}^{\localized}
  := |\det A_i|^{-1}
     \cdot \big( \Modulation{-b_i} g_j \big) \circ A_i^{-t}
   = \Fourier^{-1} (\widehat{g_j} \circ S_i)
  \quad \text{for} \quad (i,j) \in I \times J \, ,
  \label{eq:NormalizedVersionDoubleIndex}
\end{equation}
so that $\Fourier g^{\localized}_{i,j} = \widehat{g_j} \circ S_i$.

\subsection{Boundedness of the frame operator}

As a first step, we provide a sufficient condition for a system to be adapted
(see Definition~\ref{def:Adaptedness}).
The proof makes use of the following self-improving property of amalgam spaces,
which is taken from \cite[Theorem~2.17]{StructuredBanachFrames}.

\begin{lemma}\label{lem:BandlimitedWienerAmalgamEstimate}
  Let $f \in \Schwartz'(\RR^d)$ with
  $\supp \widehat{f} \subset A [-R, R]^d + \xi_0$
  for some $A \in \GL(d, \RR)$, $\xi_0 \in \RHat^d$, and $R > 0$.
  Then there exists a constant $C = C(d) > 0$ which only depends on $d \in \N$ such that
  \[
    \|f\|_{W_{A^{-t} [-1,1]^d} (L^{\infty}, L^1)}
    \leq C \cdot (1 + R)^d \cdot \|f\|_{L^1} \, .
  \]
\end{lemma}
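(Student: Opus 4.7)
My plan is to prove this in three reduction steps, ending with a direct construction.

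First I would reduce to the case $\xi_0 = 0$. Let $g := M_{-\xi_0} f$, so $|g(x)| = |f(x)|$ pointwise. This modulation leaves both $\|\cdot\|_{L^1}$ and $\|\cdot\|_{W_U(L^\infty,L^1)}$ unchanged (the amalgam norm only depends on $|f|$), while $\widehat g = T_{-\xi_0}\widehat f$ is supported in $A[-R,R]^d$. Second, I would reduce to $A = \mathrm{id}$. Let $h := g \circ A^{-t}$. Using $(f \circ M)^{\wedge} = |\det M|^{-1} \widehat f \circ M^{-t}$, one gets $\widehat h(\xi) = |\det A|\, \widehat g(A\xi)$, so $\supp\widehat h \subset [-R,R]^d$. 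Moreover $\|h\|_{L^1} = |\det A|\cdot\|g\|_{L^1}$, and by the identity \eqref{eq:WienerAmalgamLinearTransformation},
\[
  \|g\|_{W_{A^{-t}[-1,1]^d}(L^\infty,L^1)}
  = |\det A^{-t}| \cdot \|g \circ A^{-t}\|_{W_{[-1,1]^d}(L^\infty,L^1)}
  = |\det A|^{-1}\, \|h\|_{W_{[-1,1]^d}(L^\infty,L^1)}.
\]
The two factors $|\det A|^{\pm 1}$ cancel, so the claim reduces to proving
\[
  \|h\|_{W_{[-1,1]^d}(L^\infty,L^1)} \leq C(d)\,(1+R)^d\,\|h\|_{L^1}
\]
whenever $\supp\widehat h \subset [-R,R]^d$. (We may assume $\|h\|_{L^1} < \infty$, otherwise the statement is trivial.)

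For this special case I would fix, once and for all, a Schwartz function $\psi_0 \in \Schwartz(\R^d)$ with $\widehat{\psi_0} \equiv 1$ on $[-1,1]^d$, and set $\psi(x) := (2R)^d \psi_0(2R x)$. Then $\widehat\psi(\xi) = \widehat{\psi_0}(\xi/(2R)) \equiv 1$ on $[-2R,2R]^d \supset [-R,R]^d$, so $\widehat h = \widehat h\cdot\widehat\psi$ and hence $h = h \ast \psi$. By the $L^1$-convolution module property of $W_U(L^\infty,L^1)$ recalled in Section~\ref{sub:AmalgamSpaces},
\[
  \|h\|_{W_{[-1,1]^d}(L^\infty,L^1)} \leq \|h\|_{L^1}\cdot\|\psi\|_{W_{[-1,1]^d}(L^\infty,L^1)}.
\]
Applying \eqref{eq:WienerAmalgamLinearTransformation} with the dilation $x \mapsto 2Rx$ gives $\|\psi\|_{W_{[-1,1]^d}(L^\infty,L^1)} = \|\psi_0\|_{W_{[-2R,2R]^d}(L^\infty,L^1)}$, so everything reduces to the dimension-only bound
\[
  \|\psi_0\|_{W_{[-2R,2R]^d}(L^\infty,L^1)} \leq C(d)\,(1+R)^d.
\]

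To close this, I would split the defining integral $\int_{\R^d}\|\psi_0\|_{L^\infty([-2R,2R]^d + x)}\,dx$ into $|x| \leq 4R\sqrt d$ and $|x| > 4R\sqrt d$. On the first region the bound $\|\psi_0\|_{L^\infty}$ gives a contribution $\lesssim R^d$; on the second, the Schwartz decay $|\psi_0(y)| \leq C_N(1+|y|)^{-N}$ together with $|z| \geq |x|/2$ for $z \in [-2R,2R]^d + x$ yields $\|\psi_0\|_{L^\infty([-2R,2R]^d + x)} \leq C_N(|x|/2)^{-N}$, and choosing $N = d+1$ makes the tail integral $O(1)$ (in fact $O(R^{-1})$ for large $R$). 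Combining yields $\leq C(d)(1+R)^d$.

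The whole argument is essentially routine once the reductions are set up; there is no real obstacle, only the bookkeeping of Jacobian factors in the $\xi_0$- and $A$-reductions. The one mild technical point is that \eqref{eq:WienerAmalgamLinearTransformation} must be applied twice with compatible choices of $U$ so that the Jacobians cancel, and that $\psi_0$ must be chosen once (independently of $R$) so that the final constant depends only on $d$.
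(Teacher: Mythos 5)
The paper does not contain its own proof of this lemma; it is imported verbatim as \cite[Theorem~2.17]{StructuredBanachFrames}, so there is nothing in the given source to compare your argument against. That said, your proof is correct and is the standard one for this kind of Bernstein/Nikolskii-type estimate for band-limited functions. The two reduction steps are clean: the modulation step uses that the amalgam norm depends only on $|f|$, and the dilation step is consistent because you apply \eqref{eq:WienerAmalgamLinearTransformation} with $A^{-t}$ on both the ambient set $A^{-t}[-1,1]^d$ and the composed function $g\circ A^{-t}$, so the Jacobians $|\det A|^{\pm 1}$ from the $L^1$ and amalgam sides cancel exactly as you note. The key idea---choose $\psi_0\in\Schwartz(\R^d)$ once and for all with $\widehat{\psi_0}\equiv 1$ on $[-1,1]^d$, rescale to $\psi=(2R)^d\psi_0(2R\cdot)$ so that $\widehat\psi\equiv 1$ on $[-2R,2R]^d\supset\supp\widehat h$, deduce $h=h\ast\psi$, and then invoke the $L^1$-convolution module property---is exactly the right mechanism, and the final two-region estimate of $\|\psi_0\|_{W_{[-2R,2R]^d}(L^\infty,L^1)}$ (bounded region contributing $\lesssim R^d$, Schwartz tail with $N=d+1$ contributing $O(1)$) closes it.

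Two small points worth tightening if this were to be written out in full. First, the identity $h=h\ast\psi$ needs a justification at the distributional level: since $\widehat h$ is a compactly supported distribution and $\widehat\psi\equiv 1$ on a neighborhood of its support, $\widehat\psi\,\widehat h=\widehat h$ in $\Schwartz'$, hence $\psi\ast h=h$; alternatively, once you have assumed $h\in L^1(\R^d)$ (as you do), $\widehat h\in C_0$ and this is a pointwise identity. Second, the trivial-case dispatch ``$\|h\|_{L^1}=\infty$'' is legitimate because compact Fourier support forces $h$ to be a smooth function by Paley--Wiener, so both sides are well-defined elements of $[0,\infty]$; it is worth noting this is the reason the left side makes sense at all. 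Neither is a gap, merely the kind of remark a referee would want spelled out.
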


\begin{proposition}\label{prop:AmalgamMatrixEntriesEstimate}
  Let $\CalQ = \big( A_i (Q_i') + b_i \big)_{i \in I}$ be an affinely generated cover of
  $\CalO \subset \RHat^d$, and let $\Phi = (\varphi_i)_{i \in I}$ be a regular partition of unity
  subordinate to $\CalQ$.
  Let $w = (w_i)_{i \in I}$ be $\CalQ$-moderate, and let $v = (v_j)_{j \in J}$ be a weight.
  Suppose that the system $\StandardGSI$ satisfies, for $(i,j) \in I \times J$,
  \begin{align*}
    G_{i,j}
    :=  \max \bigg\{ \frac{w_i}{v_j}, \; \frac{v_j}{w_i} \bigg\} \,
        \frac{ (1 + \|C_j^t A_i\|)^d}{|\det C_j|^{1/2}}
        \int_{Q_i'}
          \max_{|\theta| \leq d+1}
            \big| \partial^{\theta} [\Fourier g_{i,j}^{\localized}] (\xi) \big|
        \; d\xi
    < \infty
  \end{align*}
  and that $G = (G_{i,j})_{i \in I, j \in J} \in \CC^{I \times J}$
  is of Schur-type.
  Then $\StandardGSI$ is $(w,v,\Phi)$-adapted.
  Consequently, the frame operator $S : \StandardDecompSp \to \StandardDecompSp$
  is well-defined and bounded.
\end{proposition}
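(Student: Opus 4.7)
The plan is to reduce the claim to a pointwise comparison $M_{i,j} \lesssim G_{i,j}$ (with implicit constants depending only on $d$, $\CalQ$, and $\Phi$), where $M = (M_{i,j})$ is the matrix from Definition~\ref{def:Adaptedness}. Such a pointwise bound transfers the Schur property of $G$ to $M$, which is precisely $(w,v,\Phi)$-adaptedness; Corollary~\ref{cor:FrameOperatorSpecialContinuity} then yields well-definedness and boundedness of the frame operator on $\StandardDecompSp$. Since the weight factor $\max\{w_i/v_j, v_j/w_i\}$ appears identically in both matrices, the task reduces to
\[
  \|(\widecheck{\varphi_i} \ast g_j) \circ C_j\|_{W(L^\infty, \ell^1)}
  \;\lesssim\;
  \frac{(1+\|C_j^t A_i\|)^d}{|\det C_j|} \int_{Q_i'} \max_{|\theta| \leq d+1} |\partial^\theta[\Fourier g^\localized_{i,j}](\xi)| \, d\xi.
\]

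I would handle this via a chain of four steps. First, the Fourier support of $\widecheck{\varphi_i} \ast g_j = \Fourier^{-1}(\varphi_i \widehat{g_j})$ lies in $A_i[-R_\CalQ, R_\CalQ]^d + b_i$, so Lemma~\ref{lem:BandlimitedWienerAmalgamEstimate} applied with $A = A_i$ yields $\|\widecheck{\varphi_i} \ast g_j\|_{W_{A_i^{-t}[-1,1]^d}(L^\infty, L^1)} \lesssim \|\widecheck{\varphi_i} \ast g_j\|_{L^1}$. Second, the substitution identity \eqref{eq:WienerAmalgamLinearTransformation} with the choice $U := C_j^{-1} A_i^{-t}[-1,1]^d$, for which $C_j U = A_i^{-t}[-1,1]^d$, gives
\[
  \|(\widecheck{\varphi_i} \ast g_j) \circ C_j\|_{W_U(L^\infty, L^1)} \;=\; |\det C_j|^{-1} \, \|\widecheck{\varphi_i} \ast g_j\|_{W_{A_i^{-t}[-1,1]^d}(L^\infty, L^1)}.
\]

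Third, I would control $\|\widecheck{\varphi_i} \ast g_j\|_{L^1}$ by the target integral. The identity $\varphi_i \widehat{g_j} = (\varphi_i^\normalizedBAPU \cdot \Fourier g^\localized_{i,j}) \circ S_i^{-1}$, unwound through the affine change of variables in $\Fourier^{-1}$, reduces this to $\|\Fourier^{-1} h\|_{L^1}$ for $h := \varphi_i^\normalizedBAPU \cdot \Fourier g^\localized_{i,j}$, which is supported in $Q_i'$. The integration-by-parts inequality $(1+|x|)^{d+1} |\Fourier^{-1} h(x)| \lesssim \max_{|\alpha| \leq d+1} \|\partial^\alpha h\|_{L^1}$, combined with the integrability of $(1+|x|)^{-(d+1)}$, gives $\|\Fourier^{-1} h\|_{L^1} \lesssim \max_{|\alpha| \leq d+1} \|\partial^\alpha h\|_{L^1}$, which Leibniz's rule and the regularity bound \eqref{eq:RegularPartitionOfUnity} on $\varphi_i^\normalizedBAPU$ further bound by the required integral.

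The fourth and last ingredient is a change of window in the Wiener amalgam norm: I need to compare $\|\cdot\|_{W_U(L^\infty, L^1)}$ with $\|\cdot\|_{W(L^\infty, \ell^1)} \asymp \|\cdot\|_{W_{[0,1]^d}(L^\infty, L^1)}$ for the $i,j$-dependent window $U = C_j^{-1} A_i^{-t}[-1,1]^d$. Since $U$ contains the Euclidean ball of radius $1/\|(C_j^{-1} A_i^{-t})^{-1}\| = 1/\|C_j^t A_i\|$ (using $\|B\| = \|B^t\|$ for real matrices), the unit cube $[0,1]^d$ can be covered by $N \lesssim (1+\|C_j^t A_i\|)^d$ translates of $U$, and a standard covering argument — bounding $\|f\|_{L^\infty([0,1]^d + x)}$ by a sum of $L^\infty$-norms over the shifted translates of $U$ and exploiting translation-invariance of the outer integral — then gives $\|f\|_{W_{[0,1]^d}(L^\infty, L^1)} \leq N \|f\|_{W_U(L^\infty, L^1)}$, closing the chain. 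I expect this final geometric step to be the principal obstacle: one has to pin down that the correct matrix norm controlling the covering number is precisely $\|C_j^t A_i\|$, arising as the reciprocal of the smallest singular value of $C_j^{-1} A_i^{-t}$, rather than a volume factor or $\|A_i^{-1} C_j^{-t}\|$, so that the resulting constant aligns exactly with the form of $G_{i,j}$ in the statement; the other three steps are routine once the affine substitutions are kept straight.
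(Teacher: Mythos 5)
Your proof is correct, and it takes a genuinely different route through the Wiener-amalgam part of the argument. In the paper's proof, the anisotropic factor $(1+\|C_j^t A_i\|)^d$ comes from a single application of Lemma~\ref{lem:BandlimitedWienerAmalgamEstimate} with the window $C_j$: one rewrites $\supp\Fourier(\widecheck{\varphi_i}\ast g_j) \subset A_i[-r,r]^d + b_i \subset C_j^{-t}[-P_{i,j},P_{i,j}]^d + b_i$ with $P_{i,j} = r\,\|C_j^t A_i\|_{\ell^\infty\to\ell^\infty}$, so the lemma applied with $A = C_j^{-t}$, $R = P_{i,j}$ directly produces the matrix-norm factor as $(1+R)^d$, and there is no separate covering step. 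You instead apply the lemma with the window $A_i^{-t}$, matched to the ``natural'' support $A_i[-R_\CalQ,R_\CalQ]^d + b_i$, which yields only a uniform constant $\lesssim (1+R_\CalQ)^d$, and you then produce the factor $(1+\|C_j^t A_i\|)^d$ by an explicit covering argument when switching the amalgam window from $U = C_j^{-1}A_i^{-t}[-1,1]^d$ to a fixed cube. Both routes arrive at the same intermediate estimate \eqref{eq:amalgam_L1_estimate}. Your step~4 is correct as stated: $U$ does contain the Euclidean ball of radius $1/\|A_i^tC_j\| = 1/\|C_j^tA_i\|$, the cube $[0,1]^d$ can be covered by $\lesssim_d(1+\|C_j^tA_i\|)^d$ translates of $U$, and the translation-invariance of the outer $L^1$-integral then gives $\|f\|_{W_{[0,1]^d}(L^\infty,L^1)} \leq N\,\|f\|_{W_U(L^\infty,L^1)}$, so the step you flagged as the ``principal obstacle'' is in fact fine. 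The paper's phrasing is marginally shorter because the covering is already packaged inside the band-limitedness lemma; your version has the pedagogical merit of isolating the geometric mechanism (inradius of $U$ versus the unit cube) that produces the anisotropic factor.
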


\begin{proof}
  We will estimate $\| ( \widecheck{\varphi_i} \ast g_j ) \circ C_j \|_{W(C_0, \ell^1)}$
  for $(i,j) \in I \times J$.
  Choose $r > 1$ such that $\overline{Q_i '} \subset [-r, r]^d$ for all $i \in I$.
  The norm equivalence
  $\|\mybullet\|_{W(C_0,\ell^1)} \asymp \|\mybullet\|_{W_{[-1,1]^d} (C_0,L^1)}$
  yields an absolute constant $K_1 = K_1 (d) > 0$ satisfying
  \begin{align*}
    \| ( \widecheck{\varphi_i} \ast g_j ) \circ C_j \|_{W(C_0, \ell^1)}
    &\leq K_1 \cdot \|
                     ( \widecheck{\varphi_i} \ast g_j ) \circ C_j
                   \|_{W_{[-1,1]^d}(C_0, L^1)} \\
    &= K_1 \cdot |\det C_j|^{-1}
          \cdot \|
                  \widecheck{\varphi_i} \ast g_j
                \|_{W_{C_j ([-1,1]^d)}(C_0, L^1)} \,
  \end{align*}
  for $i \in I$ and $j \in J$.
  Here, we used Equation~\eqref{eq:WienerAmalgamLinearTransformation} in the last step.
  Define $P_{i,j} := r \cdot \|C_j^t  A_i \|_{\ell^\infty \to \ell^\infty}$.
  Since $\supp \varphi_i \subset A_i(\overline{Q_i '}) + b_i$, it follows that
  \[
    \supp \Fourier (\widecheck{\varphi_i} \ast g_j)
    \subset A_i \, [-r, r]^d + b_i
    =       C_j^{-t} \big( C_j^t \cdot A_i [-r, r]^d \big)  + b_i
    \subset C_j^{-t} [-P_{i,j},P_{i,j}]^d + b_i \, .
  \]
  Therefore, Lemma~\ref{lem:BandlimitedWienerAmalgamEstimate} yields
  a constant $K_2 = K_2 (d) > 0$ such that
  \begin{align} \label{eq:amalgam_L1_estimate}
    \| ( \widecheck{\varphi_i} \ast g_j ) \circ C_j \|_{W(C_0, \ell^1)}
    & \leq K_1 K_2 \cdot (1 + P_{i,j})^d \cdot |\det C_j|^{-1}
                   \cdot \| \Fourier^{-1} (\varphi_i \cdot \widehat{g_j}) \|_{L^1} \, .
  \end{align}
  Next, recalling the notion of the normalized version
  $\varphi_i^{\normalizedBAPU} = \varphi_i \circ S_i$ of $\varphi_i$
  (Definition~\ref{def:RegularBAPUNormalizedVersion}), we see
  \begin{align*}
    \big\|
      \Fourier^{-1} \big( \varphi_i \cdot \widehat{g_j} \big)
    \big\|_{L^1}
    = \big\|
        \Fourier^{-1}
        \big(
         ( \varphi_i \circ S_i ) \cdot
          (\widehat{g_j}  \circ S_i )
        \big)
      \big\|_{L^1}
    = \big\|
        \Fourier^{-1}
        \big(
          \varphi_i^{\normalizedBAPU} \cdot
           \Fourier g^{\localized}_{i,j}
        \big)
      \big\|_{L^1},
  \end{align*}
  whence Lemma~\ref{lem:FourierDecayViaSmoothness} shows that
  \begin{align*}
   \big\|
     \Fourier^{-1} \big( \varphi_i \cdot \widehat{g_j} \big)
   \big\|_{L^1}
   \leq \frac{d+1}{\pi^d}
        \max_{|\theta| \leq d+1}
          \big\|
            \partial^{\theta}
            \big(
              \varphi_i^{\normalizedBAPU}
              \cdot \Fourier g_{i,j}^{\localized}
            \big)
          \big\|_{L^1} \, .
  \end{align*}
  Now, since $\varphi^{\normalizedBAPU}_i$ vanishes outside of $Q_i '$,
  it follows that
  \(
    | (\partial^{\alpha} \varphi_i^{\normalizedBAPU} )(\xi) |
    \leq K_3 \cdot \mathds{1}_{Q_i '} (\xi)
  \)
  for all $\xi \in \RHat^d$ and any $\alpha \in \NN_0^d$ with $|\alpha| \leq d + 1$, where
  \(
    K_3
    := \max_{|\alpha| \leq d+1}
         \sup_{i \in I}
           \| \partial^\alpha \varphi_i^{\normalizedBAPU} \|_{L^\infty}
  \).
  An application of the Leibniz rule therefore yields
  \begin{align*}
   \big|
     \partial^{\theta}
     \big(
       \varphi_i^{\normalizedBAPU}
       \cdot \Fourier g_{i,j}^{\localized}
     \big)(\xi)
   \big|
   &\leq \sum_{\beta \leq \theta}
           \binom{\theta}{\beta}
           \big| \big( \partial^{\theta - \beta} \varphi_i^{\normalizedBAPU} \big) (\xi) \big|
           \cdot \big|
                   \partial^{\beta} [\Fourier g_{i,j}^{\localized}] (\xi)
                 \big| \\
   & \leq 2^{d+1} K_3 \cdot
          \mathds{1}_{Q_i '} (\xi) \,
          \max_{|\nu| \leq d+1}
                 \big| \big( \partial^{\nu} [\Fourier g_{i,j}^{\localized}] \big) ( \xi ) \big|
  \end{align*}
  for any $\theta \in \NN_0^d$ with $|\theta| \leq d+1$.
  Integrating this last inequality and combining it with \eqref{eq:amalgam_L1_estimate}
  yields
  \[
    \| (\widecheck{\varphi_i} \ast g_j) \circ C_j \|_{W(C_0, \ell^1)}
    \leq K \frac{(1 + \|C_j^t A_i \|)^d}{|\det C_j|}
         \int_{Q_i '}
           \max_{|\theta| \leq d+1}
              \big| \big( \partial^{\theta} [\Fourier g_{i,j}^{\localized}] \big) (\xi) \big|
         \; d\xi
  \]
  for a constant $K = K(\CalQ, d, \Phi) > 0$.
  Therefore, the matrix entries $M_{i,j}$ defined in Equation~\eqref{eq:AnalysisOperatorSchurMatrix}
  satisfy
  \begin{align*}
    0 \leq M_{i,j}
    & = \max \Big\{ \frac{w_i}{v_j}, \frac{v_j}{w_i} \Big\}
        \cdot |\det C_j|^{1/2}
        \cdot \| (\widecheck{\varphi_i} \ast g_j) \circ C_j \|_{W(C_0, \ell^1)} \\
    & \leq K
           \cdot \max \Big\{ \frac{w_i}{v_j}, \frac{v_j}{w_i} \Big\}
           \cdot \frac{(1 + \|C_j^t A_i \|)^d}{|\det C_j|^{1/2}}
           \int_{Q_i '}
             \max_{|\theta| \leq d+1}
                \big| \big( \partial^{\theta} [ \Fourier g_{i,j}^{\localized} ] \big) (\xi) \big|
           \; d\xi
    =    K \cdot G_{i,j} \, .
  \end{align*}
  This implies $\|M\|_{\schur} \leq K \cdot \|G\|_{\schur} < \infty$,
  so that $\StandardGSI$ is $(w,v,\Phi)$-adapted.
\end{proof}

\subsection{The main term}

In this section, we provide a simplified bound for the operator norm of
$T_0^{-1} : \StandardDecompSp \to \StandardDecompSp$.

\begin{proposition}\label{prop:MainTermSimplified}
  Let $\CalQ = (S_i (Q_i '))_{i \in I}$ be an affinely generated cover of an open set
  $\CalO \subset \RHat^d$ of full measure.
  Let $\Phi = (\varphi_i)_{i \in I}$ be a regular partition of unity subordinate to $\CalQ$.
  Suppose the system $\StandardGSI$ satisfies
  \begin{equation}
    M :=
    \sup_{i \in I} \,
        \sum_{j \in J}
          \bigg(\,
            |\det C_j|^{-1}
            \cdot \Big\|\,
                    \max_{|\nu| \leq d+1} \,
                      \big|\,
                        \partial^\nu | \Fourier g_{i,j}^{\localized} |^2
                      \,\big|
                  \,\Big\|_{L^{d+1} (Q_i ')}
          \,\bigg)
    < \infty \, .
    \label{eq:T0InverseMainAssumption}
  \end{equation}
  Then the function $t_0$ defined in Equation~\eqref{eq:TAlphaDefinition}
  is continuous on $\CalO$ and tame, and Equation~\eqref{eq:GSI_assumption}
  holds for \emph{all} $\xi \in \CalO$.
  Furthermore, for all $p,q \in [1,\infty]$ and
  any $\CalQ$-moderate weight $w = (w_i)_{i \in I}$, the operator
  \[
    T_0 := \Phi_{t_0} : \DecompSp (\CalQ, L^p, \ell_w^q) \to \DecompSp (\CalQ, L^p, \ell_w^q)
  \]
  with $\Phi_{t_0}$ as in Proposition~\ref{prop:FourierMultiplierBound}
  is well-defined, bounded, and boundedly invertible, with
  \begin{equation}
    \|T_0^{-1}\|_{\DecompSp(\CalQ,L^p,\ell_w^q)
                  \to \DecompSp(\CalQ,L^p,\ell_w^q)}
    \leq C_d
         \cdot N_\CalQ^2 C_\Phi
         \cdot \Big[\max_{|\alpha| \leq d+1} C_{\CalQ,\Phi,\alpha}\Big]
         \cdot A^{-1}
         \cdot \bigg(\frac{M}{A}\bigg)^{d+1},
    \label{eq:MainTermInverseOperatorNorm}
  \end{equation}
  where $A > 0$ is as in \eqref{eq:GSI_assumption} and
  \begin{equation}
    C_d := \frac{3 \cdot (d+1)^{3/2} \cdot 2^{d+1} }{\pi^d}
           \left(\frac{\frac{0.8}{e} \cdot (d+1)^2}{\ln(2+d)}\right)^{d+1}.
    \label{eq:MainTermSimplifiedSpecialConstant}
  \end{equation}
\end{proposition}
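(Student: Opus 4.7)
The plan is to show that $1/t_0$ is also tame (in the sense of Proposition~\ref{prop:FourierMultiplierBound}) with the stated bound on $C_{1/t_0}$, so that parts~(iii) and (iv) of that proposition combined with the density statement in Proposition~\ref{prop:Density} (for $\max\{p,q\}<\infty$) and the approximation statement Proposition~\ref{prop:FourierMultiplierBound}(ii) (for $\max\{p,q\}=\infty$) will yield $\Phi_{1/t_0} \, T_0 = T_0 \, \Phi_{1/t_0} = \Phi_1 = \mathrm{id}$ on $\StandardDecompSp$. The norm bound \eqref{eq:MainTermInverseOperatorNorm} then follows from Proposition~\ref{prop:FourierMultiplierBound}(i) applied to $\Phi_{1/t_0}$.

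First, I would verify that $t_0$ is continuous on $\CalO$ and that the Calder\'on condition \eqref{eq:GSI_assumption} holds \emph{pointwise}. Writing $\tau_i := t_0 \circ S_i = \sum_{j\in J}|\det C_j|^{-1} |\Fourier g_{i,j}^{\localized}|^2$ on $Q_i'$, the hypothesis \eqref{eq:T0InverseMainAssumption} together with a Sobolev embedding ($W^{d+1,d+1}\hookrightarrow C_{b}$ on bounded domains of $\RR^d$) gives uniform convergence of the series on each $Q_i'$; since the $S_i(Q_i')$ cover $\CalO$, $t_0$ is continuous on $\CalO$ and \eqref{eq:GSI_assumption} holds everywhere in $\CalO$. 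For tameness of $1/t_0$, I exploit the invariance $\|\Fourier^{-1}(f\circ S_i)\|_{L^1} = \|\Fourier^{-1}f\|_{L^1}$ to pass to normalized coordinates, writing $\psi_i := \varphi_i^{\normalizedBAPU}$ so that
\[
  \|\Fourier^{-1}(\varphi_i/t_0)\|_{L^1}
  = \|\Fourier^{-1}(\psi_i/\tau_i)\|_{L^1}
  \leq \tfrac{d+1}{\pi^d}
       \max_{|\theta|\leq d+1}
         \|\partial^\theta(\psi_i/\tau_i)\|_{L^1}
\]
by Lemma~\ref{lem:FourierDecayViaSmoothness}. The Leibniz rule and $|\partial^\alpha \psi_i| \leq C_{\CalQ,\Phi,\alpha}\,\Indicator_{Q_i'}$ reduce matters to bounding $\max_{|\beta|\leq d+1} \|\partial^\beta(1/\tau_i)\|_{L^1(Q_i')}$, losing only the factor $2^{d+1}\max_{|\alpha|\leq d+1} C_{\CalQ,\Phi,\alpha}$.

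The core of the argument, and the main obstacle, is the estimate on $\partial^\beta (1/\tau_i)$ via Fa\`a di Bruno's formula applied to the composition $\eta \mapsto 1/\tau_i(\eta)$ with $\phi(y)=1/y$, whose $k$-th derivative is $(-1)^k k!/y^{k+1}$. This yields
\[
  |\partial^\beta(1/\tau_i)(\eta)|
  \leq \sum_{\pi \vdash \beta}
         c_\pi \cdot |\pi|! \cdot \tau_i(\eta)^{-(|\pi|+1)}
         \prod_{B \in \pi} |\partial^B \tau_i(\eta)|,
\]
the sum being over set-partitions $\pi$ of $\beta$. Since $\tau_i \geq A$ pointwise on $\CalO$, and since H\"older's inequality on $Q_i'$ with exponents $p_B = d+1$ (valid as $|\pi|\leq|\beta|\leq d+1$) bounds $\|\prod_B|\partial^B \tau_i|\|_{L^1(Q_i')}$ by $\prod_B \|\partial^B \tau_i\|_{L^{d+1}(Q_i')}$ (after absorbing a bounded volume factor into $C_d$ since the $Q_i'$ come from a finite family), I can use
\[
  \|\partial^B \tau_i\|_{L^{d+1}(Q_i')}
  \leq \sum_{j\in J} |\det C_j|^{-1}
         \bigl\|\max_{|\nu|\leq d+1}|\partial^\nu|\Fourier g_{i,j}^{\localized}|^2|\bigr\|_{L^{d+1}(Q_i')}
  \leq M
\]
to obtain $\|\partial^\beta (1/\tau_i)\|_{L^1(Q_i')} \leq C(d)\, A^{-(|\beta|+1)} M^{|\beta|}$, where $C(d)$ collects the combinatorial sum over partitions (the factor $(\,\tfrac{(0.8/e)(d+1)^2}{\ln(2+d)})^{d+1}$ in $C_d$ reflects a Bell-number-type estimate of this sum, times a $(d+1)^{3/2}$ factor for dimension). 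Taking the maximum over $|\beta|\leq d+1$ yields the dominant term $A^{-(d+2)} M^{d+1}$, and combining all constants gives the claimed bound on $C_{1/t_0}$ and hence on $\|T_0^{-1}\|_{\op}$.
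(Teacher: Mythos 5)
Your overall strategy---show $1/t_0$ is tame via a Fa\`a di Bruno-type estimate for $\partial^\beta(1/\tau_i)$ on $Q_i'$, then invoke Proposition~\ref{prop:FourierMultiplierBound}(iv) to get $\Phi_{1/t_0}\Phi_{t_0}=\mathrm{id}$---does coincide with the paper's, and the reduction through $\|\Fourier^{-1}(f\circ S_i)\|_{L^1}=\|\Fourier^{-1}f\|_{L^1}$, Lemma~\ref{lem:FourierDecayViaSmoothness}, Leibniz, and $|\partial^\alpha\varphi_i^{\normalizedBAPU}|\leq C_{\CalQ,\Phi,\alpha}\Indicator_{Q_i'}$ is essentially the same.

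The genuine gap is that you apply Fa\`a di Bruno to $\tau_i=t_0\circ S_i$ as though $\tau_i$ were known to be $C^{d+1}$. It is not: $t_0=\sum_j\gamma_j$ is an infinite series, and the hypothesis~\eqref{eq:T0InverseMainAssumption} controls the derivatives of the \emph{summands} only in $L^{d+1}(Q_i')$, which does not give pointwise $C^{d+1}$ regularity of the sum ($W^{d+1,d+1}$ on a bounded domain does not embed into $C^{d+1}$). The paper flags exactly this obstruction at the start of its Step~2 ("we cannot apply it directly, since $t_0$ might not be $C^{d+1}$") and resolves it by first applying the one-variable reciprocal lemma (Lemma~\ref{lem:DerivativesOfReciprocal}) to the \emph{smooth} finite partial sums $g_N:=\sum_{n=1}^N\gamma_{j_n}$, controlling $\partial_\ell^m(G_N\circ S_i)$ with $G_N=1/g_N$ uniformly in $N$, and finally passing to the limit via Fatou after proving $\varphi_i G_N\to\varphi_i/t_0$ uniformly. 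This approximation step (Steps~2--3) is the technical core of the proof, and your proposal simply asserts the conclusion of the Fa\`a di Bruno expansion for $1/\tau_i$ without establishing the differentiability that makes it meaningful.

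Two smaller discrepancies. First, the paper works with the one-dimensional reciprocal lemma direction by direction, because Lemma~\ref{lem:FourierDecayViaSmoothness} only requires the pure partials $\partial^\theta$ with $\theta\in\{0\}\cup\{(d+1)e_\ell\}$; this keeps the combinatorics to ordinary Bell numbers and yields exactly the constant $C_d$. Your multivariate Fa\`a di Bruno over set-partitions of the multi-index $\beta$ is not wrong, but would produce a different (and less clean) combinatorial constant. Second, in the paper's estimate the bound $|\partial_\ell^{d+1}(\psi_i G_N)|\lesssim A_\eps^{-(d+2)}(K_i^{(N)})^{d+1}\Indicator_{Q_i'}$ integrates to $\|K_i^{(N)}\|_{L^{d+1}(Q_i')}^{d+1}\leq M^{d+1}$ \emph{exactly}, with no H\"older loss. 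Your route instead bounds $\prod_B\|\partial^B\tau_i\|_{L^{d+1}}$ by H\"older with $|\pi|\leq d+1$ factors, which picks up a factor $\lambda(Q_i')^{1-|\pi|/(d+1)}$; this factor depends on $\CalQ$, so it cannot be absorbed into the purely dimensional constant $C_d$ as you suggest, and you would also need an extra argument (e.g., $M/A\geq 1$) to justify that the $|\pi|=d+1$ term dominates.
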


\begin{proof}
  We divide the proof into four steps.
\\\\
  \textbf{Step 1.}
  We show that the series defining $t_0$ converges locally uniformly on $\CalO$,
  that Equation~\eqref{eq:GSI_assumption} holds pointwise on $\CalO$, and that $t_0$ is tame.

  To see this, set $\gamma_j := |\widehat{g_j}|^2 / |\det C_j|$,
  and note $t_0 = \sum_{j \in J} \gamma_j$ and that $\gamma_j \in C^\infty (\RHat^d)$
  thanks to our standing assumptions regarding the $g_j$.
  Now, for arbitrary $i \in I$, recall that $\varphi_i^{\normalizedBAPU} = \varphi_i \circ S_i$
  vanishes outside $Q_i '$, so that the Leibniz rule shows
  \begin{align*}
    \big|
      \partial^\alpha
      \big(
        \varphi_i^{\normalizedBAPU} \cdot (\gamma_j \circ S_i)
      \big) (\xi)
    \big|
    & \leq \sum_{\beta \leq \alpha}
             \binom{\alpha}{\beta} \,
             |\partial^{\alpha - \beta} \varphi_i^{\normalizedBAPU} (\xi)| \,
             |\partial^\beta (\gamma_j \circ S_i) (\xi)| \\
    & \leq c_0
           \cdot |\det C_j|^{-1}
           \cdot \Indicator_{Q_i '} (\xi)
           \cdot \max_{|\nu| \leq d+1}
                   \big|
                     \partial^\nu | \Fourier g_{i,j}^{\localized} |^2 (\xi)
                   \big|
  \end{align*}
  for $c_0 := 2^{d+1} \, \max_{|\nu| \leq d+1} C_{\CalQ,\Phi,\nu}$ and
  arbitrary $\alpha \in \N_0^d$ with $|\alpha| \leq d+1$.

  Therefore, using the notation
  $\mathbb{I} := \{0\} \cup \{ (d+1) \, e_\ell \colon \ell \in \FirstN{d} \}$
  (where $(e_1,\dots,e_d)$ denotes the standard basis of $\R^d$),
  Lemma~\ref{lem:FourierDecayViaSmoothness} shows because of
  \(
    \|\varphi_i \cdot \gamma_j\|_{\Fourier L^1}
    = \|\varphi_i^{\normalizedBAPU} \cdot (\gamma_j \circ S_i)\|_{\Fourier L^1}
  \)
  and $\tfrac{d+1}{\pi^d} \leq 1$ that
  \begin{equation}
    \begin{split}
      \| \varphi_i \cdot \gamma_j \|_{\Fourier L^1}
      & \leq \max_{\alpha \in \mathbb{I}}
               \big\|
                 \partial^\alpha
                 \big(
                   \varphi_i^{\normalizedBAPU} \cdot (\gamma_j \circ S_i)
                 \big)
               \big\|_{L^1}
        \leq c_0 \cdot |\det C_j|^{-1}
             \cdot \Big\|
                     \max_{|\nu| \leq d+1}
                       \big|
                         \partial^\nu |\Fourier g_{i,j}^{\localized}|^2
                       \big|
                   \Big\|_{L^1 (Q_i ')} \\
      & \leq c_0 \, c_1 \cdot |\det C_j|^{-1}
             \cdot \Big\|
                     \max_{|\nu| \leq d+1}
                       \big|
                         \partial^\nu |\Fourier g_{i,j}^{\localized}|^2
                       \big|
                   \Big\|_{L^{d+1} (Q_i ')} ,
    \end{split}
    \label{eq:T0InverseLocalizedGammaEstimate}
  \end{equation}
  where $c_1 = c_1(\CalQ, d) > 0$ is a constant satisfying
  $\|\cdot\|_{L^1 (Q_i ')} \leq c_1 \cdot \|\cdot\|_{L^{d+1} (Q_i ')}$ for all $i \in I$,
  which exists since the $(Q'_i)_{i \in I}$ are uniformly bounded.
  Estimate \eqref{eq:T0InverseLocalizedGammaEstimate} implies that
  \[
    \sup_{i \in I}
      \sum_{j \in J}
        \|\varphi_i \cdot \gamma_j\|_{\sup}
    \leq \sup_{i \in I}
           \sum_{j \in J}
             \|\varphi_i \cdot \gamma_j\|_{\Fourier L^1}
    \leq c_0 c_1 \cdot M < \infty ,
  \]
  where $M$ is as in \eqref{eq:T0InverseMainAssumption}.
  This guarantees the locally uniform convergence on $\CalO$ of the series
  ${t_0 = \sum_{j \in J} \gamma_j}$.
  Indeed, if $\xi \in \CalO$ is arbitrary, then $\xi \in Q_i$ for some $i \in I$ where $Q_i$ is open;
  furthermore, $\sum_{\ell \in i^\ast} \varphi_\ell \equiv 1$ on $Q_i$ and hence
  \(
    \sum_{j \in J}
      \|\gamma_j\|_{L^\infty (Q_i)}
    \leq \sum_{j \in J}
           \sum_{\ell \in i^\ast}
             \|\varphi_\ell \cdot \gamma_j\|_{\sup}
    < \infty
  \),
  which shows that the series $t_0 = \sum_{j \in J} \gamma_j$ converges uniformly on $Q_i$.
  By locally uniform convergence, we see that $t_0$ is continuous on $\CalO$.
  Equation~\eqref{eq:GSI_assumption} shows that $A \leq t_0 \leq B$ almost everywhere
  on $\CalO$; since $\CalO$ is open and $t_0$ continuous,
  this estimate necessarily holds pointwise on $\CalO$.

  Finally, since $\supp \varphi_i \subset \CalO$ is compact, we see
  $\varphi_i \, t_0 = \sum_{j \in J} \varphi_i \gamma_j$ with uniform convergence of the series,
  and hence with convergence in $L^1(\RHat^d)$, since all summands have support in the fixed
  compact set $\overline{Q_i} \subset \CalO$.
  Thus, $\Fourier^{-1} (\varphi_i \, t_0) = \sum_{j \in J} \Fourier^{-1} (\varphi_i \, \gamma_j)$,
  which leads to the estimate
  \(
    \sup_{i \in I}
      \|\Fourier^{-1} (\varphi_i \, t_0)\|_{L^1}
    \leq \sup_{i \in I}
           \sum_{j \in J}
             \|\varphi_i \cdot \gamma_j\|_{\Fourier L^1}
    \leq c_0 \, c_1 \cdot M
    < \infty
  \).
  Thus, $t_0$ is tame, so that Proposition~\ref{prop:FourierMultiplierBound} shows that
  $T_0 = \Phi_{t_0} : \StandardDecompSp \to \StandardDecompSp$ is well-defined and bounded.
 \\\\
 \textbf{Step 2.}
  In this step, we prepare for applying Lemma~\ref{lem:DerivativesOfReciprocal};
  we cannot apply it directly, since $t_0$ might not be $C^{d+1}$.
  Thus, we will construct a sequence $(g_N)_{N \in \N}$ of smooth functions approximating $t_0$.
  We will then apply Lemma~\ref{lem:DerivativesOfReciprocal} to the $g_N$ in Step~3.

  For the construction of the $(g_N)_{N \in \N}$, first note that $J$ is infinite;
  indeed, we have $\widehat{g_j} \in C_0 (\RHat^d)$ for all $j \in J$ since $g_j \in L^1(\R^d)$;
  thus, \eqref{eq:GSI_assumption} can only hold if $J$ is infinite.
  Since $J$ is countable, we thus have $J = \{ j_n \colon n \in \N \}$
  for certain pairwise distinct $j_n \in J$.
  With this, define $g_N := \sum_{n=1}^N \gamma_{j_n} \in C^\infty (\RHat^d)$.
  As seen in Step~1, $g_N \to t_0$ locally uniformly on $\CalO$.
  Since $0 < A \leq t_0 \leq B$ on $\CalO$, this easily implies $G_{N} \to \tfrac{1}{t_0}$
  locally uniformly on $\CalO$, where we defined
  \[
    G_N : \CalO \to \R,
          \xi \mapsto \begin{cases}
                        (g_N (\xi) )^{-1} , & \text{if } g_N (\xi) \neq 0 , \\
                        0,                  & \text{otherwise}.
                      \end{cases}
  \]
  Thus, $\varphi_i \cdot G_N \to \varphi_i \cdot t_0^{-1}$ in $L^1(\RHat^d)$,
  and hence $\Fourier^{-1} (\varphi_i \, G_N) \to \Fourier^{-1} (\varphi_i \cdot t_0^{-1})$
  uniformly as $N \to \infty$.
  Therefore, Fatou's lemma shows that
  \begin{equation}
    \| \Fourier^{-1} (\varphi_i \cdot t_0^{-1}) \|_{L^1}
    \leq \liminf_{N \to \infty}
           \| \Fourier^{-1} (\varphi_i \, G_N) \|_{L^1}
    =    \liminf_{N \to \infty}
           \|
             \varphi_i^{\normalizedBAPU}
             \cdot (G_N \circ S_i)
           \|_{\Fourier L^1}.
    \label{eq:T0InverseApproximationFatou}
  \end{equation}
\\\\
  \textbf{Step 3.} We next estimate
  $\liminf_{N \to \infty} \| \varphi_i^{\normalizedBAPU} \cdot (G_N \circ S_i) \|_{\Fourier L^1}$.
  Define
  \[
    K_i^{(N)}
    : S_i^{-1}(\CalO) \to [0,\infty),
    \xi \mapsto \sum_{n=1}^N
                  \max_{|\alpha| \leq d+1}
                    \big|
                      \partial^\alpha (\gamma_{j_n} \circ S_i) (\xi)
                    \big| .
  \]
  Let $V_i \subset \CalO$ be open and bounded with
  $\overline{Q_i} \subset V_i \subset \overline{V_i} \subset \CalO$ and let $\eps \in (0,1)$.
  Since $g_N \to t_0$ uniformly on $V_i$ and $t_0 \geq A > 0$ on $\CalO \supset V_i$,
  there is $N_0 = N_0 (i,\eps) \in \N$ such that $g_N \geq (1-\eps) \, A =: A_\eps$ on $V_i$
  for all $N \geq N_0$.
  Note that
  $K_i^{(N)} (\xi) \geq \sum_{n=1}^N \gamma_{j_n} (S_i \xi) = g_N (S_i \xi) \geq A_\eps$
  for $\xi \in S_i^{-1}(V_i)$ and $N \geq N_0$.

  Define $U_i := S_i^{-1} (V_i)$, fix $\xi^{(0)} \in U_i$ and $\ell \in \FirstN{d}$, set
  \[
    U := \{
           \xi \in \RHat
           \colon
           (
            \xi_1^{(0)},
            \dots,
            \xi_{\ell-1}^{(0)},
            \xi,
            \xi_{\ell+1}^{(0)},
            \dots,
            \xi_d^{(0)}
           )
           \in U_i
         \}
  \]
  and, for $N \geq N_0$, let
  \(
    f_N : U \to [A_\eps, \infty),
          \xi \mapsto (g_N \circ S_i)
                      (
                       \xi_1^{(0)},
                       \dots,
                       \xi_{\ell-1}^{(0)},
                       \xi,
                       \xi_{\ell+1}^{(0)},
                       \dots,
                       \xi_d^{(0)}
                      ),
  \)
  noting that $\big| f_N^{(m)} (\xi_\ell^{(0)}) \big| \leq K_i^{(N)} (\xi^{(0)})$
  for all $m \in \FirstN{d+1}$.
  Hence, Lemma~\ref{lem:DerivativesOfReciprocal} shows for all $m \in \FirstN{d+1}$ that
  \begin{equation}
    \begin{split}
      \bigg|
        \frac{\partial^m}{\partial \xi_\ell^m} \Big|_{\xi = \xi^{(0)}}
          (G_N \circ S_i) (\xi)
      \bigg|
      & = \bigg|
            \frac{d^m}{d \xi^m} \Big|_{\xi = \xi_{\ell}^{(0)}}
            \frac{1}{f_N (\xi)}
          \bigg| \\
      & \leq C_{d+1} \cdot A_\eps^{-1}
             \cdot \max \big\{
                          A_\eps^{-1} \cdot K_i^{(N)} (\xi^{(0)}) ,
                          \big( A_\eps^{-1} \cdot K_i^{(N)} (\xi^{(0)}) \big)^m
                        \big\} \\
      & \leq C_{d+1} \cdot A_\eps^{-(d+2)} \cdot \big( K_i^{(N)} (\xi^{(0)}) \big)^{d+1},
    \end{split}
    \label{eq:T0InverseDerivativeEstimate}
  \end{equation}
  where $C_{d+1}$ is as in Lemma~\ref{lem:DerivativesOfReciprocal}.

  Since $\xi^{(0)} \in U_i$ was arbitrary, we have thus shown, for all $\xi \in U_i$ and $N \geq N_0$,
  \[
    \max_{\ell \in \FirstN{d}}
      \max_{0 \leq m \leq d+1}
        \big|
          \partial_\ell^m (G_N \circ S_i) (\xi)
        \big|
    \leq C_{d+1} \cdot A_\eps^{(-d+2)} \cdot \big( K_i^{(N)} (\xi) \big)^{d+1} .
  \]
  Finally, since $\varphi_i^{\normalizedBAPU} = \varphi_i \circ S_i$ vanishes outside of
  $Q_i' = S_i^{-1} (Q_i) \subset S_i^{-1} (V_i) = U_i$, the Leibniz rule shows
  \begin{align*}
    \big|
      \partial_\ell^m
      \big(
        \varphi_i^{\normalizedBAPU}
        \cdot (G_N \circ S_i)
      \big)(\xi)
    \big|
  &  \leq \sum_{s=0}^m
           \binom{m}{s} \,
           |\partial_\ell^{m-s} \varphi_i^{\normalizedBAPU} (\xi)|
           \, |\partial_\ell^s (G_N \circ S_i) (\xi)| \\
&    \leq c_0 C_{d+1} \cdot A_\eps^{-(d+2)}
         \cdot \big( K_i^{(N)} (\xi) \big)^{d+1}
         \cdot \Indicator_{Q_i '} (\xi)
  \end{align*}
  for all $\xi \in \RHat^d$, $\ell \in \FirstN{d}$, $0 \leq m \leq d+1$, and $N \geq N_0$.
  Thus, Lemma~\ref{lem:FourierDecayViaSmoothness} shows
  \begin{align*}
  &  \| \varphi_i^{\normalizedBAPU} \cdot (G_N \circ S_i) \|_{\Fourier L^1} \\
    & \quad \quad \quad \leq \frac{d+1}{\pi^d} \!\!
           \max_{\substack{\ell \in \FirstN{d} \\ 0 \leq m \leq d+1}} \!\!
             \big\|
               \partial_\ell^m \big( \varphi_i^{\normalizedBAPU} \cdot (G_N \circ S_i) \big)
             \big\|_{L^1} \! \\
&  \quad \quad \quad    \leq \frac{d+1}{\pi^d} \cdot c_0 \, C_{d+1}
           \cdot A_\eps^{-(d+2)}
           \, \big\| K_i^{(N)} \big\|_{L^{d+1} (Q_i ')}^{d+1} \\
    & \quad \quad \quad \leq \frac{d+1}{\pi^d} \cdot c_0 \, C_{d+1}
           \cdot A_\eps^{-(d+2)}
           \cdot \bigg(
                   \sum_{j \in J}
                     |\det C_j|^{-1} \,
                     \Big\|
                       \max_{|\alpha| \leq d+1}
                         \big| \partial^\alpha |\Fourier g_{i,j}^{\localized}|^2 \big| \,
                     \Big\|_{L^{d+1} (Q_i ')}
                 \bigg)^{d+1} \\
    & \quad \quad \quad \leq \frac{d+1}{\pi^d} \cdot c_0 C_{d+1}
           \cdot A_\eps^{-(d+2)} \cdot M^{d+1}.
  \end{align*}
  Since this holds for all $N \geq N_0 = N_0(i,\eps)$, and since $A_\eps = (1-\eps) A$
  where $\eps \in (0,1)$ is arbitrary, we thus see by virtue of
  Equation~\eqref{eq:T0InverseApproximationFatou} that
  \[
    \|
      \Fourier^{-1} (\varphi_i \cdot t_0^{-1})
    \|_{L^1}
    \leq \frac{d+1}{\pi^d} \cdot c_0 C_{d+1} \cdot A^{-(d+2)} \cdot M^{d+1}
    <    \infty
  \]
  for all $i \in I$.
  Hence, $t_0^{-1}$ is tame, and Proposition~\ref{prop:FourierMultiplierBound} shows that
  $\Phi_{t_0^{-1}} : \StandardDecompSp \to \StandardDecompSp$ is well-defined and
  bounded, with operator norm bounded
  by the right-hand side of Equation~\eqref{eq:MainTermInverseOperatorNorm}.
\\\\
  \textbf{Step 4.}
  Proposition~\ref{prop:FourierMultiplierBound}(iv) shows
  $\Phi_{t_0^{-1}} \Phi_{t_0} = \Phi_{\mathbf{1}} = \Phi_{t_0} \Phi_{t_0^{-1}}$,
  where $\mathbf{1} : \CalO \to \R, \xi \mapsto 1$.
  Directly from the definition of $\Phi_{\mathbf{1}}$ in
  Proposition~\ref{prop:FourierMultiplierBound}, we see $\Phi_{\mathbf{1}} f = f$ for all
  $f \in \StandardDecompSp$.
  Hence, $T_0 : \StandardDecompSp \to \StandardDecompSp$ is boundedly invertible
  with $T_0^{-1} = \Phi_{t_0^{-1}}$.
\end{proof}

\subsection{The remainder term}
\label{sub:RemainderTerm}

The next (technical) result provides an estimate of the operator norm
of the remainder term $R_0 : \StandardDecompSp \to \StandardDecompSp$
considered in Corollary~\ref{cor:RemainderTermR0}.
Here, we make use of a \emph{normalized} version $g_j^{\normalizedFactorisation}$
of the generators $(g_j)_{j \in J}$ of $\StandardGSI$, namely
\[
  g_j^{\normalizedFactorisation}
  := |\det B_j|^{-1/2} \cdot (\Modulation{-c_j} g_j)\circ B_j^{-t}
\]
for invertible affine-linear maps  $U_j = B_j (\mybullet) + c_j$; note that
$\widehat{\, g_j^{\normalizedFactorisation} \,} = |\det B_j|^{1/2} \cdot \widehat{g_j} \circ U_j$.

\begin{lemma}\label{lem:RemainderSimplified}
  Let $\CalQ = (S_i (Q_i '))_{i \in I} = (A_i \, (Q_i ') + b_i)_{i \in I}$ be
  an affinely generated cover of an open set ${\CalO \subset \RHat^d}$ of full measure.
  Let $\Phi = (\varphi_i)_{i \in I}$ be a regular partition of unity subordinate to $\CalQ$,
  and let $w = (w_i)_{i \in I}$ be a $\CalQ$-moderate weight.
  Let $\StandardGSI$ be a generalized shift-invariant system.
  Furthermore, assume that $\StandardGSI$ is $(w,v,\Phi)$-adapted for some weight
  $v = (v_j)_{j \in J}$, and assume that the function $t_0$ introduced in
  Equation~\eqref{eq:TAlphaDefinition} is tame.

  Suppose that there is a family $(U_j)_{j \in J}$ of invertible affine-linear maps
  $U_j = B_j (\mybullet) + c_j \vphantom{\sum_j}$ and a weight $v = (v_j)_{j \in J}$
  such that the Fourier transform of
  $g_j^{\normalizedFactorisation} = |\det B_j|^{-1/2} \cdot (\Modulation{-c_j} g_j)\circ B_j^{-t}$
  can be factorized as $\Fourier g_j^{\normalizedFactorisation} = h_{j,1} \cdot h_{j,2}$
  with $h_{j,1}, h_{j,2} \in C^{d+1}(\RHat^d)$ satisfying
  \[
    \max_{|\alpha| \leq d+1}
      |\partial^\alpha h_{j,2} (\xi)|
    \leq C' \cdot (1 + |\xi|)^{-(d+1)}
    \quad \text{for} \quad \xi \in \RHat^d .
  \]
  Moreover, suppose that $Y = (a_{i,j} X_{i,j})_{i \in I, j \in J}  \vphantom{\sum_j}$
  and $Z = (b_{i,j} X_{i,j})_{i \in I, j \in J}$ are of Schur-type, where
  \begin{align*}
    a_{i,j}  = \max \big\{ 1, \tfrac{w_i}{v_j} \big\} \,
       |\det B_j^t C_j|^{-1}
       \max \big\{ 1 ,\, |A_i^{-1} (b_i - c_j)|^{d+1} \big\}
       \max \big\{ 1 ,\, \|A_i^{-1} B_j\|^{d+1} \big\}
       \|C_j^t A_i\|^{d+1}
  \end{align*}
  and
  \[
    b_{i,j}
    = \max \{ 1, \tfrac{v_j}{w_i} \}
       \, \max \{ 1, |A_i^{-1} (b_i - c_j)| \}
       \, \max \{ 1, \|A_i^{-1} B_j\|^{d+1} \}
       \, \max \{ \|C_j^t A_i\|, \|C_j^t A_i\|^{d+1} \},
  \]
  and
  \[
    X_{i,j}  :=
     \max \big\{ 1 ,\, \|B_j^{-1} A_i\|^{d+1} \big\}
     \int_{Q_i '}
                  \max_{|\alpha| \leq d+1}
                    \big|
                      (\partial^\alpha h_{j,1}) \big( U_j^{-1} S_i (\xi) \big)
                    \big|
                \, d \xi \, .
  \]
  Then, for all $p,q \in [1,\infty]$, the operator
  $R_0 : \StandardDecompSp \to \StandardDecompSp$
  of Corollary~\ref{cor:RemainderTermR0} is bounded, with
  \(
    \|R_0\|_{\op}
    \leq C_0 C_{p,q} \| \Gamma_{\CalQ} \|_{\ell^q_w \to \ell^q_w}
         \cdot (C')^2
         \cdot \|Y\|_{\schur} \|Z\|_{\schur}
  \),
  where
  \begin{equation}
    C_0 := 24 \, \pi^2
            \left(\frac{8d}{\pi}\right)^{2d+2}
            12^d \, (d+1)^3
            \max \big\{ 1 ,\, R_\CalQ^{d+2} \big\}
            \max_{|\alpha| \leq d+1} C_{\CalQ,\Phi,\alpha}^2
    \label{eq:RemainderSimplifiedSpecialConstant}
  \end{equation}
  with $R_{\CalQ} := \max_{i \in I} \sup_{\xi \in Q_i '} |\xi|$
  and  $C_{p,q} := 1$ if $\max \{p,q\} < \infty$
  and $C_{p,q} := C_{\Phi} \cdot \|\Gamma_{\CalQ}\|_{\ell_w^q \to \ell_w^q}^2$ otherwise.
\end{lemma}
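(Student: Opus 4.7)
The plan is to combine Corollaries~\ref{cor:RemainderTermR0} and~\ref{cor:RemainderAbstractEstimate} to reduce the task to bounding the Schur norm of $\widetilde{N}$ defined in~\eqref{eq:matrix_entries_NTilde}. The target inequality $\|\widetilde{N}\|_{\schur} \lesssim \|Z\|_{\schur} \|Y\|_{\schur}$ will follow by establishing the pointwise factorization
\[
  \widetilde{N}_{i,\ell} \;\leq\; C \sum_{j \in J} Z_{i,j} \, Y_{\ell,j}
\]
and invoking submultiplicativity of the Schur norm, since $\widetilde{N}$ is then dominated entrywise by $C \cdot Z Y^T$ and $\|Y^T\|_{\schur} = \|Y\|_{\schur}$. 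The weight factor separates through $\max\{1, w_i/w_\ell\} \leq \max\{1, w_i/v_j\} \cdot \max\{1, v_j/w_\ell\}$, so the core task is to estimate each summand
\[
  T_{i,\ell,j,\alpha} := \|\varphi_i(\cdot - \alpha) \, \overline{\widehat{g_j}} \, \widehat{g_j}(\cdot - \alpha) \, \varphi_\ell\|_{\Fourier L^1}
\]
and then to sum it over $\alpha \in C_j^{-t}\ZZ^d \setminus \{0\}$, extracting a bound whose $(i,\ell)$-dependence factors through $j$.

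Using the factorization $\widehat{g_j} = |\det B_j|^{-1/2} \cdot (h_{j,1} h_{j,2}) \circ U_j^{-1}$, the integrand splits as a product $F = G_\ell \cdot G_i$ with $G_\ell$ supported on $Q_\ell$ and $G_i$ supported on $Q_i + \alpha$; then Young's inequality yields $T_{i,\ell,j,\alpha} \leq \|G_\ell\|_{\Fourier L^1} \|G_i\|_{\Fourier L^1}$. The design choice is to place both $h_{j,2}$-factors on the $G_\ell$ side, so that the translated factor $h_{j,2}(U_j^{-1}(\cdot - \alpha))$ supplies the critical decay $(1 + |U_j^{-1}(\xi - \alpha)|)^{-(d+1)}$ on $\supp \varphi_\ell$, while $G_i = \varphi_i(\cdot - \alpha) \, h_{j,1}(U_j^{-1}(\cdot - \alpha))$ has $\Fourier L^1$-norm independent of $\alpha$ by translation invariance. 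Each factor is estimated by the affine substitution $\xi = S_\ell \zeta$ (resp.\ $\xi = S_i \zeta + \alpha$)---both of which preserve the $\Fourier L^1$ norm---followed by Lemma~\ref{lem:FourierDecayViaSmoothness}, reducing each $\Fourier L^1$ norm to an $L^1$-norm on $Q_\ell'$ (resp.\ $Q_i'$) of derivatives of order at most $d+1$. The Leibniz rule together with the chain rule, which sends $\partial(h_{j,s} \circ U_j^{-1} \circ S_\ell)$ into a power of $\|B_j^{-1} A_\ell\|$, produces exactly the integral quantity $X_{\ell,j}$ on the $G_\ell$ side and $X_{i,j}$ on the $G_i$ side; the translation factor $|A_i^{-1}(b_i - c_j)|$ and the norm $\|C_j^t A_i\|^{d+1}$ enter upon expressing $\varphi_i(S_\ell \zeta - \alpha)$ in the normalized coordinates of $\varphi_i^\normalizedBAPU$ and upon tracking how the decay centre of $h_{j,2}$ transforms under the $\alpha$-dependent translation.

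The summation over $\alpha$ is carried out by a lattice-to-integral comparison: since the decay is of critical order $d+1$, the series $\sum_\alpha (1 + |U_j^{-1}(y - \alpha)|)^{-(d+1)}$ is bounded by a constant multiple of $\max\{1, |\det B_j^t C_j|\}$, namely the reciprocal covolume of $B_j^{-1} C_j^{-t} \ZZ^d$. Combined with the prefactor $|\det C_j|^{-1}$ from~\eqref{eq:matrix_entries_NTilde} and the $|\det B_j|^{-1}$ from the factorization, this yields the overall normalization $|\det B_j^t C_j|^{-1}$ appearing in $Z_{i,j}$, with any residual $|\det|$-factor absorbed into the $\max\{1,\ldots\}$ terms and the geometric powers already present in $Z$ and $Y$. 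The remaining factors are distributed asymmetrically between $Z$ and $Y$: the $(d+1)$-power geometric quantities inherited from the decay-producing side land in $Z$, while $Y$ keeps only the lower-order factors---the linear power $|A_i^{-1}(b_i - c_j)|$ and the milder $\max\{\|C_j^t A_i\|, \|C_j^t A_i\|^{d+1}\}$---from the $\alpha$-independent side. The constant $R_\CalQ$ enters $C_0$ via the uniform bound $|\xi| \leq R_\CalQ$ on $Q_i'$ used when converting lattice-summation bounds back to cover coordinates.

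The main technical obstacle is the careful bookkeeping of the geometric powers produced by the chain rule (distinguishing $\|B_j^{-1} A_\ell\|$ from $\|A_\ell^{-1} B_j\|$ depending on whether the derivative acts on the outer affine argument or on the inner decay factor), together with the correct allocation of the $\alpha$-summation residue so that both $Z$ and $Y$ emerge with finite Schur norms under the stated hypotheses. The limiting regime $\max\{p,q\} = \infty$ is handled automatically via the factor $C_{p,q}$ inherited from Corollary~\ref{cor:RemainderTermR0}, which accounts for the failure of $\DenseSpace(\R^d)$ to be norm-dense in $\StandardDecompSp$ in this case.
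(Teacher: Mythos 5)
Your high-level strategy---reduce to $\|\widetilde N\|_{\schur}$ via Corollaries~\ref{cor:RemainderTermR0} and~\ref{cor:RemainderAbstractEstimate}, factor the integrand into a product of two cutoff-localized pieces, and control the Schur norm through a sup/sum split over the lattice---is indeed the paper's strategy. But two of your specific design choices create genuine gaps.

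\textbf{The asymmetric split loses the crucial smallness in $Z_{i,j}$.}
You place both $h_{j,2}$-factors on the $G_\ell$ side, so that $G_i = \Translation{\alpha}\big(\varphi_i\cdot(h_{j,1}\circ U_j^{-1})\big)$ carries no decay factor and its $\Fourier L^1$-norm is literally $\alpha$-independent.
The paper instead splits symmetrically: the $\alpha$-translated piece $K^{(1)}_{i,j,k}$ gets the translated $\varphi_i$, the translated $h_{j,1}$, and the \emph{untranslated} $\overline{h_{j,2}}$; the other piece $K^{(2)}_{\ell,j,k}$ gets $\varphi_\ell$, $\overline{h_{j,1}}$, and the \emph{translated} $h_{j,2}$.
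This cross-pairing of each cutoff with the ``mismatched'' decay factor is essential.
It is what lets the paper extract, from $\sup_{k\neq 0}K^{(1)}_{i,j,k}$, the factor $\|C_j^t A_i\|^{d+1}$ (since for $k\neq 0$ the decay argument satisfies $1+|A_i^{-1}C_j^{-t}k|\geq \|C_j^t A_i\|^{-1}$).
In your scheme $G_i$ has no decay factor at all, so the $\sup_\alpha$ bound cannot produce $\|C_j^t A_i\|^{d+1}$; your version of $Z_{i,j}$ lacks it entirely, hence is larger than the stated $Z_{i,j}$ by a factor $\|C_j^t A_i\|^{-(d+1)}$.
Since $C_j=\delta A_j^{-t}$ makes $\|C_j^t A_i\|\sim\delta$ in the structured setting, this factor is $\sim\delta^{-(d+1)}$, so your bound does not imply the stated one and in fact degrades the final $\delta^2$-smallness of $\|R_0\|$ that Proposition~\ref{prop:RemainderSimplifiedForStructuredSystems} and Theorem~\ref{thm:MainSummary} rely on.

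\textbf{The lattice sum cannot be bounded by a covolume.}
You claim $\sum_{\alpha\in C_j^{-t}\ZZ^d\setminus\{0\}}(1+|U_j^{-1}(y-\alpha)|)^{-(d+1)}\lesssim\max\{1,|\det B_j^t C_j|\}$, i.e.\ the reciprocal covolume of $B_j^{-1}C_j^{-t}\ZZ^d$.
This is false for anisotropic lattices: with $d=2$ and $B_j^{-1}C_j^{-t}=\mathrm{diag}(N,1/N)$, the covolume is $1$ but the sum is $\asymp N$.
The correct tool is Corollary~\ref{cor:nonzero_series_estimate}, which bounds the sum by $(d+1)2^{3+4d}(1+|\eta|)\max\{\|A^{-1}\|,\|A^{-1}\|^{d+1}\}$; applied after the coordinate change $U_j^{-1}S_i\xi+B_j^{-1}C_j^{-t}k\mapsto\xi+A_i^{-1}(b_i-c_j)+A_i^{-1}C_j^{-t}k$ (which costs $\max\{1,\|A_i^{-1}B_j\|\}^{d+1}$), this yields exactly the factor $\max\{\|C_j^t A_i\|,\|C_j^t A_i\|^{d+1}\}$ and the factors $\max\{1,|A_i^{-1}(b_i-c_j)|\}$, $\max\{1,\|A_i^{-1}B_j\|^{d+1}\}$ that appear in $Y_{i,j}$.
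Your covolume heuristic skips this coordinate-change step and hence cannot produce the $A_i$-dependent operator-norm factors that the statement contains.

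To repair the argument, use the paper's symmetric factorization, apply the $\sup_k$ estimate (step~2 of the paper, producing $\|C_j^t A_i\|^{d+1}$ from the lower bound on the decay argument) to $K^{(1)}$, and apply Corollary~\ref{cor:nonzero_series_estimate} after the coordinate change (step~3 of the paper) to $\sum_k K^{(2)}$.
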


\begin{proof}
  For brevity, set $\nu(x) := \max \{1, x\}$ for $x \in [0,\infty)$,
  and note $\nu(xy) \leq \nu(x) \, \nu(y)$.
  This implies $\nu(w_i / w_\ell) \leq \nu(w_i/v_j) \cdot \nu(v_j/w_\ell)$,
  an estimate that we will employ frequently.

  According to Proposition~\ref{prop:RemainderAbstractTAlphaEstimate}
  and Corollary~\ref{cor:RemainderAbstractEstimate}, it suffices to estimate
  \begin{equation}
    L_1 = \sup_{i \in I} \,
            \sum_{\ell \in I} \,
              \sum_{j \in J}
                \sum_{k \in \ZZ^d \setminus \{0\}} \!\!
                  \nu \left(\frac{w_i}{w_\ell}\right)
                  \, K_{i,\ell,j,k}
    \quad \text{and} \quad
    L_2 = \sup_{\ell \in I} \,
            \sum_{i \in I} \,
              \sum_{j \in J}
                \sum_{k \in \ZZ^d \setminus \{0\}} \!\!
                  \nu \left( \frac{w_i}{w_\ell}\right)
                  \, K_{i,\ell,j,k}
    \label{eq:RemainderSimplifiedTargetQuantitiy}
  \end{equation}
  where
  \(
    K_{i,\ell,j,k}
    := |\det C_j|^{-1}
       \cdot \big\|
               \overline{\widehat{g_j}}
               \cdot \widehat{g_j}(\mybullet - C_j^{-t}k)
               \cdot \varphi_i (\mybullet - C_j^{-t}k)
               \cdot \varphi_\ell
             \big\|_{\Fourier L^1}
  \).
  In order to do so, note that
  \(
    \widehat{g_j}
    = |\det B_j|^{-1/2}
      \cdot (\Fourier g_j^{\normalizedFactorisation}) \circ U_j^{-1}
  \).
  Hence, since $\Fourier g_j^{\normalizedFactorisation} = h_{j,1} \cdot h_{j,2}$ by assumption,
  the term $K_{i,\ell,j,k}$ can be estimated as follows:
  \begin{align*}
    & K_{i,\ell,j,k}
      = \big|\det B_j^t C_j \,\big|^{-1}
        \Big\|\,
          \big(\,
            \overline{\Fourier g_j^{\normalizedFactorisation}} \circ U_j^{-1}
          \,\big)
          \cdot \Translation{C_j^{-t} k}
                \big( (\Fourier g_j^{\normalizedFactorisation}) \circ U_j^{-1} \big)
          \cdot \big( \Translation{C_j^{-t} k} \varphi_i \big)
          \cdot \varphi_\ell
        \,\Big\|_{\Fourier L^1} \\
    & \leq \big|\det B_j^t C_j \,\big|^{-1}
            \Big\|
                   \Translation{C_j^{-t} k}
                   \big(
                     \varphi_i
                     \cdot (h_{j,1} \circ \! U_j^{-1})
                   \big)
                   \cdot \big(
                           \overline{\strut h_{j,2}} \circ \! U_j^{-1}
                         \big)
            \Big\|_{\Fourier L^1} \\
            & \quad \quad \quad \quad \quad \quad \quad \quad \quad \cdot
            \Big\|
                   \varphi_\ell
                   \cdot \big(
                           \overline{\strut h_{j,1}} \circ \! U_j^{-1}
                         \big)
                   \cdot \Translation{C_j^{-t} k}
                         \big( h_{j,2} \circ \! U_j^{-1} \big)
            \Big\|_{\Fourier L^1} \\
    & =: \big|\det B_j^t C_j \,\big|^{-1}
         \cdot K_{i,j,k}^{(1)}
         \cdot K_{\ell,j,k}^{(2)} \, .
  \end{align*}
  Using the preceding estimate, one can bound $L_1$ from
  Equation~\eqref{eq:RemainderSimplifiedTargetQuantitiy} as follows:
  \begin{align*}
      L_1
      &= \sup_{i \in I}
           \sum_{j\in J, \ell \in I}
             \sum_{k \in \ZZ^d \setminus \{0\}}
               \nu \left(\frac{w_i}{w_\ell}\right)
               K_{i,\ell,j,k} \\
      & \leq \sup_{i \in I}
               \sum_{j \in J}
                 \left[
                   \bigg(
                     \sum_{\ell \in I}
                       \sum_{k \in \ZZ^d \setminus \{0\}}
                         \nu \left( \frac{v_j}{w_\ell}\right)
                         \, K_{\ell,j,k}^{(2)}
                   \bigg)
                   \cdot
                   |\det B_j^t C_j|^{-1} \,\,
                   \nu \left(\frac{w_i}{v_j}\right)
                   \sup_{k \in \ZZ^d \setminus \{0\}}
                     K_{i,j,k}^{(1)}
                 \right] \\
      & \leq \bigg(
               \sup_{j \in J}
                 \sum_{\ell \in I}
                   \nu \left(\frac{v_j}{w_\ell}\right)
                   \sum_{k \in \ZZ^d \setminus \{0\}}
                     K_{\ell,j,k}^{(2)}
             \bigg)
             \cdot
             \sup_{i \in I}
               \sum_{j \in J}
                 \bigg(
                   \nu \left(\frac{w_i}{v_j}\right)
                   \, |\det B_j^t C_j|^{-1}
                   \sup_{k \in \ZZ^d \setminus \{0\}}
                     K_{i,j,k}^{(1)}
                 \bigg).
                  \numberthis \label{eq:RemainderSimplifiedL1Estimate}
  \end{align*}
  A similar calculation gives
  \begin{equation}
    \begin{split}
      L_2
       \leq \bigg(
               \sup_{\ell \in I}
                 \sum_{j \in J}
                   \nu \left(\frac{v_j}{w_\ell}\right)
                   \sum_{k \in \ZZ^d \setminus \{0\}}
                     K_{\ell,j,k}^{(2)}
             \bigg)
             \cdot
             \sup_{j \in J}
               \sum_{i \in I}
                 \bigg(
                   \nu \left(\frac{w_i}{v_j}\right)
                   |\det B_j^t C_j|^{-1}
                   \sup_{k \in \ZZ^d \setminus \{0\}}
                     K_{i,j,k}^{(1)}
                 \bigg) \, .
    \end{split}
    \label{eq:RemainderSimplifiedL2Estimate}
  \end{equation}
  The remainder of the proof is divided into four steps:
\\\\
  \textbf{Step 1.} \emph{Estimates for $K_{i,j,k}^{(1)}$ and $K_{\ell,j,k}^{(2)}$.}
  For $j \in J$ and $k \in \ZZ^d$, set
  \(
    H_{j,k}
    := \overline{h_{j,1}} \cdot \Translation{B_j^{-1} C_j^{-t} k} h_{j,2}.
  \)
  Since
  $\Translation{\xi} (g \circ U_j^{-1})
   = (\Translation{B_j^{-1} \xi} g) \circ U_j^{-1}$ for any $\xi \in \RHat^d$
  and $g : \RHat^d \to \CC$, it follows that
  \[
    \big( h_{j,1} \circ U_j^{-1} \big)
    \cdot \Translation{-C_j^{-t} k}
          \big( \overline{\strut h_{j,2}} \circ U_j^{-1} \big)
    = \big(
        h_{j,1}
        \cdot \Translation{-B_j^{-1} C_j^{-t} k} \overline{\strut h_{j,2}}
      \big)
      \circ U_j^{-1}
    = \overline{\strut H_{j,-k}} \circ U_j^{-1}.
  \]
  Using the normalization
  $\varphi_i^{\normalizedBAPU} = \varphi_i \circ S_i$ of $\varphi_i$, a direct calculation shows
  \begin{align}
      K_{i,j,k}^{(1)}
       = \Big\|
            \varphi_i
            \cdot \big( h_{j,1} \circ U_j^{-1} \big)
            \cdot \Translation{-C_j^{-t} k}
                  \big( \overline{h_{j,2}} \circ U_j^{-1} \big)
          \Big\|_{\Fourier L^1}
      = \Big\|
          \varphi_i^{\normalizedBAPU}
          \cdot \big(
                  \overline{\strut H_{j,-k}} \circ U_j^{-1} \circ S_i
                \big)
        \Big\|_{\Fourier L^{1}} \, .
    \label{eq:RemainderSimplifiedGamma1FourierEstimate}
  \end{align}
  Now, define
  \(
    \zeta_j : \RHat^d \to [0,\infty), \;
               \xi \mapsto \max_{|\alpha| \leq d+1}
                             |\partial^\alpha h_{j,1} (\xi)| \, .
  \)
  By applying Leibniz' rule, combined with the assumption
  $ \max_{|\alpha| \leq d+1}
      |\partial^\alpha h_{j,2} (\xi)|
    \leq C' \cdot (1 + |\xi|)^{-(d+1)}$
  and the identity $\sum_{\beta \leq \alpha} \binom{\alpha}{\beta} = 2^{|\alpha|}$, we  see
  \begin{align} \label{eq:partialHjk}
       \big| \partial^\alpha H_{j,k} (\xi) \big|
       \leq 2^{|\alpha|} \cdot C' \cdot
              (1 + |\xi - B_j^{-1} C_j^{-t} k|)^{-(d+1)}
             \cdot \zeta_j (\xi)
  \end{align}
  for all $\alpha \in \NN_0^d$ with $|\alpha| \leq d+1$ and all $\xi \in \RHat^d$.
  This, together with Lemma~\ref{lem:LinearChainRule}, yields that, for all
  $n \in \underline{d}$ and $m \in \{0,\dots,d+1\}$,
  \begin{align*}
      & \Big|
          \big[
            \partial_n^m
            \big(
              \overline{\strut H_{j,k}} \circ U_j^{-1} \circ S_i
            \big)
          \big](\xi)
        \Big| \\
&         \leq \|B_j^{-1} A_i\|^m
             \cdot d^m
             \cdot \max_{\beta \in \NN_0^d \text{ with } |\beta| = m}
                     \big|
                       (\partial^\beta H_{j,k})
                       \big( U_j^{-1} (S_i (\xi)) \big)
                     \big|
             \\
      & \leq (2d)^{d+1} C'
              \max \big\{ 1,\, \|B_j^{-1} A_i\|^{d+1} \big\}
              \zeta_j \big( U_j^{-1} (S_i (\xi)) \big)
              \big(
                     1 + |U_j^{-1} (S_i (\xi)) - B_j^{-1} C_j^{-t} k|
                   \big)^{-(d+1)} \, .
  \end{align*}
  Since $\Phi$ is a regular partition of unity, we have
  \(
    |\partial^\alpha \varphi_i^{\normalizedBAPU} (\xi)|
    \leq C_{\CalQ,\Phi,\alpha} \cdot \Indicator_{Q_i '} (\xi) \vphantom{\sum_j}
  \)
  for all $\xi \in \RHat^d$ and $\alpha \in \NN_0^d$.
  Thus, setting $C_1 := (4d)^{d+1} C' \cdot \max_{|\alpha| \leq d+1} C_{\CalQ,\Phi,\alpha}$
  and invoking Leibniz's rule once more, we see that
  \begin{align*}
    & \big|
        \big[
          \partial_n^{d+1}
          \big(
            \varphi_i^{\normalizedBAPU}
            \cdot \big(
                    \overline{\strut H_{j,-k}} \circ U_j^{-1} \circ S_i
                  \big)
          \big)
        \big] (\xi)
      \big| \\
&       \leq \sum_{m=0}^{d+1}
             \binom{d+1}{m}
             \big|
               \partial_n^{d+1-m} \varphi_i^{\normalizedBAPU} (\xi)
             \big|
             \cdot \big|
                     \partial_n^m
                     \big(
                       \overline{\strut H_{j,-k}}
                       \circ U_j^{-1} \circ S_i
                     \big)
                     (\xi)
                   \big| \\
    & \leq
           C_1
            \max \big\{ 1,\, \|B_{j}^{-1} A_i\|^{d+1} \big\}
            \Indicator_{Q_i '} (\xi)
            \zeta_j \big( U_j^{-1} (S_i (\xi)) \big)
            \big(
                   1 + |U_j^{-1} (S_i (\xi)) + B_j^{-1} C_j^{-t} k|
                 \big)^{-(d+1)} \, .
  \end{align*}
  Clearly, the same overall estimate also holds for
  \(
    |[\varphi_i^{\normalizedBAPU}
    \cdot (\overline{\strut H_{j,-k}} \circ U_j^{-1} \circ S_i)] (\xi)|
  \)
  itself instead of its derivative
  \(
    \big|
      \partial_n^{d+1}
      \big(
        \varphi_i^{\normalizedBAPU}
        \cdot (\overline{\strut H_{j,-k}} \circ U_j^{-1} \circ S_i)
      \big)(\xi)
    \big|
  \).
  Thus, setting
  \[
    C_2 := (4d/\pi)^{d+1}
           \cdot (d+1)\pi
           \cdot C'
           \cdot \max_{|\alpha| \leq d+1}
                   C_{\CalQ,\Phi,\alpha}
  ,
  \]
  we can apply Lemma~\ref{lem:FourierDecayViaSmoothness}
  and Equation~\eqref{eq:RemainderSimplifiedGamma1FourierEstimate} to conclude
  \begin{align*}
    K_{i,j,k}^{(1)}
    &=
      \Big\|
        \varphi_i^{\normalizedBAPU}
        \cdot \big(
                \overline{\strut H_{j,-k}} \circ U_j^{-1} \circ S_i
              \big)
      \Big\|_{\Fourier L^{1}}
      \leq \frac{d+1}{\pi^d}
           \cdot \max_{\alpha \in \mathbb{I}}
                   \Big\|
                     \partial^\alpha
                     \Big(
                       \varphi_i^{\normalizedBAPU}
                       \cdot \big(
                               \overline{\strut H_{j,-k}}
                               \circ U_j^{-1}
                               \circ S_i
                             \big)
                     \Big)
                   \Big\|_{L^1} \\
    & \leq C_2
           \cdot \max \big\{1,\, \|B_j^{-1} A_i\|^{d+1} \big\}
           \cdot \int_{Q_i '}
                   \zeta_j \big( U_j^{-1} (S_i (\xi)) \big) \\
     & \quad \quad \quad \quad \quad \quad \quad \quad \quad
                   \cdot \big(
                           1 + |
                                U_j^{-1}(S_i (\xi))
                                + B_j^{-1} C_j^{-t} k
                               |
                         \big)^{-(d+1)}
                 \, d \xi \, ,
  \end{align*}
  where $\mathbb{I} := \{0\} \cup \{(d+1) \cdot e_n \,:\, n \in \underline{d} \}$.
  By similar arguments as for $K_{i,j,k}^{(1)}$, one obtains
  \begin{align*}
    K_{\ell,j,k}^{(2)}
    & \leq C_2
           \cdot \max \big\{ 1, \|B_j^{-1} A_\ell \|^{d+1} \big\}
           \\
           & \quad \quad \quad \quad \quad \quad \quad
                 \cdot \int_{Q_\ell '}   \zeta_j \big( U_j^{-1} (S_\ell (\xi)) \big)
                   \cdot \big(
                           1 + |
                                U_j^{-1}(S_\ell (\xi))
                                - B_j^{-1} C_j^{-t} k
                               |
                         \big)^{-(d+1)}
                 \, d \xi .
  \end{align*}
\\\\
  \textbf{Step 2.}
  \emph{Estimating the supremum over $k \in \ZZ^d \setminus \{0\}$.}
  Note that $|\xi|  \leq \|A^{-1}\| \cdot |A \xi|$,
  and thus $|A \xi| \geq \|A^{-1}\|^{-1} \cdot |\xi|$ for any $\xi \in \RHat^d$ and
  $A \in \GL(\RR^d)$.
  Hence,
  \begin{equation}
    \begin{split}
      \big|
        U_j^{-1} (S_i (\xi)) \pm B_j^{-1} C_j^{-t} k
      \big|
      & = \big|
            B_j^{-1} \big( S_i (\xi) - c_j \big)
            \pm B_j^{-1} C_j^{-t} k
          \big| \\
      & = \big|
            B_j^{-1} A_i
            \big(
              \xi
              + A_i^{-1} (b_i - c_j)
              \pm A_i^{-1} C_j^{-t} k
            \big)
          \big| \\
      & \geq \|A_i^{-1} B_j\|^{-1}
             \cdot \big|
                     \xi + A_i^{-1} (b_i - c_j) \pm A_i^{-1} C_j^{-t} k
                   \big| \, .
    \end{split}
    \label{eq:SumSupremumJointAuxiliaryEstimate}
  \end{equation}
  This implies for arbitrary $i \in I$, $\xi \in Q_i '$, $k \in \ZZ^d \setminus \{0\}$,
  and $j \in J$ that
  \begin{align*}
    & \quad
      \|C_j^t A_i\|^{-1}
      \leq 1 + |A_i^{-1} C_j^{-t} k|
      \leq 1
           + \big| \xi + A_i^{-1} (b_i - c_j) \pm A_i^{-1} C_j^{-t} k \big|
           + |\xi|
           + |A_i^{-1} (b_i - c_j)| \\
    &
      \leq 3 \, \max \{1, R_{\CalQ}\}
            \max \big\{ 1 ,\, |A_i^{-1} (b_i - c_j)| \big\}
            \big(
                   1 + |\xi + A_i^{-1} (b_i - c_j) \pm A_i^{-1} C_j^{-t} k|
                 \big) \\
    &
      \leq 3 \, \max \{1, R_{\CalQ}\}
            \max \big\{ 1 ,\, |A_i^{-1} (b_i - c_j)| \big\}
            \Big(
                   1
                   + \|A_i^{-1} B_j\|
                     \cdot \big|
                             U_j^{-1} (S_i (\xi)) \pm B_j^{-1} C_j^{-t} k
                           \big|
                 \Big) \\
    &
      \leq 3 \, \max \{1, R_{\CalQ}\}
            \max \big\{ 1 ,\, |A_i^{-1} (b_i - c_j)| \big\} \\
   &  \quad \quad \quad \quad \quad \quad
           \cdot  \max \big\{1 ,\, \|A_i^{-1} B_j\| \big\}
            \big(
                   1 + \big|
                         U_j^{-1} (S_i (\xi)) \pm B_j^{-1} C_j^{-t} k
                       \big|
                 \big) \, .
  \end{align*}
  Setting $C_3 := 3^{d+1} \cdot \max \big\{1, R_{\CalQ}^{d+1} \big\}$,
  the preceding estimate implies
  \begin{align*}
    & \sup_{k \in \ZZ^d \setminus \{0\}}
        \Big(
          1 + \big| U_j^{-1} (S_i (\xi)) \pm B_j^{-1} C_j^{-t} k \big|
        \Big)^{-(d+1)} \\[0.1cm]
    & \quad \quad \quad
      \leq C_3
            \max \big\{ 1 ,\, |A_i^{-1} (b_i - c_j)|^{d+1} \big\}
            \max \big\{1 ,\, \|A_i^{-1} B_j\|^{d+1} \big\}
            \|C_j^t A_i\|^{d+1}
  \end{align*}
  for all $i \in I$, $\xi \in Q_i '$, and $j \in J$.
  Using this, and the estimates for $K_{i,j,k}^{(n)}$ that we derived in Step~1, we see that
  \begin{equation}
    \begin{split}
      \sup_{k \in \ZZ^d \setminus \{0\}}
        K_{i,j,k}^{(n)}
      &\leq C_2 C_3
            \max \big\{ 1 ,\, |A_i^{-1} (b_i - c_j)|^{d+1} \big\}
             \max \big\{1 ,\, \|A_i^{-1} B_j\|^{d+1} \big\}
             \|C_j^t A_i\|^{d+1}
             X_{i,j} \\
      &=    C_2 C_3  |\det B_j^t C_j|  \big( \nu(w_i/v_j) \big)^{-1} \cdot Z_{i,j}
    \end{split}
    \label{eq:RemainderKSupremumEstimate}
  \end{equation}
  for $n \in \{1,2\}$, $i \in I$, and $j \in J$.
\\\\
  \textbf{Step 3.} \emph{Estimating the sum over $k \in \ZZ^d \setminus \{0\}$.}
  Estimate~\eqref{eq:SumSupremumJointAuxiliaryEstimate} implies
  \begin{align*}
    1 + \big| U_j^{-1} (S_i (\xi)) + B_j^{-1} C_j^{-t} k \big|
    & \geq 1 + \|A_i^{-1} B_j \|^{-1} \cdot |\xi + A_i^{-1} (b_i - c_j) + A_i^{-1} C_j^{-t} k| \\
    & \geq \big( \max \{ 1, \|A_i^{-1} B_j\| \} \big)^{-1} \\
&\quad \quad \quad\quad \quad \quad           \cdot \big( 1 + |\xi + A_i^{-1} (b_i - c_j) + A_i^{-1} C_j^{-t} k \big) .
  \end{align*}
  By combining this estimate with Corollary~\ref{cor:nonzero_series_estimate},
  we see for any $\xi \in Q_i '$ that
  \begin{align*}
    & \sum_{k \in \ZZ^d \setminus \{0\}} \!\!\!
        \big(
          1 + |U_j^{-1} (S_i (\xi)) + B_j^{-1} C_j^{-t} k |
        \big)^{-(d+1)} \\
    & \quad \leq \max\{ 1 , \|A_i^{-1} B_j\|^{d+1} \}
           \sum_{k \in \ZZ^d \setminus \{0\}} \!\!\!
             (1 + |\xi + A_i^{-1} (b_i - c_j) + A_i^{-1} C_j^{-t} k|)^{-(d+1)} \\
    & \quad \leq (d+1) \, 2^{3 + 4d}
           \cdot \max\{ 1 , \|A_i^{-1} B_j\|^{d+1} \} \\
     & \quad \quad \quad \quad \quad \quad     \quad \quad \quad
           \cdot (1 + |\xi + A_i^{-1} (b_i - c_j)|)
           \cdot \max \big\{ \| C_j^{t} A_i \|, \| C_j^t A_i \|^{d+1} \big\} \\
    & \quad \leq (d+1) \, 2^{3 + 4d} (2 + R_\CalQ)
           \cdot \max\{ 1 , \|A_i^{-1} B_j\|^{d+1} \} \\
    &       \quad \quad \quad \quad \quad \quad \quad \quad \quad
           \cdot \max\{ 1, |A_i^{-1} (b_i - c_j)| \}
           \cdot \max \big\{ \| C_j^{t} A_i \|, \| C_j^t A_i \|^{d+1} \big\} .
  \end{align*}
  Here, we used in the last step that $|\xi| \leq R_\CalQ$ since $\xi \in Q_i '$.

  \smallskip{}

  By combining this estimate with the estimate for $K_{i,j,k}^{(n)}$ from Step~1,
  we see for $n \in \{1,2\}$ and arbitrary $i \in I$ and $j \in J$ that
  \begin{align*}
      &\sum_{k \in \ZZ^d \setminus \{0\}}
          K_{i,j,k}^{(n)} \\
      &\leq C_2 \max \big\{1, \|B_j^{-1} A_i \|^{d+1} \big\} \\
      & \quad \quad \quad \quad \quad \quad
           \cdot \int_{Q'_i}
              \zeta_j \big( U_j^{-1} (S_i(\xi)) \big)
              \sum_{k \in \ZZ^d \setminus \{0\}}
                \big(
                  1 + |U_j^{-1} (S_i (\xi)) + B_j^{-1} C_j^{-t} k |
                \big)^{-(d+1)}
            \; d\xi \\
       &\leq C_4
             \max \{ 1, \|A_i^{-1} B_j\|^{d+1} \}
             \max \{ 1, |A_i^{-1} (b_i - c_j)| \}
             \cdot \max \big\{ \|C_j^t A_i\|, \|C_j^t A_i\|^{d+1} \big\}
             \, X_{i,j} \\
       & =   C_4 \cdot \big( \nu (v_j / w_i) \big)^{-1} \cdot Y_{i,j} \, ,
   \numberthis \label{eq:RemainderKSeriesEstimate}
  \end{align*}
  where we defined $C_4 := (d+1) \cdot 2^{3 + 4d} \cdot (2 + R_\CalQ) \cdot C_2$.
\\\\
  \textbf{Step 4.} \emph{Completing the proof.}
  Combining the two estimates \eqref{eq:RemainderSimplifiedL1Estimate}
  and \eqref{eq:RemainderSimplifiedL2Estimate}
  with the estimates obtained in Equations~\eqref{eq:RemainderKSeriesEstimate} and
  \eqref{eq:RemainderKSupremumEstimate}, we conclude that
  \begin{align*}
     L_1 &\leq  \bigg(
                   \sup_{j \in J}
                     \sum_{\ell \in I}
                       \nu \left( \frac{v_j}{w_\ell} \right)
                       \sum_{k \in \ZZ^d \setminus \{0\}} \!\!
                         K_{\ell,j,k}^{(2)}
                 \bigg)
             \cdot
             \sup_{i \in I}
               \sum_{j \in J}
                 \bigg(
                   \nu \left( \frac{w_i}{v_j} \right)
                   \, |\det B_j^t C_j|^{-1}
                   \sup_{k \in \ZZ^d \setminus \{0\}} \!\!
                     K_{i,j,k}^{(1)}
                 \bigg) \\
    & \leq C_2 C_3 C_4 \,
           \| Y \|_{\schur} \, \| Z \|_{\schur}
      \leq C_0 \cdot (C')^2 \cdot \| Y \|_{\schur} \, \| Z \|_{\schur} \, .
  \end{align*}
  The estimate $L_2 \leq C_0 \cdot (C')^2 \cdot \| Y \|_{\schur} \, \| Z \|_{\schur}$
  is obtained similarly.
  Hence, an application of Corollaries~\ref{cor:RemainderTermR0} and
  \ref{cor:RemainderAbstractEstimate} gives
  \(
    \| R_0 \|_{\op}
    \leq C_0 C_{p,q} \|\Gamma_{\CalQ} \|_{\ell^q_w \to \ell^q_w}
         \cdot (C')^2
         \cdot \| Y \|_{\schur} \|Z\|_{\schur}
  \),
  as desired.
\end{proof}

\section{Results for structured systems}
\label{sec:Structured_Compatible}

In this section, we provide further simplified conditions for the boundedness and
invertibility of the frame operator.
For this, we will assume throughout this section that the family $(g_j)_{j \in J}$
of functions $g_j \in L^1(\RR^d) \cap L^2 (\RR^d)$ defining the system $\StandardGSI$ possess
the form
\begin{equation}\label{eq:structured_system}
  g_j = |\det A_j|^{1/2} \cdot M_{b_j} [g \circ A^t_j]
\end{equation}
for certain $A_j \in \mathrm{GL}(d, \RR)$ and $b_j \in \RHat^d$ and a fixed
$g \in L^1(\R^d) \cap L^2(\R^d)$ satisfying $\widehat{g} \in C^\infty (\RHat^d)$.

Observe that \eqref{eq:structured_system} can be written as
$g_j = |\det A_j|^{-1/2} \cdot \Fourier^{-1} (\widehat{g} \circ S_j^{-1})$,
where $S_j = A_j (\cdot ) + b_j$.

\subsection{Simplified criteria for invertibility of the frame operator}
\label{sub:invertibility_simplified}

In this subsection, we give simplified versions of the estimates for the operator norms of
$T_0^{-1}$
and $R_0$,
under the assumption that the generators $(g_j)_{j \in J}$ of the system $\StandardGSI$
have the form \eqref{eq:structured_system} and that the lattices
$C_j \ZZ^d$ are given by $C_j = \delta A_{j}^{-t}$
for a suitable $\delta > 0$.
We begin with a simplified version of Proposition~\ref{prop:MainTermSimplified}.

\begin{proposition}\label{prop:MainTermForStructuredSystems}
  Let $\CalQ = \big(S_j (Q_j ')\big)_{j \in J} = \big(A_j (Q_j ') + b_j)_{j \in J}$ be an
  affinely generated cover of an open set $\CalO \subset \RHat^d$ of full measure.
  Let $\Phi = (\varphi_j)_{j \in J}$ be a regular partition of unity subordinate to $\CalQ$.
  Let $\StandardGSI$ be  such that $C_j := \delta \cdot A_j^{-t}$ for some $\delta > 0$
  and $g_j := |\det A_j|^{1/2} \cdot M_{b_j} [g \circ A_j^t]$
  for some $g \in L^1(\R^d) \cap L^2(\R^d)$ with $\widehat{g} \in C^\infty(\RHat^d)$.
  Suppose that there is some $A' > 0$ satisfying
  $A' \leq \sum_{j \in J} |\widehat{g} (S_j^{-1} \xi)|^2$ for almost all $\xi \in \CalO$,
  and that
  \begin{align*}
    M_0 &:= \sup_{i \in J}
             \sum_{j \in J}
               \bigg[
                 \max \big\{ 1 ,\, \|A_j^{-1} A_i\|^{d+1} \big\}
                 \cdot \bigg(
                         \int_{Q_i '}
                           \max_{|\alpha| \leq d+1}
                             \big|
                              (\partial^\alpha \widehat{g}) \big( S_j^{-1} (S_i \xi) \big)
                             \big|^{2 (d+1)}
                         \, d\xi
                       \bigg)^{1/(d+1)}\,
               \bigg] \\
&        < \infty.
 \end{align*}
 Then the function $t_0$ defined in Equation~\eqref{eq:TAlphaDefinition} is continuous
 on $\CalO$ and tame, and the estimate ${A' \leq \sum_{j \in J} |\widehat{g} (S_j^{-1} \xi)|^2}$
 holds for all $\xi \in \CalO$.
 Furthermore, for any $p,q \in [1,\infty]$ and any $\CalQ$-moderate weight $w = (w_j)_{j \in J}$,
 the operator
 \[
   T_0 := \Phi_{t_0} : \DecompSp(\CalQ,L^p,\ell_w^q) \to \DecompSp(\CalQ,L^p,\ell_w^q)
 \]
 with $\Phi_{t_0}$ as defined in Proposition~\ref{prop:FourierMultiplierBound}
 is well-defined, bounded, and boundedly invertible, with
 \[
   \|
     T_0^{-1}
   \|_{\DecompSp(\CalQ,L^p,\ell_w^q) \to \DecompSp(\CalQ,L^p,\ell_w^q)}
   \leq C'_d  \cdot N_{\CalQ}^2 C_\Phi
        \cdot \Big[ \max_{|\alpha| \leq d+1} C_{\CalQ,\Phi,\alpha} \Big]
        \cdot (A')^{-1}
        \cdot \bigg( \frac{M_0}{A'} \bigg)^{d+1}
        \cdot \delta^{d} \, ,
 \]
  where $C'_d = C_d \cdot (2d)^{(d+1)^2}$ with $C_d$ as in
  Equation~\eqref{eq:MainTermSimplifiedSpecialConstant}.
\end{proposition}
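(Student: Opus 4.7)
The plan is to reduce Proposition~\ref{prop:MainTermForStructuredSystems} to Proposition~\ref{prop:MainTermSimplified} by specializing the latter with the affinely generated cover $\CalQ$, the regular partition of unity $\Phi$, the weight $w$, and the structured generators $(g_j)_{j \in J}$. Since the cover here is indexed by $J$, it plays the role of the abstract index set $I$ from Proposition~\ref{prop:MainTermSimplified}, and the two families of affine maps coincide. The core task is to translate the hypothesis $M_0 < \infty$ into a bound on the abstract quantity $M$ from~\eqref{eq:T0InverseMainAssumption}, and to keep track of how the constants $A'$ and $\delta$ propagate into the conclusion.

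From $g_j = |\det A_j|^{1/2}\, M_{b_j}(g \circ A_j^t)$ one computes $\widehat{g_j} = |\det A_j|^{-1/2}\, \widehat{g} \circ S_j^{-1}$, so by~\eqref{eq:NormalizedVersionDoubleIndex} the localized version is $\Fourier g_{i,j}^{\localized} = \widehat{g_j} \circ S_i = |\det A_j|^{-1/2}\, \widehat{g} \circ (S_j^{-1} \circ S_i)$, and $S_j^{-1} \circ S_i$ is affine with linear part $A_j^{-1} A_i$. Setting $h := \widehat{g} \circ (S_j^{-1} \circ S_i)$, I would estimate $\partial^\nu (h \bar h)$ by combining Leibniz's rule with the linear chain rule (Lemma~\ref{lem:LinearChainRule}). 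The decisive observation is that in each Leibniz term $(\partial^\beta h)(\partial^{\nu - \beta} \bar h)$ the two chain-rule applications contribute $\|A_j^{-1} A_i\|^{|\beta|}$ and $\|A_j^{-1} A_i\|^{|\nu|-|\beta|}$, which combine multiplicatively into $\|A_j^{-1} A_i\|^{|\nu|}$; for $|\nu| \leq d+1$ this is dominated by $\max\{1, \|A_j^{-1} A_i\|^{d+1}\}$. This produces the pointwise bound
\[
  \max_{|\nu| \leq d+1} \big|\partial^\nu |\Fourier g_{i,j}^{\localized}|^2(\xi)\big|
  \leq K(d) \cdot |\det A_j|^{-1} \cdot \max\{1, \|A_j^{-1} A_i\|^{d+1}\} \cdot \max_{|\alpha| \leq d+1} \big|(\partial^\alpha \widehat{g})(S_j^{-1} S_i \xi)\big|^2,
\]
with $K(d)$ an explicit dimensional constant of size at most $(2d)^{d+1}$.

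Next I would take the $L^{d+1}(Q_i')$ norm and use the identity $\|F^2\|_{L^{d+1}} = \|F\|_{L^{2(d+1)}}^2$ to recognize the integrand defining $M_0$ (which carries the exponent $2(d+1)$ on $\partial^\alpha \widehat{g}$). The choice $C_j = \delta A_j^{-t}$ yields $|\det C_j|^{-1} = \delta^{-d} |\det A_j|$, which exactly cancels the $|\det A_j|^{-1}$ from the previous step, so summing over $j \in J$ and taking the sup over $i \in J$ produces
\[
  M \leq K(d)\, \delta^{-d}\, M_0,
\]
with $M$ as in~\eqref{eq:T0InverseMainAssumption}. Moreover $t_0 = \delta^{-d} \sum_{j \in J} |\widehat{g} \circ S_j^{-1}|^2 \geq \delta^{-d} A'$, so the Calder\'on lower bound in the standing hypothesis may be taken to be $A := \delta^{-d} A'$; the upper bound $t_0 \leq B$ demanded by the standing hypothesis is a consequence of $M < \infty$, via Step~1 of the proof of Proposition~\ref{prop:MainTermSimplified}, which simultaneously delivers continuity and tameness of $t_0$ and upgrades the a.e.~Calder\'on bound to a pointwise one on $\CalO$.

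Feeding everything into the conclusion of Proposition~\ref{prop:MainTermSimplified} yields
\[
  \|T_0^{-1}\|_{\op}
  \leq C_d \cdot N_\CalQ^2\, C_\Phi \cdot \big[\max_{|\alpha| \leq d+1} C_{\CalQ,\Phi,\alpha}\big] \cdot \frac{\delta^d}{A'} \cdot \Big(\frac{K(d)\, M_0}{A'}\Big)^{d+1},
\]
and absorbing $K(d)^{d+1} \leq (2d)^{(d+1)^2}$ into the constant reproduces precisely $C'_d = C_d \cdot (2d)^{(d+1)^2}$, as claimed. The only delicate step is the derivative estimate: a careless Leibniz--chain-rule bound that squares the chain-rule factor would produce the exponent $2(d+1)$ on $\|A_j^{-1} A_i\|$, which is incompatible with the form of $M_0$. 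Proper accounting, using that $|\beta|$ and $|\nu|-|\beta|$ are complementary in each Leibniz term, restores the exponent $d+1$ and makes the reduction go through; the remainder of the argument is straightforward bookkeeping of determinant and $\delta$-powers.
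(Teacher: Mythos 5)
Your proposal is correct and follows essentially the same path as the paper's proof: reduce to Proposition~\ref{prop:MainTermSimplified} by bounding the abstract quantity $M$ of~\eqref{eq:T0InverseMainAssumption} via a Leibniz/chain-rule estimate that produces $M \leq (2d)^{d+1}\,\delta^{-d}\,M_0$, observe that the Calder\'on lower bound rescales to $A = \delta^{-d}A'$, and substitute. The paper differentiates $|\widehat{g}|^2 \circ (S_j^{-1}S_i)$ by applying the chain rule first and then Leibniz to $\partial^\alpha|\widehat{g}|^2$, whereas you apply Leibniz to $h\bar h$ first and the chain rule to each factor; both orderings yield the same exponent $\|A_j^{-1}A_i\|^{|\nu|}$ when done carefully, and your explicit remark about the pitfall of squaring the chain-rule factor is exactly the point the paper's ordering sidesteps automatically.
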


\begin{proof}
  We apply Proposition~\ref{prop:MainTermSimplified}.
  For this, note that since $C_j = \delta \cdot A_j^{-t}$ and
  $\widehat{g_j} = |\det A_j|^{-1/2} \cdot \widehat{g} \circ S_j^{-1}$,
  the $\CalQ$-localized version $g_{i,j}^{\localized}$ of $g_j$
  defined in \eqref{eq:NormalizedVersionDoubleIndex} satisfies
  \(
    \Fourier g_{i,j}^{\localized}
    = \widehat{g_j} \circ S_i
    = |\det A_j|^{-1/2} \cdot \widehat{g} \circ S_j^{-1} \circ S_i
  \)
 and, moreover,
 \(
   |\det C_j|^{-1} \cdot |\Fourier g_{i,j}^{\localized}|^2
   = \delta^{-d} \cdot |\widehat{g}|^2 \circ S_j^{-1} \circ S_i .
 \)
Leibniz rule entails the pointwise estimate
 \[
   \big|
     \partial^\alpha |\widehat{g}|^2 (\xi)
   \big|
   = \Big|
       \partial^\alpha
       \big( \widehat{g}(\xi) \cdot \overline{\strut \widehat{g}}(\xi) \big)
     \Big|
   \leq \sum_{\beta \leq \alpha}
          \binom{\alpha}{\beta} \,
          \big|
            \partial^\beta \, \widehat{g} (\xi)
          \big|
          \cdot \big|
                  \partial^{\alpha - \beta} \,
                  \overline{\strut \widehat{g}}(\xi)
                \big|
   \leq 2^{|\alpha|}
        \cdot \Big(
                \max_{|\alpha| \leq d+1} |\partial^{\alpha} \widehat{g} (\xi)|
              \Big)^2
 \]
 for any $\alpha \in \NN_0^d$ with $|\alpha| \leq d+1$.
 Since
 $S_j^{-1} S_i = A_j^{-1} A_i (\mybullet) + \, A_j^{-1} (b_i - b_j)$,
 it follows by the chain rule as in Lemma~\ref{lem:LinearChainRule}
 that, for any $\nu \in \NN_0^d$ with $|\nu| \leq d+1$,
 \begin{align*}
   |\det C_j|^{-1}
    \big|
           \partial^\nu |\Fourier g_{i,j}^{\localized}|^2 (\xi)
         \big|
   &\leq \delta^{-d}
           d^{|\nu|}
           \|A_j^{-1} A_i\|^{|\nu|}
           \max_{|\alpha| = |\nu|}
                  \Big|
                    (\partial^\alpha |\widehat{g}|^2)
                    \big( S_j^{-1} (S_i \xi) \big) \bigg| \\
   & \leq \delta^{-d}
           (2d)^{d+1}
           \max \big\{ 1 ,\, \|A_j^{-1} A_i\|^{d+1} \big\}
           \Big(
                  \max_{|\alpha| \leq d+1}
                    \big| (\partial^{\alpha} \widehat{g}) \big( S_j^{-1} (S_i \xi) \big) \big|
                \Big)^{2} \,
 \end{align*}
 for $\xi \in \RHat^d$.
 Using this, we can estimate the constant $M$ from
 Proposition~\ref{prop:MainTermSimplified} as follows:
 \begin{align*}
   M
   & = \sup_{i \in J} \,
         \sum_{j \in J}
           \bigg(\,
             |\det C_j|^{-1}
             \cdot \Big\|\,
                     \max_{|\nu| \leq d+1} \,
                       \big|\,
                         \partial^\nu | \Fourier g_{i,j}^{\localized} |^2
                       \,\big|
                   \,\Big\|_{L^{d+1} (Q_i ')}
           \,\bigg) \\
   & \leq \delta^{-d} \, (2d)^{d+1} \cdot
          \sup_{i \in J}
            \sum_{j \in J}
              \bigg[
                \max \big\{ 1 ,\, \|A_j^{-1} A_i\|^{d+1} \big\} \\
                & \quad \quad \quad \quad \quad \quad \quad \quad \quad \quad \quad \quad
                \cdot \big(
                        \int_{Q_i '}
                          \max_{|\alpha| \leq d+1}
                            \big|
                             (\partial^\alpha \widehat{g}) \big( S_j^{-1} (S_i \xi) \big)
                            \big|^{2 (d+1)}
                        \, d\xi
                      \big)^{1/(d+1)} \,
              \bigg] \\
  & = \delta^{-d} \, (2d)^{d+1} \cdot M_0 \, ,
 \end{align*}
 with $M_0$ as defined in the statement of the current proposition.

 By assumption, we have $A' \leq \sum_{j \in J} |\widehat{g} (S_j^{-1} \xi)|^2$, and thus
 \[
   t_0 (\xi)
   = \sum_{j \in J}
       | \det C_j |^{-1} \, |\widehat{g_j} (\xi)|^2
   = \delta^{-d} \cdot \sum_{j \in J} |\widehat{g}(S_j^{-1} \xi)|^2
   \geq A' \cdot \delta^{-d}
 \]
 for almost all $\xi \in \CalO$ and hence for almost all $\xi \in \RHat^d$.
 Therefore, Proposition~\ref{prop:MainTermSimplified} shows that $t_0$ is continuous on $\CalO$
 and tame, that the preceding estimate holds pointwise on $\CalO$, and that
 the operator $T_0 : \StandardDecompSp \to \StandardDecompSp$ is well-defined, bounded,
 and boundedly invertible with
 \[
   \|
     T_0^{-1}
   \|_{\StandardDecompSp \to \StandardDecompSp}
    \leq  (2d)^{(d+1)^2} C_d \cdot N_{\CalQ}^2 C_\Phi
          \cdot \Big[ \max_{|\alpha| \leq d+1} C_{\CalQ,\Phi,\alpha} \Big]
          \cdot (A')^{-1}
          \cdot \bigg( \frac{M_0}{A'} \bigg)^{d+1}
          \cdot \delta^{d} \, .
 \]
 This completes the proof.
\end{proof}

Our next aim is to present a simplified version of the technical Lemma~\ref{lem:RemainderSimplified}.
For this, we will use the following result
whose proof we postpone to Appendix~\ref{sub:ProofFactorizationLemma}.

\begin{lemma}\label{lem:FactorizationLemma}
  Let $g \in C^{d+1}(\RHat^d)$ be such that
  there exists a function  $\varrho : \RHat^d \to [0,\infty)$ satisfying
  \(
    |\partial^\alpha g (\xi)| \leq \varrho (\xi) \cdot (1 + |\xi|)^{-(d+1)}
  \)
  for all $\xi \in \RHat^d$ and $\alpha \in \NN_0^d$ with $|\alpha| \leq d+1$.
  Then, setting
  \[
    h_1 (\xi) := (1 + |\xi|^2)^{(d+1)/2} \cdot g (\xi), \quad
    \qquad
    h_2 (\xi)  := (1 + |\xi|^2)^{-(d+1)/2}
  \]
  we have $g = h_1 \cdot h_2$ on $\RHat^d$.
  Furthermore, $h_1,h_2 \in C^{d+1}(\RHat^d)$ satisfy the estimates
  \begin{align} \label{eq:h1h2_decay}
    \max_{|\alpha| \leq d+1}
      |\partial^\alpha h_2 (\xi)|
    \leq C' \cdot (1 + |\xi|)^{-(d+1)},
    \quad
    \max_{|\alpha| \leq d+1}
      |\partial^\alpha h_1 (\xi)|
    \leq C' \cdot \varrho (\xi)
  \end{align}
  for all $\xi \in \RHat^d$, where $C' := \big( 12 \cdot (d+1)^2 \big)^{d+1}$.
\end{lemma}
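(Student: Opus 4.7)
The factorization $g = h_1 \cdot h_2$ is immediate by construction, so the substance of the lemma lies in the derivative bounds. The backbone of the proof will be a single auxiliary estimate for the smooth weight $\langle \xi \rangle := (1+|\xi|^2)^{1/2}$: for every $\gamma \in \NN_0^d$ and $s \in \RR$,
\[
  \big| \partial^\gamma \langle \xi \rangle^s \big|
  \leq C_{\gamma,s} \cdot \langle \xi \rangle^{s - |\gamma|} \, .
\]
I would prove this by induction on $|\gamma|$, starting from $\partial_j \langle \xi \rangle^s = s \, \xi_j \, \langle \xi \rangle^{s-2}$ and using $|\xi_j| \leq \langle \xi \rangle$. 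Tracking the recursion, one sees that $\partial^\gamma \langle \xi \rangle^s$ is a sum of at most $2^{|\gamma|}$ terms of the form $c \cdot \xi^\beta \cdot \langle \xi \rangle^{\mu}$ with $|\beta| + \mu = s - |\gamma|$, and where each coefficient $c$ is a product of at most $|\gamma|$ factors whose absolute values are bounded by $|s| + 2|\gamma|$. For $|s| \leq d+1$ and $|\gamma| \leq d+1$, this gives a constant $C_{\gamma,s} \leq (6(d+1))^{d+1}$, with plenty of room to spare.

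Applying this key estimate with $s = -(d+1)$ and combining with the elementary comparison $(1+|\xi|)/\sqrt{2} \leq \langle \xi \rangle \leq 1 + |\xi|$ yields the claimed bound for $h_2$. For $h_1 = \langle \xi \rangle^{d+1} \cdot g$, I would apply the Leibniz rule
\[
  \partial^\alpha h_1
  = \sum_{\beta \leq \alpha}
      \binom{\alpha}{\beta}
      \big( \partial^{\alpha - \beta} \langle \xi \rangle^{d+1} \big)
      \cdot \partial^\beta g \, ,
\]
and then bound $|\partial^{\alpha - \beta} \langle \xi \rangle^{d+1}| \leq C_{\alpha-\beta,d+1} \, (1+|\xi|)^{d+1 - |\alpha| + |\beta|}$ via the auxiliary estimate (note the exponent is nonnegative since $|\alpha| - |\beta| \leq d+1$), while the hypothesis on $g$ yields $|\partial^\beta g(\xi)| \leq \varrho(\xi) (1+|\xi|)^{-(d+1)}$. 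Multiplying these two bounds, the powers of $(1+|\xi|)$ combine to $(1+|\xi|)^{|\beta| - |\alpha|} \leq 1$, and summing over $\beta \leq \alpha$ introduces a further factor $2^{|\alpha|} \leq 2^{d+1}$, leaving a clean bound proportional to $\varrho(\xi)$.

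The only real work is bookkeeping the numerical constants to match the claimed $C' = (12 (d+1)^2)^{d+1}$; since the actual bounds produced above are of the form $(6(d+1))^{d+1} \cdot 2^{(d+1)/2}$ for $h_2$ and $(12(d+1))^{d+1}$ for $h_1$, both are safely dominated by the lemma's $C'$. I expect the main (and essentially only) obstacle to be presenting the inductive bound on $\partial^\gamma \langle \xi \rangle^s$ compactly without drowning in multi-index combinatorics; one efficient route is to carry the invariant ``sum of at most $2^{|\gamma|}$ monomials in $\xi$ divided by powers of $\langle \xi \rangle$, with coefficient-product size controlled by $(3(d+1))^{|\gamma|}$'' through each differentiation step.
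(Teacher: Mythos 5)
Your proposal is correct and takes essentially the same approach as the paper: bound derivatives of the weight $(1+|\xi|^2)^{\pm(d+1)/2}$, convert between $\langle\xi\rangle$ and $1+|\xi|$, and apply Leibniz for $h_1$. The only difference is that the paper outsources the derivative estimate on powers of $1+|\xi|^2$ to a citation (\cite[Lemma~6.8]{StructuredBanachFrames}), whereas you re-derive it by the standard induction on $|\gamma|$; your constant bookkeeping is sound and comfortably dominated by $C' = (12(d+1)^2)^{d+1}$.
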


\begin{proposition}\label{prop:RemainderSimplifiedForStructuredSystems}
  Let $\CalQ = \big(S_j (Q_j ')\big)_{j \in J} = \big(A_j (Q_j ') + b_j)_{j \in J}$ be an
  affinely generated cover of an open set $\CalO \subset \RHat^d$ of full measure.
  Let $\Phi = (\varphi_j)_{j \in J}$ be a regular partition of unity subordinate to $\CalQ$,
  and let $w = (w_j)_{j \in J}$ be $\CalQ$-moderate.
  Let $\StandardGSI$ be such that $C_j := \delta \cdot A_j^{-t}$
  for some $\delta \in (0,1]$ and $g_j := |\det A_j|^{1/2} \cdot M_{b_j} [g \circ A_j^t]$
  for some $g \in L^1(\R^d) \cap L^2(\R^d)$ satisfying $\widehat{g} \in C^\infty (\RHat^d)$.
  Assume that the function $t_0$ defined in Equation~\eqref{eq:TAlphaDefinition} is tame.
  Assume that $\widetilde{Y} = (\widetilde{Y}_{i,j} )_{i, j \in J}$
  is of Schur-type, where
  \[
   \widetilde{Y}_{i,j} := K_{i,j} \cdot \int_{Q_i'}
                 (1+|S_j^{-1} (S_i \xi) |)^{d+1}
                  \max_{|\alpha| \leq d+1}
                                  |[\partial^\alpha \widehat{g}] (S_j^{-1} (S_i \xi))|
               \, d \xi ,
  \]
  with
  \[
    K_{i,j} := \max \Big\{ \frac{w_i}{w_j}, \frac{w_j}{w_i} \Big\}
               \Big(
                       \max \big\{ 1, |A_i^{-1} (b_i - b_j)| \big\}
                        \max \big\{ 1, \|A_i^{-1} A_j\| \big\}
                        \max \big\{ 1, \|A_j^{-1} A_i\|^2 \big\}
                     \Big)^{d+1}
  .\]
  Then the system $\StandardGSI$ is $(w,w,\Phi)$-adapted.
  Furthermore, for any $p,q \in [1,\infty]$,
  the operator $R_0 : \StandardDecompSp \to \StandardDecompSp$
  defined in Corollary~\ref{cor:RemainderTermR0} is well-defined and bounded, with
  \[
    \|R_0\|_{\StandardDecompSp \to \StandardDecompSp}
    \leq C_0 C_{p,q} (C')^4
         \| \Gamma_{\CalQ} \|_{\ell^q_w \to \ell^q_w}
         \cdot \delta^2 \cdot \| \widetilde{Y} \|_{\schur}^2,
  \]
  with $C_0$ as in \eqref{eq:RemainderSimplifiedSpecialConstant},
  $C'$ as in Lemma~\ref{lem:FactorizationLemma} and
  $C_{p,q} := 1$ if $\max\{p,q\} < \infty$ and
  $C_{p,q} := C_{\Phi} \| \Gamma_{\CalQ} \|^2_{\ell^q_w \to \ell^q_w}$, otherwise.
\end{proposition}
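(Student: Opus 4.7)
The statement is tailor-made to be reduced to Lemma~\ref{lem:RemainderSimplified}, with the combinatorial content of the matrices $Y,Z$ packaged into a single matrix $\widetilde{Y}$. My plan is to specialize $U_j$ optimally so that the generator factorization becomes trivial, to invoke Lemma~\ref{lem:FactorizationLemma} on the single function $\widehat{g}$, and then carry out the bookkeeping showing $\|Y\|_{\schur},\|Z\|_{\schur} \lesssim \delta \|\widetilde{Y}\|_{\schur}$.

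First, I would set $B_j := A_j$ and $c_j := b_j$, i.e.\ $U_j := S_j$. From $\widehat{g_j} = |\det A_j|^{-1/2}\, \widehat{g}\circ S_j^{-1}$ we immediately obtain $\Fourier g_j^{\normalizedFactorisation} = \widehat{g}$, independently of $j$. Applied to $\widehat{g} \in C^\infty(\RHat^d)$ with the choice $\varrho(\xi) := (1+|\xi|)^{d+1} \max_{|\alpha|\le d+1} |\partial^\alpha \widehat{g}(\xi)|$, Lemma~\ref{lem:FactorizationLemma} yields a factorization $\widehat{g} = h_1 \cdot h_2$ satisfying $\max_{|\alpha|\le d+1}|\partial^\alpha h_2(\xi)| \le C'(1+|\xi|)^{-(d+1)}$ and $\max_{|\alpha|\le d+1}|\partial^\alpha h_1(\xi)| \le C'(1+|\xi|)^{d+1} \max_{|\alpha|\le d+1}|\partial^\alpha \widehat{g}(\xi)|$. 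Setting $h_{j,1} := h_1$, $h_{j,2} := h_2$, the decay hypothesis on $h_{j,2}$ required by Lemma~\ref{lem:RemainderSimplified} is satisfied.

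Next, I would compute $X_{i,j}$, $Y_{i,j}$, $Z_{i,j}$ using the identities $B_j^t C_j = \delta \, I$, so $|\det B_j^t C_j|^{-1} = \delta^{-d}$, and $C_j^t A_i = \delta\, A_j^{-1} A_i$, hence $\|C_j^t A_i\|^{d+1} = \delta^{d+1} \|A_j^{-1} A_i\|^{d+1}$. The bound on $h_1$ gives
\[
  X_{i,j} \leq C' \, \max\{1, \|A_j^{-1}A_i\|^{d+1}\} \, I_{i,j},
\]
where $I_{i,j}$ denotes the integral appearing in $\widetilde{Y}_{i,j}$. Plugging into $Z_{i,j}$, the factor $|\det B_j^t C_j|^{-1} \|C_j^t A_i\|^{d+1} = \delta\, \|A_j^{-1}A_i\|^{d+1}$ produces the desired single power of $\delta$; combining with $\max\{1,a\}^2 = \max\{1,a^2\}$ yields $Z_{i,j} \le \delta C' \,\widetilde{Y}_{i,j}$. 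Similarly, using $\delta \in (0,1]$ to absorb $\max\{\|C_j^tA_i\|,\|C_j^tA_i\|^{d+1}\} \le \delta \max\{1,\|A_j^{-1}A_i\|^{d+1}\}$ and dominating the $\max\{1,w_j/w_i\}$- and $\max\{1,|A_i^{-1}(b_i-b_j)|\}$-factors by the larger ones in $K_{i,j}$, one obtains $Y_{i,j} \le \delta C' \,\widetilde{Y}_{i,j}$. The Schur-property of $\widetilde{Y}$ then yields $\|Y\|_{\schur}, \|Z\|_{\schur} \le \delta C' \|\widetilde{Y}\|_{\schur}$.

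Before applying Lemma~\ref{lem:RemainderSimplified}, I must verify the adaptedness hypothesis. I would invoke Proposition~\ref{prop:AmalgamMatrixEntriesEstimate} with $v = w$; using the chain rule as in Lemma~\ref{lem:LinearChainRule} on $\Fourier g_{i,j}^{\localized} = |\det A_j|^{-1/2}\widehat{g}\circ S_j^{-1}\circ S_i$, together with $(1+\|C_j^t A_i\|)^d = (1+\delta\|A_j^{-1}A_i\|)^d \le 2^d \max\{1,\|A_j^{-1}A_i\|^d\}$ (since $\delta \le 1$) and $1 \le (1+|S_j^{-1}(S_i\xi)|)^{d+1}$, one readily bounds the entries $G_{i,j}$ from Proposition~\ref{prop:AmalgamMatrixEntriesEstimate} by a constant multiple (depending on $d$ and $\delta$) of $\widetilde{Y}_{i,j}$, so $\StandardGSI$ is $(w,w,\Phi)$-adapted. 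Finally, plugging the two Schur-estimates into Lemma~\ref{lem:RemainderSimplified} gives
\[
  \|R_0\|_{\op} \le C_0 C_{p,q} \|\Gamma_{\CalQ}\|_{\ell^q_w \to \ell^q_w} (C')^2 \cdot (\delta C')^2 \|\widetilde{Y}\|_{\schur}^2,
\]
matching the stated bound. The main obstacle is purely bookkeeping: keeping track of how the $\delta$-factors enter $Y$ and $Z$, and verifying that all the mixed $A_j^{-1}A_i$, $A_i^{-1}A_j$, $|A_i^{-1}(b_i-b_j)|$ and weight terms are simultaneously dominated by $K_{i,j}$.
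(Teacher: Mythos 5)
Your proposal is correct and follows essentially the same path as the paper's own proof: same specialization $B_j=A_j$, $c_j=b_j$, $v=w$, same use of Lemma~\ref{lem:FactorizationLemma} on $\widehat{g}$ with the weight $\varrho(\xi)=(1+|\xi|)^{d+1}\max_{|\alpha|\le d+1}|\partial^\alpha\widehat{g}(\xi)|$, and the same bookkeeping $Y_{i,j},Z_{i,j}\le C'\delta\,\widetilde{Y}_{i,j}$ using $\delta\le 1$ before invoking Lemma~\ref{lem:RemainderSimplified}. The adaptedness verification via Proposition~\ref{prop:AmalgamMatrixEntriesEstimate} and the chain rule (with the cheap factor $1\le(1+|S_j^{-1}S_i\xi|)^{d+1}$) also matches the paper's argument.
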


\begin{proof}
  To show that $\StandardGSI$ is $(w,w,\Phi)$-adapted,
  we use Proposition~\ref{prop:AmalgamMatrixEntriesEstimate}.
  Let us set ${v_j := w_j}$ for $j \in J$.
  Note that
  \(
    \Fourier g_{i,j}^{\localized}
    = \widehat{g_j} \circ S_i
    = |\det A_j|^{-1/2} \cdot \widehat{g} \circ S_j^{-1} \circ S_i
  \).
  An application of the chain rule as in Lemma~\ref{lem:LinearChainRule} shows,
  for any $\alpha \in \NN_0^d$ with $|\alpha| \leq d+1$, that
  \begin{align*}
    \big| \partial^\alpha [\Fourier g_{i,j}^{\localized}] (\xi) \big|
    & \leq |\det A_j|^{-1/2}
           \cdot d^{|\alpha|} \, \|A_j^{-1} A_i\|^{|\alpha|}
           \max_{|\beta| = |\alpha|}
             |(\partial^\beta \widehat{g}) \big( S_j^{-1} (S_i \xi) \big)| \\
    & \leq |\det A_j|^{-1/2}
           \cdot d^{d+1} \, \max \{ 1, \|A_j^{-1} A_i\|^{d+1} \}
           (1+|S_j^{-1} (S_i \xi)|)^{d+1} \\
           & \quad \quad \quad \quad \quad \quad \quad \quad \quad \quad \quad \quad \cdot
           \max_{|\alpha| \leq d+1}
             |(\partial^{\alpha} \widehat{g}) ( S_j^{-1} (S_i \xi)) | \, ,
  \end{align*}
  and hence
  \(
    \int_{Q_i'}
      \max_{|\alpha| \leq d+1}
        |\partial^\alpha [\Fourier g_{i,j}^{\localized}] (\xi)|
    \, d \xi
    \leq |\det A_j|^{-1/2}
         \cdot d^{d+1} \, \max \{ 1, \|A_j^{-1} A_i\|^{d+1} \}
         \cdot \widetilde{Y}_{i,j} K_{i,j}^{-1}
  \).
  Thus, the matrix entries $G_{i,j}$ introduced in Proposition~\ref{prop:AmalgamMatrixEntriesEstimate}
  satisfy
  \[
    G_{i,j}
    \leq \delta^{-d/2} d^{d+1}
         \max \bigg\{ \frac{w_i}{w_j}, \; \frac{w_j}{w_i} \bigg\} \,
         \max \{1, \|A_j^{-1} A_i\|^{d+1} \}
           (1 + \delta \|A_j^{-1} A_i\|)^d
          \frac{ \widetilde{Y}_{i,j}}{K_{i,j}}
    \leq C_{d,\delta} \cdot \widetilde{Y}_{i,j} \, ,
  \]
  for a suitable constant $C_{d,\delta} > 0$ which is independent of $i,j \in J$.
  Thus $\|G\|_{\schur} < \infty$.

  To finish the proof, we will show the claimed bound on
  $\|R_0\|_{\StandardDecompSp \to \StandardDecompSp}$.
  For this, we will apply Lemma~\ref{lem:RemainderSimplified} with the choices
  $I = J$, $B_j = A_j$, $c_j = b_j$ and $v_j = w_j$.
  In this setting, we have ${g_j^{\normalizedFactorisation} = g}$ for all $j \in J$.
  By defining
  \(
    \varrho :
    \RHat^d \to [0,\infty), \;
    \xi \mapsto (1+|\xi|)^{d+1} \max_{|\alpha| \leq d+1} | \partial^{\alpha} \widehat{g}(\xi)|
  \),
  we clearly have
  $|\partial^{\alpha} \widehat{g}(\xi) | \leq \varrho (\xi) \cdot (1 + |\xi| )^{-(d+1)}$
  for all $\xi \in \RHat^d$ and $\alpha \in \NN_0^d$ with $|\alpha| \leq d+1$.
  Hence, by Lemma~\ref{lem:FactorizationLemma}, we can factorize
  $\widehat{g} = h_1 \cdot h_2$ with $h_1, h_2 \in C^{d+1} (\RHat^d)$
  satisfying \eqref{eq:h1h2_decay}.
  This shows that the first hypothesis in Lemma~\ref{lem:RemainderSimplified} is satisfied,
  and it remains to show that the matrices ${Y = (Y_{i,j})_{i,j \in J}}$
  and ${Z = (Z_{i,j})_{i,j \in J}}$ of Lemma~\ref{lem:RemainderSimplified} are of Schur-type.
  For this, note that $|\det (B_j^t C_j)|^{-1} = |\det (A_j^t \delta A_j^{-t})|^{-1} = \delta^{-d}$
  and $\|C_j^t A_i\| = \delta \, \|A_j^{-1} A_i\| \leq \|A_j^{-1} A_i\|$, since $\delta \leq 1$.
  Therefore,
  \begin{align*}
    \max \{ \|C_j^t A_i\|, \|C_j^t A_i\|^{d+1} \}
     \leq \delta \|A_j^{-1} A_i\| \cdot \max \{ 1, \| A_j^{-1} A_i \|^d \}
      \leq \delta \max \{ 1, \| A_j^{-1} A_i \|^{d+1} \}
  \end{align*}
  for all $i,j \in I$.
  It is now readily verified that $Y_{i,j} \leq C' \cdot \delta \cdot \widetilde{Y}_{i,j}$
  and $Z_{i,j} \leq C' \cdot \delta \cdot \widetilde{Y}_{i,j}$ for $i,j \in J$,
  where $C'$ is as in Lemma~\ref{lem:FactorizationLemma}.
  Hence,
  \(
    \| Y \|_{\schur} \| Z \|_{\schur}
    \leq (C')^2 \cdot \delta^2 \cdot \| \widetilde{Y} \|_{\schur}^2
  \).
  Therefore, applying Lemma~\ref{lem:RemainderSimplified} completes the proof.
\end{proof}

The factor $\max\{1, |A_i^{-1} (b_i - b_j)|\}$ that appears in defining $K_{i,j}$ in
Proposition~\ref{prop:RemainderSimplifiedForStructuredSystems} can be inconvenient.
In particular, it does not appear in \cite{StructuredBanachFrames},
which makes it difficult to translate existing concrete examples from \cite{StructuredBanachFrames}
readily to the present setting.
For this reason, we supply the following.

\begin{lemma}\label{lem:TranslationRemoval}
  The matrix entries $\widetilde{Y}_{i,j}$ introduced in
  Proposition~\ref{prop:RemainderSimplifiedForStructuredSystems}
  satisfy $0 \leq \widetilde{Y}_{i,j} \leq (1 + R_\CalQ)^{d+1} \cdot \widehat{Y}_{i,j}$, where
  \[
    \widehat{Y}_{i,j}
    := L_{i,j}
       \cdot \int_{Q_i'}
               (1 + |S_j^{-1} (S_i \xi)|)^{2d+2}
               \max_{|\alpha| \leq d+1}
                 |(\partial^\alpha \widehat{g})(S_j^{-1} (S_i \xi))|
             \, d \xi
  \]
  and
  \(
    L_{i,j} := \max \big\{ \frac{w_i}{w_j}, \frac{w_j}{w_i} \big\}
               \big(
                 \max \{ 1, \|A_i^{-1} A_j\|^2 \}
                 \, \max \{ 1, \|A_j^{-1} A_i\|^3 \}
               \big)^{d+1}
  \)
  for $i,j \in J$.
\end{lemma}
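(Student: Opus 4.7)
The plan is to establish the pointwise inequality
\[
  K_{i,j} \leq (1 + R_\CalQ)^{d+1} \cdot L_{i,j} \cdot (1 + |S_j^{-1}(S_i \xi)|)^{d+1}
  \qquad \text{for all } \xi \in Q_i',
\]
which, when substituted into the definition of $\widetilde{Y}_{i,j}$, absorbs one factor of $(1 + |S_j^{-1}(S_i\xi)|)^{d+1}$ inside the integrand and promotes the power from $d+1$ to $2d+2$, yielding exactly $(1+R_\CalQ)^{d+1} \widehat{Y}_{i,j}$. The entire task thus reduces to bounding the translation factor $\max\{1, |A_i^{-1}(b_i-b_j)|\}^{d+1}$ appearing in $K_{i,j}$ but absent from $L_{i,j}$, in terms of operator-norm factors and $(1+|S_j^{-1}(S_i\xi)|)$.

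First, I would observe the algebraic identity $A_j^{-1}(b_i - b_j) = S_j^{-1}(S_i\xi) - A_j^{-1} A_i\, \xi$, valid for any $\xi \in \RHat^d$. Using $|\xi| \leq R_\CalQ$ for $\xi \in Q_i'$ and the bound $|A_i^{-1}(b_i - b_j)| \leq \|A_i^{-1} A_j\| \cdot |A_j^{-1}(b_i - b_j)|$, this yields
\[
  1 + |A_i^{-1}(b_i - b_j)| \leq 1 + \|A_i^{-1} A_j\|\, |S_j^{-1}(S_i\xi)| + R_\CalQ\, \|A_i^{-1} A_j\|\, \|A_j^{-1} A_i\|.
\]
The crucial observation is that $\|A_i^{-1} A_j\| \cdot \|A_j^{-1} A_i\| \geq \|A_i^{-1} A_j \cdot A_j^{-1} A_i\| = 1$, so the free constant $1$ on the right-hand side can be absorbed into a single $\|A_i^{-1} A_j\| \|A_j^{-1} A_i\|$ factor; factoring out and using $1 + R_\CalQ + a \leq (1+R_\CalQ)(1+a)$ gives
\[
  1 + |A_i^{-1}(b_i - b_j)| \leq (1 + R_\CalQ)\, \max\{1, \|A_i^{-1} A_j\|\} \, \max\{1, \|A_j^{-1} A_i\|\} \, (1 + |S_j^{-1}(S_i\xi)|).
\]

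Raising this inequality to the $(d+1)$-th power and multiplying through by the remaining matrix-norm factors in $K_{i,j}$—namely $\max\{1, \|A_i^{-1} A_j\|\}^{d+1}$ and $\max\{1, \|A_j^{-1} A_i\|^2\}^{d+1}$—collects the powers of $\|A_i^{-1} A_j\|$ into $\max\{1, \|A_i^{-1} A_j\|^2\}^{d+1}$ and the powers of $\|A_j^{-1} A_i\|$ into $\max\{1, \|A_j^{-1} A_i\|^3\}^{d+1}$, producing precisely the matrix part of $L_{i,j}$. Substituting into $\widetilde{Y}_{i,j}$ and pulling the extra $(1+|S_j^{-1}(S_i\xi)|)^{d+1}$ under the integral completes the proof.

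The main obstacle is purely bookkeeping: the power counting must come out exactly right so that one arrives at the exponents $2$ and $3$ in $L_{i,j}$ rather than $3$ and $3$. The step that saves one power is using $\|A_i^{-1} A_j\|\,\|A_j^{-1} A_i\| \geq 1$ to absorb the leading $1$ instead of crudely bounding $1 \leq \max\{1, \|A_i^{-1} A_j\|\} \max\{1, \|A_j^{-1} A_i\|\}$; without this refinement one would gain an unwanted extra factor of $\max\{1, \|A_i^{-1} A_j\|\}^{d+1}$.
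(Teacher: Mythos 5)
Your proof is correct and follows essentially the same route as the paper: bound the translation term $\max\{1,|A_i^{-1}(b_i-b_j)|\}$ pointwise for $\xi\in Q_i'$ by $(1+R_\CalQ)\max\{1,\|A_i^{-1}A_j\|\}\max\{1,\|A_j^{-1}A_i\|\}(1+|S_j^{-1}(S_i\xi)|)$, raise to the $(d+1)$-th power, absorb the operator-norm factors into those already present in $K_{i,j}$, and pull the extra $(1+|S_j^{-1}(S_i\xi)|)^{d+1}$ into the integral.

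One small misconception worth correcting: the refinement you flag as ``crucial''---using $\|A_i^{-1}A_j\|\,\|A_j^{-1}A_i\|\ge 1$ to absorb the additive $1$---is in fact not needed and saves no power. The crude bound $1\le \max\{1,\|A_i^{-1}A_j\|\}\max\{1,\|A_j^{-1}A_i\|\}$ applied termwise to
\[
1+\|A_i^{-1}A_j\|\,|S_j^{-1}(S_i\xi)|+R_\CalQ\,\|A_i^{-1}A_j\|\,\|A_j^{-1}A_i\|
\le \max\{1,\|A_i^{-1}A_j\|\}\max\{1,\|A_j^{-1}A_i\|\}\bigl(1+R_\CalQ+|S_j^{-1}(S_i\xi)|\bigr)
\]
already yields the same factor $\max\{1,\|A_i^{-1}A_j\|\}\max\{1,\|A_j^{-1}A_i\|\}$ that you obtain with the refinement, and hence the same final exponents $2$ and $3$ in $L_{i,j}$. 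The paper uses exactly this cruder step, bounding $|A_i^{-1}(b_i-b_j)|\le\|A_i^{-1}A_j\|(|S_j^{-1}(S_i\xi)|+R_\CalQ\|A_j^{-1}A_i\|)$ and then simply inserting $\max\{1,\cdot\}$ factors. So the power counting is less delicate than you feared, but your argument is nonetheless sound.
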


\begin{proof}
  Since
  \(
    S_j^{-1} (S_i \xi)
    = A_j^{-1} (A_i \xi + b_i - b_j)
  \)
  for all $\xi \in \RHat^d$, it follows that
  \begin{align*}
    |A_i^{-1} (b_i - b_j)|
    & = |A_i^{-1} A_j A_j^{-1} (b_i - b_j)|
      \leq \|A_i^{-1} A_j\|
           \cdot \big(
                   |A_j^{-1} A_i \xi + A_j^{-1} (b_i - b_j)| + |A_j^{-1} A_i \xi|
                 \big) \\
    & \leq \|A_i^{-1} A_j\|
           \cdot \big(
                   |S_j^{-1} (S_i \xi)| + R_\CalQ \|A_j^{-1} A_i\|
                 \big) \\
    & \leq (1 + R_\CalQ)
           \cdot \max\{ 1, \|A_i^{-1} A_j\| \}
           \cdot \max \{ 1, \|A_j^{-1} A_i\| \}
           \cdot (1 + |S_j^{-1} (S_i \xi)|)
  \end{align*}
  for $\xi \in Q_i'$.
  Using this, the estimate $\widetilde{Y}_{i,j} \leq (1 + R_\CalQ)^{d+1} \cdot \widehat{Y}_{i,j}$
  follows directly from the definitions.
\end{proof}

\subsection{Invertibility of the frame operator}

The next result summarizes our criteria for the invertibility of the frame operator
obtained in this section.

\begin{theorem}\label{thm:MainSummary}
  Let $\CalQ = \big(S_j (Q_j ')\big)_{j \in J} = \big(A_j (Q_j ') + b_j)_{j \in J}$ be an
  affinely generated cover of an open set $\CalO \subset \RHat^d$ of full measure.
  Let $\Phi = (\varphi_j)_{j \in J}$ be a regular partition of unity subordinate to $\CalQ$,
  and let $w = (w_j)_{j \in J}$ be $\CalQ$-moderate.
  Suppose that
  \begin{itemize}
    \item[(i)]  The system $\StandardGSI$ is such that
                $g_j := |\det A_j|^{1/2} \cdot M_{b_j} [g \circ A_j^t]$
                and $C_j := \delta \cdot A_j^{-t}$ for some $\delta > 0$
                and some $g \in L^1(\R^d) \cap L^\infty(\R^d)$
                with $\widehat{g} \in C^\infty (\RHat^d)$;

    \item[(ii)] There is an $A' > 0$ such that $A' \leq \sum_{j \in J} |\widehat{g}(S_j^{-1} \xi)|^2$
                for almost all $\xi \in \CalO$;

    \item[(iii)] The matrix $\widehat{Y} = (\widehat{Y}_{i,j})_{i,j \in J}$ is of Schur-type,
                 where $\widehat{Y}_{i,j}$ as in Lemma~\ref{lem:TranslationRemoval};
                 \vspace{0.1cm}

    \item[(iv)] The term $M_0$ defined in Proposition~\ref{prop:MainTermForStructuredSystems}
                is finite.

  \end{itemize}
  Then the system $\StandardGSI$ is $(w,w,\Phi)$-adapted, and for $p,q \in [1,\infty]$,
   the frame operator
  ${S : \DecompSp(\CalQ,L^p,\ell_w^q) \to \DecompSp(\CalQ,L^p,\ell_w^q)}$
  associated to $\StandardGSI$ is well-defined and bounded.

  Finally, for given $p,q \in [1,\infty]$, let
  \(
    C_{d,\CalQ,w} := \max \big\{
                            [ \sup_{j \in J} \lambda(Q_j ') ]^{-\frac{3}{d+2}} ,
                            [\kappa_d K_{\CalQ,w}]^{1/(d+2)}
                          \big\}
  \),
  where
  \[
    \kappa_d := (2d)^{(d+1)^2}
                \big( 8d \big)^{2d+2}
                12^{5d + 5}
                \cdot (d+1)^{8d + 10}
                \cdot \frac{72 \cdot (d+1)^{5/2} \cdot 2^{d+2}}{\pi^{3d}}
                \cdot
                \bigg(
                  \frac{\frac{0.8}{e} (d+1)^2}{\ln(2+d)}
                \bigg)^{d+1}
  \]
  and
  \(
    K_{\CalQ,w}
    := \|\Gamma_\CalQ\|_{\ell_w^q \to \ell_w^q}^3
       N_{\CalQ}^2
       \max \{1, C_\Phi^2 \}
       (1 + R_\CalQ)^{3d + 4}
       \max_{|\alpha| \leq d+1} C_{\CalQ,\Phi,\alpha}^3 .
  \)
  Then, if $\delta > 0$ is chosen such that
  \begin{equation}
    C_{d,\CalQ,w}
    \cdot M_0^{\frac{d+1}{d+2}}
    \cdot \big( \|\widehat{Y}\|_{\schur}^2 \big)^{\frac{1}{d+2}}
    \cdot \frac{\delta}{A'}
    < 1 ,
    \label{eq:MainCondition}
  \end{equation}
  then the frame operator is also boundedly invertible as an operator on
  $\DecompSp(\CalQ,L^p,\ell_w^q)$.
\end{theorem}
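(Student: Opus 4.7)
The plan is to decompose the frame operator as $S = T_0 + R_0$ on $\StandardDecompSp$, control each piece by the two concrete estimates developed in Section~\ref{sec:Structured_Compatible}, and close the argument by a Neumann series through Lemma~\ref{lem:FrameOperatorInvertibility}. All four hypotheses of the theorem feed into these two estimates, and hypothesis~(iv) moreover guarantees that the main multiplier $t_0$ has the regularity needed to invoke the frame-operator decomposition of Corollary~\ref{cor:FrameOperatorOnDenseSpace}. The delicate point is not any single estimate, but rather the bookkeeping required to show that the concrete exponents and constants in the two bounds combine exactly into the condition \eqref{eq:MainCondition}.

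First I would apply Proposition~\ref{prop:MainTermForStructuredSystems}: hypotheses (i), (ii) and (iv) are precisely what that proposition demands (with $A' > 0$ playing the role of its lower Calder\'on bound), so $t_0$ is continuous on $\CalO$ and tame, and $T_0 := \Phi_{t_0}$ is boundedly invertible on $\StandardDecompSp$ with
\[
  \|T_0^{-1}\|_{\op}
  \;\leq\; C'_d \, N_\CalQ^2 C_\Phi \, \bigl[\max_{|\alpha|\le d+1} C_{\CalQ,\Phi,\alpha}\bigr]
            \cdot (A')^{-1} \cdot (M_0 / A')^{d+1} \cdot \delta^d \, .
\]
Next I would apply Proposition~\ref{prop:RemainderSimplifiedForStructuredSystems} (whose ``$t_0$ tame'' hypothesis is now discharged): combined with Lemma~\ref{lem:TranslationRemoval} which furnishes $\|\widetilde{Y}\|_\schur \le (1+R_\CalQ)^{d+1} \|\widehat{Y}\|_\schur$, it yields that $\StandardGSI$ is $(w,w,\Phi)$-adapted, that the frame operator is well-defined and bounded on $\StandardDecompSp$ (via Corollary~\ref{cor:FrameOperatorSpecialContinuity}), and that the remainder term $R_0 := S - T_0$ from Corollary~\ref{cor:RemainderTermR0} satisfies
\[
  \|R_0\|_{\op}
  \;\leq\; C_0 \, C_{p,q} \, (C')^4 \, \|\Gamma_\CalQ\|_{\ell_w^q \to \ell_w^q}
            \cdot (1+R_\CalQ)^{2(d+1)} \cdot \delta^2 \cdot \|\widehat{Y}\|_\schur^2 \, .
\]

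To conclude, I would invoke Lemma~\ref{lem:FrameOperatorInvertibility}, which reduces bounded invertibility of $S$ on $\StandardDecompSp$ to the single scalar inequality $\|T_0^{-1}\|_{\op} \cdot \|R_0\|_{\op} < 1$. Multiplying the two displayed bounds, the $\delta$-factors combine as $\delta^d \cdot \delta^2 = \delta^{d+2}$, and the factors involving $M_0$, $\|\widehat{Y}\|_\schur$, and $A'$ assemble as $(M_0)^{d+1} \, \|\widehat{Y}\|_\schur^2 \, (A')^{-(d+2)}$. Taking $(d{+}2)$-th roots, the smallness condition becomes
\[
  \bigl[\,\text{(constants)}\,\bigr]^{\frac{1}{d+2}}
  \cdot M_0^{\frac{d+1}{d+2}}
  \cdot \|\widehat{Y}\|_\schur^{\frac{2}{d+2}}
  \cdot \frac{\delta}{A'}
  \;<\; 1 \, .
\]
All that remains is to verify that the product of purely $d$-dependent constants appearing in $\|T_0^{-1}\|_{\op}$ and $\|R_0\|_{\op}$, namely $C'_d \cdot C_0 \cdot (C')^4$ (with $C'_d = C_d \cdot (2d)^{(d+1)^2}$, $C_d$ as in \eqref{eq:MainTermSimplifiedSpecialConstant}, $C_0$ as in \eqref{eq:RemainderSimplifiedSpecialConstant}, and $C' = (12(d+1)^2)^{d+1}$), is dominated by $\kappa_d$, while the $\CalQ$- and $w$-dependent factors $N_\CalQ^2 \, C_\Phi \, \max_\alpha C_{\CalQ,\Phi,\alpha} \cdot \|\Gamma_\CalQ\|_{\ell_w^q \to \ell_w^q} \cdot (1+R_\CalQ)^{2(d+1)} \cdot \max\{1, R_\CalQ^{d+2}\} \cdot \max_\alpha C_{\CalQ,\Phi,\alpha}^2$, multiplied by the extra $C_{p,q} \leq C_\Phi \|\Gamma_\CalQ\|_{\ell_w^q \to \ell_w^q}^2$ which arises in the worst case $\max\{p,q\} = \infty$, are dominated by $K_{\CalQ,w}$.

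The main obstacle is precisely this final bookkeeping step: one must track the exponents on $\|\Gamma_\CalQ\|$ (cube, because one factor enters from $R_0$ and a further square from $C_{p,q}$), on $C_\Phi$ (square, by the same reason), on $R_\CalQ$ (power $3d+4$ from combining $(1+R_\CalQ)^{2(d+1)}$ with $\max\{1,R_\CalQ^{d+2}\}$), and on $C_{\CalQ,\Phi,\alpha}$ (cube, combining one factor from $\|T_0^{-1}\|_{\op}$ with two from $C_0$), checking that the chosen $\kappa_d$ absorbs the pure $d$-dependence---this is where the $(d+1)$-th power in $C_d$, the square in $(C')^4$ raised appropriately, and the power $(d+1)^2$ of $(2d)$ all conspire to match the stated formula for $\kappa_d$. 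Once this accounting is complete, the definition $C_{d,\CalQ,w} \ge [\kappa_d K_{\CalQ,w}]^{1/(d+2)}$ forces $\|T_0^{-1}\|_{\op} \cdot \|R_0\|_{\op} < 1$ under assumption \eqref{eq:MainCondition}, and bounded invertibility of $S$ on $\StandardDecompSp$ follows. The role of the alternative lower bound $[\sup_j \lambda(Q_j')]^{-3/(d+2)}$ in the definition of $C_{d,\CalQ,w}$ is an auxiliary safeguard that does not interfere with the implication just established, since we only need the maximum to dominate $[\kappa_d K_{\CalQ,w}]^{1/(d+2)}$.
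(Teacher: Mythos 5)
Your Step~1 is essentially the paper's Step~1: decompose $S = T_0 + R_0$, control each piece by Propositions~\ref{prop:MainTermForStructuredSystems} and~\ref{prop:RemainderSimplifiedForStructuredSystems} together with Lemma~\ref{lem:TranslationRemoval}, then close with Lemma~\ref{lem:FrameOperatorInvertibility}. Your bookkeeping description of how the $d$-dependent and $(\CalQ,w)$-dependent constants assemble into $\kappa_d$ and $K_{\CalQ,w}$, and how the exponents combine via the $(d+2)$-th root, is faithful to what is actually done.

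The genuine gap is that Proposition~\ref{prop:RemainderSimplifiedForStructuredSystems} requires $\delta \in (0,1]$, not merely $\delta > 0$. In its proof the bound $\max\{\|C_j^t A_i\|, \|C_j^t A_i\|^{d+1}\} \leq \delta \max\{1, \|A_j^{-1}A_i\|^{d+1}\}$ pulls out a single power of $\delta$ from $C_j = \delta A_j^{-t}$ precisely because $\delta \leq 1$; without this you cannot get $\|R_0\|_{\op} \lesssim \delta^2$. Your proposal never verifies $\delta \leq 1$. The paper devotes its entire Step~2 to showing that the hypothesis \eqref{eq:MainCondition} \emph{forces} $\delta < 1$, via a chain of estimates: pointwise $\sum_j |\widehat{g}(S_j^{-1}\eta)| \geq \sqrt{A'}$, hence $\|\widehat{Y}\|_{\schur} \geq \sqrt{A'} \cdot \lambda(Q_i')$ and, by Jensen's inequality, $M_0 \geq A' \cdot [\lambda(Q_i')]^{1/(d+1)}$. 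Combining these gives $M_0^{\frac{d+1}{d+2}} (\|\widehat{Y}\|_{\schur}^2)^{\frac{1}{d+2}} \geq A' \sup_i [\lambda(Q_i')]^{3/(d+2)}$, which is exactly why the term $[\sup_j \lambda(Q_j')]^{-3/(d+2)}$ appears in the definition of $C_{d,\CalQ,w}$. You dismiss that term as an ``auxiliary safeguard that does not interfere''; in fact, it is what licenses the application of Proposition~\ref{prop:RemainderSimplifiedForStructuredSystems} in the first place, and your argument would not go through without it.
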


\begin{proof}
  We proceed in two steps.
\\\\
  \textbf{Step 1.} Suppose that $\delta \leq 1$.
  Since $A' \leq \sum_{j \in J} |\widehat{g}(S_j^{-1} \xi)|^2$
  for almost all $\xi \in \CalO$, and since $M_0$ is finite, an application of
  Proposition~\ref{prop:MainTermForStructuredSystems} shows that
  $t_0$ is continuous on $\CalO$ and tame and that
  \(
    T_0 := \Phi_{t_0} : \DecompSp(\CalQ,L^p,\ell_w^q) \to \DecompSp(\CalQ,L^p,\ell_w^q),
  \)
  with $\Phi_{t_0}$ as defined in Proposition~\ref{prop:FourierMultiplierBound},
  is well-defined, bounded, and boundedly invertible, with
  \[
    \|T_0^{-1}\|_{\DecompSp(\CalQ,L^p,\ell_w^q) \to \DecompSp(\CalQ,L^p,\ell_w^q)}
    \leq C^{(1)} \cdot M_0^{d+1} \cdot (A')^{-(d+2)} \cdot \delta ^d
  \]
  for arbitrary $p,q \in [1,\infty]$.
  Here,
  \(
    C^{(1)}
    := (2d)^{(d+1)^2} \, C_{d} \, N_{\CalQ}^2 C_\Phi
       \cdot \max_{|\alpha| \leq d+1} C_{\CalQ,\Phi,\alpha}
  \),
  with $C_d$ as in Equation~\eqref{eq:MainTermSimplifiedSpecialConstant}.

  Lemma~\ref{lem:TranslationRemoval} shows that
  $\|\widetilde{Y}\|_{\schur} \leq (1 + R_\CalQ)^{d+1} \, \|\widehat{Y}\|_{\schur} < \infty$,
  with $\widetilde{Y}$ as in Proposition~\ref{prop:RemainderSimplifiedForStructuredSystems}.
  Therefore, Proposition~\ref{prop:RemainderSimplifiedForStructuredSystems} shows that the system
  $\StandardGSI$ is $(w,w,\Phi)$-adapted, and hence the frame operator
  $S : \DecompSp(\CalQ, L^p, \ell_w^q) \to \DecompSp(\CalQ, L^p, \ell_w^q)$ is well-defined
  and bounded for all $p,q \in [1,\infty]$ by Corollary~\ref{cor:FrameOperatorSpecialContinuity}.

  Lastly, it follows by
  Proposition~\ref{prop:RemainderSimplifiedForStructuredSystems}
  and Corollary~\ref{cor:RemainderTermR0}
  that the frame operator $S$ can be written as $S = T_0 + R_0$, where
  \[
    \|R_0\|_{\DecompSp(\CalQ,L^p,\ell_w^q) \to \DecompSp(\CalQ,L^p,\ell_w^q)}
    \leq C^{(2)} \cdot \delta^2 \cdot \|\widetilde{Y}\|_{\schur}^2
    \leq C^{(2)} \, (1 + R_\CalQ)^{2d+2} \cdot \delta^2 \cdot \|\widehat{Y}\|_{\schur}^2,
  \]
  where $C^{(2)} := C_0 C_{p,q} (C')^4 \|\Gamma_\CalQ\|_{\ell_w^q \to \ell_w^q}$,
  with $C_0$ as in \eqref{eq:RemainderSimplifiedSpecialConstant}
  and $C'$ as in Lemma~\ref{lem:FactorizationLemma},
  and with ${C_{p,q} := \max \{1, C_\Phi \} \cdot \|\Gamma_{\CalQ}\|_{\ell_w^q \to \ell_w^q}^2}$.
  Here, we used the easily verifiable estimate $\|\Gamma_{\CalQ}\|_{\ell_w^q \to \ell_w^q} \geq 1$.

  Therefore, for arbitrary $p,q \in [1,\infty]$, a combination of the above estimates gives
  \begin{align*}
    \|T_0^{-1}\|_{\op} \cdot \|R_0\|_{\op}
    & \leq C^{(1)} C^{(2)} (1 + R_\CalQ)^{2d+2}
           \cdot \delta^{2+d} \cdot \|\widehat{Y}\|_{\schur}^2
           \cdot M_0^{d+1} \cdot (A')^{-(d+2)} \\
    & =    \Big[
             \big(
               C^{(1)} C^{(2)} (1 + R_\CalQ)^{2d+2}
             \big)^{1/(d+2)}
             \cdot M_0^{\frac{d+1}{d+2}}
             \cdot (\|\widehat{Y}\|_{\schur}^2)^{\frac{1}{d+2}}
             \cdot \frac{\delta}{A'}
           \Big]^{d+2} \\
    & \leq \Big[
             C_{d,\CalQ, w}
             \cdot M_0^{\frac{d+1}{d+2}}
             \cdot (\|\widehat{Y}\|_{\schur}^2)^{\frac{1}{d+2}}
             \cdot \frac{\delta}{A'}
           \Big]^{d+2}
      <    1.
  \end{align*}
  Therefore, Lemma~\ref{lem:FrameOperatorInvertibility} implies that the frame operator
  $S = T_0 + R_0 : \mathcal{D}(\CalQ, L^p, \ell^q_w) \to  \mathcal{D}(\CalQ, L^p, \ell^q_w)$
  is boundedly invertible, as claimed.
\\\\
  \textbf{Step 2.}
  In this step it will be shown that \eqref{eq:MainCondition} already entails $\delta \leq 1$.
  To this end, first note that
  \(
    A'
    \leq \sum_{j \in J}
           |\widehat{g}(S_j^{-1} \eta)|^2
    \leq \big(
           \sum_{j \in J}
             |\widehat{g}(S_j^{-1} \eta)|
         \big)^2
  \),
  and hence
  $\sum_{j \in J} |\widehat{g}(S_j^{-1} \eta)| \geq \sqrt{A'}$ for almost every $\eta \in \CalO.$
  Thus, for any fixed $i \in J$,
  \begin{align*}
    \|\widehat{Y}\|_{\schur}
    & \geq \sum_{j \in J}
             \widehat{Y}_{i,j}
      \geq   \int_{Q_i'}
             \sum_{j \in J}
               |\widehat{g} (S_j^{-1} (S_i \xi))|
           \, d \xi
     \geq \int_{Q_i'} \sqrt{A'} \, d \xi
      =    \sqrt{A'} \cdot \lambda(Q_i ').
  \end{align*}
  Next, by applying Jensen's inequality, we see that the constant $M_0$ introduced in
  Proposition~\ref{prop:MainTermForStructuredSystems} satisfies, for each $i \in J$, the estimate
  \begin{align*}
    M_0
    & \geq \sum_{j \in J}
             \Big(
               \lambda(Q_i ')
               \int_{Q_i'}
                 |\widehat{g}(S_j^{-1} (S_i \xi))|^{2(d+1)}
               \, \frac{d \xi}{\lambda(Q_i ')}
             \Big)^{1/(d+1)} \\
             &
      \geq [\lambda(Q_i ')]^{1/(d+1) - 1}
           \sum_{j \in J}
             \int_{Q_i'}
               |\widehat{g}(S_j^{-1} (S_i \xi))|^{2}
             \, d \xi \\
    & =    [\lambda(Q_i ')]^{1/(d+1) - 1}
           \int_{Q_i'}
             \sum_{j \in J}
               |\widehat{g}(S_j^{-1} (S_i \xi))|^{2}
           \, d \xi \\
&      \geq [\lambda(Q_i ')]^{1/(d+1) - 1} \cdot A' \cdot \lambda(Q_i ') \\
&      =    A' \cdot [\lambda(Q_i ')]^{1/(d+1)}.
  \end{align*}
  Overall, we see that
  \[
    \kappa :=   M_0^{\frac{d+1}{d+2}} \cdot \big( \|\widehat{Y}\|_{\schur}^2 \big)^{\frac{1}{d+2}}
           \geq A' \cdot \sup_{i \in J} [\lambda(Q_i ')]^{\frac{3}{d+2}}
           \geq C_{d,\CalQ,w}^{-1} \, A'
  \]
  and hence $C_{d,\CalQ,w} \cdot \kappa \cdot \frac{\delta}{A'} \geq \delta$.
  Thus, if $\delta$ satisfies Equation~\eqref{eq:MainCondition}, then $\delta < 1$.
\end{proof}

\subsection{Proof of Theorem~\ref{thm:theorem_intro}}
\label{sec_proof_of_thm:theorem_intro}
Theorem \ref{thm:theorem_intro}, announced in the introduction,
is just a reformulation of Theorem~\ref{thm:MainSummary},
with the following identifications of notation:
$\newA=A'$; $\newB=B'$; $M_1 = \|\widehat{Y}\|_{\schur}$.
\hfill$\square$

\subsection{Banach frames and atomic decompositions}
We now remark that, under the assumptions of Theorem~\ref{thm:MainSummary},
the system $(\Translation{\delta A_j^{-t}k} g_j)_{j \in J, k \in \Z^d}$
forms a Banach frame and an atomic decomposition (\cite{GroechenigDescribingFunctions})
for the Besov-type spaces $\StandardDecompSp$, and, moreover,
\emph{the corresponding dual family is given by the canonical dual frame}.

\begin{corollary}\label{cor:BanachFramesAtomicDecompositions}
  Suppose that the assumptions of Theorem~\ref{thm:MainSummary} are satisfied,
  including the assumption \eqref{eq:MainCondition}.
  Then the system $(\Translation{\delta A_j^{-t} k} g_j)_{j \in J, k \in \Z^d}$
  forms a Banach frame and an atomic decomposition
  for all of the spaces $\StandardDecompSp$, $p,q \in [1,\infty]$, with associated coefficient
  space $Y_w^{p,q}$ as in Definition~\ref{def:CoefficientSpace}.
  Precisely, the analysis and synthesis maps
  \begin{align*}
    & \analysis : \StandardDecompSp \to Y_w^{p,q},
                  f \mapsto \big(
                              \langle f \mid \Translation{\delta A_j^{-t} k} g_j \rangle_{\Phi}
                            \big)_{j \in J, k \in \Z^d} \\[0.2cm]
    \text{and} \quad
    & \synthesis : Y_w^{p,q} \to \StandardDecompSp,
                 (c_j^{(k)})_{j \in J, k \in \Z^d}
                 \mapsto \sum_{j \in J}
                           \sum_{k \in \Z^d}
                             c_j^{(k)} \, \Translation{\delta A_j^{-t} k} g_j
  \end{align*}
  are well-defined and bounded, and satisfy
  \[
    (S^{-1} \circ \synthesis) \circ \analysis = \identity_{\StandardDecompSp}
  \quad
\text{and}
  \quad
    \synthesis \circ (\analysis \circ S^{-1}) = \identity_{\StandardDecompSp}
  .\]
\end{corollary}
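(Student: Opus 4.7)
The plan is to observe that Corollary~\ref{cor:BanachFramesAtomicDecompositions} is essentially a clean corollary of the combination of Theorem~\ref{thm:MainSummary} with Proposition~\ref{prop:AnalysisSynthesisOperatorGeneral}: the heavy lifting (invertibility of the frame operator and $(w,w,\Phi)$-adaptedness) has already been done, so the remaining work is only to assemble these ingredients and verify the two identities.

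First, I would invoke Theorem~\ref{thm:MainSummary}, which under the hypothesis \eqref{eq:MainCondition} guarantees that $\StandardGSI$ is $(w,w,\Phi)$-adapted and that the frame operator $S$ is well-defined, bounded, and boundedly invertible on $\StandardDecompSp$ for every $p,q \in [1,\infty]$. With $(w,w,\Phi)$-adaptedness in hand, Proposition~\ref{prop:AnalysisSynthesisOperatorGeneral}(i)--(ii) immediately yields that
\[
  \synthesis : Y_w^{p,q} \to \StandardDecompSp
  \qquad \text{and} \qquad
  \analysis : \StandardDecompSp \to Y_w^{p,q}
\]
are well-defined and bounded, with operator norms controlled by the Schur norm of the matrix in \eqref{eq:AnalysisOperatorSchurMatrix} (and the clustering constant for $\analysis$). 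In particular, Proposition~\ref{prop:AnalysisSynthesisOperatorGeneral}(iii) ensures that the extended pairing $\langle f \mid \Translation{\delta A_j^{-t} k} g_j \rangle_{\Phi}$ is independent of the chosen BAPU, so $\analysis$ is intrinsically defined.

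Next I would recall that, by construction, the frame operator factors as $S = \synthesis \circ \analysis$ on $\StandardDecompSp$; this was noted in Corollary~\ref{cor:FrameOperatorSpecialContinuity}, whose hypotheses are satisfied here. Combining this factorization with the bounded invertibility of $S : \StandardDecompSp \to \StandardDecompSp$ gives directly
\[
  (S^{-1} \circ \synthesis) \circ \analysis
  = S^{-1} \circ (\synthesis \circ \analysis)
  = S^{-1} \circ S
  = \identity_{\StandardDecompSp}
\]
and
\[
  \synthesis \circ (\analysis \circ S^{-1})
  = (\synthesis \circ \analysis) \circ S^{-1}
  = S \circ S^{-1}
  = \identity_{\StandardDecompSp},
\]
which is precisely the Banach-frame / atomic-decomposition pair of identities.

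There is essentially no obstacle at this stage, since the delicate estimates are all absorbed into Theorem~\ref{thm:MainSummary} and Proposition~\ref{prop:AnalysisSynthesisOperatorGeneral}; the only minor bookkeeping point is to check that the range of the analysis map on $\StandardDecompSp$ indeed lands in $Y_w^{p,q}$ (and not merely in a larger coefficient space), so that the compositions $S^{-1} \circ \synthesis$ and $\analysis \circ S^{-1}$ make sense as bounded maps between the advertised spaces. This is exactly what Proposition~\ref{prop:AnalysisSynthesisOperatorGeneral}(ii) provides, and the independence of the extended pairing from the BAPU (part (iii) of the same proposition) ensures that both identities hold unambiguously.
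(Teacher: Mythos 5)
Your proposal is correct and takes essentially the same route as the paper: invoke Theorem~\ref{thm:MainSummary} for $(w,w,\Phi)$-adaptedness and bounded invertibility of $S$, cite Proposition~\ref{prop:AnalysisSynthesisOperatorGeneral} for boundedness of $\analysis$ and $\synthesis$, and conclude via the factorization $S = \synthesis \circ \analysis$.
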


\begin{proof}
  Theorem~\ref{thm:MainSummary} shows that
  $(\Translation{\delta A_j^{-t} k} g_j)_{j \in J, k \in \Z^d}$ is $(w,w,\Phi)$-adapted.
  Thus, the boundedness of $\analysis, \synthesis$
  follows from Proposition~\ref{prop:AnalysisSynthesisOperatorGeneral}.
  The remaining statements follow from the invertibility of ${S = \synthesis \circ \analysis}$
  proven in Theorem~\ref{thm:MainSummary}.
\end{proof}

\subsection{An example}
We conclude with an example verifying the hypotheses of Theorem~\ref{thm:MainSummary}
for Besov-type spaces associated with covers that have a geometry which is in a certain sense
intermediate between the geometry of the uniform and the dyadic covers.
These covers are an instance of the \emph{non-homogeneous isotropic covers}
from \cite[Section 2.5]{TriebelFourierAnalysisAndFunctionSpaces} and \cite[Section 2.1]{tr78};
the corresponding spaces are also known as \emph{$\alpha$-modulation spaces}
\cite{GroebnerAlphaModulationSpaces}.
For similar calculations of other concrete examples,
we refer to \cite{StructuredBanachFrames}.

For fixed $\alpha \in [0,1)$, the \emph{$\alpha$-modulation space} with parameters
$p,q \in [1,\infty]$ and $s \in \R$ is defined as
$M_{p,q}^{s,\alpha} (\RR^d) := \DecompSp (\CalQ^{(\alpha)}, L^p, \ell_{w^{(s,\alpha)}}^q)$,
where the cover $\CalQ^{(\alpha)}$ of $\RHat^d$ is given by
\[
  \CalQ^{(\alpha)}
  := \big( A_j^{(\alpha)} Q + b_j^{(\alpha)} \big)_{j \in \ZZ^d \setminus \{0\}}
,\]
where $
  A_j^{(\alpha)} := |j|^{\alpha_0} \, \identity_{\RR^d}, \;
  b_j^{(\alpha)} := |j|^{\alpha_0} \, j,
$ and$
  Q = B_r (0) ,
$
with $\alpha_0 := \tfrac{\alpha}{1-\alpha}$ and  $r \geq r_0 = r_0 (d,\alpha)$.
Under this assumption on $r$, one can show that $\CalQ^{(\alpha)}$ is indeed
an affinely generated cover of $\RHat^d$; see
\cite[Theorem 2.6]{BorupNielsenAlphaModulationSpaces} and \cite[Lemma 7.3]{StructuredBanachFrames}.
Finally, the weight $w^{(s,\alpha)}$ is given by
\(
  w^{(s,\alpha)}_j
  = |j|^{s / (1 - \alpha)}
\)
for $j \in \ZZ^d \setminus \{0\}$.
In the following, we will simply write $\CalQ$, $A_j$, and $b_j$
for $\CalQ^{(\alpha)}$, $A_j^{(\alpha)}$, and $b_j^{(\alpha)}$
and fix some $r \geq r_0(d,\alpha)$.

Fix $s_0 \geq 0$. In the following, we will only consider ``smoothness parameters'' $s \in [-s_0, s_0]$.
Take ${g \in L^1(\RR^d) \cap L^2(\R^d)}$ such that $\widehat{g} \in C^\infty(\RHat^d)$,
and assume that there are $c,C > 0$ and $N > 0$ such that
\begin{equation}
  |\widehat{g} (\xi)| \geq c \quad \forall \, |\xi| \leq r
  \qquad \text{and} \qquad
  \max_{|\alpha| \leq d+1}
    |\partial^\alpha \widehat{g} (\xi)|
  \leq C \cdot (1 + |\xi|)^{-N}
  \quad \forall \, \xi \in \RHat^d .
  \label{eq:AlphaModulationDecayAssumption}
\end{equation}

We will determine conditions on $N$
(depending on $d, \alpha, s_0$) which ensure that the prerequisites
of Theorem~\ref{thm:MainSummary} are satisfied.
In fact, it will turn out that it is enough if $N > 4d + 3 + \tau$
where $\tau := \frac{4 \alpha d + 3 \alpha + s_0}{1-\alpha} \in [0,\infty)$.

To show this, note because of $Q_i ' = B_r (0)$ for all $i \in \ZZ^d \setminus \{0\}$ that
\[
  S_j^{-1} (S_i Q_i ')
  = B_{R_{i,j}} (\xi_{i,j})
  \quad \text{where} \quad
  R_{i,j} = \big( |i| / |j| \big)^{\alpha_0} \cdot r
  \quad \text{and} \quad
  \xi_{i,j} = \big( |i| / |j| \big)^{\alpha_0} \cdot i - j.
\]
Thus, applying the change of variables $\eta = S_j^{-1} (S_i \xi)$,
combined with the estimate \eqref{eq:AlphaModulationDecayAssumption},
yields
\begin{align*}
  Z_{i,j}
  & := \int_{Q_i'}
         (1 + |S_j^{-1} (S_i \xi)|)^{2d+2} \,
         \max_{|\alpha| \leq d+1}
           |[\partial^\alpha \widehat{g}] \big( S_j^{-1} (S_i \xi) \big)|
       \, d \xi \\
  & = \bigg( \frac{|j|}{|i|} \bigg)^{d \alpha_0} \!\!
      \int_{B_{R_{i,j}} (\xi_{i,j})} \!\!
        (1 + |\eta|)^{2d + 2} \,
        \max_{|\alpha| \leq d+1}
          |[\partial^\alpha \widehat{g}] (\eta)|
      \, d \eta \\
      &
    \leq C \, \bigg( \frac{|j|}{|i|} \bigg)^{d \alpha_0} \!\!
         \int_{B_{R_{i,j}} (\xi_{i,j})} \!\!
           (1 + |\eta|)^{2d + 2 - N}
         \, d \eta.
\end{align*}
A similar computation shows
\begin{align*}
  W_{i,j} & := \bigg(
       \int_{Q_i '}
         \max_{|\alpha| \leq d+1}
           \big|
             (\partial^\alpha \widehat{g}) \big( S_j^{-1} (S_i \xi) \big)
           \big|^{2(d+1)}
       \, d \xi
     \bigg)^{\frac{1}{d+1}} \\
     &
  \leq C^2 \,
       \bigg(
         \bigg(
           \frac{|j|}{|i|}
         \bigg)^{d \alpha_0} \!\!
         \int_{B_{R_{i,j}} (\xi_{i,j})} \!\!
           (1 + |\eta|)^{-2N(d+1)}
         \, d \eta
       \bigg)^{\frac{1}{d+1}} \!\! .
\end{align*}
Using the notations
\[
  \Lambda_{i,j}^{[M,\tau]}
  := \bigg(
       \int_{B_{R_{i,j}} (\xi_{i,j})}
         (1 + |\eta|)^{-M}
       \, d \eta
     \bigg)^{\tau}
  \qquad \text{and} \qquad
  \Xi_{i,j}^{[k,M,\tau]} := \bigg( \frac{|j|}{|i|} \bigg)^{k} \cdot \Lambda_{i,j}^{[M,\tau]}
\]
for $i,j \in \ZZ^d \setminus \{0\}$ and $k, M \in \RR$, $\tau \in (0,\infty)$,
we have thus shown
\begin{equation}
  Z_{i,j} \leq C \cdot \Xi_{i,j}^{[d\alpha_0, N - 2d - 2, 1]}
  \quad \text{and} \quad
  W_{i,j} \leq C^2 \cdot \Xi_{i,j}^{\left[ \frac{d \alpha_0}{d+1}, 2N(d+1), \frac{1}{d+1} \right]} .
  \label{eq:AlphaModulationEstimateUsingLambda}
\end{equation}
This is useful, since
\cite[Equation (7.13)]{StructuredBanachFrames}
shows for $M \geq d + 1$ that
\begin{equation}
  \Xi_{i,j}^{[k,M,\tau]}
  \leq C' \cdot (1 + |j - i|)^{|k| + \tau (d + 1 - M)}
  \quad \forall \, i,j \in \ZZ^d \setminus \{0\} ,
  \label{eq:AlphaModulationNiceXiEstimate}
\end{equation}
where $C' = C'(\alpha, d, M, r, \tau, |k|)$.

Now, using that $w_{j}^{(s,\alpha)} = |j|^{s / (1 - \alpha)}$ and
$A_j = |j|^{\alpha_0} \, \identity$, a straightforward computation shows that
the quantity $L_{i,j}$ introduced in Lemma~\ref{lem:TranslationRemoval} satisfies
\[
  L_{i,j}
  = \begin{cases}
      \big( |j| / |i| \big)^{2(d+1)\alpha_0 + \frac{|s|}{1-\alpha}}
      & \text{if } |i| \leq |j|, \\
      \big( |j| / |i| \big)^{-3(d+1)\alpha_0 - \frac{|s|}{1-\alpha}}
      & \text{if } |i| >    |j|
    \end{cases}
  \leq \max \Big\{
              \big( |j| / |i| \big)^{\sigma},
              \big( |j| / |i| \big)^{-\sigma}
            \Big\} ,
\]
where we introduced $\sigma := \frac{3 \alpha (d + 1) + s_0}{1-\alpha} \in [0,\infty)$.
In combination with Equations~\eqref{eq:AlphaModulationEstimateUsingLambda}
and \eqref{eq:AlphaModulationNiceXiEstimate},
we thus see that the matrix elements $\widehat{Y}_{i,j}$ introduced in
Lemma~\ref{lem:TranslationRemoval} satisfy
\begin{align*}
  0
 & \leq \widehat{Y}_{i,j}
   =    L_{i,j} \, Z_{i,j}
    \leq C \cdot \max \big\{
                        \big( |j| / |i| \big)^{\sigma},
                        \big( |j| / |i| \big)^{-\sigma}
                      \big\}
           \cdot \Xi_{i,j}^{[d \alpha_0, N-2d-2,1]} \\
  & =    C \cdot \max \big\{
                        \Xi_{i,j}^{[\sigma + d \alpha_0, N-2d-2,1]},
                        \Xi_{i,j}^{[d \alpha_0 - \sigma, N-2d-2, 1]}
                      \big\} \\
                      &
    \leq C \cdot C_1 \cdot (1 + |j - i|)^{\sigma + d \alpha_0 + d+1 - (N-2d-2)} \\
  & =    C \cdot C_1 \cdot (1 + |j - i|)^{\sigma + d \alpha_0 + 3(d+1) - N} ,
\end{align*}
where $C_1 = C_1 (d, \alpha, N, r, s_0)$.
From this, it is easy to see that $\|\widehat{Y}\|_{\schur} \leq C \cdot C_2 < \infty$,
provided that $N > 4d + 3 + \sigma + d \alpha_0 = 4d + 3 + \tau$, where $C_2 = C_2 (d,\alpha,N,r,s_0)$.
We have thus verified condition (iii) of Theorem~\ref{thm:MainSummary}.

Next, we show that $M_0 < \infty$ for $M_0$ as defined in
Proposition~\ref{prop:MainTermForStructuredSystems}.
The same arguments as for estimating $\widehat{Y}_{i,j}$ give
\begin{align*}
  V_{i,j}
  & := \max \big\{ 1, \|A_j^{-1} A_i\|^{d+1} \big\} W_{i,j} \\
  &
    \leq C^2 \, \max \Big\{
                       \Xi_{i,j}^{\left[ \frac{d \alpha_0}{d+1}, 2N(d+1), \frac{1}{d+1} \right]},
                       \Xi_{i,j}^{\left[
                                    \alpha_0 \left( \frac{d}{d+1} - (d+1) \right), 2N(d+1), 1/(d+1)
                                  \right]}
                     \Big\} \\
  & \leq C^2 \cdot C_3 \cdot (1 + |j - i|)^{\alpha_0 \frac{d^2 + d + 1}{d+1}
                                            + \frac{1}{d+1} (d+1 - 2N(d+1))} \\
&    \leq C^2 \cdot C_3 \cdot (1 + |j - i|)^{1 + \alpha_0 \cdot (d+1) - 2N},
\end{align*}
where $C_3 = C_3(\alpha, d, N, r)$.
From this, we see that the constant $M_0$ introduced in
Proposition~\ref{prop:MainTermForStructuredSystems} satisfies
$M_0 = \|V\|_{\schur} \leq C^2 C_4 < \infty$ for a constant
$C_4 = C_4(\alpha, d, N, r)$, as soon as $N > \frac{1 + d}{2} (1 + \alpha_0)$,
which is implied by $N > 4d + 3 + \sigma + d \alpha_0$.
Thus, condition (iv) of Theorem~\ref{thm:MainSummary} is satisfied.

Lastly, we verify condition (ii) of Theorem~\ref{thm:MainSummary},
that is, $\sum_{j \in \ZZ^d \setminus \{0\}} |\widehat{g} (S_j^{-1} \xi)|^2 \geq A'$
for all $\xi \in \RHat^d$, where $A' := c^2$, with $c > 0$ as in
Equation~\eqref{eq:AlphaModulationDecayAssumption}.
To see this, note that Equation~\eqref{eq:AlphaModulationDecayAssumption}
implies $|\widehat{g}|^2 \geq c^2 \, \Indicator_{Q}$, where we recall $Q = B_r (0)$.
Hence, $|\widehat{g}(S_j^{-1} \xi)|^2 \geq c^2 \Indicator_{Q_j}$, since $Q_j = S_j Q$.
Finally, since $\CalQ^{(\alpha)} = (Q_j)_{j \in \ZZ^d \setminus \{0\}}$ is a cover
of $\RHat^d$, we see $\sum_{j \in \ZZ^d \setminus \{0\}} |\widehat{g}(S_j^{-1} \xi)|^2 \geq c^2 = A'$,
as claimed.

\appendix
\section{Estimation of the \texorpdfstring{$\Fourier L^1$}{𝓕L¹} norm}
\label{sec:TechnicalResults}

\subsection{Sobolev embeddings}

In this appendix we give an explicit bound for the constant implied in the estimate
$\|\Fourier^{-1} f\|_{L^1} \lesssim \max_{|\alpha| \leq d+1} \|\partial^\alpha f\|_{L^1}$.
Similar, but more qualitative results in the non-commutative context
can be found in \cite{MR2590916,MR3219221}.

\begin{lemma}\label{lem:BoxPolarCoordinateSubstitute}
  Let $d \in \NN$ and $\alpha, c > 0$.
  Define
  \(
    g : \RR^d \to     (0,\infty),
        x     \mapsto \big( \max \{c, \|x\|_{\ell^\infty} \} \big)^{-\alpha}.
  \)
  Then $\int_{\RR^d} g(x) \, dx < \infty$ if and only if $\alpha > d$,
  and in this case
  \[
    \int_{\RR^d} g(x) \, dx
    = \frac{2^d}{1 - \frac{d}{\alpha}} \cdot c^{d-\alpha} \, .
  \]
\end{lemma}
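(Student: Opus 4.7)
The plan is to split the integral into two regions determined by which expression inside the maximum dominates. On the cube $K_c := [-c,c]^d = \{x \in \RR^d : \|x\|_{\ell^\infty} \leq c\}$ we have $g(x) = c^{-\alpha}$, so this part contributes $\lambda(K_c) \cdot c^{-\alpha} = (2c)^d \cdot c^{-\alpha} = 2^d c^{d-\alpha}$. On the complement $\{\|x\|_{\ell^\infty} > c\}$, we have $g(x) = \|x\|_{\ell^\infty}^{-\alpha}$, and we reduce the integral to a one-dimensional one via the ``$\ell^\infty$ polar coordinate'' layer-cake identity
\[
  \int_{\RR^d} F(\|x\|_{\ell^\infty}) \, dx
  = \int_0^\infty F(r) \, d \big( (2r)^d \big)
  = d \cdot 2^d \int_0^\infty F(r) \, r^{d-1} \, dr,
\]
valid for any non-negative Borel $F$, which follows from $\lambda(\{\|x\|_{\ell^\infty} \leq r\}) = (2r)^d$.

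Applied with $F(r) = \Indicator_{(c,\infty)}(r) \cdot r^{-\alpha}$, this gives
\[
  \int_{\|x\|_{\ell^\infty} > c} \|x\|_{\ell^\infty}^{-\alpha} \, dx
  = d \cdot 2^d \int_c^\infty r^{d - 1 - \alpha} \, dr,
\]
which is finite iff $\alpha > d$, and in that case equals $\frac{d \cdot 2^d}{\alpha - d} \cdot c^{d-\alpha}$. If $\alpha \leq d$, the exterior integral diverges, while the interior contribution is finite, so $\int_{\RR^d} g \, dx = \infty$; this settles the ``only if'' direction.

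Assuming now $\alpha > d$, adding the two contributions yields
\[
  \int_{\RR^d} g(x) \, dx
  = 2^d c^{d-\alpha} + \frac{d \cdot 2^d}{\alpha - d} \, c^{d-\alpha}
  = 2^d c^{d-\alpha} \cdot \Big( 1 + \frac{d}{\alpha - d} \Big)
  = 2^d c^{d-\alpha} \cdot \frac{\alpha}{\alpha - d}
  = \frac{2^d}{1 - \frac{d}{\alpha}} \cdot c^{d-\alpha},
\]
as claimed. No real obstacle is anticipated; the only care needed is to justify the ``$\ell^\infty$ polar coordinate'' substitution, which one can do either by the layer-cake/Cavalieri identity applied to the super-level sets $\{x : F(\|x\|_{\ell^\infty}) > t\}$ (these are either empty or a cube), or equivalently by observing that the pushforward of Lebesgue measure under $x \mapsto \|x\|_{\ell^\infty}$ has density $d \cdot 2^d r^{d-1}$ on $(0,\infty)$.
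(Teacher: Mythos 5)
Your proof is correct, and it takes a genuinely different route from the paper's. The paper applies the layer-cake (distribution function) formula directly to $g$: the super-level set $\{g > \beta\}$ is empty for $\beta \geq c^{-\alpha}$ and is the cube $B^{\|\cdot\|_{\ell^\infty}}_{\beta^{-1/\alpha}}(0)$ of volume $(2\beta^{-1/\alpha})^d$ for $\beta < c^{-\alpha}$, and the result drops out of $\int_0^{c^{-\alpha}} \lambda_g(\beta)\,d\beta$. You instead split the domain at $\|x\|_{\ell^\infty} = c$, compute the contribution of the central cube trivially, and reduce the outer region to a one-dimensional integral via the pushforward of Lebesgue measure under $x \mapsto \|x\|_{\ell^\infty}$, whose density $d\,2^d r^{d-1}$ is exactly the derivative of the cube volume $(2r)^d$. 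Both approaches rest on the same geometric fact (sub-/super-level sets are cubes), but the paper decomposes the \emph{range} while you decompose the \emph{domain}. The paper's method avoids any case split and uses a single integral in one stroke; yours makes the two geometric pieces (flat plateau plus decaying tail) visible, and the ``$\ell^\infty$-polar'' identity you invoke is itself a mild generalization that some readers may find easier to reuse. Your justification in the last paragraph---that the polar identity can be derived from the same Cavalieri/layer-cake principle applied to $F(\|\cdot\|_{\ell^\infty})$---correctly closes the one potential gap. No errors.
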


\begin{proof}
  Let $\mu$ denote the Lebesgue measure on $\RR^d$.
  We will use \cite[Proposition~6.24]{FollandRA}, which shows for
  measurable $f : \RR^d \to \CC$ that
  \[
    \int_{\RR^d} | f | \, d\mu
    = \int_0^\infty \lambda_f(\beta) \, d\beta \, ,
  \]
  where
  \(
    \lambda_f (\beta)
    := \mu \big( \{x \in \RR^d \,:\, | f(x) | > \beta \} \big) \, .
  \)
  To compute the distribution function $\lambda_g$,  first note that
  $g(x) \leq c^{-\alpha}$ for all $x \in \RR^d$, and thus $\lambda_g(\beta) = 0$
  for $\beta \geq c^{-\alpha}$.
  For $0 < \beta < c^{-\alpha}$, note that $g(x) > \beta$ is equivalent to
  $\| x \|_{\ell^{\infty}} < \beta^{-1/\alpha}$,
  whence to $x \in B^{\|\cdot\|^{\ell^{\infty}}}_{\beta^{-1/\alpha}} (0)$.
  Therefore, for any $\beta \in (0, c^{-\alpha})$, we compute
  \(
    \lambda_g (\beta)
    = \mu \big( B_{\beta^{-1/\alpha}}^{\| \mybullet \|_{\ell^\infty}} (0) \big)
    = (2 \cdot \beta^{-1/\alpha})^d \, ,
  \)
  and thus
  \[
    \int_{\RR^d} g(x) \, dx
    = \int_0^\infty \lambda_g (\beta) \, d\beta
    = 2^d \cdot \int_0^{c^{-\alpha}} \beta^{-d/\alpha} \, d \beta \, ,
  \]
  which is finite if and only if $d/\alpha < 1$.
  In the latter case, a direct calculation shows that
  \[
    \int_{\RR^d} g(x) \, dx
    = 2^d \cdot \frac{\beta^{1 - \alpha^{-1} d}}{1 - \frac{d}{\alpha}}
                \bigg|_{\beta=0}^{c^{-\alpha}}
    = \frac{2^d}{1 - \frac{d}{\alpha}} \cdot (c^{-\alpha})^{1 - \alpha^{-1} d}
    = \frac{2^d}{1 - \frac{d}{\alpha}} \cdot c^{d - \alpha} \, ,
  \]
  yielding the desired result.
\end{proof}

The following result provides the announced estimate.
For this, we use the usual \emph{Sobolev space}
\[
  W^{k,1} (\RR^d)
  := \left\{
       f \in L^p (\RR^d)
       \; : \;
       \partial^{\alpha} f \in L^p (\RR^d)
       \,\, \forall \, \alpha \in \NN_0^d \text{ with } |\alpha| \leq k
     \right\},
\]
with norm $\|f\|_{W^{k,1}} := \sum_{|\alpha| \leq k} \| \partial^\alpha f \|_{L^1}$.

\begin{lemma}\label{lem:FourierDecayViaSmoothness}
  Suppose $f \in W^{d+1,1}(\RHat^d)$.
  Then $\Fourier^{-1} f \in L^1(\RR^d)$ with
  \[
    \| \Fourier^{-1} f \|_{L^1}
    \leq \frac{d+1}{\pi^d}
         \cdot \max_{\theta \in \mathbb{I}} \| \partial^\theta f \|_{L^1} ,
  \]
 where $\mathbb{I} := \{0\}\cup\{(d+1) e_\ell \with \ell\in\underline{d} \} \subset \NN_0^d$,
  with $(e_k)_{k =1}^d$ denoting the standard basis of $\RR^d$.
\end{lemma}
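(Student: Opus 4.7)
The plan is to obtain a pointwise decay estimate for $\mathcal{F}^{-1} f$ in terms of the quantity $M := \max_{\theta \in \mathbb{I}} \| \partial^\theta f \|_{L^1}$, and then to integrate this pointwise bound using Lemma~\ref{lem:BoxPolarCoordinateSubstitute} above. The key observation is that all derivatives appearing in $\mathbb{I}$ are pure derivatives along a single coordinate direction, which makes it natural to exploit decay along each coordinate axis separately.

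First I would note that, since $W^{d+1,1}(\RHat^d) \hookrightarrow L^1(\RHat^d)$, the function $\mathcal{F}^{-1} f$ is well-defined and continuous, with the trivial bound $\|\mathcal{F}^{-1} f\|_{L^\infty} \leq \|f\|_{L^1} \leq M$. Next, for each coordinate direction $\ell \in \underline{d}$, integration by parts (equivalently, the intertwining identity $\mathcal{F}^{-1}(\partial_\ell^{d+1} f)(x) = (2\pi i x_\ell)^{d+1} \mathcal{F}^{-1} f(x)$) gives the pointwise estimate
\[
  (2\pi |x_\ell|)^{d+1} \, |\mathcal{F}^{-1} f(x)|
  \leq \|\partial_\ell^{d+1} f\|_{L^1}
  \leq M.
\]
Taking the maximum over $\ell$ yields $(2\pi \|x\|_{\ell^\infty})^{d+1} \, |\mathcal{F}^{-1} f(x)| \leq M$. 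Combining with $|\mathcal{F}^{-1} f(x)| \leq M$ gives the unified pointwise bound
\[
  |\mathcal{F}^{-1} f(x)|
  \leq M \cdot \frac{1}{(2\pi)^{d+1} \, \big( \max \{ (2\pi)^{-1} ,\, \|x\|_{\ell^\infty} \} \big)^{d+1}} \, .
\]

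Finally, I would integrate this bound over $\RR^d$ by directly applying Lemma~\ref{lem:BoxPolarCoordinateSubstitute} with $\alpha = d+1$ and $c = (2\pi)^{-1}$; the lemma gives
\[
  \int_{\RR^d} \big( \max\{(2\pi)^{-1}, \|x\|_{\ell^\infty}\} \big)^{-(d+1)} \, dx
  = \frac{2^d}{1 - \tfrac{d}{d+1}} \cdot (2\pi)^{-1 \cdot (d - (d+1))}
  = 2^d (d+1) \cdot 2\pi \, .
\]
Combining with the prefactor $M / (2\pi)^{d+1}$ produces the constant $2^d (d+1) \cdot 2\pi \cdot (2\pi)^{-(d+1)} = (d+1)/\pi^d$, exactly as claimed. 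There is no substantive obstacle here beyond correctly tracking the constant through the scaling $c = (2\pi)^{-1}$; Lemma~\ref{lem:BoxPolarCoordinateSubstitute} is precisely calibrated to handle the $\max\{c, \|x\|_{\ell^\infty}\}$ shape produced by combining the trivial sup-norm bound with the one-variable decay estimates.
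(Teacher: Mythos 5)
Your proof is correct and follows essentially the same approach as the paper: one combines the trivial bound $\|\mathcal{F}^{-1}f\|_{L^\infty} \leq \|f\|_{L^1}$ with the one-variable decay estimate $(2\pi|x_\ell|)^{d+1}|\mathcal{F}^{-1}f(x)| \leq \|\partial_\ell^{d+1} f\|_{L^1}$ to obtain the pointwise bound involving $(\max\{(2\pi)^{-1},\|x\|_{\ell^\infty}\})^{-(d+1)}$, then integrates via Lemma~\ref{lem:BoxPolarCoordinateSubstitute}. The only cosmetic difference is that the paper first reduces to Schwartz functions by density and then applies Fatou's lemma, whereas you invoke the intertwining identity directly for $f \in W^{d+1,1}$; this is equally valid since both $f$ and $\partial_\ell^{d+1}f$ lie in $L^1(\RHat^d)$, so both sides of the identity are continuous functions and the distributional equality upgrades to a pointwise one.
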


\begin{proof}
  Since $\Schwartz(\RHat^d) \subset W^{d+1,1}(\RHat^d)$ is dense
  (see e.g.\ \cite[E10.8]{AltFunctionalAnalysis}),
  and since ${\Fourier^{-1} f_n \to \Fourier^{-1} f}$ uniformly if
  $f_n \to f$ in $W^{d+1,1}(\RHat^d) \hookrightarrow L^1(\RHat^d)$,
  it suffices---in view of Fatou's lemma---to prove the estimate for ${f \in \Schwartz(\RHat^d)}$.
  In this case, elementary properties of the Fourier transform yield
  for all $\alpha \in \NN_0^d$ and $ x \in \RR^d$ the estimate
  \[
    | x^\alpha \cdot \Fourier^{-1}f (x) |
    = (2\pi)^{-| \alpha |} \cdot | \Fourier^{-1} (\partial^\alpha f) (x) |
    \leq (2\pi)^{-| \alpha |} \cdot \| \partial^\alpha f \|_{L^1}
    .
  \]
  Next, using the auxiliary function
  \({
    g : \RR^d \to (0,\infty), \;
    x \mapsto (\max \{(2\pi)^{-1}, \|x\|_{\ell^\infty} \})^{-(d+1)}
  }\),
  it follows that
  \begin{align}
    | \Fourier^{-1} f(x) |
    & = g(x)
      \cdot \max \{ (2\pi)^{-(d+1)} , \|x\|_{\ell^\infty}^{d+1}\}
      \cdot | \Fourier^{-1} f(x) | \nonumber\\
    & = g(x) \cdot \max \left\{
                           (2\pi)^{-(d+1)} \cdot | \Fourier^{-1} f(x) |, \quad
                           \max_{\ell \in \underline{d}}
                             \big| x_\ell^{d+1} \cdot \Fourier^{-1} f(x) \big|
                        \right\} \nonumber\\
    & \leq g(x) \cdot \max \left\{
                             (2\pi)^{-(d+1)} \cdot \| f \|_{L^1}, \quad
                             \max_{\ell \in \underline{d}}
                               \big[
                                 (2\pi)^{-(d+1)}
                                 \cdot \| \partial_\ell^{d+1} f \|_{L^1}
                               \big]
                           \right\} \nonumber\\
    & \leq g(x) \cdot (2\pi)^{-(d+1)} \cdot
           \max_{\theta \in \mathbb{I}} \| \partial^\theta f \|_{L^1} .
  \label{eq:FourierDecayViaSmoothnessFundamental}
  \end{align}
 Hence, it remains to compute the integral $\int_{\RR^d} g(x) \, dx$.
 For this, note that an application of Lemma~\ref{lem:BoxPolarCoordinateSubstitute}
 (with $c = (2 \pi)^{-1}$ and $\alpha = d+1$) gives
 \(
   \int g(x) \, dx
   = \frac{2^d}{1 - \alpha^{-1} d} \cdot c^{d - \alpha}
   = 2^{d+1}\pi \cdot (d+1)
 \),
 and thus
 \[
   \| \Fourier^{-1} f \|_{L^1}
   \leq 2^{d+1} \pi \cdot (d+1) \cdot (2\pi)^{-(d+1)}
        \cdot \max_{\theta \in \mathbb{I}} \| \partial^\theta f \|_{L^1}
   = \frac{d+1}{\pi^d}
     \cdot \max_{\theta \in \mathbb{I}} \| \partial^\theta f \|_{L^1} \, ,
 \]
 which completes the proof.
\end{proof}

\subsection{The chain rule}

Lemma~\ref{lem:FourierDecayViaSmoothness} allows to estimate the $\Fourier L^1$ norm of $f$ in terms
of the $L^1$ norms of certain derivatives of $f$.
In many cases, we will have $f = g \circ A$, where we have good control
over the derivatives of $g$.
In such cases, the following lemma will be helpful.

\begin{lemma}\label{lem:LinearChainRule}
  (\cite[Lemma~2.6]{DecompositionIntoSobolev})

  Let $d,k \in \NN$, $A \in \RR^{d \times d}$, and $f \in C^k(\RR^d)$
  be arbitrary.
  Let $(e_1,\dots,e_d)$ denote the standard basis of $\RR^d$,
  let $i_1,\dots,i_k \in \underline{d}$, and define
  $\alpha := \sum_{m=1}^k e_{i_m} \in \NN_0^d$.

  Then $|\alpha| = k$, and
  \begin{equation}
    \partial^\alpha (f \circ A)(x)
    = \sum_{\ell_1,\dots,\ell_k \in \underline{d}}
        [
         A_{\ell_1,i_1} \cdots A_{\ell_k,i_k}
         \cdot (\partial_{\ell_1} \cdots \partial_{\ell_k} f) (A \, x)
        ]
    \qquad \forall \, x \in \RR^d \, .
    \label{eq:LinearChainRule}
  \end{equation}
\end{lemma}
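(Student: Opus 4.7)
The plan is to prove the formula by induction on $k$, exploiting the fact that when $\alpha = \sum_{m=1}^k e_{i_m}$, the differential operator $\partial^\alpha$ coincides with the iterated operator $\partial_{i_1} \partial_{i_2} \cdots \partial_{i_k}$ (since partials of $C^k$-functions commute). The claim $|\alpha| = k$ is immediate from the definition.

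First I would dispatch the base case $k = 1$. Writing $(f \circ A)(x) = f(Ax)$ and noting that the $j$-th coordinate of $Ax$ is $\sum_{\ell} A_{\ell, j} x_\ell$ is not quite the right parsing — rather, $(Ax)_\ell = \sum_m A_{\ell,m} x_m$, so $\partial_{i_1}(Ax)_\ell = A_{\ell, i_1}$. The single-variable chain rule then yields
\[
  \partial_{i_1}(f \circ A)(x)
  = \sum_{\ell_1 \in \underline{d}}
      (\partial_{\ell_1} f)(Ax) \cdot A_{\ell_1, i_1},
\]
which matches \eqref{eq:LinearChainRule} for $k=1$.

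For the inductive step, assume the formula holds for some $k \geq 1$ whenever the underlying function is $C^k$, and let $f \in C^{k+1}(\RR^d)$ and $i_1, \dots, i_{k+1} \in \underline{d}$ be arbitrary. Set $\alpha' := \sum_{m=1}^{k} e_{i_m}$ and $\alpha := \alpha' + e_{i_{k+1}}$. Since $\partial^\alpha = \partial_{i_{k+1}} \partial^{\alpha'}$, applying the induction hypothesis to $\partial^{\alpha'}(f \circ A)$ and then differentiating termwise (the sum is finite, so this is legitimate) gives
\[
  \partial^{\alpha}(f \circ A)(x)
  = \sum_{\ell_1,\dots,\ell_k \in \underline{d}}
      A_{\ell_1, i_1} \cdots A_{\ell_k, i_k}
      \cdot \partial_{i_{k+1}} \bigl[ (\partial_{\ell_1} \cdots \partial_{\ell_k} f) \circ A \bigr](x).
\]
Each term inside the derivative is itself $(g_{\ell_1,\dots,\ell_k} \circ A)$ for $g_{\ell_1,\dots,\ell_k} := \partial_{\ell_1} \cdots \partial_{\ell_k} f \in C^1(\RR^d)$, so the base case applies, producing the extra factor $A_{\ell_{k+1}, i_{k+1}}$ and an extra summation over $\ell_{k+1}$, yielding precisely \eqref{eq:LinearChainRule} for $k+1$.

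There is no real obstacle here: the only subtle point is the legitimacy of commuting the partials (justified by Schwarz's theorem, since $f \in C^k$) and of differentiating the finite sum termwise, both of which are standard. The remainder is purely combinatorial bookkeeping.
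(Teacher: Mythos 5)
Your induction on $k$ is correct. Note, however, that the paper does not prove this lemma at all: it is stated with a citation to \cite[Lemma~2.6]{DecompositionIntoSobolev} and no internal argument is given, so there is nothing in the present paper to compare against. What you have written is the standard proof of the iterated chain rule for a linear change of variables, and it is sound: the base case $k=1$ is the ordinary multivariable chain rule with $\partial_{i_1}(Ax)_\ell = A_{\ell,i_1}$, and the inductive step differentiates the (finite) sum termwise and reapplies the base case to each $g_{\ell_1,\dots,\ell_k} := \partial_{\ell_1}\cdots\partial_{\ell_k} f \in C^1(\RR^d)$. The only point worth spelling out a touch more carefully is at the very end of the inductive step: termwise application of the base case produces $\partial_{\ell_{k+1}}(\partial_{\ell_1}\cdots\partial_{\ell_k} f)$, whereas the target formula has $\partial_{\ell_1}\cdots\partial_{\ell_{k+1}} f$; these agree by Schwarz's theorem because $f \in C^{k+1}$, which you do invoke but only in passing. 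With that one remark made explicit, the argument is complete.
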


\subsection{The norm of a reciprocal}

\begin{lemma}\label{lem:DerivativesOfReciprocal}
  Let $m \in \NN$ and let $U \subset \RR$ be open.
  Suppose that $f \in C^m (U)$ never vanishes on $U$.
  Let $A > 0$, $K \geq 0$, and $x_0 \in U$ be such that
  \[
    |f (x_0)| \geq A^{-1}
    \quad \text{and} \quad
    |f^{(\ell)}(x_0)| \leq K
    \quad \forall \, 1 \leq \ell \leq m \, .
  \]
 Then the reciprocal $F := 1 / f$ of $f$ satisfies
 \[
    \bigg|\frac{d^\ell}{d x^\ell}\Big|_{x = x_0} F (x)\bigg|
    \leq C_m \cdot A \cdot \max \big\{ AK, \, (A K)^\ell \big\}
 \]
 for all $1 \leq \ell \leq m$, where the constant $C_m$ satisfies,
 for all $1 \leq \ell \leq m$,
 \[
   1 \leq C_m
     \leq 3 \sqrt{m}
            \cdot \Big(
                    \frac{\frac{0.8}{e} \cdot m^2}{\ln(1+m)}
                  \Big)^m \, .
 \]
\end{lemma}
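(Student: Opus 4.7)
The plan is to apply Faà di Bruno's formula to the composition $F = g \circ f$, where $g(y) = 1/y$, and then reduce the resulting expression to an explicit combinatorial estimate. Since $g^{(k)}(y) = (-1)^k k!/y^{k+1}$, the assumption $|f(x_0)| \geq A^{-1}$ immediately yields $|g^{(k)}(f(x_0))| \leq k! \, A^{k+1}$. The one-dimensional Faà di Bruno formula gives
\[
  F^{(\ell)}(x_0)
  = \sum_{k=1}^{\ell}
      g^{(k)}(f(x_0)) \,
      B_{\ell,k}\bigl(f'(x_0), \ldots, f^{(\ell-k+1)}(x_0)\bigr),
\]
where $B_{\ell,k}$ is the partial Bell polynomial. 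Because $B_{\ell,k}$ has non-negative coefficients and satisfies $B_{\ell,k}(1, \ldots, 1) = S(\ell,k)$---with $S(\ell,k)$ the Stirling number of the second kind---the hypothesis $|f^{(j)}(x_0)| \leq K$ for $1 \leq j \leq m$ translates by (weighted) homogeneity to $|B_{\ell,k}(f'(x_0), \ldots, f^{(\ell-k+1)}(x_0))| \leq K^k \, S(\ell,k)$. Combining the two estimates then yields
\[
  |F^{(\ell)}(x_0)|
  \leq A \, \sum_{k=1}^{\ell} k! \, S(\ell, k) \, (AK)^k.
\]

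Next I would factor out the appropriate power of $AK$. Elementary case analysis shows that, for $1 \leq k \leq \ell$, one has $(AK)^k \leq \max\{AK, (AK)^\ell\}$ in both the regime $AK \leq 1$ (where $(AK)^k \leq AK$) and the regime $AK \geq 1$ (where $(AK)^k \leq (AK)^\ell$). Pulling this factor out of the sum yields
\[
  |F^{(\ell)}(x_0)|
  \leq A \cdot \max\{AK, (AK)^\ell\} \cdot a(\ell),
  \qquad a(\ell) := \sum_{k=1}^{\ell} k! \, S(\ell,k) ,
\]
where $a(\ell)$ is the $\ell$-th Fubini (ordered Bell) number. The qualitative bound $|F^{(\ell)}(x_0)| \leq C_m \cdot A \cdot \max\{AK, (AK)^\ell\}$ is thus obtained with $C_m := a(m) = \max_{1 \leq \ell \leq m} a(\ell)$, since the Fubini sequence is strictly increasing. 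As a sanity check, the same recursion coefficients $a(\ell)$ can be recovered inductively by applying the Leibniz rule to $f \cdot F = 1$, confirming the combinatorial structure.

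The main obstacle is the derivation of the explicit closed-form upper bound on $C_m$ claimed in the statement. One starts from the surjection inequality $k! \, S(\ell, k) \leq k^\ell$ (since $k! \, S(\ell,k)$ counts surjections $[\ell] \to [k]$, which are a subset of all functions), producing $a(\ell) \leq \sum_{k=1}^{\ell} k^\ell$, and then controls this sum by a Stirling-type approximation. Careful bookkeeping---exploiting, in particular, that the maximizer of a related expression $k \mapsto k^\ell / (\text{combinatorial factor})$ scales like $\ell / \ln(1+\ell)$, accounting for the $\ln(1+m)$ in the denominator of the stated bound---yields the closed-form expression ${3 \sqrt{m} \cdot \bigl((0.8/e) m^2 / \ln(1+m)\bigr)^m}$, with the factor $0.8/e$ and the $\sqrt{m}$ prefactor absorbing the Stirling constants. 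The resulting constant is deliberately generous; for the applications to Faà di Bruno-type estimates elsewhere in the appendix, only its dependence on $m$ alone is needed.
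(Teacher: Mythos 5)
Your application of Fa\`a di Bruno's formula is essentially the same as the paper's (which uses the ``set-partition version'': grouping the partitions $\pi$ of $\underline{\ell}$ by their number of blocks $|\pi|=k$ recovers exactly your sum over $k$ weighted by the Stirling number $S(\ell,k)$). In fact, your version is \emph{sharper}: you keep the factor $k!$ coming from $g^{(k)}(y)=(-1)^k k!\,y^{-(k+1)}$, whereas the paper replaces $|\pi|!\leq \ell!$ and then pulls $\ell!$ outside, ending up with $C_m = m!\cdot|P_m|$ ($|P_m|$ a Bell number); your $C_m = a(m)=\sum_{k=1}^m k!\,S(m,k)$ (a Fubini number) is strictly smaller. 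So the overall combinatorial structure is right, and your intermediate bound $|F^{(\ell)}(x_0)|\leq A\cdot\max\{AK,(AK)^\ell\}\cdot a(\ell)$ is correct.

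The gap is in the last step, which you flag as ``the main obstacle'' but then sketch a route that does not actually close it. The surjection inequality $k!\,S(\ell,k)\leq k^\ell$ gives $a(m)\leq \sum_{k=1}^m k^m$, and this is already too lossy: for $m=2$ one has $\sum_{k=1}^2 k^2 = 5$, whereas $3\sqrt{2}\big(\tfrac{(0.8/e)\cdot 4}{\ln 3}\big)^2 \approx 4.87$, so the stated bound cannot be recovered from $\sum_{k=1}^m k^m$ by any ``Stirling-type'' cleanup. What actually works is the opposite of what you suggest: estimate $k!\leq m!$ (rather than bounding $k!\,S(\ell,k)$ by a function of $k$ alone) to get $a(m)\leq m!\sum_{k=1}^m S(m,k)=m!\cdot B_m$ where $B_m$ is the Bell number, and then invoke two explicit inequalities from the literature that the paper relies on: the Berend--Tassa bound $B_m\leq \big(\frac{0.8\,m}{\ln(1+m)}\big)^m$ and Robbins' refinement of Stirling's formula $m!\leq 3\sqrt{m}\,(m/e)^m$. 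Multiplying these gives precisely $3\sqrt{m}\,\big(\frac{(0.8/e)\,m^2}{\ln(1+m)}\big)^m$. Without the Bell-number bound of Berend--Tassa (or a comparably tight substitute), the explicit constant does not follow, and your sketch as written would produce a weaker (and, for small $m$, false) claim.
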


\begin{proof}
  Setting $g : \RR \setminus \{0\} \to \RR, t \mapsto t^{-1}$,
  we have $F = g \circ f$.
  Therefore, the ``set partition version'' of \emph{Faa di Bruno's formula},
  see for instance \cite[p. 219]{FaaDiBruno}, shows for $1 \leq \ell \leq m$
  that
  \[
    F^{(\ell)} (x_0)
    = \sum_{\pi \in P_\ell}
      \Big[
        g^{(|\pi|)} (f(x_0))
        \prod_{B \in \pi} f^{(|B|)} (x_0)
      \Big] \, ,
  \]
  where $P_\ell \subset 2^{2^{\underline{\ell}}}$ denotes the sets
  of all \emph{partitions} of the set $\underline{\ell} := \{1,\dots,\ell\}$.
  Phrased differently, the set $P_\ell$ contains exactly those subsets
  $\pi \subset 2^{\underline{\ell}}$ of the power set $2^{\underline{\ell}}$
  for which $\underline{\ell} = \biguplus \pi$ and $B \neq \emptyset$ for all
  $B \in \pi$.
  For each $\pi \in P_\ell$, we denote by $|\pi|$ the number of blocks of the
  partition determined by $\pi$; that is, $|\pi|$ is the number of elements
  of $\pi$. Likewise, for a block $B \in \pi$, we denote by $|B|$ the size
  of the block, that is, the number of elements of $B$.

  An induction argument shows that
  $g^{(k)}(t) = (-1)^k \cdot k! \cdot t^{-(k+1)}$ for all $k \in \NN_0$.
  Therefore, for arbitrary $\pi \in P_\ell$, it follows that
  $|g^{(|\pi|)} (f(x_0))|
   = |\pi|! \cdot |f(x_0)|^{-(1 + |\pi|)}
   \leq \ell! \cdot A^{1 + |\pi|}$, since any $\pi \in P_\ell$ satisfies
  $\ell = \sum_{B \in \pi} |B| \geq \sum_{B \in \pi} 1 = |\pi|$.
  Similarly, it follows that
  \[
    \prod_{B \in \pi}
      \big| f^{(|B|)} (x_0) \big|
    \leq \prod_{B \in \pi} K
    =    K^{|\pi|}
  \]
 for all $\pi \in P_\ell$.
 Combining these observations shows that
  \[
    |F^{(\ell)}(x_0)|
    \leq \sum_{\pi \in P_\ell}
           \big( \ell! \cdot A \cdot (AK)^{|\pi|} \big)
    \leq A \cdot \max \big\{AK, (AK)^\ell \big\} \cdot \ell! \cdot |P_\ell| \, ,
  \]
  where we used again that $1 \leq |\pi| \leq \ell$ for $\pi \in P_\ell$.
  Since $\ell! \leq m!$ and $|P_\ell| \leq |P_m|$
  for $\ell \leq m$,
  it suffices to show that $C_m := m! \cdot |P_m|$
  satisfies the bound stated in the lemma. Here,
  the cardinalities $|P_m|$ are the so-called \emph{Bell numbers}.
  For these, \cite[Theorem~2.1]{BellNumberBound} provides the bound
  $|P_m| \leq \big(\frac{0.8 \cdot m}{\ln(1+m)}\big)^m$.
  Furthermore, the version of Stirling's formula derived in
  \cite{RobbinsStirlingsFormula} shows that
  \[
    m! \leq \sqrt{2\pi} \cdot e^{1/12} \cdot (m/e)^m \cdot \sqrt{m}
       \leq 3 \cdot (m/e)^m \cdot \sqrt{m} \, .
  \]
  Combining these estimates gives the desired result.
\end{proof}

\section{Proof of Proposition~\ref{prop:Density}}
\label{sub:DensityResultProof}
(i) Let $f \in \DenseSpace (\RR^d)$ and
  set $K := \supp \widehat{f} \subset \CalO$.
  For $i \in I$, the set
  ${U_i := \varphi_i^{-1}(\CC \setminus \{0\}) }$ is open.
  Moreover, since $\sum_{i \in I} \varphi_i \equiv 1$ on $\CalO$,
  it follows that $\CalO = \bigcup_{i \in I} U_i$.
  By compactness of $K$, there exists a finite subset $I_K \subset I$
  satisfying $K \subset \bigcup_{\ell \in I_K} U_\ell \subset \bigcup_{\ell \in I_K} Q_\ell$.
  Therefore, for any $i \in I$ satisfying $Q_i \cap K \neq \emptyset$, necessarily
  $\emptyset \neq Q_i \cap K \subset Q_i \cap \bigcup_{\ell \in I_K} Q_\ell$,
  and hence $i \in I_K^\ast := \bigcup_{\ell \in I_K} \ell^\ast$, which is a finite subset of $I$.
  By contraposition,  we have $Q_i \cap K = \emptyset$, and hence
  $\varphi_i \cdot \widehat{f} \equiv 0$, for all $i \in I \setminus I_K^\ast$.

  Next, for each $i \in I_K^\ast$, clearly
  $\varphi_i \cdot \widehat{f} \in C_c^\infty(\CalO) $,
  and thus $\|\Fourier^{-1} (\varphi_i \cdot \widehat{f}\, ) \|_{L^p} < \infty$.
  Therefore, setting
  \(
    M := \max_{i \in I_K^\ast}
           \|\Fourier^{-1} (\varphi_i \cdot \widehat{f} \, ) \|_{L^p}
      <  \infty
  \)
  gives
  \[
    \|f\|_{\DecompSp(\CalQ,L^p,\ell_w^q)}
    = \Big\|
        \big(
          \|
            \Fourier^{-1} (\varphi_i \cdot \widehat{f} \, )
          \|_{L^p}
        \big)_{i \in I}
      \Big\|_{\ell_w^q}
    \leq \big\|
           (
             M
           )_{i \in I_K^\ast}
         \big\|_{\ell_w^q}
    < \infty ,
  \]
  which shows that $f \in \DecompSp(\CalQ,L^p,\ell_w^q)$.
\\\\
(ii) Let $p, q \in [1,\infty)$.
  Recall the notation $C_\Phi = \sup_{i \in I} \|\Fourier^{-1} \varphi_i\|_{L^1}$
  from Definition~\ref{def:BAPU}.
  Let $f \in \DecompSp(\CalQ, L^p, \ell_w^q)$ and $\eps > 0$ be arbitrary.
  Note ${c = (c_i)_{i \in I} \in \ell^q_w (I)}$, where
  $c_i := \|\Fourier^{-1} (\varphi_i \cdot \widehat{f} \,)\|_{L^p}$
  for $i \in I$.
  Since $\|c\|_{\ell_w^q}\vphantom{\sum_{j_i}} = \|f\|_{\DecompSp(\CalQ,L^p,\ell_w^q)} < \infty$
  and since $q < \infty$, there exists a finite set $I_0 = I_0 (\eps,f) \subset I$
  such that the sequence
  $\vphantom{\sum^L}\widetilde{c} := c \cdot \Indicator_{I \setminus I_0}$ satisfies
  \[
    \|\, \widetilde{c} \,\|_{\ell_w^q}
    < \Big(
        C_\Phi \cdot \|\Gamma_\CalQ\|_{\ell_w^q \to \ell_w^q}
      \Big)^{-2}
      \cdot \frac{\eps}{2} \, .
  \]
  For each $i \in I_0^\ast := \bigcup_{\ell \in I_0} \ell^\ast$, let
  $c_i^\ast := (\Gamma_\CalQ \, c)_i = \sum_{\ell \in i^\ast} c_\ell$ and choose
  some $h_i \in \Schwartz(\RR^d)$ such that
  $
    \big\|
      \Fourier^{-1} (\varphi_i^\ast \cdot \widehat{f} \, )
      - h_i
    \big\|_{L^p}
    \leq \delta \cdot c_i^\ast,
  $
  where $\delta := (\eps / 2)
                   \cdot \big(
                           C_{\Phi}
                           \cdot \| \Gamma_{\CalQ} \|_{\ell_w^q \to \ell_w^q}
                         \big)^{-2}
                   \cdot (1+\|c\|_{\ell^q_w})^{-1}$.
  This is possible since we have $p < \infty$, and since if $c_i^\ast = 0$, then
  \(
    \|\Fourier^{-1} (\varphi_i^\ast \cdot \widehat{f} \, )\|_{L^p}
    \leq \sum_{\ell \in i^\ast}
           \| \Fourier^{-1} (\varphi_\ell \widehat{f} \,) \|_{L^p}
    =    c_i^\ast
    = 0
  \).

  Define $g_i := \widehat{h_i} \in \Schwartz(\RHat^d)$ for $i \in I_0^\ast$,
  and $g_i := 0$ for $i \in I \setminus I_0^\ast$.
  We claim that
  \begin{equation}
    \big\|
      \Fourier^{-1} (\varphi_i^\ast \cdot \widehat{f} \, )
      - \Fourier^{-1} g_i
    \big\|_{L^p}
    \leq \Big(
           \Gamma_\CalQ \, \widetilde{c}
           + \delta \cdot \Gamma_\CalQ \, c
         \Big)_{i}
    \label{eq:DensityMainEstimate}
  \end{equation}
  for all $i \in I$.
  To show this, distinguish the two cases $i \in I_0^\ast$ and
  $i \in I \setminus I_0^\ast$.
  In the first case,
  \[
    \big\|
      \Fourier^{-1} (\varphi_i^\ast \cdot \widehat{f} \, ) - \Fourier^{-1} g_i
    \big\|_{L^p}
    = \big\|
        \Fourier^{-1} (\varphi_i^\ast \cdot \widehat{f} \, ) - h_i
      \big\|_{L^p}
    \leq \delta \cdot c_i^\ast = \delta \cdot (\Gamma_\CalQ \, c)_i
  \]
  by choice of $h_i$.
  Since, furthermore, $(\Gamma_\CalQ \, \widetilde{c} \, )_i \geq 0$, the estimate
  \eqref{eq:DensityMainEstimate} holds in the first case.
  For the second case, we have $g_i = 0$.
  Furthermore, $i \notin I_0^\ast$ and thus $\ell \notin I_0$
  for all $\ell \in i^\ast$.
  Therefore,
  \[
    \big\|
      \Fourier^{-1} (\varphi_i^\ast \cdot \widehat{f} \, ) - \Fourier^{-1} g_i
    \big\|_{L^p}
    = \big\|
        \Fourier^{-1} (\varphi_i^\ast \cdot \widehat{f} \, )
      \big\|_{L^p}
    \leq \sum_{\ell \in i^\ast}
         \big[
           \Indicator_{I \setminus I_0} (\ell)
           \cdot \big\|\Fourier^{-1} (\varphi_\ell \cdot \widehat{f} \, ) \big\|_{L^p}
         \big]
    = \big(\, \Gamma_\CalQ \, \widetilde{c} \,\big)_{i} \, .
  \]
  As in the first case, we thus see that estimate
  \eqref{eq:DensityMainEstimate} holds.

  Define $g := \Fourier^{-1} \big( \sum_{i \in I} \varphi_i \cdot g_i \big)$.
  Then $g \in \DenseSpace (\RR^d) $ since $g_i = 0$ for all but finitely many
  $i \in I$.
  Next, note that $\varphi_i \, \varphi_i^\ast = \varphi_i$, and hence
  \[
    \varphi_\ell \cdot \widehat{f - g}
    = \varphi_\ell \cdot \Big(
                           \sum_{i \in I}
                             \varphi_i \cdot \widehat{f}
                           - \sum_{i \in I}
                               \varphi_i \cdot g_i
                         \Big)
    = \varphi_\ell \cdot \sum_{i \in \ell^\ast}
                         \big[
                           \varphi_i \cdot (\varphi_i^\ast \, \widehat{f} - g_i)
                         \big] \, .
  \]
  Using Young's inequality, we thus get
  \begin{align*}
    \| \Fourier^{-1} (\varphi_\ell \cdot \widehat{f-g}) \|_{L^p}
    & \leq \|\Fourier^{-1} \varphi_\ell\|_{L^1}
           \cdot \sum_{i \in \ell^\ast}
                 \bigg(
                   \|\Fourier^{-1} \varphi_i\|_{L^1}
                   \cdot \|
                           \Fourier^{-1} (\varphi_i^\ast \, \widehat{f} \, )
                           - \Fourier^{-1} g_i
                         \|_{L^p}
                 \bigg) \\
    & \leq C_\Phi^2 \cdot \sum_{i \in \ell^\ast}
                          \big(
                            \Gamma_\CalQ \, \widetilde{c}
                            + \delta \cdot \Gamma_{\CalQ} \, c
                          \big)_i
      =   C_\Phi^2 \cdot \Big[
                           \Gamma_\CalQ
                           \big(
                            \Gamma_\CalQ \, \widetilde{c}
                            + \delta \cdot \Gamma_\CalQ \, c
                           \big)
                         \Big]_\ell \, ,
  \end{align*}
  where the last inequality follows by \eqref{eq:DensityMainEstimate}.
  This finally implies
  \[
    \|f - g\|_{\DecompSp(\CalQ,L^p,\ell_w^q)}
    \leq C_\Phi^2 \cdot \|\Gamma_\CalQ \|
                  \cdot \big(
                          \|\Gamma_\CalQ \, \widetilde{c}\|_{\ell_w^q}
                          + \delta \cdot \|\Gamma_\CalQ \, c\|_{\ell_w^q}
                        \big)
    \leq (C_\Phi \, \| \Gamma_\CalQ \| )^2
         \cdot (\|\widetilde{c}\|_{\ell_w^q} + \delta \cdot \|c\|_{\ell_w^q})
    \leq \eps ,
  \]
  which completes the proof of (ii).
\\\\
(iii) Since $\CalQ$ is a decomposition cover, the index set $I$ is countably infinite.
Indeed, the sets $\big( \varphi_i^{-1} (\CC \setminus \{0\}) \big)_{i \in I}$ form an open cover
of $\CalO$.
Since $\CalO$ is second countable, there is a countable $I_0 \subset I$ such that
\(
  \CalO
  \subset \bigcup_{i \in I_0}
            \varphi_i^{-1} (\CC \setminus \{0\})
  \subset \bigcup_{i \in I_0}
            Q_i
\).
Finally, for $i \in I$, we have
\(
  \emptyset
  \neq Q_i
  \subset \CalO
  \subset \bigcup_{\ell \in I_0}
            Q_\ell
\),
and hence $i \in \ell^\ast$ for some $\ell \in I_0$.
In other words, $I \subset \bigcup_{\ell \in I_0} \ell^\ast$ is countable as a countable union
of finite sets.
Finally, if $I$ was finite, then $\sum_{i \in I} \varphi_i \in C_c (\CalO)$,
in contradiction to $\CalO$ being open and to $\sum_{i \in I} \varphi_i \equiv 1$ on $\CalO$.
Thus, we can write $I = \{i_n \colon n \in \NN\}$ for pairwise distinct $(i_n)_{n \in \NN}$.

For each $i \in I$, we have $f_i := \Fourier^{-1} (\varphi_i \, \widehat{f} \,) \in L^p (\RR^d)$
with $\supp \widehat{f_i} \subset \supp \varphi_i \subset U_i$ for the open set
$U_i := (\varphi_i^\ast)^{-1} (\CC \setminus \{0\}) \subset Q_i^\ast \subset \CalO$,
since $\varphi_i^\ast \varphi_i = \varphi_i$.
Now, for each fixed $i \in I$, \cite[Lemma~3.2]{DecompositionEmbedding} yields a sequence
$(f_i^{(n)})_{n \in \NN}$ of Schwartz functions such that $|f_i^{(n)}| \leq |f_i|$ and
$f_i^{(n)} \xrightarrow[n\to\infty]{} f_i$ pointwise, and such that
$\supp \widehat{f_i^{(n)}} \subset B_{1/n} (\supp \varphi_i)$, where
$B_{1/n}(\supp \varphi_i) := \{\xi \in \RHat^d  :  \text{dist}(\xi, \supp \varphi_i) \leq n^{-1} \}$.
By choosing $N_i \in \NN$ with $B_{1/N_i} (\supp \varphi_i) \subset U_i$,
and by replacing $f_i^{(1)}, \dots, f_i^{(N_i)}$ by $f_i^{(N_i)}$,
we get $\supp \widehat{f_i^{(n)}} \subset U_i \subset Q_i^\ast \subset \CalO$
for all $i \in I$ and $n \in \NN$.

Note that we have $f_i^{(n)} \xrightarrow[n \to \infty]{\Schwartz'(\RR^d)} f_i$.
Indeed, if $p < \infty$, then this follows from $f_i^{(n)} \xrightarrow[n\to\infty]{L^p} f_i$,
which is a consequence of the dominated convergence theorem since $|f_i^{(n)}| \leq |f_i| \in L^p$
and $f_i^{(n)} \to f_i$ pointwise.
If $p = \infty$ and $h \in \Schwartz (\RR^d)$, then $f_i^{(n)} \cdot h \to f_i \cdot h$
pointwise, and we have the estimate
$|f_i^{(n)} \cdot h| \leq |f_i \cdot h| \leq \|f_i\|_{L^\infty} \cdot |h| \in L^1$, whence
\(
  \langle f_i^{(n)}, h \rangle_{\Schwartz', \Schwartz}
  \to \langle f_i, h \rangle_{\Schwartz',\Schwartz}
\)
by dominated convergence.

Now, define $g_N := \sum_{n=1}^N f_{i_n}^{(N)} \in \DenseSpace(\RR^d)$.
We first verify that $g_N \to f$ with convergence in $Z' (\CalO)$.
To see this, let $\psi \in Z(\CalO)$ be arbitrary.
Then $\Fourier^{-1} \psi \in C_c^\infty (\CalO)$,
so that $K := \supp \Fourier^{-1} \psi \subset \CalO$ is compact.
Precisely as in the proof of Part (i), we thus see that there is a finite set $I_K \subset I$
such that $Q_i \cap K = \emptyset$ for all $i \in I \setminus I_K$.
Therefore, $U_i \cap K \subset Q_i^\ast \cap K = \emptyset$,
and hence $\widehat{f_i^{(n)}} \cdot \Fourier^{-1} \psi \equiv 0$,
for all $i \in I \setminus I_K^\ast$.
Now, choose $N_0 = N_0 (K) \in \NN$ such that $I_K^\ast \subset \{ i_1,\dots,i_{N_0} \}$.
If $N \geq N_0$, we then have
\begin{align*}
  \langle g_N , \psi \rangle_{Z', Z}
    = \langle
        \widehat{g_N} ,
        \Fourier^{-1} \psi
      \rangle_{\CalD'(\CalO), \CalD}
   = \sum_{n=1}^N
        \langle
          \widehat{f_{i_n}^{(N)}},
          \Fourier^{-1} \psi
        \rangle_{\CalD'(\CalO), \CalD}
   = \sum_{i \in I_K^\ast}
        \langle
          \widehat{f_i^{(N)}}, \Fourier^{-1} \psi
        \rangle_{\CalD'(\CalO), \CalD},
\end{align*}
where the last equality follows since $\{i_1,\dots,i_N\} \supset I_K^\ast$ and
$\widehat{f_i^{(N)}} \cdot \Fourier^{-1} \psi \equiv 0$ for $i \in I \setminus I_K^\ast$.
Next, using that $f_i^{(N)} \to f_i $ in $\Schwartz'$ and noting that
\(
  \Fourier^{-1} \psi
  = \sum_{i \in I} \varphi_i \, \Fourier^{-1} \psi
  = \sum_{i \in I_K^\ast} \varphi_i \, \Fourier^{-1} \psi
\),
we see that
\begin{align*}
\langle g_N , \psi \rangle_{Z', Z}
   \xrightarrow[N\to\infty]{}
     & \sum_{i \in I_K^\ast}
        \langle
          \widehat{f_i} ,
          \Fourier^{-1} \psi
        \rangle_{\CalD'(\CalO), \CalD} \\
  & = \sum_{i \in I_K^\ast}
        \langle
          \varphi_i \widehat{f},
          \Fourier^{-1} \psi
        \rangle_{\CalD'(\CalO), \CalD} \\
  & = \langle \widehat{f}, \Fourier^{-1} \psi \rangle_{\CalD'(\CalO), \CalD} \\
   & = \langle f, \psi \rangle_{Z', Z} \, .
\end{align*}
Thus, $g_N \xrightarrow[N\to\infty]{} f$ with convergence in $Z'(\CalO)$.

Finally, we construct a sequence $F = (F_i)_{i \in I} \in \ell_w^q (I; L^p)$
such that each $g_N$ is $(F,\Phi)$-dominated.
To this end, set $F_i := \sum_{\ell \in i^{\ast \ast}} |\widecheck{\varphi_i}| \ast |f_\ell|$,
where $f_\ell := \Fourier^{-1} (\varphi_\ell \cdot \widehat{f})$.
Note because of $\supp \widehat{f_{i_n}^{(N)}} \subset Q_{i_n}^\ast$ that
$\varphi_i \cdot \widehat{f_{i_n}^{(N)}} \not\equiv 0$ can only hold for $i_n \in i^{\ast\ast}$.
Therefore, since $|f_i^{(m)}| \leq |f_i|$, we get
\begin{align*}
  \big|
    \Fourier^{-1} (\varphi_i \, \widehat{g_N} \,)
  \big|
  & = \bigg|
        \Fourier^{-1}
        \Big(
          \varphi_i
          \cdot \sum_{n \in \underline{N} \; : \; i_n \in i^{\ast \ast}}
                  \widehat{f_{i_n}^{(N)}}
        \Big)
      \bigg|
    \leq \sum_{n \in \underline{N} \; : \; i_n \in i^{\ast \ast}}
           \big|
             \Fourier^{-1} (\varphi_i \cdot \widehat{f_{i_n}^{(N)}})
           \big| \\
&    \leq \sum_{\ell \in i^{\ast \ast}}
           |\widecheck{\varphi_i}| \ast |f_\ell|
    =    F_i \, .
\end{align*}
Finally, setting $c = (c_i)_{i \in I}$ with
$c_i := \|\Fourier^{-1} (\varphi_i \, \widehat{f} \, )\|_{L^p}$, we see
because of $i^{\ast \ast} = \bigcup_{j \in i^\ast} j^\ast$ that
\[
  \|F_i\|_{L^p}
  \leq \sum_{\ell \in i^{\ast \ast}}
         \big\| |\Fourier^{-1} \varphi_i| \ast |f_\ell| \big\|_{L^p}
  \leq \sum_{j \in i^\ast}
         \sum_{\ell \in j^{\ast}}
           \| \Fourier^{-1} \varphi_i \|_{L^1} \cdot \| f_\ell \|_{L^p}
  \leq C_\Phi \cdot (\Gamma_{\CalQ} \, \Gamma_{\CalQ} \, c)_i \, .
\]
Thus, $F \in \ell_w^q(I; L^p)$ with
$\|F\|_{\ell_w^q (I; L^p)}
 \leq C_\Phi \, \|\Gamma_{\CalQ}\|_{\ell_w^q \to \ell_w^q}^2
      \cdot \|f\|_{\DecompSp(\CalQ,L^p,\ell_w^q)}$,
since $\|f\|_{\DecompSp(\CalQ,L^p,\ell_w^q)} = \|c\|_{\ell_w^q}$.
\hfill$\square$

\section{Proof of Proposition~\ref{prop:FourierMultiplierBound}}
\label{sec:FourierMultiplierProof}

Before proving Proposition~\ref{prop:FourierMultiplierBound},
we first collect a few properties of the ``generalized multiplication operation'' $\odot$
introduced in Definition~\ref{def:SpecialMultiplication}.

\begin{lemma}\label{lem:SpecialMultiplicationProperties}
  Let $p \in [1,\infty]$.
  For $f,g \in \Fourier L^1(\R^d)$ and $h \in \Fourier L^p(\R^d)$, the following properties hold:
  \begin{enumerate}
    \item[(i)] $f \odot (g \odot h) = (f \odot g) \odot h$.

    \item[(ii)] If $f \in \Schwartz(\RHat^d)$, then $f \odot h = f \cdot h$.

    \item[(iii)] If $p \in [1,2]$, then $f \odot h = f \cdot h$.

    \item[(iv)] We have $\supp (f \odot h) \subset \supp f \cap \supp h$,
          where the support is understood in the sense of tempered distributions.
  \end{enumerate}
\end{lemma}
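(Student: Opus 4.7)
The plan is to reduce each assertion to a property of convolutions on ordinary Lebesgue spaces, combined with density of $\Schwartz$ in $L^p$ (for $p < \infty$) where needed. Throughout, set $a := \Fourier^{-1} f \in L^1(\R^d)$, $b := \Fourier^{-1} g \in L^1(\R^d)$, and $c := \Fourier^{-1} h \in L^p(\R^d)$.

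Parts (i) and (ii) are essentially immediate. For (i), Young's inequality places $b \ast c$, $a \ast (b \ast c)$, and $(a \ast b) \ast c$ in $L^p$, and Fubini---applied to the almost-everywhere absolutely convergent triple integral $\iint a(y)\, b(z)\, c(\mybullet - y - z)\,dy\,dz$---yields $a \ast (b \ast c) = (a \ast b) \ast c$ in $L^p$; applying $\Fourier$ gives the claim. For (ii), since $f \in \Schwartz(\RHat^d)$ forces $a \in \Schwartz(\R^d)$, the convolution $a \ast c$ of a Schwartz function with the tempered distribution $c \in L^p \subset \Schwartz'(\R^d)$ is a tempered smooth function, and the classical convolution theorem (see \cite[Theorem~7.19]{RudinFunctionalAnalysis}) gives $\Fourier[a \ast c] = \widehat a \cdot \widehat c = f \cdot h$.

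For (iii), fix $p \in [1,2]$ and choose $h_n \in \Schwartz(\RHat^d)$ with $\Fourier^{-1} h_n \to c$ in $L^p$ (using density of $\Schwartz$ in $L^p$). Commutativity of the convolution $L^1 \ast L^p \to L^p$ gives $f \odot h_n = h_n \odot f$, and applying (ii) combined with the commutativity of ordinary multiplication yields $h_n \odot f = h_n \cdot f = f \cdot h_n$. As $n \to \infty$, Young's inequality implies $\Fourier^{-1}(f \odot h_n) \to \Fourier^{-1}(f \odot h)$ in $L^p$, while Hausdorff--Young gives $h_n \to h$ in $L^{p'}(\RHat^d)$; since $f \in L^\infty(\RHat^d)$, also $f \cdot h_n \to f \cdot h$ in $L^{p'}$. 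Both convergences pass to $\Schwartz'(\RHat^d)$ via the $L^p$--$L^{p'}$ duality, and the limits must coincide. Thus $f \odot h = f \cdot h$ as tempered distributions, and as both sides lie in $L^{p'}$, the identity holds almost everywhere.

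For (iv), I establish $\supp(f \odot h) \subset \supp h$ and $\supp(f \odot h) \subset \supp f$ separately by approximation. Fix an open set $U$ and $\phi \in C_c^\infty(U)$. First, assume $U \cap \supp h = \emptyset$ and pick Schwartz $\tilde f_n$ with $\Fourier^{-1} \tilde f_n \to a$ in $L^1$. By (ii), $\tilde f_n \odot h = \tilde f_n \cdot h$, and $\langle \tilde f_n \cdot h, \phi \rangle = \langle h, \tilde f_n \, \phi \rangle = 0$ because $\tilde f_n \, \phi \in C_c^\infty(U)$. Young's inequality yields $\tilde f_n \odot h \to f \odot h$ in $\Fourier L^p$, hence in $\Schwartz'$ (with the convention $p' = 1$ when $p = \infty$ so that the $L^p$--$L^{p'}$ duality still applies), giving $\langle f \odot h, \phi \rangle = 0$. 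Next, assume $U \cap \supp f = \emptyset$; since $f = \widehat a \in C_0(\RHat^d)$, we have $f|_U \equiv 0$ pointwise. Direct Schwartz approximation of $h$ fails when $p = \infty$, so I cut off $c$ on the spatial side: choose $\chi_n \in C_c^\infty(\R^d)$ with $|\chi_n| \leq 1$ and $\chi_n \to 1$ pointwise, and set $c_n := c \cdot \chi_n \in L^1(\R^d) \cap L^p(\R^d)$, $h_n := \widehat{c_n}$. Dominated convergence gives $a \ast c_n \to a \ast c$ pointwise with dominating function $|a| \ast |c| \in L^p$, so $f \odot h_n \to f \odot h$ in $\Schwartz'$. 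Part~(iii) applied with exponent $1$ yields $f \odot h_n = f \cdot h_n$, which vanishes pointwise on $U$ since $f$ does; passing to the limit yields $\langle f \odot h, \phi \rangle = 0$. The main obstacle throughout is keeping track of the various topologies (convergence in $\Fourier L^p$, in $L^{p'}$ via Hausdorff--Young, and in $\Schwartz'$), with the endpoint $p = \infty$ of (iv) being the most delicate point, forcing the spatial cut-off in place of the unavailable density of $\Schwartz$ in $L^\infty$.
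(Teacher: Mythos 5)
Your proof is correct, and for parts~(i) and (ii) it matches the paper's approach exactly (Fubini and the classical convolution theorem). For part~(iii), the route differs slightly: you reduce to (ii) via commutativity of convolution, namely $f \odot h_n = h_n \odot f = h_n \cdot f$ for Schwartz approximants $h_n$, and then pass to the limit via Young and Hausdorff--Young; the paper instead proves the identity $\widehat{\varphi\ast\psi} = \widehat{\varphi}\cdot\widehat{\psi}$ for $\varphi \in L^1$, $\psi \in L^p$ directly by a single density/continuity argument in both variables. Both are equally valid and essentially the same amount of work.

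Part~(iv) is where your argument genuinely diverges. The paper exploits the algebraic structure already established: given $\varphi$ supported away from $\supp f$, it picks a bump $\psi \in C_c^\infty$ with $\varphi = \psi\varphi$ and $\supp\psi \cap \supp f = \emptyset$, and computes in one line using (i) and (ii),
\[
  \psi \cdot (f\odot h) = \psi \odot (f\odot h) = (\psi\odot f)\odot h = (\psi\cdot f)\odot h = 0,
\]
so that $\langle f\odot h, \varphi\rangle = \langle \psi\cdot(f\odot h), \varphi\rangle = 0$; the inclusion $\supp(f\odot h) \subset \supp h$ then follows similarly (after commuting $f$ and $\psi$). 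Your approach proves the two inclusions by two separate density arguments: Schwartz approximation of $f$ on the $\Fourier L^1$ side, and a spatial cut-off $c_n = c\chi_n$ of $h$ to get around the failure of density in $L^\infty$. Both arguments are correct; yours is longer and more delicate (you have to track several different modes of convergence and handle the endpoint $p=\infty$ by hand), whereas the paper's is shorter because it pushes the analytic work into (i)--(iii) and treats (iv) purely algebraically. The pay-off of your route is that it does not rely on (i), so it would survive if one only had (ii) and (iii) available; but in the setting of this lemma that freedom buys nothing, and the paper's proof is the more economical one.
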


\begin{proof}
  (i) Note that $\widecheck{f},\widecheck{g} \in L^1(\R^d)$ and $\widecheck{h} \in L^p(\R^d)$.
  Thus, Young's inequality shows for almost all $x \in \R^d$ that
  $(|\widecheck{f}| \ast (|\widecheck{g}| \ast |\widecheck{h}|))(x) < \infty$.
  For each such $x$, a standard calculation using Fubini's theorem shows
  $((\widecheck{f} \ast \widecheck{g}) \ast \widecheck{h})(x) = (\widecheck{f} \ast (\widecheck{g} \ast \widecheck{h}))(x)$.
  Hence, both sides are identical as tempered distributions.
  Thus, $(f \odot g) \odot h = f \odot (g \odot h)$.
\\\\
  (ii) This was already observed in Remark~\ref{rem:SpecialMultiplication}.
\\\\
 (iii)
  It is well-known that if $p \in [1,2]$, then
  $\widehat{\varphi \ast \psi} = \widehat{\varphi} \cdot \widehat{\psi}$
  for $\varphi \in L^1(\R^d)$ and $\psi \in L^p(\R^d)$.
  Indeed, for $\varphi,\psi \in \Schwartz(\R^d)$, the identity is clear;
  furthermore, it follows from the Hausdorff-Young inequality that as elements of $L^{p'}(\R^d)$,
  both sides of the identity depend continuously on $\varphi \in L^1(\R^d)$ and $\psi \in L^p(\R^d)$.
  Therefore, $f \odot h = \Fourier[\widecheck{f} \ast \widecheck{h}] =  f \cdot h$.
\\\\
  (iv) Let $\varphi \in C_c^\infty(\RHat^d)$ with
  $\supp \varphi \subset \RHat^d \setminus \supp f$.
  There is $\psi \in C_c^\infty (\RHat^d)$ with
  $\varphi = \varphi \cdot \psi$ and $\supp \psi \subset \RHat^d \setminus \supp f$.
  Furthermore, by combining Properties (i) and (ii), we see that
  \[
    \psi \cdot (f \odot h)
    = \psi \odot (f \odot h)
    = (\psi \odot f) \odot h
    = (\psi \cdot f) \odot h
    = 0.
  \]
  Because of $\varphi = \psi \cdot \varphi$, this entails
  \(
    \langle f \odot h, \varphi \rangle_{\Schwartz',\Schwartz}
    = \langle \psi \cdot (f \odot h) , \varphi \rangle_{\Schwartz',\Schwartz}
    = 0
  \).
  Since this holds for every $\varphi \in C_c^\infty (\RHat^d)$ with
  $\supp \varphi \subset \RHat^d \setminus \supp f$, we see $\supp (f \odot h) \subset \supp f$.
  The argument for $\supp (f \odot h) \subset \supp h$ is similar.
\end{proof}

With this preparation, we can now provide the proof of Proposition~\ref{prop:FourierMultiplierBound}.

\begin{proof}[Proof of Proposition \ref{prop:FourierMultiplierBound}]
  Before proving the claims, we show that $\Phi_h$ is well-defined,
  with unconditional convergence in $Z'(\CalO)$ of the defining series.
  For brevity, let
  $\psi_i := \Fourier^{-1}[(\varphi_i^\ast h) \odot (\varphi_i \widehat{f})] \in \Schwartz'(\R^d)$.
  This is well-defined since \eqref{eq:TameCondition}
  implies $\varphi_i \, h \in \Fourier L^1$, and
  $\varphi_i^\ast \, h = \sum_{\ell \in i^\ast} \varphi_\ell \, h \in \Fourier L^1(\R^d)$.
  \smallskip{}

  Since $\Fourier : Z'(\CalO) \to \CalD'(\CalO)$ is an isomorphism, it is enough to show that
  the series $\sum_{i \in I} \Fourier \psi_i$ converges unconditionally in $\CalD'(\CalO)$.
  To see this, note that $\supp \widehat{\psi_i} \subset \supp \varphi_i \subset \overline{Q_i}$
  for all $i \in I$, by Property (iv) of Lemma~\ref{lem:SpecialMultiplicationProperties}.
  Therefore, $\sum_{i \in I} \Fourier \psi_i$ converges unconditionally in
  $\CalD'(\CalO)$ as a locally finite%
  \footnote{Here, we use that if $\xi_0 \in \CalO$ is arbitrary, then $\xi_0 \in Q_\ell$
  for some $\ell \in I$ and hence $\varphi_\ell^\ast (\xi_0) = 1$.
  Thus, ${U := \{ \xi \in \CalO \colon |\varphi_\ell^\ast (\xi)| > 1/2 \}} \subset Q_\ell^\ast$
  is an open neighborhood of $\xi_0$; finally, if $U \cap \overline{Q_i} \neq \emptyset$,
  then also $U \cap Q_i \neq \emptyset$ and hence
  $i \in \ell^{\ast \ast} = \bigcup_{j \in \ell^\ast} j^\ast$,
  proving that the family $( \, \overline{Q_i} \, )_{i \in I}$ is locally finite on $\CalO$.}
  sum of (tempered) distributions.
\\\\
  (ii)  As above, let $\psi_i^{(n)} := \Fourier^{-1}[(\varphi_i^\ast h) \odot (\varphi_i \widehat{f_n})]$.
  Note that $\widehat{f_n} \to \widehat{f}$ in $\CalD'(\CalO)$, since $f_n \to f$ in $Z'(\CalO)$.
  Thus, setting $e_x : \RHat^d \to \CC, \xi \mapsto e^{2 \pi i \langle x,\xi \rangle}$
  for $x \in \R^d$, an application of \cite[Theorem 7.23]{RudinFunctionalAnalysis}
  shows that
  \[
    \Fourier^{-1} (\varphi_i \widehat{f} \, ) (x)
    = (\varphi_i \widehat{f} \, )(e_x)
    = \widehat{f} (\varphi_i e_x)
    = \lim_{n \to \infty} \widehat{f_n} (\varphi_i e_x)
    = \lim_{n \to \infty} \Fourier^{-1} (\varphi_i \widehat{f_n}) (x)
  \]
  for all $i \in I \text{ and } x \in \R^d$. Therefore, using that
  \(
    \langle F \ast G, \varphi \rangle_{\Schwartz',\Schwartz}
    = \int_{\R^d} G(x) \cdot (\varphi \ast \widetilde{F})(x) \, dx
  \)
  with $\widetilde{F}(x) = F(-x)$ for $ F \in L^1, G \in L^p$,
  and the estimate $|\Fourier^{-1}(\varphi_i \widehat{f_n})| \leq F_i \in L^p (\R^d)$,
  we get by the dominated convergence theorem
  \begin{equation}
    \begin{split}
      \langle \psi_i^{(n)}, \varphi \rangle_{\Schwartz',\Schwartz}
      & = \langle
            \Fourier^{-1}(\varphi_i^\ast h) \ast \Fourier^{-1} (\varphi_i \widehat{f_n}),
            \varphi
          \rangle_{\Schwartz',\Schwartz}
        = \int_{\R^d}
            \Fourier^{-1}(\varphi_i \widehat{f_n})(x)
            \cdot (\varphi \ast \widehat{\varphi_i^\ast h})(x)
          \, d x \\
      & \xrightarrow[n\to\infty]{}
        \int_{\R^d}
          \Fourier^{-1}(\varphi_i \widehat{f})(x)
          \cdot (\varphi \ast \widehat{\varphi_i^\ast h})(x)
        \, d x
        = \langle \psi_i, \varphi \rangle_{\Schwartz',\Schwartz}
    \end{split}
    \label{eq:FourierMultiplierDominatednessMainIdentity}
  \end{equation}
  for all $\varphi \in \Schwartz(\R^d)$ and $i \in I$.
  Here, we used that
  $\varphi \ast \widehat{\varphi_i^\ast h} \in L^1(\R^d) \cap L^\infty (\R^d) \subset L^{p'} (\R^d)$.

  \smallskip{}

  Now, let $\varphi \in Z(\CalO)$ be arbitrary, so that $\Fourier^{-1} \varphi \in C_c^\infty(\CalO)$.
  Then there is a finite set $I_\varphi \subset I$ such that
  $\supp \Fourier^{-1} \varphi \subset \overline{Q_i}^c$ for all $i \in I \setminus I_\varphi$.
  Since $\supp \Fourier \psi_i \subset \overline{Q_i}$
  and $\supp \Fourier \psi_i^{(n)} \subset \overline{Q_i}$,
  this implies
  \(
    \langle \psi_i , \varphi \rangle_{Z',Z}
    = \langle \Fourier \psi_i, \Fourier^{-1} \varphi \rangle_{\CalD', C_c^\infty}
    = 0
  \)
  for all $i \in I \setminus I_\varphi$. The same holds for $\psi_i$ replaced by $\psi_i^{(n)}$.
  Thus,
  \[
    \langle \Phi_h f_n, \varphi \rangle_{Z',Z}
    = \sum_{i \in I_\varphi}
        \langle \psi_i^{(n)}, \varphi \rangle_{\Schwartz', \Schwartz}
    \xrightarrow[n\to\infty]{} \sum_{i \in I_\varphi}
                                 \langle \psi_i, \varphi \rangle_{\Schwartz', \Schwartz}
    = \langle \Phi_h f, \varphi \rangle_{Z',Z} .
  \]
  This shows that $\Phi_h f_n \to \Phi_h f$ with convergence in $Z'(\CalO)$.

  Finally, we see for $\ell \in I$ directly by definition of $\psi_i^{(n)}$ and by
  definition of the ``extended multiplication'' $\odot$ that
  \begin{align*}
    \Fourier^{-1}(\varphi_\ell \, \widehat{\psi_i^{(n)}})
    & = \Fourier^{-1} \varphi_\ell
        \ast \Fourier^{-1}(\varphi_i^\ast h)
        \ast \Fourier^{-1}(\varphi_i \, \widehat{f_n})
      = \Fourier^{-1}(\varphi_i^\ast h)
        \ast \Fourier^{-1} (\varphi_i \, \varphi_\ell \, \widehat{f_n}) \\
    & = \Fourier^{-1}(\varphi_i^\ast h)
        \ast \Fourier^{-1} (\varphi_i)
        \ast \Fourier^{-1}(\varphi_\ell \widehat{f_n}).
  \end{align*}
  This shows that
  \(
    \vphantom{\sum_j}
    \Fourier^{-1} (\varphi_\ell \, \widehat{\psi_i^{(n)}})
    = \Fourier^{-1}(\varphi_i^\ast \, h)
      \ast \Fourier^{-1}(\varphi_i \, \varphi_\ell \, \widehat{f_n})
    = 0
  \)
  if $\ell \in I \setminus i^\ast$, since then $\varphi_i \, \varphi_\ell \equiv 0$.
  Therefore, since $|\Fourier^{-1}(\varphi_\ell \, \widehat{f_n} \,)| \leq F_\ell$, we see
  \begin{align*}
    |\Fourier^{-1}(\varphi_\ell \cdot \widehat{\Phi_h f_n})|
    & \leq \sum_{i \in \ell^\ast}
             |\Fourier^{-1}(\varphi_\ell \cdot \widehat{\psi_i^{(n)}})|
      \leq \sum_{i \in \ell^\ast}
             |\Fourier^{-1}(\varphi_i^\ast h)|
             \ast |\Fourier^{-1}(\varphi_i)|
             \ast |\Fourier^{-1}(\varphi_\ell \widehat{f_n})| \\
    & \leq \sum_{i \in \ell^\ast}
             |\Fourier^{-1}(\varphi_i^\ast h)| \ast |\Fourier^{-1}(\varphi_i)| \ast F_\ell
      =: G_\ell .
  \end{align*}
  In view of Young's inequality, we see
  \[
    \|G_\ell\|_{L^p}
    \leq \sum_{i \in \ell^\ast}
           \| \Fourier^{-1}(\varphi_i^\ast h)\|_{L^1} \,
           \|\Fourier^{-1}(\varphi_i)\|_{L^1} \,
           \|F_\ell\|_{L^p}
    \leq N_\CalQ^2 C_{\Phi} C_h \cdot \|F_\ell\|_{L^p},
  \]
  and hence $\|G\|_{\ell_w^q(I;L^p)} \leq N_\CalQ^2 C_{\Phi} C_h \cdot \|F\|_{\ell_w^q(I;L^p)} < \infty$,
  so that indeed each $\Phi_h f_n$ is $(G,\Phi)$-dominated.
\\\\  (i) By applying Property (ii) to the constant sequence given by $f_n = f$ for all $n \in \N$
  and with $F_i := |\Fourier^{-1}(\varphi_i \widehat{f})|$, we see that
  $\Phi_h f$ is $(G,\Phi)$-dominated for a function $G \in \ell_w^q(I;L^p)$ satisfying
  \(
    \|G\|_{\ell_w^q(I;L^p)}
    \leq N_\CalQ^2 C_{\Phi} C_h \cdot \|F\|_{\ell_w^q(I;L^p)}
    =    N_\CalQ^2 C_{\Phi} C_h \cdot \|f\|_{\DecompSp(\CalQ,L^p,\ell_w^q)}
  \).
  This proves the claim.
\\\\
  (iii) If $\widehat{f} \in C_c(\CalO)$, then $\varphi_i \, \widehat{f} \in C_c(\CalO) \subset L^2(\RHat^d)$,
  so that
  \(
    (\varphi_i^\ast h) \odot (\varphi_i \widehat{f})
    = (\varphi_i^\ast h) \cdot (\varphi_i \widehat{f})
    = \varphi_i \cdot h \widehat{f}
  \);
  see Lemma~\ref{lem:SpecialMultiplicationProperties}(iii).
  Since $h \widehat{f} \in C_c(\CalO)$, it follows
  \(
    h \widehat{f} = \sum_{i \in I} [ \varphi_i \cdot h \widehat{f} ]
  \),
  where only finitely many terms do not vanish.
  Hence, by definition of $\Phi_h f$,
  \[
    \Phi_h f
    = \sum_{i \in I}
        \Fourier^{-1} [(\varphi_i^\ast h) \odot (\varphi_i \widehat{f})]
    = \Fourier^{-1}
      \Big[
        \sum_{i \in I}
          \varphi_i \cdot h \widehat{f} \,
      \Big]
    = \Fourier^{-1} (h \cdot \widehat{f}).
  \]
\\\\
  (iv) We have
  \begin{align*}
    \big\| \Fourier^{-1} \big( \varphi_i \cdot (g \cdot h) \big) \big\|_{L^1}
    &= \| \Fourier^{-1} (\varphi_i g \cdot \varphi_i^\ast h) \|_{L^1}
    \leq \sum_{\ell \in i^\ast}
           \|\Fourier^{-1}(\varphi_i \, g)\|_{L^1}
           \cdot \|\Fourier^{-1}(\varphi_\ell \, h)\|_{L^1} \\
  &  \leq N_\CalQ \, C_g \, C_h < \infty,
  \end{align*}
  so that $g \cdot h$ is tame.
  Part (iii) shows for $f \in \DenseSpace (\R^d)$ that $\Phi_g f = \Fourier^{-1}(g \widehat{f})$,
  which in particular implies $\Fourier [\Phi_g f] \in C_c (\CalO)$.
  Thus, by Part (iii) again,
  \(
    \Phi_h \Phi_g f
    = \Fourier^{-1} [h \cdot \Fourier[\Phi_g f]]
    = \Fourier^{-1} (h g \widehat{f})
    = \Phi_{g h} f
  \).
  Finally, for arbitrary $f \in \DecompSp(\CalQ,L^p,\ell_w^q)$, Proposition~\ref{prop:Density}
  yields a sequence $(f_n)_{n \in \N} \subset \DenseSpace(\RR^d)$ which is $(F,\Phi)$-dominated
  for some $F \in \ell_w^q (I;L^p)$ and such that $f_n \to f$ in $Z'(\CalO)$.
  By Part (ii), this implies $\Phi_{g h} f_n \to \Phi_{g h} f$ and $\Phi_g f_n \to \Phi_g f$
  in $Z'(\CalO)$. Furthermore, there is $G \in \ell_w^q(I; L^p)$ such that
  each $\Phi_{g} f_n$ is $(G,\Phi)$-dominated.
  Thus, a final application of Part (ii) implies
  \[
    \Phi_{g h} f
    = \lim_{n \to \infty} \Phi_{g h} f_n
    = \lim_{n \to \infty} \Phi_h [\Phi_g f_n]
    = \Phi_h [\Phi_g f],
  \]
  which completes the proof.
\end{proof}

\section{Other auxiliary results}
\label{sec:miscellaneous}

\subsection{An estimate for
            \texorpdfstring{the series $\sum_{k \in \ZZ^d} (1 + |\eta + A k|)^{-(d+1)}$}
                           {a certain series}}
\label{sub:series_estimate}

\begin{lemma}\label{lem:series_estimate}
 For $\eta \in \RR^d$ and $A \in \GL(d, \RR)$,
 \[
   \sum_{k \in \ZZ^d} (1 + |\eta + A k|)^{-(d+1)}
   \leq (d+1) \cdot 2^{1+2d} \cdot \max \{1, \|A^{-1} \|^{d+1} \} \, .
 \]
\end{lemma}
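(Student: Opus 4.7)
\medskip

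The plan is to reduce the estimate, by an affine change of variables, to the case $A = \identity$ and then compare the sum to an integral that can be evaluated using Lemma~\ref{lem:BoxPolarCoordinateSubstitute}.

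The first step is to get rid of the matrix $A$. Writing $\xi := A^{-1} \eta$, so that $\eta + A k = A(\xi + k)$, the estimate $|v| = |A^{-1} A v| \le \|A^{-1}\| \cdot |Av|$ yields $|\eta + A k| \ge \|A^{-1}\|^{-1} |\xi + k|$. A brief case distinction on whether $\|A^{-1}\|^{-1} \le 1$ or not then gives
\[
  1 + |\eta + A k|
  \ge \min \{ 1,\, \|A^{-1}\|^{-1} \} \cdot (1 + |\xi + k|)
  = \frac{1 + |\xi + k|}{\max \{ 1,\, \|A^{-1}\| \}} .
\]
Raising this to the power $-(d+1)$ reduces the claim to proving that $\sum_{k \in \ZZ^d} (1 + |\xi + k|)^{-(d+1)} \le (d+1)\, 2^{1+2d}$ uniformly in $\xi \in \RR^d$.

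For this, I would pass to the $\ell^\infty$ norm. Since $|y| \ge \|y\|_\infty$, one has $1 + |y| \ge \max\{1, \|y\|_\infty\}$, so it suffices to estimate $\sum_{k \in \ZZ^d} g(\xi + k)$ where $g(y) := (\max\{1, \|y\|_\infty\})^{-(d+1)}$. The key observation is that, for $\|h\|_\infty \le 1/2$, the elementary inequality $\max\{1, \|y\|_\infty + 1/2\} \le (3/2) \max\{1, \|y\|_\infty\}$ (easily verified by distinguishing $\|y\|_\infty \lessgtr 1$) gives
\[
  g(y) \le (3/2)^{d+1} g(y + h) \qquad \forall \, \|h\|_\infty \le 1/2.
\]
Averaging over $h \in [-1/2, 1/2]^d$ and summing over $k \in \ZZ^d$ therefore yields
\[
  \sum_{k \in \ZZ^d} g(\xi + k)
  \le (3/2)^{d+1} \sum_{k \in \ZZ^d} \int_{[-1/2, 1/2]^d} g(\xi + k + h)\,dh
  = (3/2)^{d+1} \int_{\RR^d} g(y)\,dy.
\]
Applying Lemma~\ref{lem:BoxPolarCoordinateSubstitute} with $c = 1$ and $\alpha = d+1$ evaluates the integral as $\int_{\RR^d} g(y)\,dy = \frac{2^d}{1 - d/(d+1)} = 2^d (d+1)$. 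Hence
\[
  \sum_{k \in \ZZ^d} (1 + |\xi + k|)^{-(d+1)}
  \le (3/2)^{d+1} \cdot 2^d (d+1)
  = \frac{(d+1) \cdot 3^{d+1}}{2}.
\]
A final comparison of constants, using $3^{d+1} \le 4^{d+1} = 2^{2d+2}$, gives the bound $(d+1) \cdot 2^{1+2d}$, completing the proof after multiplying by $\max\{1, \|A^{-1}\|^{d+1}\}$.

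There is no real obstacle here; the only small technical point is getting the right comparison constant $(3/2)^{d+1}$ when passing from the sum to the integral, which requires the slightly nontrivial observation that the function $g$ has bounded multiplicative oscillation on unit cubes. Everything else is bookkeeping.
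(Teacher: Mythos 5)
Your proof is correct, and it takes a genuinely (if mildly) different route from the paper's. Both arguments reduce to $A = \identity$ via the same $\|A^{-1}\|$ estimate and both pass to the $\ell^\infty$ norm, but they diverge in how the resulting lattice sum is controlled. The paper exploits $\ZZ^d$-periodicity of $\Theta(x) = \sum_k (1 + |x+k|)^{-(d+1)}$ to reduce to $x \in [0,1)^d$, bounds $\Theta(x)$ by $2^{d+1} \sum_k (1 + \|k\|_\infty)^{-(d+1)}$, and then evaluates that fixed lattice sum directly via a discrete layer-cake (distribution-function) computation. You instead observe that $g(y) = (\max\{1,\|y\|_\infty\})^{-(d+1)}$ has multiplicative oscillation at most $(3/2)^{d+1}$ over cubes of side $1$, which turns the shifted lattice sum into a Riemann-sum comparison with $\int_{\RR^d} g$, and then evaluates that integral via Lemma~\ref{lem:BoxPolarCoordinateSubstitute} (already proved in the paper). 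Your approach recycles the existing integral lemma instead of redoing a layer-cake argument for the lattice sum, and sidesteps the periodicity reduction entirely; the paper's route avoids the oscillation estimate. Both land on the same constant $(d+1)\,2^{1+2d}$ (your intermediate $(d+1)\,3^{d+1}/2$ is in fact slightly sharper before rounding $3 \le 4$). The computations — the oscillation inequality, the averaging step, and the final constant comparison — all check out.
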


\begin{proof}
 First, note that the function
 \(
   \Theta : \RR^d \to [0,\infty], \;
            x \mapsto \sum_{k \in \ZZ^d} (1 + |x+k|)^{-(d+1)}
 \)
 is $\ZZ^d$-periodic, and hence
 $\|\Theta\|_{\sup} = \|\Theta|_{[0,1)^d}\|_{\sup}$.
 For $x \in [0,1)^d$, we have
 \(
   \|k\|_{\infty}
   \leq 1 + \|x+k\|_{\infty}
   \leq 1 + |x+k| \, ,
 \)
 and thus $1 + \|k\|_{\infty} \leq 2 (1 + |x+k|)$.
 Therefore, $\Theta(x) \leq 2^{d+1} \cdot \sum_{k \in \ZZ^d}
                                             (1 + \|k\|_{\infty})^{-(d+1)}$.
 In order to estimate this last term, we rewrite it using \cite[Proposition~6.24]{FollandRA}
 as
 \[
   \sum_{k \in \mathbb{Z}^d} (1 + \| k \|_{\infty} )^{-(d+1)}
   = \int_0^\infty
       | \{ k \in \ZZ^d \; \colon \; (1+\|k\|_{\infty})^{-(d+1)} > \lambda \}|
     \; d\lambda .
 \]
 Let $f : \ZZ^d \to (0,1] , k \mapsto (1 + \|k\|_{\infty})^{-(d+1)}$.
 For $\lambda \geq 1$, clearly $\{k \in \ZZ^d \,:\, f(k) > \lambda\} = \emptyset$.
 In contrast, for $\lambda \in (0,1)$,
 \begin{align*}
   \{ k \in \ZZ^d \,:\, f(k) > \lambda \}
   & \subset \,\, \{
                    k \in \ZZ^d
                    \,:\,
                    \|k\|_{\infty} \leq \lambda^{-1/(d+1)} - 1
                  \} \\
   & \subset \,\, \Big\{
                    k \in \ZZ^d
                    \,:\,
                    \forall \, n \in \underline{d}:
                      k_n \in \big\{
                                - \lfloor \lambda^{-1/(d+1)} - 1 \rfloor,
                                \dots,
                                  \lfloor \lambda^{-1/(d+1)} - 1 \rfloor
                              \big\}
                  \Big\}
   \, ,
 \end{align*}
 and thus
 \(
   |\{k \in \ZZ^d \,:\, f(k) > \lambda \}|
   \leq \big( 1 + 2 \lfloor \lambda^{-1/(d+1)} - 1 \rfloor \big)^d
   \leq 2^d \cdot \lambda^{-d/(d+1)}
 \),
 which  implies
 \[
   \Theta (x)
   \leq 2^{d+1} \sum_{k \in \ZZ^d}
                  (1 + \|k\|_{\infty})^{-(d+1)}
   \leq 2^{1+2d} \int_0^1 \! \lambda^{-\frac{d}{d+1}} \, d\lambda
   =    (d+1) \cdot 2^{1+2d}
 \]
 for all $x \in [0,1)^d$, whence $\Theta(x) \leq (d+1) \cdot 2^{1 + 2d}$ for all $x \in \RR^d$.

 Now, let $A \in \GL(d,\R)$ be arbitrary.
 Then
 \[
   1 + |k + A^{-1} \eta|
   \leq 1 + \|A^{-1}\| \cdot |A (k + A^{-1} \eta)|
   \leq \max \big\{1, \|A^{-1}\| \big\}
        \cdot \big( 1 + |A (k + A^{-1} \eta)| \big) \, ,
 \]
 and hence
 $\big( 1 + |\eta + Ak| \big)^{-(d+1)}
  = \big( 1 + |A(k + A^{-1} \eta)| \big)^{-(d+1)}
  \leq \max \big\{ 1 , \|A^{-1}\|^{d+1} \}
       \cdot \big( 1 + |k + A^{-1} \eta| \big)^{-(d+1)}$.
 Overall, we see for arbitrary $\eta \in \RR^d$ and $A \in \GL(d, \R)$ that
 \begin{equation}
   \begin{split}
     \sum_{k \in \ZZ^d} \big( 1 \!+\! |\eta + A k| \big)^{-(d+1)}
     & \leq \max \big\{ 1, \|A^{-1}\|^{d+1} \big\}
            \cdot \sum_{k \in \ZZ^d}
                    \big( 1 + |k \!+\! A^{-1} \eta| \big)^{-(d+1)} \\
     & = \max \big\{ 1, \|A^{-1}\|^{d+1} \big\}
         \cdot \Theta(A^{-1} \eta) \\
         &
       \leq (d \!+\! 1) \cdot 2^{1+2d} \cdot \max \big\{ 1, \|A^{-1}\|^{d+1} \big\} ,
   \end{split}
   \label{eq:GeneralDilatedSeriesEstimate}
 \end{equation}
 finishing the proof.
 \end{proof}

As a corollary, we get the following estimate for the series where we sum over
$k \in \ZZ^d \setminus \{0\}$ instead of $k \in \ZZ^d$.

\begin{corollary}\label{cor:nonzero_series_estimate}
  For $\eta \in \RR^d$ and $A \in \GL (d,\RR)$, we have
  \[
    \sum_{k \in \ZZ^d \setminus \{0\}}
      (1 + |\eta + A k|)^{-(d+1)}
    \leq (d+1) \cdot 2^{3 + 4d}
         \cdot (1 + |\eta|)
         \cdot \max \big\{ \|A^{-1}\|, \|A^{-1}\|^{d+1} \big\} .
  \]
\end{corollary}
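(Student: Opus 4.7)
\smallskip

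The plan is to split the argument into two regimes according to the size of $|\eta|$ relative to $\|A^{-1}\|^{-1}$, separated by the cutoff $2\|A^{-1}\|\,|\eta| = 1$.

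In the regime $2\|A^{-1}\|\,|\eta| > 1$, I would simply apply Lemma~\ref{lem:series_estimate} to the full series, which already yields the bound $(d+1)\cdot 2^{1+2d}\cdot \max\{1, \|A^{-1}\|^{d+1}\}$. A short verification, splitting on whether $\|A^{-1}\|\geq 1$ or not, shows that this is dominated by the target bound $(d+1)\cdot 2^{3+4d}\cdot (1+|\eta|)\cdot \max\{\|A^{-1}\|, \|A^{-1}\|^{d+1}\}$: when $\|A^{-1}\|\geq 1$ the two ``$\max$'' expressions coincide and $1+|\eta|\geq 1$ suffices; when $\|A^{-1}\|<1$, the regime assumption gives $(1+|\eta|)\|A^{-1}\| > 1/2$, which absorbs the gap of a factor $2^{2+2d}$ between the Lemma's bound and the target.

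In the complementary regime $2\|A^{-1}\|\,|\eta| \leq 1$, I would exploit the geometric separation of the nontrivial lattice points from $-\eta$. For every $k\in\ZZ^d\setminus\{0\}$, the singular-value bound $|Ak|\geq \|A^{-1}\|^{-1}|k|\geq \|A^{-1}\|^{-1}\geq 2|\eta|$ combined with the reverse triangle inequality yields $|\eta+Ak|\geq |Ak|/2$, whence
\[
  (1+|\eta+Ak|)^{-(d+1)}
  \;\leq\; 2^{d+1}(1+|Ak|)^{-(d+1)}
  \;\leq\; 2^{d+1}\|A^{-1}\|^{d+1}|k|^{-(d+1)}.
\]
Summing over $k\neq 0$ reduces the matter to an explicit bound for $\sum_{k\in\ZZ^d\setminus\{0\}}|k|^{-(d+1)}$. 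I would obtain this from Lemma~\ref{lem:series_estimate} applied with $\eta=0$ and $A=\identity$, using the elementary estimate $|k|^{-(d+1)}\leq 2^{d+1}(1+|k|)^{-(d+1)}$ valid for $k\in\ZZ^d\setminus\{0\}$; this yields $\sum_{k\neq 0}|k|^{-(d+1)}\leq (d+1)\cdot 2^{2+3d}$. The combined estimate gives $\sum_{k\neq 0}(1+|\eta+Ak|)^{-(d+1)} \leq (d+1)\cdot 2^{3+4d}\cdot \|A^{-1}\|^{d+1}$, and since $\|A^{-1}\|^{d+1} \leq \max\{\|A^{-1}\|, \|A^{-1}\|^{d+1}\}$ and $1\leq 1+|\eta|$, the target bound follows.

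I do not anticipate a serious obstacle; the argument is essentially careful bookkeeping of multiplicative constants. The one subtle point worth emphasizing is that the $(1+|\eta|)$ factor on the right-hand side is \emph{not} produced by a single pointwise inequality, but is forced by the case split itself: in the ``expanding'' regime one exploits a direct geometric separation that gives no such factor at all, while in the complementary regime the product $(1+|\eta|)\|A^{-1}\|$ is itself bounded below, so the uniform estimate from Lemma~\ref{lem:series_estimate} already suffices.
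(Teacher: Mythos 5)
Your proof is correct and follows essentially the same strategy as the paper's: a two-case split on the size of $\eta$ relative to the scale set by $A^{-1}$, with Lemma~\ref{lem:series_estimate} applied directly (absorbing the gap into a lower bound on $(1+|\eta|)\|A^{-1}\|$) in the ``large $\eta$'' regime and a geometric-separation estimate reducing to $\sum_{k\neq 0}(1+|k|)^{-(d+1)}$ in the ``small $\eta$'' regime. The only differences are cosmetic: the paper thresholds on $|A^{-1}\eta|\leq 1/3$ and works through $|k+A^{-1}\eta|\geq (1+|k|)/4$, whereas you threshold on $\|A^{-1}\|\,|\eta|\leq 1/2$ and bound $|\eta+Ak|\geq|Ak|/2$ directly; both routes land on the same constants.
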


\begin{proof}
  We distinguish two cases.

  First, suppose $|A^{-1} \eta| \leq \tfrac{1}{3}$.
  Then, noting that $|k| \geq 1$ for all $k \in \ZZ^d \setminus \{0\}$, we get the estimate
  \(
    |k + A^{-1} \eta|
    \geq |k| - |A^{-1} \eta|
    \geq \tfrac{|k|}{2} + \tfrac{1}{2} - |A^{-1} \eta|
    \geq \tfrac{|k|}{2}
    \geq \tfrac{1 + |k|}{4}.
  \)
  Next, note that $|x| = |A^{-1} A x| \leq \|A^{-1}\| \, |A x|$,
  and hence $|A x| \geq \|A^{-1}\|^{-1} \, |x|$ for all $x \in \RR^d$.
  This implies
  \[
    1 + |\eta + A k|
    \geq |A k + \eta|
    \geq \|A^{-1}\|^{-1} \cdot |k + A^{-1} \eta|
    \geq \frac{\|A^{-1}\|^{-1}}{4} \cdot (1 + |k|).
  \]
  Now, Lemma~\ref{lem:series_estimate} shows that
  \begin{align*}
    \sum_{k \in \ZZ^d \setminus \{0\}}
      (1 + |\eta + A k|)^{-(d+1)}
    & \leq 4^{d+1} \|A^{-1}\|^{d+1}
           \cdot \!\! \sum_{k \in \ZZ^d \setminus \{0\}}
                        (1 + |k|)^{-(d+1)} \\
&      \leq (d+1) \cdot 2^{3 + 4 d} \cdot \|A^{-1}\|^{d+1} \\
    & \leq (d+1) \cdot 2^{3 + 4 d}
           \cdot (1 + |\eta|)
           \cdot \max \big\{ \|A^{-1}\|, \|A^{-1}\|^{d+1} \big\}.
  \end{align*}

  For the other case, suppose $|A^{-1} \eta| > \tfrac{1}{3}$.
  Then
  $(1 + |\eta|) \, \|A^{-1}\| \geq \|A^{-1}\| \cdot |\eta| \geq |A^{-1} \eta| > \tfrac{1}{3}$,
  and
  \begin{align*}
    \max \{1, \|A^{-1}\|^{d+1} \}
  &  \leq \max \big\{ 3 (1 + |\eta|) \, \|A^{-1}\|, \|A^{-1}\|^{d+1} \big\} \\
  &  \leq 4 \, (1 + |\eta|) \cdot \max \big\{ \|A^{-1}\|, \|A^{-1}\|^{d+1} \big\} .
  \end{align*}
  Now, an application of Lemma~\ref{lem:series_estimate} shows that
  \begin{align*}
    \smash{\sum_{k \in \ZZ^d \setminus \{0\}}}
      (1 + |\eta + A k|)^{-(d+1)}
    & \leq (d+1) \, 2^{1 + 2d} \cdot \max \big\{1, \|A^{-1}\|^{d+1} \big\} \\
    & \leq (d+1) \, 2^{3 + 2d}
           \cdot (1 + |\eta|)
           \cdot \max \big\{ \|A^{-1}\|, \|A^{-1}\|^{d+1} \big\} .
  \end{align*}
  Together with the first case, this shows that the claimed estimate always holds.
\end{proof}

\subsection{Proof of Lemma~\ref{lem:FactorizationLemma}}
\label{sub:ProofFactorizationLemma}

For brevity, set $\llangle \xi \rrangle := 1 + |\xi|^2$ for $\xi \in \RHat^d$.
With this notation, \cite[Lemma~6.8]{StructuredBanachFrames}
shows for arbitrary $\theta \in \RR$ and $\alpha \in \NN_0^d$ that there
is a polynomial $P_{\theta,\alpha} \in \RR[\xi_1,\dots,\xi_d]$ such that,
for all $\xi \in \RHat^d$,
\begin{equation}
  \partial^\alpha \llangle \xi \rrangle^\theta
  = \llangle \xi \rrangle^{\theta - |\alpha|} \cdot P_{\theta,\alpha}(\xi)
  \qquad \text{and} \qquad
  |P_{\theta,\alpha}(\xi)|
  \leq C_{\theta,\alpha} \cdot (1 + |\xi|)^{|\alpha|} ,
  \label{eq:ChineseBracketDerivativeEstimate}
\end{equation}
where $C_{\theta,\alpha} = |\alpha|! \cdot [2 (1 + d + |\theta|)]^{|\alpha|}$.
Since $(1 + |\xi|)^k \leq 2^k \cdot \llangle \xi \rrangle^{k/2}$ for all $k \geq 0$,
it follows that
\begin{equation}
  (1 + |\xi|)^{|\alpha|} \cdot \llangle \xi \rrangle^{\theta - |\alpha|}
  \leq 2^{|\alpha|} \cdot \llangle \xi \rrangle^{\theta - |\alpha|/2}
  \leq 2^{|\alpha|} \cdot \llangle \xi \rrangle^{\theta}
  \label{eq:ElementaryChineseBracketEstimate}
\end{equation}
for all $\xi \in \RHat^d$, $\theta \in \RR$ and $\alpha \in \NN_0^d$.
Next, for $\theta = -\frac{1}{2}(d+1)$ and any $\alpha \in \NN_0^d$
with $|\alpha| \leq d+1$,
\[
  C_{-(d+1)/2,\alpha}
  = |\alpha|! \cdot \Big[
                      2 \Big( 1 + d + \Big|-\frac{d+1}{2}\Big| \Big)
                    \Big]^{|\alpha|}
  \leq (d+1)! \cdot \big[ 3 \cdot (d+1) \big]^{|\alpha|}
  \leq \big( 3 \cdot (d+1)^2 \big)^{d+1} \, .
\]
Combining Equations~\eqref{eq:ChineseBracketDerivativeEstimate}
and \eqref{eq:ElementaryChineseBracketEstimate} with the elementary estimate
$1 + |\xi| \leq 2 \llangle \xi \rrangle^{1/2}$, we see that
\begin{align*}
  \max_{|\alpha| \leq d+1}
    \big| \partial^\alpha h_2 (\xi) \big|
  &= \max_{|\alpha| \leq d+1}
      \big| \partial^\alpha \llangle \xi \rrangle^{-(d+1)/2} \big| \\
  & \leq \big( 3  (d+1)^2 \big)^{d+1}
          \max_{|\alpha| \leq d+1}
                 (1 + |\xi|)^{|\alpha|} \,
                 \llangle \xi \rrangle^{-\frac{d+1}{2} - |\alpha|} \\
  & \leq \big( 6  (d+1)^2 \big)^{d+1}  \llangle \xi \rrangle^{-\frac{d+1}{2}}
    \leq C' \cdot (1 + |\xi|)^{-(d+1)} \, .
\end{align*}
For the estimate concerning $h_1$, note that
since $C_{\theta,\alpha} = C_{-\theta,\alpha}$,  we also have
$C_{(d+1)/2, \beta} \leq \big(3 \cdot (d+1)^2 \big)^{d+1}$
for all $\beta \in \NN_0^d$ with $|\beta| \leq d+1$.
Hence, using the Leibniz rule and
Equations~\eqref{eq:ChineseBracketDerivativeEstimate} and
\eqref{eq:ElementaryChineseBracketEstimate},
it follows for arbitrary $\xi \in \RHat^d$ that
\begin{align*}
  \max_{|\alpha| \leq d + 1}
    \big| \partial^\alpha h_1 (\xi) \big|
  & \leq \sum_{\beta \leq \alpha}
           \binom{\alpha}{\beta}
           \, \big| \partial^\beta \llangle \xi \rrangle^{(d+1)/2} \big|
           \cdot \big| \partial^{\alpha - \beta} g (\xi) \big| \\
  & \leq \varrho(\xi) \cdot (1 + |\xi|)^{-(d+1)}
           \sum_{\beta \leq \alpha}
                  \binom{\alpha}{\beta}
                  \, C_{(d+1)/2, \beta}
                  \,\, (1 + |\xi|)^{|\beta|}
                  \,\, \llangle \xi \rrangle^{\frac{d+1}{2} - |\beta|} \\
  & \leq \big( 6 \cdot (d+1)^2 \big)^{d+1}
           \varrho(\xi)
           (1 + |\xi|)^{-(d+1)} \, \llangle \xi \rrangle^{(d+1)/2}
           \sum_{\beta \leq \alpha}
                  \binom{\alpha}{\beta} \\
&    \leq C'  \varrho(\xi) \, ,
\end{align*}
which completes the proof.\hfill $\square$

\end{document}